\newtheorem{theorem}{Theorem}[section]
\newtheorem{lemma}[theorem]{Lemma}
\newtheorem{definition}[theorem]{Definition}
\newtheorem{ex}[theorem]{Example}
\newtheorem{coro}[theorem]{Corollary}
\newtheorem{pro}[theorem]{Proposition}
\newtheorem{re}[theorem]{Remark}
\newcommand{\Lrightarrow}{\hbox to1cm{\rightarrowfill}}
\newcommand{\Ldownarrow}{\bigg\downarrow}
\DeclareMathOperator{\Hom}{Hom}
\DeclareMathOperator{\bX}{\mathbf{X}}
\DeclareMathOperator{\bU}{\mathbf{U}}
\DeclareMathOperator{\bY}{\mathbf{Y}}
\DeclareMathOperator{\mA}{\mathcal{A}}
\DeclareMathOperator{\mC}{\mathcal{C}}
\DeclareMathOperator{\mO}{\mathcal{O}}
\DeclareMathOperator{\mD}{\mathcal{D}}
\DeclareMathOperator{\mI}{\mathcal{I}}
\DeclareMathOperator{\mL}{\mathcal{L}}
\DeclareMathOperator{\mN}{\mathcal{N}}
\DeclareMathOperator{\mR}{\mathcal{R}}
\DeclareMathOperator{\mX}{\mathcal{X}}
\DeclareMathOperator{\mY}{\mathcal{Y}}
\DeclareMathOperator{\bV}{\mathbf{V}}
\DeclareMathOperator{\bW}{\mathbf{W}}
\DeclareMathOperator{\bZ}{\mathbf{Z}}
\DeclareMathOperator{\Aut}{\mathrm{Aut}}
\begin{document}

\selectlanguage{english}

\def \bangle{ \atopwithdelims \langle \rangle}
\title{\textsc{ Weakly holonomic equivariant $\mathcal{D}$-modules on rigid analytic spaces}}
\author{Tobias Schmidt and Thi Minh Phuong Vu}
\date{}
\maketitle
\begin{abstract}
Let $G$ be a $p$-adic Lie group. We develop a dimension theory for coadmissible $G$-equivariant $\mathcal{D}$-modules on smooth rigid analytic spaces. We introduce the category of weakly holonomic $G$-equivariant $\mathcal{D}$-modules, study its duality and its preservation under various operations.
\end{abstract}
\textbf{Key words}: Rigid analytic varieties, $p$-adic Lie groups, equivariant $\mathcal{D}$-modules, weak holonomicity, duality.
\tableofcontents
\makeatletter
\@starttoc{toc}
\makeatother

\section{Introduction}

The category of coherent $\mathcal{D}_{X}$-modules (differential equations) on a smooth complex analytic variety $X$ is omnipresent in many areas of mathematics. Among its many applications to representation theory, we mention the Beilinson-Bernstein theorem, which relates the representations of a given semi-simple complex Lie-algebra to $\mathcal{D}$-modules on its flag variety \cite{BB}. Many interesting representations correspond thereby to so-called holonomic modules and satisfy many finiteness properties. A non-zero coherent $\mathcal{D}_{X}$-module is called holonomic if the dimension of its associated characteristic variety is as small as possible, i.e. equal to $\dim X$. An equivalent definition makes use of the \textit{duality functor} 
$$\mathbb{D}: \,\, D^{-}(\mathcal{D}_X) \longrightarrow D^{+}(\mathcal{D}_X)^{op},\;\; M^.\mapsto R\mathcal{H}om_{\mathcal{D}_X}(M^., \mathcal{D}_X) \otimes_{\mathcal{O}_X}\Omega_X^{\otimes-1}[\dim X]$$ on the derived category $D^{-}(\mathcal{D}_X)$. A coherent $\mathcal{D}_X$-module $M$ is then holonomic if and only if $H^i(\mathbb{D} M)=0$, for all $i \neq 0$. 

\vskip5pt

In the setting of rigid analytic spaces, let $K$ be a discretely valued complete non-archimedean field of mixed characteristic $(0, p)$ with valuation ring $\mathcal{R}$ and uniformiser $\pi$. Let $\bX$ be a smooth rigid analytic variety over $K$. 
In \cite{AWI} Ardakov-Wadsley introduced a certain sheaf of infinite order differential operators $\wideparen{\mathcal{D}}_{\bX}$ on $\bX$ and used it to define the abelian category $\mathcal{C}_{\bX}$ of coadmissible $\wideparen{\mathcal{D}}_{\bX}$-modules. Coadmissibility is a certain finiteness condition replacing coherence in the complex analytic setting. The sheaf $\wideparen{\mathcal{D}}_{\bX}$ is in fact a certain Fr\'echet completion of the sheaf of usual finite order algebraic differential operators $\mathcal{D}_{\bX}$. In the case of rigid analytic flag varieties the sheaf corresponding to $\wideparen{\mathcal{D}}$ on the Zariski-Riemann space  was independently introduced and studied by Huyghe-Patel-Schmidt-Strauch in \cite{CT}, where it is called $\mathscr{D}_{\infty}$.

\vskip5pt 

In the context of $\wideparen{\mathcal{D}}$-modules on smooth rigid analytic varieties, the notion of characteristic variety 
is much more complicated and not yet developed. In order to define a notion of weak holonomicity for $\wideparen{\mathcal{D}}$-modules, the authors  in \cite{AWIII} introduced a dimension theory for coadmissible $\wideparen{\mathcal{D}}$-modules by using the homological grade of a module as its codimension. This is based on the key fact that whenever $\bX$ is affinoid 
with free tangent module $\mathcal{T}(\bX)$, then $\wideparen{\mathcal{D}}(\bX)$ is almost Auslander-Gorenstein (it is a well-behaved inverse limit of Auslander-Gorenstein $K$-algebras). Weak holonomicity is then defined as being of small dimension. The weakly holonomic modules form a full abelian subcategory $\mathcal{C}^{wh}_{\bX}\subset \mathcal{C}_{\bX}$ closed under extensions.  It satisfies Bernstein's inequality and admits an auto-duality. However, as explained in \cite{AWIII}, the category 
$\mathcal{C}^{wh}_{\bX}$ does not yet satisfy all the finiteness and stability properties one would expect from holonomic modules and serves thus only as a first approximation (hence the adjective 'weak'). 

\vskip5pt 
Now let $G$ be a $p$-adic Lie group acting on the smooth rigid space $\bX$. Recently, K. Ardakov introduced in \cite{AW} the category of coadmissible $G$-equivariant $\mathcal{D}_{\bX}$-modules. 
Coadmissible equivariant modules form an abelian category  $\mathcal{C}_{\bX/G}$. In the case  $G=1$, one recovers the former category $\mathcal{C}_{\bX}$.
Since classical equivariant $\mathcal{D}$-modules (e.g. Harish-Chandra sheaves) admit many applications to representation theory, it is expected that the category 
$\mathcal{C}_{\bX/G}$ (for suitable $\bX$ and $G$) will have important applications to 
the representation theory of $G$. A first manifestation of this principle is the equivariant version of the 
rigid analytic Beilinson-Bernstein localization theorem relating $\mathcal{C}_{\bX/G}$ to locally analytic $G$-representations \cite{AW,CT}. Here, $G$ is a reductive group and $\bX$ is its rigid analytic flag variety.

\vskip5pt 
It is therefore a natural question whether the notion of (weakly) holonomicity and its duality can be generalized to the equivariant setting. This is the aim of the present article. 
\vskip5pt

 After recalling some elements from the theory of equivariant $\mathcal{D}$-modules in section \ref{section_two}, we start in section \ref{section_three} the development of Ext functors and equivariant dimension in a local situation. So let $G$ be compact and stabilize a "small" affinoid $\bX$ of dimension $d$. In this case, $\mathcal{C}_{\bX/G}$ is equivalent to the category of coadmissible modules (in the original algebraic sense introduced by Schneider-Teitelbaum \cite{ST2003}) over a certain Fr\'{e}chet-Stein algebra $\wideparen{\mathcal{D}}(\bX,G)$. The latter is a suitable Fréchet completion of the skew-group ring $\mathcal{D}(\bX)\rtimes G$. It is not difficult to see that the Fr\'echet-Stein structure of $\wideparen{\mathcal{D}}(\bX,G)$ has noetherian Banach algebras, which are  Auslander-Gorenstein rings of injective dimension at most $2d$. This allows us to follow \cite{ST2003} and define Ext functors and a homological dimension for coadmissible $\wideparen{\mathcal{D}}(\bX,G)$-modules. 
 
 \vskip5pt 
 In the next section \ref{section_four} we globalize these constructions to general smooth rigid analytic varieties $\bX$ endowed with a continuous $G$-action, using 
 admissible open coverings. We point out that, in contrast to the nonequivariant situation, there is no global sheaf $\wideparen{\mathcal{D}}(-,G)$ playing the role of the coherent sheaf 
$\wideparen{\mathcal{D}}$ in the equivariant setting. The globalization is therefore more subtle than in the case $G=1$ and the technical heart of the paper, see \ref{heart}. To sum up, we construct, for all non negative integers $i\in \mathbb{N}$, Ext-functors $E^i: \mathcal{C}_{\bX/G}\rightarrow \mathcal{C}_{\bX/G}^r,$ from left to right modules, where the sheaf $E^i(\mathcal{M})$ is defined locally on "small" affinoids $\bU$, as 
\[E^i(\mathcal{M})(\bU):=\lim_H Ext^{i}_{\wideparen{\mathcal{D}}(\bU,H)}(\mathcal{M}(\bU), \wideparen{\mathcal{D}}(\bU,H)),\]
where $H$ runs over a suitable set of compact open subgroups of $G$ stabilizing $\bU$.
Similarly, the notion of homological dimension globalizes to a well-defined dimension on coadmissible $G$-equivariant (left or right) $\mathcal{D}_{\bX}$-modules. The usual side-changing functors 
 $\Omega_{\bX}\otimes_{\mathcal{O}_{\bX}}$ and 
 $\mathcal{H}om_{\mathcal{O}_{\bX}}(\Omega_{\bX},-)$ between $\mathcal{C}_{\bX/G}$ and $\mathcal{C}^r_{\bX/G}$
preserve the dimension.
 
\vskip5pt

After having introduced the dimension theory on the category $\mathcal{C}_{\bX/G}$, 
 we define weak holonomicity in section \ref{section_five}: a module $\mathcal{M}\in \mathcal{C}_{\bX/G}$ is weakly holonomic, if $\dim \mathcal{M}\leq \dim \bX$. The weakly holonomic modules form a full Serre subcategory $\mathcal{C}^{wh}_{\bX/G}$ of $\mathcal{C}_{\bX/G}$. As in the classical setting, one needs Bernstein's inequality to go further, cf. \cite[2.3.2]{Hotta}. The following theorem is our first main result, cf. \ref{bernsteinlem_global}.

\vskip5pt 

\textbf{Theorem 1.} {\it Let $\bX$ have good reduction, i.e. a formal model which is smooth. Then Bernstein's inequality holds in $\mathcal{C}_{\bX/G}$: any non-zero $\mathcal{M}\in \mathcal{C}_{\bX/G}$ satisfies ${\rm dim}\; \mathcal{M} \geq {\rm dim} \bX.$ }

\vskip5pt 
Unfortunately, we do not know at present, if Bernstein's inequality holds in $\mathcal{C}_{\bX/G}$ if $\bX$ is a general rigid analytic space with $G$-action. The main problem is that we do now know whether a given smooth affinoid with $G$-action can be viewed as a closed stable subspace of a polydisc with $G$-action, a fact which is obvious when $G=1$. 
\footnote{Note that the corresponding algebraic problem is also obvious: if $G$ is an algebraic group, then any affine $G$-variety admits a closed equivariant immersion into a finite-dimensional $G$-module, e.g. \cite[Prop. 2.2.5]{Brion}.} In any case, the methods developed in this paper allow us to go beyond the case of good reduction and treat, for example, $G$-stable smooth closed subspaces of $G$-spaces with good reduction.  
\vskip5pt

As a next step, we establish the auto-duality on $\mathcal{C}^{wh}_{\bX/G}$, whenever Bernstein's inequality holds. We have Ext functors $\mathcal{E}^i$ on 
$\mathcal{C}_{\bX/G}$ by composing $E^i$ with the side-changing functor $\mathcal{H}om_{\mathcal{O}_{\bX}}(\Omega_{\bX},-)$. Our second main result is the following, cf. \ref{main_2}.
 
 \vskip5pt
\textbf{Theorem 2.} {\it Assume that Bernstein's inequality holds in  $\mathcal{C}_{\bX/G}$. Then  $$\mathbb{D}:= \mathcal{E}^{\dim\bX}\vert_{\mathcal{C}_{\bX/G}^{wh}}$$ is an auto-duality on  $\mathcal{C}_{\bX/G}^{wh}$, i.e. satisfies $\mathbb{D}^2={\rm id}$.}

\vskip5pt

The functor $\mathbb{D}$ is the correct equivariant generalization of the duality functor from \cite{AWIII} and we call it the {\it duality functor} on $\mathcal{C}_{\bX/G}^{wh}$.

\vskip5pt 

In the last section \ref{section_five} we discuss the preservation of weak holonomicity under various operations. This allows us to exhibit large classes of weakly holonomic modules.
We first generalize the extension functor from \cite{AWIII} to the equivariant setting and obtain a functor $E_{\bX/G}$ from $G$-equivariant coherent $\mathcal{D}_{\bX}$-modules to the category $\mathcal{C}_{\bX/G}$. As expected, it takes modules of minimal dimension (in the sense of \cite{MN}) into $\mathcal{C}_{\bX/G}^{wh}$. In the algebraic case, when $\bX=\mathbb{X}^{an}$ for some smooth algebraic $K$-variety $\mathbb{X}$, any holonomic algebraic $\mathcal{D}_{\mathbb{X}}$-module gives rise to a coherent $\mathcal{D}_{\bX}$-module of minimal dimension. 

Back in the case of a general smooth rigid space $\bX$ with $G$-action, we show that all (strongly) $G$-equivariant integrable connections give rise to objects of $\mathcal{C}_{\bX/G}^{wh}$. Of course, the structure sheaf $\mathcal{O}_{\bX}$ is weakly holonomic. We go on and prove a dimension formula for the equivariant pushforward functor, generalizing 
\cite[Thm. 6.1]{AWIII} to the equivariant setting. Making use of the equivariant Kashiwara equivalence for coadmissible modules \cite{AWG}, we arrive at our third main result, cf.
\ref{Kash}.
\vskip5pt
\textbf{Theorem 3.} {\it Let $i: \,\, \bY \rightarrow \bX$ be a smooth Zariski closed subspace which is $G$-stable. Then Kashiwara's equivalence restricts to an equivalence between $\mathcal{C}^{wh}_{\bY/G}$ and the category of weakly holonomic equivariant $\wideparen{\mathcal{D}}_{\bX}$-modules supported on $\bY$.}

\vskip5pt

As a corollary, the module $i_+\mathcal{O}_{\bY}$ is a weakly holonomic $G$-equivariant $\wideparen{\mathcal{D}}_{\bX}$-module for any smooth Zariski closed subspace $i: \,\, \bY \rightarrow \bX$ which is $G$-stable. Here $i_+$ denotes equivariant push-forward along $i$. This yields a large class of examples in $\mathcal{C}_{\bX/G}^{wh}$. We finally show that weak holonomicity is preserved under the geometric induction 
 functor from \cite{AWG}, cf. \ref{indpro}.
 \vskip5pt
 \textbf{Theorem 4.} {\it Let $P$ be a closed co-compact subgroup of $G$. 
 Geometric induction $\text{\rm ind}_P^G: \mathcal{C}_{\bX/P}\rightarrow \mathcal{C}_{\bX/G}$ preserves weak holonomicity.}
 \vskip5pt
 In particular, $ \mathrm{ind}_{G_{\bY}}^G i_+\mathcal{O}_{\bY}$ is a weakly holonomic $G$-equivariant $\wideparen{\mathcal{D}}_{\bX}$-module for any smooth Zariski closed subspace $i: \,\, \bY \rightarrow \bX$ that has a co-compact stabilizer $G_{\bY}$.
 
 \vskip10pt

\textbf{Acknowledgements}. Much of this work is contained in the second author's PhD thesis at Rennes university under the supervision of Tobias Schmidt. She likes to thank him for encouragement and support during the prepartion of this thesis. She also likes to thank the Henri Lebesgue Center Bretagne and Region Bretagne for financial support.

\vskip7pt 

\textbf{Notation}: Throughout this paper, $K$ denotes a complete discrete valuation field of mixed characteristic $(0,p)$ with valuation ring $\mathcal{R}$, a uniformiser $\pi \in \mathcal{R}$ and residue field $k$. The algebraic closure of $K$ is denoted by $\overline{K}$. If $M$ is an $\mathcal{R}$-module, its $\pi$-adic completion is denoted by $\widehat{M}$. All rings, appearing in the article, except for Lie algebras, are supposed to be associative and unital. All modules are left modules, if not further specified. 

\section{Reminder on coadmissible equivariant $\mD$-modules}\label{section_two}
We review some elements of the theory of coadmissible equivariant $\mD$-modules on rigid analytic spaces, as developed in \cite{AW}. This also allows us to set up notation for the sequel.
\subsection{Some algebraic background}
\subsubsection{Lie-Rinehart algebras}
Let $R$ be a commutative ring and $A$ be a commutative $R$-algebra. 
Let $Der_R(A)$ be the space of $R$-linear derivations on $A$. A $R$-Lie algebra $L$  is called \textit{Lie-Rinehart algebra} or a \textit{$(R,A)$-Lie algebra} if it is also an $A$-module equipped with an $A$-linear Lie algebra homomorphism $\rho: L \rightarrow Der_R(A)$ such that $$[x, ay ]=a[x, y] + \rho(x)(a)y $$
for all $x, y \in L$ and $a \in A$, cf. \cite{RG}. Of course, $Der_R(A)$ itself is an \textit{$(R,A)$-Lie algebra} (with $\rho=id$).

Let $(L, \rho)$ be an $(R,A)$-Lie algebra. The \textit{enveloping algebra} of $L$ is the universal associative $R$-algebra $U(L)$ equipped with homomorphisms
\begin{center}
$i_A: A \longrightarrow U(L)$\, and \, $ i_L: L \longrightarrow U(L)$
\end{center}
satisfying the property $i_L(ax)=i_A(a)i_L(x)$ and $[i_L(x), i_A(a)]=i_A(\rho(x)(a))$ for any $ a\in A, x \in L$.  If $A$ is a Noetherian ring and $L$ is a finitely generated $A$-module, then $U(L)$ is a (left and right) Noetherian ring. The Lie algebra $L$ is called \textit{smooth} if it is a  finitely generated projective $A$-module. In this case, the morphisms $i_A$ and $i_L$ are injective and we can identify $A$ and $L$ with its images in $U(L)$.\\

Let $\varphi: A \rightarrow B$ be a morphism of $R$-algebras. We say that \textit{the action of $L$ on ${A}$ lifts to ${B}$} if there exists an ${A}$-linear Lie algebra homomorphism $\sigma: L\longrightarrow Der_R({B})$ such that for every $x \in{L}$, the diagram
\[\begin{tikzcd}
{A} \arrow{r}{\rho(x)} \arrow[swap]{d}{\varphi} & {A} \arrow{d}{\varphi} \\%
{B} \arrow{r}{\sigma(x)}& {B}
\end{tikzcd}\]
is commutative. In this case, the base change $(B\otimes_A L, 1\otimes \sigma)$ is an $(R,B)$-Lie algebra \cite[Lemma $2.2$]{AWI}.\\
\subsubsection{Crossed products}
We assemble some facts on crossed product rings. Our main reference for such rings is \cite{Passman}. For any ring $R$, we let $R^\times$ denote the multiplicative group of invertible elements (units) in $R$.\\
\begin{definition}\label{crossed_prod}
Let $R$ be a ring and $G$ be a group. Then a crossed product $R \ast G$ of $R$ and $G$ is a ring containing $R$ and a set of units $\bar{G}= \lbrace \bar{g}, \, g \in G\rbrace \subset (R\ast G)^{\times}$ which is in bijection with $G$ such that:
\begin{itemize}
\item[(i)] $R\ast G$ is free as a right $R$-module with basis $\bar{G}$  
\item[(ii)] $\bar{g_1}R=R\bar{g_1}$ and $\bar{g_1}\bar{g_2}R=\overline{g_1g_2}R$ for all $g_1, g_2 \in G$.
\end{itemize}
\end{definition}

It follows from (ii) that $R\ast G$ is also freely generated on $\bar{G}$ as a left $R$-module. 
The following flatness lemma for crossed products will be useful for the next sections.

\begin{lemma} Let $\varphi: R \rightarrow A$ be a morphism of rings such that $\varphi$ is left (resp. right) flat
and that it factors through
\[R \longrightarrow R\ast G \rightarrow A.\]
Then the morphism $R\ast G \rightarrow A$ is left (resp. right) flat.
\end{lemma}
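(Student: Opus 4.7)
The plan is to reformulate flatness as vanishing of $\mathrm{Tor}$ groups and to exploit the key structural fact built into the definition of a crossed product: $R\ast G$ is free, hence faithfully flat, as both a left and a right $R$-module, with basis $\bar G$. I treat the right-flat case explicitly; the left-flat case is handled symmetrically using the left-freeness of $R\ast G$ over $R$.

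The first step is a Grothendieck change-of-rings computation associated with the factorisation $R \hookrightarrow R\ast G \to A$. The natural identification of functors $A\otimes_R - \,=\, A\otimes_{R\ast G}(R\ast G \otimes_R -)$, combined with the fact that $R\ast G$ is $R$-flat (so that $R\ast G \otimes_R -$ sends $R$-projective resolutions to $R\ast G$-projective resolutions), yields for every left $R$-module $N$ a natural isomorphism
\[
\mathrm{Tor}^R_i(A, N) \;\cong\; \mathrm{Tor}^{R\ast G}_i\bigl(A,\, R\ast G \otimes_R N\bigr), \qquad i \geq 0.
\]
The right-flatness hypothesis on $\varphi$ kills the left-hand side for $i \geq 1$, so $\mathrm{Tor}^{R\ast G}_i\bigl(A, R\ast G \otimes_R N\bigr) = 0$ for every $i \geq 1$ and every left $R$-module $N$.

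The second step is to upgrade this vanishing from induced modules $R\ast G \otimes_R N$ to an arbitrary left $R\ast G$-module $M$. I would use the canonical multiplication map $R\ast G \otimes_R M \twoheadrightarrow M$, $s\otimes m\mapsto sm$, whose kernel $K$ sits in a short exact sequence of left $R\ast G$-modules. The long exact sequence of $\mathrm{Tor}^{R\ast G}(A,-)$, combined with the vanishing for the induced middle term, gives for each $i\geq 1$ a dimension-shifting identification
\[
\mathrm{Tor}^{R\ast G}_{i+1}(A, M) \;\cong\; \mathrm{Tor}^{R\ast G}_i(A, K).
\]
Iterating this construction produces, in effect, a resolution of $M$ by induced modules and drives the higher Tor groups to zero, giving $\mathrm{Tor}^{R\ast G}_i(A, M) = 0$ for all $i \geq 1$, which is exactly flatness of $A$ as a left $R\ast G$-module.

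The main obstacle lies in this second step: presenting each successive kernel $K$ as a quotient of an induced module in a controlled manner that makes the dimension-shift terminate. Here the crossed-product relations $\bar{g_1}\bar{g_2}R = \overline{g_1 g_2}R$ and the free $\bar G$-basis are essential, since they let one rewrite $R\ast G$-linear combinations in terms of the $\bar G$-components and thereby express $K$ in an $R$-induced form at each stage. The first step would go through whenever $R\ast G$ is merely $R$-flat, but the second step genuinely uses the crossed-product axioms; once executed, the right-flatness of $R\ast G \to A$ follows, and the symmetric argument settles the left-flat case.
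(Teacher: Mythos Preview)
Your first step is correct: freeness of $R\ast G$ over $R$ gives the base-change identity $\mathrm{Tor}^R_i(A,N)\cong\mathrm{Tor}^{R\ast G}_i(A,\,R\ast G\otimes_R N)$, so every induced module is $A\otimes_{R\ast G}(-)$-acyclic once $A$ is $R$-flat.

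The gap is in the second step. The dimension-shift $\mathrm{Tor}_{i+1}^{R\ast G}(A,M)\cong\mathrm{Tor}_i^{R\ast G}(A,K)$ is valid only for $i\geq 1$ and says nothing about $\mathrm{Tor}_1^{R\ast G}(A,M)$, which is the case that matters. Iterating does yield a resolution of $M$ by $A\otimes_{R\ast G}(-)$-acyclic modules, but an acyclic resolution lets you \emph{compute} the derived functors --- it does not force them to vanish. Your suggestion that the crossed-product relations let one ``express $K$ in an $R$-induced form'' is likewise unfounded: the kernel of the multiplication map $R\ast G\otimes_R M\to M$ is not induced in general.

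In fact the gap cannot be repaired, because the assertion as literally stated is false. Take $R=k$ a field of characteristic~$2$, $G=\mathbb{Z}/2\mathbb{Z}$ acting trivially, so that $R\ast G=k[G]\cong k[\epsilon]/(\epsilon^2)$ with $\epsilon=g-1$, and let $A=k$ via the augmentation $g\mapsto 1$. Then $\varphi=\mathrm{id}_k$ is flat and factors through $k[G]$, yet $k$ is not flat over $k[\epsilon]/(\epsilon^2)$: one has $\mathrm{Tor}^{k[G]}_i(k,k)=k$ for every $i\geq 0$. Running your construction in this example gives the periodic resolution $\cdots\to k[G]\to k[G]\to k\to 0$ by induced modules; applying $k\otimes_{k[G]}(-)$ produces $\cdots\xrightarrow{\,0\,}k\xrightarrow{\,0\,}k\to 0$, whose homology is visibly nonzero in every degree.

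The paper itself gives no proof but cites an external reference. In the situations where such flatness is actually used later (passage from $\widehat{U(\mathcal{L})}_K\rtimes_N G$ to $\widehat{U(\mathcal{L}')}_K\rtimes_N G$ with $\mathcal{L}'=\mathcal{B}\otimes_{\mathcal{A}}\mathcal{L}$), the target $A$ is itself a crossed product $R'\ast G$ and the map is induced from a flat map $R\to R'$. Under that extra hypothesis one checks directly that $(R'\ast G)\otimes_{R\ast G}M\cong R'\otimes_R M$ naturally in $M$, whence flatness of $R'$ over $R$ gives the conclusion. Some hypothesis of this kind is needed; without it the statement fails, and no dimension-shifting argument can rescue it.
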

\begin{proof} This is \cite[Lemma $2.2$]{Tobias}.\end{proof}

An important example of a crossed product for us is the skew group ring. Let $G$ act on $R$ via a group homomorphism $\sigma: G \rightarrow Aut(R)$. The skew product ring $R\rtimes G$ is, by definition, the free right $R$-module with basis $ G$:
\[R\rtimes G= \lbrace  {g_0}r_0+...+{g_n}r_n, \, r_i \in R, g_i \in G, n \in \mathbb{N}\rbrace.\]
The multiplication on $R \rtimes G$ is defined by:
$$ (g_1r_1)(g_2r_2)=(g_1g_2)((g_2^{-1}.r_1)r_2)$$
for any $ r_1, r_2 \in R$ and $g_1, g_2 \in G$. Here, we let $g.r$ (resp. $r.g$) denote the image of $r$ under $\sigma (g) $ (resp. $\sigma (g^{-1}))$. The ring $R \rtimes G$ naturally contains $R$ as a subring and contains $G$ as a subgroup of $(R\rtimes G)^\times$ and one has the relation 
$grg^{-1}= g.r, \, \,$ for any $g \in G, r \in R.$ 

 \begin{re} We will often use the presentation of $R \rtimes G$ as a free left $R$-module with basis $G$: each element in $R \rtimes G$ has a unique representation  $\sum_{g \in G}r_{g}g,$ where $r_g \in R$ are zero for all but finitely many $g \in G$. 
Under this representation, one can rewrite the multiplication as follows:
\begin{align} \label{skew}
(rg)(r'g')=(r(g.r'))(gg').
\end{align}
 \end{re}
Let a skew group ring $R \rtimes G$ be given. Following \cite[Definition $2.2.1$]{AW} a \textit{trivialisation} of $R \rtimes G$ is a group homomorphism $\beta : G \rightarrow R^\times$ such that
\begin{center}
 $\beta(g)r\beta(g)^{-1}=g.r$ for all $g \in G$ and $r \in R$. 
 \end{center}
 A trivialisation $\beta: G \rightarrow R^\times$ induces a ring isomorphism  
 \begin{align*}
 \tilde{\beta}: R[G] &\stackrel{\simeq}{\longrightarrow} R \rtimes G\\
 r &\longmapsto r\\
 g &\longmapsto \beta(g)^{-1}g
 \end{align*}
with the ordinary group ring $R[G]$, cf. \cite[Lemma $2.2.2$]{AW}. Given a 
a normal subgroup of $G$ and $\beta: N \rightarrow R^\times$ a trivialisation of $R \rtimes N$, one defines the left $R\rtimes G$-module
 $$R \rtimes_N G = R\rtimes_{N}^\beta G:= \frac{R\rtimes G}{(R \rtimes G)(\tilde{\beta}(N)-1)}.$$
When $\beta$ is $G$-equivariant, which means $\beta(gng^{-1})=g.\beta(n)$ for every $n \in N$ and $g \in G$, then $R \rtimes_NG$ is an associative ring containing $R$ as a subring and there is a  group homomorphism $G \rightarrow (R \rtimes_N G)^\times$, cf. \cite[Lemma $2.2.4$]{AW}.

\subsection{Coadmissible equivariant $\mD$-modules on rigid analytic spaces}
Let $\bX$ be a rigid analytic space. If $\bX$ is quasi-compact and quasi-separated (qcqs), then a choice of formal model $\mathcal{X}$ for $\bX$ leads to a certain Hausdorff topology on the automorphism group 
$\Aut(\bX,\mO_{\bX})$ of the $G$-ringed topological space 
$(\bX,\mO_{\bX})$ which is compatible with the group structure and which is independent of the choice of $\mathcal{X}$. A filter base is given by the congruence subgroups in $\Aut(\mX,\mO_{\mX})$, viewed in $\Aut(\bX,\mO_{\bX})$ via the generic fibre functor \cite[3.1.5]{AW}.

\vskip5pt Now let $\bX$ be a general rigid space and $G$ a $p$-adic Lie group together with a group homomorphism $\rho: G\rightarrow \Aut(\bX,\mO_{\bX})$. The action of $G$ on $\bX$ is {\it continuous} in the sense of \cite[Definition 3.1.8]{AW} if the stabilizer $G_{\bU}$ of any qcqs admissible open $\bU$ is open in $G$ and the induced action map 
$G_{\bU}\rightarrow \Aut(\bU,\mO_{\bU})$ is continuous. \\

Now let $\bX$ be a smooth affinoid $K$-variety and $G$ be a compact $p$-adic Lie group which acts continuously on $\bX$. Then any affine formal model $\mA$ of $A:= \mathcal{O}(\bX)$ is contained in a $G$-stable affine formal model \cite[3.2.4]{AW}. Let us fix a $G$-stable affine formal model $\mathcal{A}$ in $A$ in the following. \\
\\
 Let $L:= Der_K(A)$ denote the $(K,A)$-Lie algebra of $K$-derivations endowed with the natural action of $G$.  An $\mathcal{A}$-submodule $\mathcal{L}$ of $L$ is called  \textit{$G$-stable  $\mathcal{A}$-Lie lattice} in L if it is a finitely presented $\mathcal{A}$-module which spans $L$ as a $K$-vector space and is stable under the $G$-action and the Lie bracket on $L$. For such a $G$-stable $\mathcal{A}$-Lie lattice $\mathcal{L}$, we denote by $\widehat{U(\mathcal{L})}$ the $\pi$-adic completion of the envelopping algebra $U(\mathcal{L})$ and write $\widehat{U(\mathcal{L})_K}:= \widehat{U(\mathcal{L})} \otimes_{\mathcal{R}}K$. By functoriality, these rings have natural $G$-actions.\\
\\
Now fix a Lie lattice $\mathcal{L}$ which is \textit{smooth} (i.e. finitely generated projective) as an $\mathcal{A}$-module.Then the unit ball of the $K$-Banach algebra $\widehat{U(\mathcal{L})_K}$ is isomorphic to $\widehat{U(\mathcal{L})}$. Consider the skew product $\widehat{U(\mathcal{L})_K} \rtimes G$. Since $\mathcal{A}$ is $G$-stable, the morphism $\rho: G \rightarrow \Aut(A)$ takes values in the subgroup $\Aut(\mathcal{A}) \subset \Aut(A)$. Write
\begin{align}
G_\mathcal{L}:= \rho^{-1}(\exp(p^\epsilon \mathcal{L})) \subset G. 
\end{align}
Here $\epsilon=1$ if $p=1$; $\epsilon =2$ if $p >2$ and $\rho: G \rightarrow \Aut(\mathcal{A})$. By \cite[Theorem $3.2.12$]{AW}, there is a $G$-equivariant trivialisation 
\[\beta_\mathcal{L}: G_\mathcal{L} \longrightarrow \widehat{U(\mathcal{L})_K}^{\times}\]
of the  $G_\mathcal{L}$-action on $\widehat{U(\mathcal{L})_K}$. Hence the $K$-algebra $\widehat{U(\mathcal{L})_K} \rtimes_H G$ is defined for any open normal subgroup $H$ of $G$ contained in $G_\mathcal{L}$. \\
\\
Recall \cite[Definition 3.2.13]{AW} that a pair $(\mathcal{L},J)$ is called an $\mathcal{A}$-trivialising pair if  $\mathcal{L}$ is a $G$-stable $\mathcal{A}$-Lie lattice in $L$ and $J$ is an open normal subgroup of $G$ contained in the subgroup $G_\mathcal{L}$ of $G$ (which generally depends on $\mathcal{L}$). The set $\mathcal{I}(\mathcal{A},\rho, G)$ of all $\mathcal{A}$-trivialising pairs is a partially ordered set via 
$(\mathcal{L}_1, N_1) \leq (\mathcal{L}_2, N_2)$ \,\,iff\,\, $\mathcal{L}_2 \subset \mathcal{L}_1$ and $N_2 \subset N_1$. In this situation, we can form the \textit{completed skew-group algebra}
\[\wideparen{\mathcal{D}}(\bX,G)= \varprojlim_{(\mathcal{L}, J)}\widehat{U(\mathcal{L})_K}\rtimes_J G, \]
where   $(\mathcal{L},J)$ runs over the set  $\mathcal{I}(\mathcal{A},\rho, G)$ of $\mathcal{A}$-trivialising pairs.\\
\\
In our situation, the pair $(\bX,G)$ is called {\it small} \cite[3.4.4]{AW}, if $L$ has a $G$-stable free $A$-Lie lattice $\mL$ for some $G$-stable affine formal model $\mA$ of $A$. In this case, there is 
a {\it good chain} $(J_\bullet)$ for $\mathcal{L}$, i.e. an open normal subgroup $J_n$ of $G_{\pi^n\mathcal{L}}$ for each non negative integer $n \geq 0$ such that $\bigcap_n J_n=\lbrace 1\rbrace$. Finally, there is a canonical isomorphism of $K$-algebras

\[\wideparen{\mathcal{D}}(\bX,G) \simeq \varprojlim_n \widehat{U(\pi^n\mathcal{L})_K} \rtimes_{J_n}G.\]
For each $n\geq 0$, $  \widehat{U(\pi^n\mathcal{L})_K} \rtimes_{J_n}G $ is a noetherian Banach $K$-algebra and the transition maps in the projective limit are flat ring homomorphisms \cite[3.4.8]{AW}. This shows that $\wideparen{\mathcal{D}}(\bX,G)$ is a two-sided Fr\'{e}chet-Stein algebra in the sense of \cite[6.4]{ST2003} for small $(\bX,G)$.
\\
\begin{re} \label{gamma} Let $\mathcal{D}(\bX)= U(\mathcal{O}(\bX))= {U(\mathcal{L})}\otimes_\mathcal{R}K$ be the ring of global differential operators of finite order on $\bX$. There is a canonical group homomorphism 
$\gamma:  G \rightarrow (\wideparen{\mathcal{D}}(\bX,G))^\times $
and a canonical $K$-algebra homomorphism
$\iota: \mathcal{D}(\bX) \rightarrow \wideparen{\mathcal{D}}(\bX,G).$
These are defined as the inverse limit of the natural maps
$\gamma_n: G \rightarrow \widehat{U(\pi^n\mathcal{L})}_K\rtimes_{J_n} G$ 
and
$ \iota_n: \mathcal{D}(\bX)\cong {U(\pi^n\mathcal{L})}\otimes_\mathcal{R}K \rightarrow \widehat{U(\pi^n\mathcal{L})}_K\rtimes_{J_n} G$
respectively. The maps $\gamma$ and $\iota$ are compatible and define a natural morphism 
\[\iota \rtimes \gamma: \mathcal{D}(\bX)\rtimes G \longrightarrow \wideparen{\mathcal{D}}(\bX,G).\]
\end{re}

For any smooth rigid-analytic space $\bX$ we will write $\mathcal{T}_{\bX}$ or simply $\mathcal{T}$ for the tangent sheaf $Der_K(\mathcal{O}_{\bX})$. We denote $\bX_{w}(\mathcal{T})$ the set of all affinoid subdomains $\bU$ of $\bX$ such that $\mathcal{T}(\bU)$ admits a free $\mathcal{A}$-Lie lattice for some affine formal model $\mathcal{A}$ in $\mathcal{O}(\bU)$. 
The set $\bX_w(\mathcal{T})$ is a basis for the strong Grothendieck topology $\bX_{\rm rig}$ on $\bX$ \cite[Lemma 9.3]{AWI}.\\
\\
Suppose that $(\bX,G)$ is small. Since  $\wideparen{\mathcal{D}}(\bX,G)$ is a Fr\'{e}chet-Stein $K$-algebra, there is the abelian category $\mathcal{C}_{\wideparen{\mathcal{D}}(\bX,G)}$ (resp. $\mathcal{C}_{\wideparen{\mathcal{D}}(\bX,G)}^r$) of coadmissible left (resp. right) $\wideparen{\mathcal{D}}(\bX,G)$-modules from \cite{ST2003}.
It is possible to view coadmissible $\wideparen{\mathcal{D}}(\bX,G)$-modules as $G$-equivariant sheaves on $\bX$, as we now briefly recall \cite[$3.5$]{AW}. Let $M \in \mathcal{C}_{\wideparen{\mathcal{D}}(\bX,G)}$ be a coadmissible left $\wideparen{\mathcal{D}}(\bX,G)$-module, we first define a presheaf on the set $\bX_w(\mathcal{T})$. For each $\bU \in \bX_w(\mathcal{T})$, we set
\begin{center}
$M(\bU,H) := \wideparen{\mathcal{D}}(\bU,H) \wideparen{\otimes}_{\wideparen{\mathcal{D}}(\bX,H)} M$. 
\end{center}
Here $\wideparen{\otimes}$ denotes the completed tensor product of coadmissible modules, which is defined in \cite[Section 7.3]{AWI}. In particular, $M(\bU,H)$ is a coadmissible (left) $\wideparen{\mathcal{D}}(\bU,H)$-module. If $N \leq H$ is another open subgroup of $G$ such that $(\bU,N)$ is small, then there is an isomorphism of $\wideparen{\mD}(\bU,N)$-modules $M(\bU,N)\tilde{\longrightarrow} M(\bU,H)$. Thus we may form the limit when $H$ runs over the set of open subgroups of $G$ such that $(\bU,H)$ is small
\[\mathcal{P}_X(M)(\bU):=\varprojlim_H M(\bU,H).\]
 
Note that the correspondence $\mathcal{P}_{\bX}(M) : \bU\in \bX_w(\mathcal{T}) \longmapsto \mathcal{P}_{\bX}(M)(\bU)$ defines a presheaf on $\bX_w(\mathcal{T})$. The $G$-action on  $\mathcal{P}_X(M)$ is determined as follows. Let $g \in G$ , then there is a continuous isomorphism of $K$- Fr\'{e}chet algebras
$$\wideparen{g}_{\bU,H}:  \wideparen{\mathcal{D}}(\bU,H) \longrightarrow \wideparen{\mathcal{D}}(g\bU, gHg^{-1}).$$
This isomorphism, together with the group homomorphism $\gamma$ in Remark \ref{gamma}, determines the following isomorphism:
\begin{align*}
g^M_{\bU,H}: M(\bU,H) &\longrightarrow M(g\bU, gHg^{-1})\\
a \wideparen{\otimes} m &\longmapsto \wideparen{g}_{\bU,H}(a) \wideparen{\otimes} \gamma(g) m
\end{align*}
which is linear relative to $\wideparen{g}_{\bU,H}$. We then see that there is a $G$-equivariant structure on  $\mathcal{P}_{\bX}(M)$ which is locally determined by the inverse limit of the maps $g^{M}_{\bU,H}$ when $H$ runs over all the open subgroups $H$ of $G$ such that $(\bU,H)$ is small. According to \cite[3.5.8/11]{AW}), the presheaf $\mathcal{P}_{\bX}(M)$ is a $G$-equivariant sheaf of $\mathcal{D}_{\bX}$-modules on $X_w(\mathcal{T})$. It can therefore be extended uniquely to a sheaf on $\bX$, which is denoted by $Loc_{\bX}^{\wideparen{\mD}(\bX,G)}(M)$ or $Loc_{\bX}(M)$ for simplicity (whenever there is no ambiguity).\\
\\
Now we drop the assumption that $(\bX,G)$ is small and let $\bX$ be a smooth rigid analytic variety and $G$ be a $p$-adic Lie group acting continuously on $\bX$. There is the well-known notion of a $G$-equivariant (left or right) $\mathcal{D}_{\bX}$-module, which we do not recall here, e.g. \cite[Definition $2.3.4$]{AW}. Instead, we recall that a $G$-equivariant left $\mathcal{D}_{\bX}$-module $\mathcal{M}$ on $\bX$ is called \textit{locally Fr\'{e}chet} if for each $\bU \in \bX_w(\mathcal{T})$, $\mathcal{M}(\bU)$ is equipped with a $K$-Fr\'{e}chet topology and the maps $g^\mathcal{M}(\bU): \mathcal{M}(\bU) \longrightarrow \mathcal{M}(g\bU)$ are continuous for any $g \in G$, cf. \cite[Definition $3.6.7$]{AW}. There is the obvious notion of a morphism of $G$-equivariant locally Fr\'{e}chet $\mathcal{D}$-modules. The category of $G$-equivariant locally Fr\'{e}chet left $\mathcal{D}_{\bX}$-modules is denoted by $\mathrm{Frech}(G-\mathcal{D})$. 

A $G$-equivariant locally Fr\'{e}chet $\mathcal{D}_{\bX}$-module $\mathcal{M}$ is called \textit{coadmissible} if there exists a $\bX_w(\mathcal{T})$-covering $\mathcal{U}$ of $\bX$ satisfying that for every $\bU \in \mathcal{U}$, there is an open compact subgroup $H$ of $G$ stabilising $\bU$ and a coadmissible $\wideparen{\mathcal{D}}(\bU,H)$-module $M$  such that one has an isomorphism 
\[Loc_{\bU}(M) \simeq \mathcal{M}\mid_{\bU}\]
of $H$-equivariant locally Fr\'{e}chet $\mathcal{D}_{\bU}$-modules. The category of coadmissible $G$-equivariant $\mathcal{D}_{\bX}$-modules is denoted by $\mathcal{C}_{\bX/G}$. This is a full subcategory of ${\rm Frech}(G-\mathcal{D})$. In the case where $(\bX,G)$ is small, the functor 
\[Loc_{\bX} : \mathcal{C}_{\wideparen{\mathcal{D}}(\bX,G)} \longrightarrow \mathcal{C}_{\bX/G}\]
is an equivalence of categories \cite[3.6.11]{AW}.

\vskip5pt
Note that the category $\mathcal{C}_{\bX/G}^r$ of coadmissible $G$-equivariant right $\mathcal{D}$-modules can be defined similarly and, in the case of small $(\bX,G)$, the above equivalence of categories still holds for the category $\mathcal{C}_{\wideparen{\mathcal{D}}(\bX,G)}^r$ via a localization functor ${}^rLoc_{\bX}(-)$ on $\mathcal{C}_{\wideparen{\mathcal{D}}(\bX,G)}^r$, which is defined in complete analogy.  For future reference, let us note that the group $G$ acts (locally) on ${}^rLoc_{\bX}(M)$ as follows: if $g \in G$ and $(\bU,H)$ is small, then $g$ produces an isomorphism of $K$-modules 
\begin{align*}
g^M_{\bU,H}: \,\, M \wideparen{\otimes}_{\wideparen{\mathcal{D}}(\bX,H)} \wideparen{\mathcal{D}}(\bU,H) &\tilde{\longrightarrow} M \wideparen{\otimes}_{\wideparen{\mathcal{D}}(\bX,gHg^{-1})} \wideparen{\mathcal{D}}(g\bU,gHg^{-1})\\
m \wideparen{\otimes}a & \longmapsto  m\gamma(g^{-1} )\wideparen{\otimes} \wideparen{g}(a).
\end{align*}

Next, we recall from \cite{AWI} some important classes of affinoid subdomains of $\bX$. Let $\bU$ be an affinoid subdomain of $\bX$ and let $r^{\bX}_{\bU}: \mathcal{O}(\bX) \rightarrow \mathcal{O}(\bU)$ be the restriction morphism. Fix an affine formal model $\mathcal{A}$ of $\mathcal{O}(\bX)$ and an $\mathcal{A}$-Lie lattice $\mathcal{L}$ in $\mathcal{T}(\bX)$. 

\begin{definition}
\begin{itemize}
\item[(i)] An affine formal model $\mathcal{B}$ in $\mathcal{O}(\bU)$ is called $\mathcal{L}$-stable if $r^{\bX}_{\bU}(\mathcal{A}) \subset \mathcal{B}$ and the action of $\mathcal{L}$ on $\mathcal{A}$ lifts to $\mathcal{B}$. If $\bU$ admits a $\mathcal{L}$-stable affine formal model, then $\bU$ is said to be $\mathcal{L}$-admissible.
\item[(ii)] Suppose that $\bU$ is rational. Then $\bU$ is $\mathcal{L}$-accessible in $n$-steps if $\bU=\bX$ for $n=0$ and for $n >0$,  there is a chain $\bU \subset \bZ \subset \bX$ such that
\begin{itemize}
\item[$\centerdot$] $\bZ \subset \bX$ is $\mathcal{L}$-accessible in $(n-1)$-steps,
\item[$\centerdot$]$\bU=\bZ(f)$ or $\bZ(1/f)$ for some non-zero $f \in \mathcal{O}(\bZ)$,
\item[$\centerdot$] there is a $\mathcal{L}$-stable affine formal model $\mathcal{C} \subset \mathcal{O}(\bZ)$ such that $\mathcal{L}.f \subset \pi \mathcal{C}$.
\end{itemize}
\item[(iii)] An affinoid subdomain (not necessary rational) $\bU$ of $\bX$ is called $\mathcal{L}$-accessible if it is $\mathcal{L}$-admissible and there is a finite covering $\bU=\cup_{i=1}^r \bU_i$, where each $\bU_i$ is a $\mathcal{L}$-accessible rational subdomain of $\bX$.
\end{itemize}
\end{definition}
We also recall that an arbitrary affinoid subdomain $\bU\subset\bX$ becomes, after "rescaling the lattice", $\pi^n\mathcal{L}$-accessible for any sufficiently large $n$, cf. \cite[Prop. 7.6]{AWI}.

Following \cite{AW} we denote by $\bX_{w}(\mathcal{L}, G)$ and $\bX_{ac}(\mathcal{L}, G)$ the sets of $G$-stable affinoid subdomains of $\bX$ which are also $\mathcal{L}$-admissible  and $\mathcal{L}$-accessible  respectively (note that $\bX_{ac}(\mathcal{L}, G) \subset \bX_{w}(\mathcal{L}, G)$). These sets form Grothendieck topologies  on $\bX$ (with respect to inclusion). 

If $N$ is a subgroup of $G$ such that $(\mathcal{L},N)$ is an $\mathcal{A}$-trivialising pair, then following \cite[Section 4]{AW}, we may construct the presheaf $\widehat{\mathcal{U}(\mathcal{L})}_K \rtimes_{N}G$ on $\bX_{w}(\mathcal{L},G)$ as follows. 

\begin{definition}\label{defq}
Let $\bU \in \bX_{w}(\mathcal{L},G)$. Then for any choice of a $G$-stable $\mathcal{L}$-stable affine formal model $\mathcal{B}$ of $\mathcal{O}(\bU)$, we set:
\begin{center}
$(\widehat{\mathcal{U}(\mathcal{L})}_K \rtimes_{N}G)(\bU):= \widehat{U(\mathcal{B}\otimes_{\mathcal{A}}\mathcal{L})}_K \rtimes_{N}G$. 
\end{center}
\end{definition}

By \cite[Cor. $4.3.12$]{AW} this gives rise to a well-defined sheaf on $\bX_{w}(\mathcal{L},G)$.

\begin{pro} \label{flat}
If $\mathcal{L}$ is smooth as an $\mathcal{A}$-module and $\bU \in \bX_{ac}(\mathcal{L},G)$ is $\mathcal{L}$-accessible, then the noetherian ring $(\widehat{\mathcal{U}(\mathcal{L})}_K\rtimes_{N}G)(\bU)$ is flat as a (left and right) $\widehat{U(\mathcal{L})}_K \rtimes_{N}G$-module.
\end{pro}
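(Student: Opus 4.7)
The plan is to reduce to the non-equivariant flatness of Ardakov-Wadsley and then apply a mild generalisation of the factorisation lemma stated at the end of \S2.1.2. Set $R := \widehat{U(\mathcal{L})}_K$ and, for a chosen $G$-stable $\mathcal{L}$-stable affine formal model $\mathcal{B}$ of $\mathcal{O}(\bU)$, set $S := \widehat{U(\mathcal{B} \otimes_{\mathcal{A}} \mathcal{L})}_K$, so that by Definition \ref{defq} the local ring is $S \rtimes_N G$. The task is to show that the natural map $R \rtimes_N G \to S \rtimes_N G$ is two-sided flat.

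First I would recall from \cite{AWI} that, under the assumptions that $\mathcal{L}$ is smooth over $\mathcal{A}$ and $\bU$ is $\mathcal{L}$-accessible, the non-equivariant map $R \to S$ is two-sided flat. Next I would verify that $S \rtimes_N G$ is free as both a left and a right $S$-module, on a basis indexed by a transversal of $N$ in $G$: the quotient relation of Definition \ref{cross} identifies $n \in N$ with its image $\beta_\mathcal{L}(n) \in S^\times$, and the skew-group multiplication formula \eqref{skew} then permits rewriting every element uniquely in the chosen transversal. Consequently $S \hookrightarrow S \rtimes_N G$ is two-sided faithfully flat, and so the composition $R \to S \to S \rtimes_N G$ is two-sided flat.

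This composition also factors as $R \hookrightarrow R \rtimes_N G \to S \rtimes_N G$, where the first arrow is two-sided faithfully flat by exactly the same freeness argument applied to $(R, R\rtimes_N G)$. A mild adaptation of the flatness descent lemma of \S2.1.2 (\cite[Lemma 2.2]{Tobias})---whose proof uses only that the intermediate ring is free over the base ring on a group-indexed basis---then applies verbatim with $R \ast G$ replaced by $R \rtimes_N G$ and the basis indexed by a transversal of $N$ in $G$, both on the left and on the right. This yields the two-sided flatness of $R \rtimes_N G \to S \rtimes_N G$.

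The main obstacle is to verify that the freeness of $R \rtimes_N G$ as a right (respectively left) $R$-module genuinely survives the quotient defining it. This is a slightly delicate bookkeeping step that uses the $G$-equivariance of $\beta_\mathcal{L}$---which ensures that the defining ideal is two-sided, so that left and right transversal descriptions are simultaneously available---together with the multiplication rule \eqref{skew}. Once this freeness is in hand, the transfer of the factorisation lemma to the skew-group-with-trivialisation setting is purely formal.
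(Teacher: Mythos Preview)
The paper does not supply its own proof of this proposition; it is simply quoted from \cite[Theorem 4.3.14]{AW}. Your argument is correct and is the natural route.

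One simplification: you frame the final step as requiring a ``mild adaptation'' of the factorisation lemma \cite[Lemma 2.2]{Tobias} from $R\ast G$ to $R\rtimes_N G$, and you flag the freeness of $R\rtimes_N G$ over $R$ as a ``slightly delicate bookkeeping step''. Neither is actually needed. Since $G$ is compact and $N$ is open normal, the quotient $G/N$ is finite, and (as recalled in the paper via \cite[Lemma 2.2.4]{AW} and used e.g.\ in Remark \ref{keyre}) one has canonical isomorphisms $R\rtimes_N G \cong R\ast(G/N)$ and $S\rtimes_N G \cong S\ast(G/N)$ of genuine crossed products. Freeness of both sides over $R$ and $S$ is then immediate from the definition of a crossed product, and the lemma applies verbatim with the finite group $G/N$ in place of $G$. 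With this in hand your chain---$R\to S$ flat by \cite{AWI}, $S\to S\ast(G/N)$ free hence faithfully flat, so $R\to S\ast(G/N)$ flat, and then descent through $R\ast(G/N)$ via the lemma---goes through without further comment.
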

\begin{proof}\cite[ Theorem 4.3.14]{AW}.\end{proof}
  \vskip5pt
Finally, we briefly recall the side-changing functors between $\mathcal{C}_{\bX/G}$ and $\mathcal{C}_{\bX/G}^r$, cf. \cite{AWG}. Let
\begin{center}
	$\Omega_{\bX}:= \mathcal{H}om_{\mathcal{O}_{\bX}}(\bigwedge^{\dim \bX}_{\mathcal{O}_{\bX}}\mathcal{T}, \mathcal{O}_{\bX})$
\end{center}
be the canonical sheaf on $\bX$. This is an invertible sheaf of $\mathcal{O}_{\bX}$-modules.

\begin{theorem}\label{chanfunc} 
	\begin{itemize}
		\item[$(i)$] The functors $\Omega_{\bX} \otimes_{\mathcal{O}_{\bX}}-$ and $\mathcal{H}om_{\mathcal{O}_{\bX}}(\Omega_{\bX}, -)$ are mutually quasi-inverse equivalences of categories between $\mathcal{C}_{\bX/G}$ and $\mathcal{C}^r_{\bX/G}$.
		\item[$(ii)$] Let $(\bX,G)$ be small. Then $\Omega(\bX) \otimes_{\mathcal{O}(\bX)}-$ and $Hom_{\mathcal{O}(\bX)}(\Omega(\bX),-)$ are quasi-inverse equivalences between the category of coadmissible left resp. right $\wideparen{\mathcal{D}}(\bX,G)$-modules, interchanging the two localization functors $Loc_{\bX}$ and ${}^rLoc_{\bX}$.	
	\end{itemize}
\end{theorem}
\begin{proof} This is \cite[Theorem $4.1.14$, $4.1.15$]{AWG}. \end{proof}
\section{Ext groups and dimension for coadmissible $\wideparen{\mathcal{D}}(\bX,G)$-modules}\label{section_three}
In this section, we develop the local theory of Ext functors and equivariant dimension theory, generalizing the non-equivariant approach from \cite{AWIII}. As in loc.cit., all is based on the notion of an {Auslander-Gorenstein ring}, which we recall briefly. Let $A$ be a ring. The \textit{grade} of an $A$-module $M$ is defined to be 
$$j_A(M):= \min \lbrace i : Ext^i_A(M,A) \neq 0\rbrace$$
and $\infty$ if no such $i$ exists.  The ring $A$ is called \textit{Gorenstein} if it is 
two-sided noetherian and has finite left and right injective dimension $injdim(A).$
A Gorenstein ring $A$ is an \textit{Auslander-Gorenstein ring (or an AG ring)} if it is Gorenstein and satisfies the \textit{Auslander condition}: \begin{itemize}
\item For any finitely generated $A$-module $M$ and any $i \geq 0$, one has $j_A(N) \geq i$ whenever $N$ is a (right) submodule of $Ext^i_A(M,A)$.
\end{itemize}

 In the next section, we want to develop a dimension theory for coadmissible $\wideparen{\mathcal{D}}(\bX,G)$-modules, generalizing the case $G=1$ from 
 \cite[Section 5]{AWIII}. We shall need the following lemma, which is a mild generalization of \cite[Lemma 8.8]{ST2003} to the non-noetherian case. 
\begin{lemma} \label{keylemma1} Let $R_0 \rightarrow R_1$ be a unital homomorphism of unital (possibly non-noetherian) rings. Suppose that there are units $b_0=1,b_1,...,b_m \in (R_1)^\times$ which form a basis of $R_1$ as a left  $R_0$-module and which satisfy:
\begin{itemize}
\item[(i)]$b_iR_0=R_0b_i$ for any $1 \leq i \leq m$.
\item[(ii)] For any $0 \leq i, j \leq m$, there is a natural integer $k$ with $0 \leq k \leq m$ such that $b_ib_j \in b_kR_0$.
\item[(iii)] For any $0 \leq i\leq m$, there is a a natural integer $l$ with $0\leq l\leq m$ such that $b_i^{-1}\in b_lR_0$.

\end{itemize}
Then for any (left or right) $R_1$-module $M$ and (left or right) $R_0$-module $N$, we have an isomorphism of $R_0$-modules
\begin{align*}
Hom_{R_1}(M, R_1 \otimes_{R_0} N) &\tilde{\longrightarrow} Hom_{R_0}(M, N)\\
f &\longmapsto p \circ f.
\end{align*}
Here, $p:  R_1 \rightarrow R_0$ is the projection map  onto the first summand in the decomposition 
$R_1= \bigoplus_{i=0}^{m}b_iR_0=\bigoplus_{i=0}^{m}R_0b_i$
and is $R_0$-linear on both sides. In particular, for any integer $i \geq 0$, this induces an isomorphism of (right or left) $R_0$-modules.
\[Ext_{R_1}^i(M, R_1\otimes_{R_0}N)\simeq Ext^i_{R_0}(M,N).\]

\end{lemma}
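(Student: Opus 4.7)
The strategy is to construct an explicit inverse to the map $F : f \mapsto \tilde{p} \circ f$, where $\tilde{p} := p \otimes 1_N : R_1 \otimes_{R_0} N \to N$ is the map induced by $p$, and then pass to $\mathrm{Ext}$ via a projective resolution. The well-definedness of $F$ is immediate from condition (i): each summand $b_i R_0 = R_0 b_i$ is a left $R_0$-submodule of $R_1$, so $p : R_1 \to R_0$ is left $R_0$-linear and hence $\tilde{p}$ is $R_0$-linear. The candidate inverse will be the Frobenius-reciprocity style formula
\[
G(g)(m) := \sum_{i=0}^{m} b_i \otimes g(b_i^{-1} m),
\]
which makes sense because $M$ is an $R_1$-module and the $b_i$ are units in $R_1$.

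The nontrivial content is to verify that $G(g)$ is $R_1$-linear. For $R_0$-linearity, condition (i) gives $b_i^{-1} r b_i \in R_0$ for every $r \in R_0$, and the $R_0$-linearity of $g$ together with the tensor relation recovers $G(g)(rm) = r \, G(g)(m)$. For $b_j$-equivariance, condition (ii) produces a map $\tau_j$ of $\{0,\dots,m\}$ together with elements $s_{j,i} \in R_0$ such that $b_j b_i = b_{\tau_j(i)} s_{j,i}$; I will show that $\tau_j$ is a bijection by applying condition (iii) to write $b_j^{-1} \in b_l R_0$ and then using (ii) and directness of the decomposition to check $\tau_j \circ \tau_l = \mathrm{id}$. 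Using $s_{j,i} b_i^{-1} = b_{\tau_j(i)}^{-1} b_j$ and reindexing via $\tau_j$ yields $b_j G(g)(m) = G(g)(b_j m)$. The identity $F \circ G = \mathrm{id}$ then follows from $\tilde{p}(b_i \otimes n) = \delta_{i,0} n$, while $G \circ F = \mathrm{id}$ follows by expanding any $R_1$-linear $f$ as $f(m) = \sum_i b_i \otimes n_i(m)$, applying $f(b_i^{-1} m) = b_i^{-1} f(m)$, and reading off the $b_0$-component after noting (by directness and (iii)) that $b_i^{-1} b_j \in R_0$ forces $j = i$ with coefficient $1$.

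For the $\mathrm{Ext}$ statement, pick any $R_1$-projective resolution $P_\bullet \to M$. Since $R_1$ is free (hence projective) as a left $R_0$-module with basis $\{b_0, \dots, b_m\}$, restriction of scalars preserves projectivity, so $P_\bullet \to M$ is simultaneously a projective resolution over $R_0$. Applying the natural isomorphism $\mathrm{Hom}_{R_1}(-, R_1 \otimes_{R_0} N) \cong \mathrm{Hom}_{R_0}(-, N)$ termwise to $P_\bullet$ and passing to cohomology gives the desired isomorphism of $\mathrm{Ext}$ groups. The right-module case is entirely symmetric, using the right $R_0$-decomposition $R_1 = \bigoplus R_0 b_i$ furnished again by condition (i). The main obstacle I anticipate is the bookkeeping required for $b_j$-equivariance of $G(g)$: all three conditions (i)--(iii) must interact simultaneously to identify $\tau_j$ as a permutation of basis indices and to cancel the scalars $s_{j,i}$ across the tensor product, but once this compatibility is in place everything else is formal.
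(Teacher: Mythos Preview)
Your proposal is correct and follows essentially the same approach as the paper: the same explicit inverse formula $G(g)(m)=\sum_i b_i\otimes g(b_i^{-1}m)$, the same use of (i)--(iii) to verify $R_1$-linearity via reindexing of the basis, and the same passage to $\mathrm{Ext}$ via a projective resolution that is simultaneously projective over $R_0$. The only organizational difference is that the paper first reduces to $M=R_1$ by a free presentation before checking bijectivity, whereas you work with general $M$ throughout; your version is arguably cleaner since it makes the permutation $\tau_j$ explicit rather than leaving the reindexing implicit.
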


\begin{proof} The proof is partly similar to \cite[Lemma $8.8$]{ST2003}.
 Note that $p$ is $R_0$-linear on both sides. Indeed, if $a \in R_0$ and $\sum_{i=0}^m a_ib_i \in R_1$, one has
\begin{itemize}
\item[.]$p(a.\sum_{i=0}^{m}a_ib_i)=p(\sum_{i=0}^{m}aa_ib_i)=aa_0=a.p(\sum_{i=0}^{m}a_ib_i)$
\item[.]$p((\sum_{i=0}^{m}a_ib_i).a)=p(\sum_{i=0}^m a_ib_ia)=p(\sum_{i=0}^ma_ia_i^{'}b_i)= a_0a_0^{'}=a_0a=p(\sum_{i=0}^ma_ib_i).a$,

\end{itemize}
here $a'_i \in R_0$ such that $a=a'_0 $ and $b_ia=a_i^{'}b_i\,\, \forall i \geq 1$ , since $b_iR_0=R_0b_i$ from $(i)$. Thus the morphism:
\begin{align*}
\tilde{p}: R_1 \otimes_{R_0}N & \longrightarrow   R_0\otimes_{R_0}N \tilde{\longmapsto} N\\
b\otimes n &\longmapsto  p(b)\otimes n \longmapsto p(b)n
\end{align*}
is $R_0$-linear. Now by using a free resolution $P^.$ of the $R_1$-module $M$, which is also a free resolution of $M$ as a $R_0$-module, we see that the map $\tilde{p}$ induces a map
\[Ext_{R_1}^i(M, R_1 \otimes_{R_0}N)=h^i(Hom_{R_1}(P^., R_1\otimes_{R_0}N))\longrightarrow h^i(Hom_{R_0}(P^., N))=Ext_{R_0}^i(M, N).\] 
Therefore, it suffices to show that for any $N \in Mod(R_0)$ and $M \in Mod(R_1)$, we have an isomorphism
\[Hom_{R_1}(M, R_1 \otimes_{ R_0 }N ) \tilde{\longrightarrow} Hom_{R_0}(M, N).\]

Take a presentation of $M$ by free $R_1$-modules:
\[R_1^{I}\longrightarrow R_1^{J}\longrightarrow M \longrightarrow 0.\]
Since $Hom_{R_1}(-,N)$ is left exact, we obtain the following commutative diagram:
\[
  \setlength{\arraycolsep}{2pt}
  \begin{array}{*{9}c}
    0 &\Lrightarrow & Hom_{R_1}(M, R_1\otimes_{R_0}N) & \Lrightarrow & Hom_{R_1}(R_1^{J},R_1\otimes_{R_0}N) & \Lrightarrow & Hom_{R_1}(R_1^{I},R_1\otimes_{R_0}N) \\
    & & \Ldownarrow & & \Ldownarrow & & \Ldownarrow & & \\
     0 &\Lrightarrow & Hom_{R_0}(M, N) & \Lrightarrow & Hom_{R_0}(R_1^{J},N) & \Lrightarrow & Hom_{R_0}(R_1^{I},N)
  \end{array}
\]
Hence it is enough to consider the case $M=R_1$ and to prove that
\begin{align*}
\Phi: Hom_{R_1}(R_1, R_1 \otimes_{ R_0} N ) &\tilde{\longrightarrow} Hom_{R_0}(R_1, N)\\
\psi & \longmapsto \tilde{p} \circ \psi .
\end{align*}
This is well-defined since $\tilde{p}$ is $R_0$-linear.

\begin{itemize}
\item[$(1)$] $\Phi$ is surjective. Indeed, if $\phi : R_1 \longrightarrow N$ be an $R_0$-linear map, one defines:
\begin{align*}
\psi: R_1 & \longrightarrow R_1 \otimes_{R_0}N\\
b &\longmapsto \sum_{i=0}^{m}b_i \otimes\phi(b_i^{-1}b).
\end{align*}
Then
\begin{itemize}
\item[$\centerdot$] $\tilde{p} \circ \psi (b)=\tilde{p}(\sum_{i=0}^{m}b_i \otimes \phi(b_i^{-1}b))= \sum_{i=0}^{m}p(b_i)\phi(b_i^{-1}b)= \phi(b)$, \,\, since $p(b_i)=0$ for $i\neq 0$.
\item[$\centerdot$] $\psi$ is $R_1$-linear. Indeed, 
 if $b= \sum_{i=0}^m a_ib_i$ with $a_i \in R_0$ and $b' \in R_1$, one can compute:
\begin{align*}
\psi(bb')& =\sum_ib_i\otimes \phi(b_i^{-1}bb')=\sum_j\sum_ib_i \otimes \phi(b_i^{-1}a_jb_jb')\\
&= \sum_j\sum_i b_i \otimes \phi(a_j^{'}b_i^{-1}b_jb')=\sum_j\sum_i b_ia_j^{'}\otimes \phi(b_i^{-1}b_jb')\\
&=\sum_j\sum_ia_jb_i\otimes \phi(b_i^{-1}b_jb')= \sum_j\left(\sum_i a_jb_jb_j^{-1}b_i \otimes \phi(b_i^{-1}b_jb')\right)\\
&= \sum_j a_jb_j\psi(b')= b \psi(b').
\end{align*}
Here, thanks to $(ii)$ and $(iii)$, we have $\psi(b')= \sum_ib_i\otimes \phi(b_i^{-1}b')=\sum_i b_jb_i\otimes \phi(b_i^{-1}b_jb')$. Therefore $\psi$ is $R_1$-linear. This implies  $\psi \in Hom_{R_1}(R_1, R_1 \otimes_{R_0}N)$ and $\Phi$ is surjective.
\item[$(2)$] $\Phi$ is injective. Indeed, let us first prove that if $\psi: R_1 \longrightarrow R_1 \otimes_{R_0}N$ is an $R_1$-linear map, then 
\[\psi(b)=\sum_{i=0}^m b_i \otimes (\tilde{p}\circ \psi)(b_i^{-1}b).\]
Indeed, suppose that $\psi(b)= \sum_{i}b_i\otimes n_i$, with $n_i \in N$ for all $i$. ( recall that $R_1\otimes_{R_0}N \simeq \bigoplus_{i}b_iR_0\otimes_{R_0}N \simeq \bigoplus_i b_i\otimes N)$, then
\[\psi(b_i^{-1}b)=b_i^{-1}\psi(b)=\sum_j b_i^{-1}b_j \otimes n_j.\]
Thus, 
\[\sum_{i=0}^m b_i \otimes \tilde{p}\circ \psi(b_i^{-1}b)= \sum_{i=0}^m b_i \otimes \sum_{j=0}^mp(b_i^{-1}b_j)n_j= \sum_{i=0}^mb_i \otimes n_i= \psi(b).\]
Consequently, if $\Phi(\psi)= 0 \Longleftrightarrow \tilde{p}\circ \psi =0 \rightarrow \psi(b)=0$ for all $b$. this implies $\Phi$ is injective.
\end{itemize}
\end{itemize}
\end{proof}

\begin{pro} \label{keypro} Let $R_0, R_1$ be two rings which satisfy the assumptions in the above lemma. If a (left or right) $R_1$-module $N$ is injective, then $N$ is also injective as $R_0$-module. Moreover
\begin{itemize}
\item[(i)] $injdim(R_0)=injdim(R_1)$,
\item[(ii)] $Ext^i_{R_1}(N,R_1) \simeq Ext^i_{R_0}(N,R_0)$ and $j_{R_1}(N)=j_{R_0}(N),$
\item[(iii)]If $R_0, R_1$ are noetherian and if $R_0$ is Auslander-Gorenstein, then so is $R_1$.
\end{itemize}
\end{pro}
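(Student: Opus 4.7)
The plan is to exploit Lemma \ref{keylemma1} together with the fact that $R_1 = \bigoplus_{i=0}^m R_0 b_i = \bigoplus_{i=0}^m b_i R_0$ is free, hence flat, as both a left and right $R_0$-module. The free decomposition also exhibits $R_0$ as an $R_0$-direct summand of $R_1$ (via $b_0 = 1$), which will allow one to reverse the inequalities coming from the Ext identification of Lemma \ref{keylemma1}.

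For the preservation of injectivity, I would argue via the standard tensor-hom adjunction $Hom_{R_1}(R_1 \otimes_{R_0} -, N) \simeq Hom_{R_0}(-, N)$: given an injection $0 \to A \to B$ of $R_0$-modules, one first applies $R_1 \otimes_{R_0} -$ (exact by the right $R_0$-flatness of $R_1$), then $Hom_{R_1}(-, N)$ (exact by the $R_1$-injectivity of $N$), and concludes via adjunction that $Hom_{R_0}(B, N) \twoheadrightarrow Hom_{R_0}(A, N)$, so that $N$ is $R_0$-injective. Assertion (ii) is then an immediate consequence of Lemma \ref{keylemma1} applied with its second argument equal to $R_0$, which gives $Ext^i_{R_1}(N, R_1) \simeq Ext^i_{R_0}(N, R_0)$ and hence $j_{R_1}(N) = j_{R_0}(N)$. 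For (i), the same Ext identification applied to arbitrary $R_1$-modules in the first slot yields $injdim(R_1) \leq injdim(R_0)$; conversely, any $R_1$-injective resolution of $R_1$ restricts, by the first assertion, to an $R_0$-injective resolution of $R_1$, so $injdim_{R_0}(R_1) \leq injdim_{R_1}(R_1)$, and since $R_0$ is an $R_0$-direct summand of $R_1$ one has $injdim(R_0) \leq injdim_{R_0}(R_1)$, giving the reverse inequality.

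For (iii), assume $R_0, R_1$ are noetherian and $R_0$ is Auslander-Gorenstein. Condition (AG2) for $R_1$ is then the content of (i). For (AG1), let $M$ be a noetherian left $R_1$-module and let $N$ be a right $R_1$-submodule of $Ext^i_{R_1}(M, R_1)$. Since $R_1$ is finitely generated over the noetherian ring $R_0$, $M$ is also noetherian as an $R_0$-module. The identification in (ii) being $R_0$-linear, $N$ sits inside $Ext^i_{R_0}(M, R_0)$ as a right $R_0$-submodule. Applying (AG1) for $R_0$ gives $j_{R_0}(N) \geq i$, and then applying (ii) to $N$ itself (as a right $R_1$-module) yields $j_{R_1}(N) = j_{R_0}(N) \geq i$. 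The analogous argument with the roles of left and right interchanged handles the symmetric case. The main subtlety throughout is keeping track of the left/right module structures and verifying that the isomorphisms of Lemma \ref{keylemma1}, which are a priori only $R_0$-linear, transport enough structure to preserve the submodule data being used; but this is automatic since every $R_1$-submodule is in particular an $R_0$-submodule, which is all that is needed.
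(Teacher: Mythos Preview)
Your proof is correct and follows essentially the same strategy as the paper: adjunction plus flatness for the preservation of injectives, Lemma~\ref{keylemma1} for (ii), and then (i) and (ii) together for (iii). The one genuine point of divergence is the reverse inequality $injdim(R_0)\leq injdim(R_1)$ in (i). The paper argues via faithful flatness: for $n=injdim(R_1)$ one has $Ext^{n+1}_{R_0}(N,R_0)\otimes_{R_0}R_1\simeq Ext^{n+1}_{R_1}(R_1\otimes_{R_0}N,R_1)=0$, and since $R_1$ is faithfully flat over $R_0$ this forces $Ext^{n+1}_{R_0}(N,R_0)=0$. You instead restrict an $R_1$-injective resolution of $R_1$ down to $R_0$ (using the first assertion) and then use that $R_0$ is a direct summand of $R_1$ as an $R_0$-module. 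Both arguments are valid and of comparable length; yours avoids invoking the base-change isomorphism for Ext, while the paper's avoids restricting a resolution. Your treatment of (iii) is also more explicit than the paper's, which simply remarks that (iii) follows from (i) and (ii).
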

\begin{proof}
Suppose that $N$ is an injective $R_1$-module. By assumption, $R_1$ is free over $R_0$ on both sides, so it is  flat as a left and right $R_0$-module. Moreover,
\[Hom_{R_0}(M,N)\simeq Hom_{R_1}(R_1\otimes_{R_0}M, N)\]
for any $M \in Mod(R_0)$. By consequence, $N$ is also injective as an $R_0$-module. \\
Now  (ii) is a direct consequence of Lemma \ref{keylemma1} while (iii) can be proved by using $(i)$ and $(ii)$, it remains to prove $(i)$.\\
 If $0 \rightarrow R_0 \rightarrow I^{\cdot} $ is an injective resolution of $R_0$ , then it follows from Lemma \ref{keylemma1} that if $M$ is an $R_1$-module, then $Hom_{R_1}(M, R_1 \otimes_{R_0}I^k) \simeq Hom_{R_0}(M, I^k)$ for any component $I^k$ of the complex $I^.$. Thus $R_1 \otimes_{R_0}I^k$ is an injective $R_1$-module for every $k$. This proves that $0 \rightarrow R_1 \rightarrow R_1 \otimes_{R_0}I^{\cdot} $ is an injective resolution of $R_1$ by $R_1$-modules. Therefore 
\[injdim(R_1) \leq injdim(R_0).\]
It remains to prove that $injdim(R_0) \leq injdim(R_1)$. Suppose that $injdim(R_1)=n < \infty$, so we need to prove that $injdim(R_0) \leq n$. This is equivalent to \[Ext^{n+1}_{R_0}(N,R_0)=0 \,\,\, \text{for any} \,\,N  \in Mod(R_0).\]
Notice that $$Ext_{R_0}^{n+1}(N,R_0)\otimes_{R_0}R_1 \simeq Ext^{n+1}_{R_1}(R_1 \otimes_{R_0}N , R_1).$$ Since $n = injdim(R_1)$, one has $Ext^{n+1}_{R_1}(R_1 \otimes_{R_0}N , R_1)=0$ implying that $Ext_{R_0}^{n+1}(N,R_0)\otimes_{R_0}R_1 =0$. On the other hand, $R_1$ is a free $R_0$-module on both sides, thus $R_1$ is faithfully flat over $R_0$ on both sides. As a result, $Ext^{n+1}_{R_0}(N, R_0)=0$ which proves that $injdim(R_0) \leq n= injdim(R_1)$.

\end{proof}
 Now, as an application, let us consider the following example which will be important for the next section. Suppose that $\bX=Sp(A)$ is a smooth affinoid $K$-variety  for a $K$-affinoid algebra $A$ and $G$ is a compact p-adic Lie group which acts continuously on $\bX$ such that $(\bX,G)$ is small. We assume the following extra conditions:
 \begin{itemize}
 \item[$\ast$]$H$ is an open \textit{normal} subgroup of $G$,
 \item[$\ast$]$\mathcal{A}$ is a $G$-stable affine formal model in $A$,
 \item[$\ast$]$(\mathcal{L},J)$ is an $\mathcal{A}$-trivialising pair such that $J \leq H$.
 \end{itemize}

 Then Lemma \ref{keylemma1} and Proposition \ref{keypro} can partially be  applied to the case where $R_1= \widehat{U(\mathcal{L})}_K \rtimes_JG$ and $R_0=\widehat{U(\mathcal{L})}_K\rtimes_J H$ as follows: 

\begin{lemma} \label{keylemma2} The natural morphism of rings 
$\widehat{U(\mathcal{L})}_K\rtimes_JH \longrightarrow \widehat{U(\mathcal{L})}_K \rtimes_JG$ satisfies the hypothesis of Lemma \ref{keylemma1}. In particular, there is a two-sided $\widehat{U(\mathcal{L})}_K \rtimes_J H$ linear projection map

\begin{equation} \label{projection1}
p^{\bX}_{G,H,J}: \widehat{U(\mathcal{L})}_K\rtimes_JG \longrightarrow \widehat{U(\mathcal{L})}_K\rtimes_JH.
\end{equation}
\end{lemma}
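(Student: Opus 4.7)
The plan is to unwind the definitions and exhibit a concrete $R_0$-basis of $R_1$ coming from coset representatives of $H$ in $G$. Since $G$ is compact and $H$ is open, the quotient $G/H$ is finite; choose representatives $g_0 = 1, g_1, \ldots, g_m \in G$, and let $b_i$ denote the image of $g_i$ in $R_1 = \widehat{U(\mathcal{L})}_K \rtimes_J G$. Each $b_i$ is automatically a unit in $R_1$ (with inverse the image of $g_i^{-1}$), so the only real content is verifying the three conditions from Lemma \ref{keylemma1} together with the freeness of $R_1$ as a left and right $R_0$-module on the basis $\{b_0, \ldots, b_m\}$.

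For the freeness, I would decompose $G/J = \bigsqcup_{i=0}^{m} g_i \cdot (H/J)$; writing $R = \widehat{U(\mathcal{L})}_K$, this gives on the one hand $R_0 = R \rtimes_J H = \bigoplus_{hJ \in H/J} \bar{h}\, R$ (free right $R$-module) and on the other hand $R_1 = \bigoplus_{i}\bigoplus_{hJ} \overline{g_i h}\, R = \bigoplus_{i} \bar{g_i}\, R_0$, using that $\overline{g_i h} = \bar{g_i}\bar{h}$ in the skew group ring. The analogous argument on the left yields $R_1 = \bigoplus_i R_0 \bar{g_i}$, as required.

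Condition (i), $b_i R_0 = R_0 b_i$, is equivalent to the statement that conjugation by $\bar{g_i}$ stabilises $R_0$. This is where normality of $H$ in $G$ enters: for $h \in H$ one computes $\bar{g_i}\bar{h}\bar{g_i}^{-1} = \overline{g_i h g_i^{-1}}$, which lies in the image of $H$ because $g_i H g_i^{-1} = H$; combined with $\bar{g_i} r \bar{g_i}^{-1} = g_i \cdot r \in R$ for $r \in R$ coming from the skew action, this gives $\bar{g_i}\, R_0\, \bar{g_i}^{-1} \subseteq R_0$, and similarly for $\bar{g_i}^{-1}$. For (ii), given $i, j$ there is a unique $k$ with $g_i g_j \in g_k H$, say $g_i g_j = g_k h'$; then $b_i b_j = \overline{g_i g_j} = \bar{g_k}\bar{h'} \in b_k R_0$. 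Condition (iii) is handled in the same manner by writing $g_i^{-1} = g_l h''$ with $h'' \in H$.

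I do not anticipate serious obstacles here; the only mildly delicate point is keeping track of the identification between the group-theoretic product and the product in the crossed product after taking the quotient $\rtimes_J$, and making sure the computations above are performed at the level of the quotient and not at the level of $R \rtimes G$. Once the three conditions are established, Lemma \ref{keylemma1} applies and produces the two-sided $R_0$-linear projection $p^{\bX}_{G,H,J}\colon R_1 \to R_0$ as the projection onto the summand $b_0 R_0 = R_0 = R_0 b_0$, which is well-defined precisely because both decompositions of $R_1$ share the same zeroth summand.
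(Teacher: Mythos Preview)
Your proposal is correct and follows essentially the same approach as the paper. The paper packages the freeness and conditions (i)--(iii) by invoking the crossed-product isomorphism $\widehat{U(\mathcal{L})}_K\rtimes_J G \cong (\widehat{U(\mathcal{L})}_K\rtimes_J H)\ast G/H$ from \cite[Lemmas~2.2.4, 2.2.6]{AW} and the basis statement from \cite[Lemma~5.9(i)]{MR}, whereas you unwind this by hand via the coset decomposition $G/J=\bigsqcup_i g_i(H/J)$; the content is identical.
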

\begin{proof}
Following \cite[Lemma $2.2.6$]{AW}, the ring $R_1:=\widehat{U(\mathcal{L})}_K\rtimes_{J}G$ is isomorphic to  $(\widehat{U(\mathcal{L})}_K \rtimes_JH) \rtimes_HG$ and the latter is isomorphic to the crossed product $(\widehat{U(\mathcal{L})}_K \rtimes_J H)\ast G/H$ (\cite[Lemma $2.2.4$]{AW}). If we denote by $S= \lbrace 1=g_1,g_2,...,g_m \rbrace$ the representatives of the right cosets of $H$ in $G$, then $R_1$ is freely generated over the subring $R_0:=\widehat{U( \mathcal{L})}_K \rtimes_JH$ by the units $\bar{S}=\lbrace \bar{g_1},..., \bar{g}_m\rbrace$, equal to the image of $S$ in $\widehat{U(\mathcal{L})}_K \rtimes_J G$  \cite[Lemma $5.9(i)$ ]{MR}. Now the properties (i) and (ii) from \ref{keylemma1} follow directly from the properties of a crossed product, as recalled in Def. \ref{crossed_prod}. For property (iii), given $i$ there is $l$ such that $g_i^{-1}\in g_lH$, i.e. there is $h\in H$ such that $1\in g_i g_lh$. This means $1\in \bar{g}_i  \bar{g}_lR_0$, whence $\bar{g}_i^{-1} \in \bar{g_l}R_0$, as claimed.
Finally, there is the projection map as claimed. \end{proof}
\begin{re} \label{keyre} The injection $\widehat{U(\mathcal{L})}_K \rightarrow \widehat{U(\mathcal{L})}_K \rtimes_JG$ also satisfies the hypothesis of Lemma \ref{keylemma1}. This can be shown along the lines of the preceding proof, using $\widehat{U(\mathcal{L})}_K \rtimes_JG \cong \widehat{U(\mathcal{L})}_K \ast G/J.$
\end{re}

\begin{pro} \label{keypro1} Suppose that $(\bX,G)$ is small and $H$ is an open normal subgroup of $G$. The ring $\wideparen{\mathcal{D}}(\bX,G)$ is freely generated over $\wideparen{\mathcal{D}}(\bX,H)$ with a basis satisfying the hypothesis of Lemma \ref{keylemma1}. 
\end{pro}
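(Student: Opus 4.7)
The plan is to reduce the statement to Lemma \ref{keylemma2} via the Fréchet-Stein presentation of $\wideparen{\mathcal{D}}(\bX,G)$. Since $(\bX,G)$ is small, I may choose a $G$-stable affine formal model $\mathcal{A} \subset \mathcal{O}(\bX)$, a $G$-stable free $\mathcal{A}$-Lie lattice $\mathcal{L}$ of $\mathcal{T}(\bX)$, and a cofinal sequence of $\mathcal{A}$-trivialising pairs $(\pi^n \mathcal{L}, J_n)$ with $J_n$ open normal in $G$ and $\bigcap_n J_n = \{1\}$, yielding
\[\wideparen{\mathcal{D}}(\bX, G) \cong \varprojlim_n \widehat{U(\pi^n\mathcal{L})}_K \rtimes_{J_n} G.\]
Replacing each $J_n$ by $J_n \cap H$ (still open normal in $G$, still contained in $G_{\pi^n\mathcal{L}}$, and still intersecting to the trivial subgroup), I may assume $J_n \leq H$ for all $n$. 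In particular $(\bX, H)$ is small for the same $\mathcal{A}$ and $\mathcal{L}$, and
\[\wideparen{\mathcal{D}}(\bX, H) \cong \varprojlim_n \widehat{U(\pi^n\mathcal{L})}_K \rtimes_{J_n} H.\]

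Next, I fix representatives $S = \{g_1 = 1, g_2, \ldots, g_m\}$ of the right cosets of $H$ in $G$; this set is finite because $G$ is compact and $H$ is open. By Lemma \ref{keylemma2}, for each $n$ the images $\bar{g}_i^{(n)}$ of the $g_i$ in $\widehat{U(\pi^n\mathcal{L})}_K \rtimes_{J_n} G$ form a free basis over $\widehat{U(\pi^n\mathcal{L})}_K \rtimes_{J_n} H$ satisfying conditions (i)--(iii) of Lemma \ref{keylemma1}. Crucially, the transition map sends $\bar{g}_i^{(n+1)} \mapsto \bar{g}_i^{(n)}$, because all these elements are the images of the fixed $g_i$ under the canonical group homomorphism $\gamma : G \to \wideparen{\mathcal{D}}(\bX, G)^{\times}$ (cf. Remark \ref{gamma}). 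Since the index set is finite, inverse limits commute with the direct sum decomposition, yielding
\[\wideparen{\mathcal{D}}(\bX, G) \;\cong\; \bigoplus_{i=1}^{m} \wideparen{\mathcal{D}}(\bX, H)\, \gamma(g_i) \;\cong\; \bigoplus_{i=1}^{m} \gamma(g_i)\,\wideparen{\mathcal{D}}(\bX, H).\]

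Finally, I verify conditions (i)--(iii) at the limit level. Setting $b_i := \gamma(g_i) \in \wideparen{\mathcal{D}}(\bX, G)^{\times}$, condition (i), $b_i \wideparen{\mathcal{D}}(\bX, H) = \wideparen{\mathcal{D}}(\bX, H) b_i$, holds because $H$ is normal in $G$, so conjugation by $\gamma(g_i)$ permutes the generators and preserves $\wideparen{\mathcal{D}}(\bX, H)$. Conditions (ii) and (iii) follow directly from the group law in $G$: writing $g_i g_j = h_{ij} g_k$ and $g_i^{-1} = h'_i g_\ell$ with $h_{ij}, h'_i \in H$ and $g_k, g_\ell \in S$, we obtain $b_i b_j = \gamma(h_{ij}) b_k \in b_k \wideparen{\mathcal{D}}(\bX, H)$ after applying (i), and similarly $b_i^{-1} \in b_\ell \wideparen{\mathcal{D}}(\bX, H)$.

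The main point that requires care --- though routine --- is the passage to a cofinal subsystem of trivialising pairs with $J_n \leq H$, so that the Fréchet-Stein presentations of $\wideparen{\mathcal{D}}(\bX, G)$ and $\wideparen{\mathcal{D}}(\bX, H)$ are indexed compatibly by the same $J_n$. Once this bookkeeping is in place, the proposition is simply the inverse limit of the level-wise statement established in Lemma \ref{keylemma2}.
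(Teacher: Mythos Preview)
Your proof is correct and follows essentially the same approach as the paper: both pass to the inverse limit of the level-wise statement from Lemma \ref{keylemma2}, using a fixed set $S$ of coset representatives of $H$ in $G$ whose images under $\gamma$ furnish the required basis. Your write-up is more explicit than the paper's---in particular you spell out the need to arrange $J_n \leq H$ and you verify conditions (i)--(iii) directly at the limit level---but the underlying argument is the same.
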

\begin{proof}
By taking the inverse limit of the morphisms $p_{G,H,J}^{\bX}$ in Lemma \ref{keylemma2} when $(\mathcal{L}, J)$ runs over the set of all $\mathcal{A}$-trivialising pairs , we see that $\wideparen{\mathcal{D}}(\bX,G)$ is freely generated as a $\wideparen{\mathcal{D}}(\bX,H)$-module by the image $\tilde{S}= \lbrace \tilde{g_1},\tilde{g_2},..,\tilde{g_m}\rbrace $ of $S$ in $\wideparen{\mathcal{D}}(\bX,G)$ which defines a two-sided $\wideparen{\mathcal{D}}(\bX,H)$-linear map 
\begin{align}\label{projection2}
 p_{G,H}^{\bX}: \wideparen{\mathcal{D}}(\bX,G) &\longrightarrow \wideparen{\mathcal{D}}(\bX,H)\\
 \sum_{i=1}^m a_i\tilde{g}_i &\longmapsto a_0.\nonumber
\end{align}
\end{proof}
\begin{coro} \label{coro3}
\begin{itemize}
 \item[(i)] The maps $p^{\bX}_{G,H}$ and $p^{\bX}_{G,H,J}$ fit into a commutative diagram
\[\begin{tikzcd}
\wideparen{\mathcal{D}}(\bX,G) \arrow{r}{p_{G,H}^{\bX}} \arrow[swap]{d}{q_{G,J}} & \wideparen{\mathcal{D}}(\bX,H)\arrow{d}{q_{H,J}} \\
\widehat{U(\mathcal{L})}_K\rtimes_JG \arrow{r}{p_{G,H,J}^{\bX}} & \widehat{U(\mathcal{L})}_K\rtimes_JH,\\
\end{tikzcd}\]
where $q_{G,J}: \wideparen{\mathcal{D}}(\bX,G)\longrightarrow \widehat{U(\mathcal{L})}_K\rtimes_JG$ and $q_{H,J}: \wideparen{\mathcal{D}}(\bX,H) \longrightarrow \widehat{U(\mathcal{L})}_K\rtimes_JH$ denote the canonical maps induced from the definition of $\wideparen{\mathcal{D}}(\bX,G)$ and $\wideparen{\mathcal{D}}(\bX,H)$ respectively.
\item[(ii)] If $\bU \in \bX_{w}(\mathcal{T})$ is such that $(\bU,G)$ is small, then the diagram
\[\begin{tikzcd}
\wideparen{\mathcal{D}}(\bX,G) \arrow{r}{p_{G,H}^{\bX}} \arrow[swap]{d}{r^{\bU}_{G}} & \wideparen{\mathcal{D}}(\bX,H)\arrow{d}{r^{\bU}_{H}} \\
\wideparen{\mathcal{D}}(\bU,G) \arrow{r}{p_{G,H}^{\bU}} & \wideparen{\mathcal{D}}(\bU,H) \\
\end{tikzcd}\]
is commutative.

\end{itemize}
\end{coro}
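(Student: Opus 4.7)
Both diagrams express compatibilities between the ``projection onto the trivial-coset component'' maps and the various structural morphisms of the completed skew-group algebras, and in both cases the strategy is to reduce to the finite Banach level $\widehat{U(\mathcal{L})}_K \rtimes_J -$, where the projections are defined directly by Lemma \ref{keylemma2} via the explicit formula $\sum a_i \bar{g}_i \mapsto a_0$ relative to the basis $\bar{S}$ coming from a choice of right coset representatives $S$ of $H$ in $G$.

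For part $(i)$, recall from the proof of Proposition \ref{keypro1} that $p^{\bX}_{G,H}$ is constructed as the inverse limit over the directed poset $\mathcal{I}(\mathcal{A},\rho,G)$ of the maps $p^{\bX}_{G,H,J}$. First I would verify that the family $\{p^{\bX}_{G,H,J}\}_{(\mathcal{L},J)}$ is compatible with the transition morphisms of the two inverse systems $\{\widehat{U(\mathcal{L})}_K \rtimes_J G\}$ and $\{\widehat{U(\mathcal{L})}_K \rtimes_J H\}$. This is immediate from the explicit description above, since a single common choice of $S \subset G$ can be fixed once and for all (it suffices that $G/H$ is finite, which holds since $G$ is compact and $H$ is open), and the transition maps in both inverse systems are $\widehat{U(\mathcal{L})}_K$-linear in a way that fixes the image of $S$. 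Once this compatibility is granted, the square in (i) commutes by the universal property of the inverse limit: both $q_{H,J}\circ p^{\bX}_{G,H}$ and $p^{\bX}_{G,H,J}\circ q_{G,J}$ are the unique maps $\wideparen{\mathcal{D}}(\bX,G) \to \widehat{U(\mathcal{L})}_K \rtimes_J H$ in the cone defining $p^{\bX}_{G,H}$ at the pair $(\mathcal{L},J)$.

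For part $(ii)$, I would first shrink (by rescaling $\mathcal{L}$ to $\pi^n\mathcal{L}$ for $n$ large, and restricting $J$ accordingly, as noted after Proposition \ref{flat}) so that $\bU$ is $\mathcal{L}$-accessible, and fix a $G$-stable $\mathcal{L}$-stable formal model $\mathcal{B}$ in $\mathcal{O}(\bU)$. At this level, restriction is the $G$-equivariant ring morphism $\widehat{U(\mathcal{L})}_K \rtimes_J G \to \widehat{U(\mathcal{B}\otimes_{\mathcal{A}}\mathcal{L})}_K \rtimes_J G$ of Definition \ref{defq}, and similarly for $H$; both are $\widehat{U(\mathcal{L})}_K$-linear and send each basis element $\bar{g}_i \in \bar{S}$ to the element $\bar{g}_i$ of the target with the same label. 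Consequently, on a finite sum $\sum_i a_i \bar{g}_i$ with $a_i \in \widehat{U(\mathcal{L})}_K \rtimes_J H$, the two composites $p^{\bU}_{G,H,J} \circ \mathrm{res}_G$ and $\mathrm{res}_H \circ p^{\bX}_{G,H,J}$ both output the restriction of $a_0$; hence the corresponding square for $(\mathcal{L},J)$ commutes. Passing to the inverse limit over $\mathcal{I}$ and applying part (i) on both $\bX$ and $\bU$ to identify $p^{\bX}_{G,H}$ and $p^{\bU}_{G,H}$ level-wise with the $p_{G,H,J}$'s, one obtains the required commutativity.

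The only mildly delicate point is bookkeeping in part $(ii)$: one must check that a single cofinal family of trivialising pairs on $\bX$ can be chosen so that each member restricts to a trivialising pair on $\bU$, and that the restriction morphisms involved are compatible with the fixed coset decomposition of $G$ modulo $H$. Both checks are routine consequences of the construction in Definition \ref{defq} and the $\mathcal{L}$-accessibility discussion; no new ideas beyond those already recorded in Lemma \ref{keylemma2} and Proposition \ref{keypro1} are needed.
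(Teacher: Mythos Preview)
Your proposal is correct and follows essentially the same approach as the paper. For part (i) the paper simply says the statement is ``evident from definition,'' which is precisely your observation that $p^{\bX}_{G,H}$ is constructed as the inverse limit of the $p^{\bX}_{G,H,J}$; for part (ii) the paper also reduces to the finite Banach level after rescaling $\mathcal{L}$, the only cosmetic difference being that it works with a basis of $\widehat{U(\mathcal{L})}_K\rtimes_J G$ over $\widehat{U(\mathcal{L})}_K$ (via representatives of $G/J$ chosen so that an initial segment represents $H/J$) rather than your basis over $\widehat{U(\mathcal{L})}_K\rtimes_J H$ via representatives of $G/H$, and then checks the same explicit formula.
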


\begin{proof}
The statement $(i)$ is evident from definition. To show $(ii)$, let us fix a $G$-stable free $\mathcal{A}$-Lie lattice $\mathcal{L}$ in $\mathcal{T}(\bX)$ for some $G$-stable affine formal model $\mathcal{A}$ of $A$. By rescaling $\mathcal{L}$ if necessary, we may assume that $\bU$ is $\mathcal{L}$-admissible \cite[Lemma $7.6$]{AWI}. Under this assumption, \cite[Proposition $4.3.6$]{AW} showed that $\mathcal{L}':= \mathcal{B}\otimes_{\mathcal{A}}\mathcal{L}$ is a $G$-stable $\mathcal{B}$-Lie lattice in $\mathcal{T}(\bU)$ for any choice of a $G$ stable $\mathcal{L}$-stable affine formal model $\mathcal{B}$ in $\mathcal{O}(\bU)$. This is even free as a $\mathcal{B}$-module. Let $J \leq G_\mathcal{L}$ be an open normal subgroup of $G$ such that $(\mathcal{L},J)$ and ($\mathcal{L}',J)$ are trivialising pairs (this is thanks to \cite[Proposition $4.3.6$]{AW}). By definition, it is enough to show that the diagram
\[\begin{tikzcd}
\widehat{U(\mathcal{L})}_K \rtimes_{J}G \arrow{r}{p_{G,H,J}^{\bX}} \arrow[swap]{d}{r^{\bU}_{G,J}} & \widehat{U(\mathcal{L})}_K \rtimes_{J}H\arrow{d}{r^{\bU}_{H,J}} \\
\widehat{U(\mathcal{L'})}_K \rtimes_{J}G \arrow{r}{p_{G,H,J}^{\bU}} & \widehat{U(\mathcal{L'})}_K \rtimes_{J}H \\
\end{tikzcd}\]
is commutative.\\
Note that $J$ is of finite index in $G$ and in $H$, so that we can choose a set of representatives $ 1=g_1,g_2,...,g_m,..., g_n$ ($m \leq n$) of $G$ modulo $J$ such that $G/J= \lbrace \bar{g_1},...,\bar{g_n}\rbrace$ and  $H/J= \lbrace \bar{g_1},\bar{g_2},...\bar{g_m} \rbrace $. Therefore
\begin{center}
$\widehat{U( \mathcal{L})}_K \rtimes_{J} G \simeq \widehat{U(\mathcal{L})}_K \ast G/J= \lbrace \sum_{i=1}^n a_i \bar{g_i} : \, \, a_i \in \widehat{U(\mathcal{L})}_K \rbrace$
\end{center}
and
\begin{center}
$\widehat{U(\mathcal{L})}_K \rtimes_{J} H \simeq \widehat{U(\mathcal{L})}_K \ast H/J= \lbrace \sum_{i=1}^m a_i \bar{g_i} : \, \, a_i \in \widehat{U(\mathcal{L})}_K \rbrace.$
\end{center}
Notice that here we identified each $\bar{g_i} \in G/J$ with its image in $\widehat{U(\mathcal{L})}_K \ast G/J$. Furthermore, these formulas still hold when we replace $\mathcal{L}$ by $\mathcal{L'}$.
Thus \[r^{\bU}_{H,J} \circ p_{G,H,J}^{\bX}(\sum_{i=1}^n a_i \bar{g_i})= r^{\bU}_{H,J}(\sum_{i=1}^m a_i\bar{g_i} )= \sum_{i=1}^m \tilde{a_i} \bar{g}_i\]
and \[p_{G,H,J}^{\bU} \circ r^{\bU}_{G,J}( \sum_{i=1}^n a_i \bar{g_i})= p^{\bU}_{G,H,J}(\sum_{i=1}^n \tilde{a_i}\bar{g_i} )= \sum_{i=1}^m \tilde{a_i}\bar{g_i}.\]
Here for each $i$, $\tilde{a_i}$ denotes the image of $a_i$ in $ \widehat{U(\mathcal{L'})}_K $ via the canonical morphism $  \widehat{U(\mathcal{L})}_K \longrightarrow  \widehat{U(\mathcal{L'})}_K  $. This proves the commutativity of the diagram.
\end{proof}

\begin{coro} \label{keycoro} Suppose that $(\bX,G)$ is small with $\dim \bX =d$ and that the $\mathcal{A}$-Lie lattice $\mathcal{L}$ is smooth as an $\mathcal{A}$-module. Then there exist $m\geq 0$ such that the ring $\widehat{U(\pi^n\mathcal{L})}_K \rtimes_{J_n} G$ is an Auslander-Gorenstein ring of (injective) dimension at most $2d$ for any $n \geq m$ and  for any open normal subgroup $J_n$ of $G$ which is contained in $G_{\pi^n\mathcal{L}}$.
\end{coro}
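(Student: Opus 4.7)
The strategy is to reduce the Auslander-Gorenstein property of the equivariant algebra $R_1 := \widehat{U(\pi^n\mathcal{L})}_K \rtimes_{J_n} G$ to that of its non-equivariant counterpart $R_0 := \widehat{U(\pi^n\mathcal{L})}_K$, by exploiting the fact that $R_1$ is a finite free module over $R_0$ and then invoking Proposition \ref{keypro}.

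First I would appeal to the non-equivariant case, which is part of the setup leading to \cite{AWIII}. Since $\bX$ is a smooth affinoid variety of dimension $d$ and $\mathcal{L}$ is a smooth $(\mathcal{R},\mathcal{A})$-Lie lattice of rank $d$ inside $\mathcal{T}(\bX)$, the example preceding Lemma \ref{keylemma1} (which records \cite[Lemma 4.3]{AWIII}) combined with the fact that the affinoid algebra $A$ is regular of dimension $d$ yields an integer $m \geq 0$ such that for every $n \geq m$ the Banach $K$-algebra $\widehat{U(\pi^n\mathcal{L})}_K$ is noetherian and Auslander-Gorenstein of self-injective dimension at most $\dim A + \mathrm{rank}(\mathcal{L}) = 2d$. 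The role of $m$ is to secure smoothness and Gorenstein properties for the relevant formal models after rescaling $\mathcal{L}$ by $\pi^n$; crucially, $m$ depends only on the geometric data $(\bX, \mathcal{A}, \mathcal{L})$ and not on the subgroup $J_n$.

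Next I would verify that the inclusion $R_0 \hookrightarrow R_1$ satisfies the hypotheses of Lemma \ref{keylemma1}. Since $G$ is compact and $J_n$ is open, the quotient $G/J_n$ is finite, and by Remark \ref{keyre} one has the crossed-product presentation $R_1 \cong R_0 \ast (G/J_n)$. Choosing lifts $\bar g_1 = 1, \bar g_2, \ldots, \bar g_m$ of coset representatives of $G/J_n$, we obtain a two-sided free basis of $R_1$ over $R_0$; the crossed product axioms directly yield the conditions (i), (ii), (iii) of Lemma \ref{keylemma1} (the relevant verifications are formally identical to those recorded in the proof of Lemma \ref{keylemma2}). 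In particular $R_1$ is noetherian, being a finite free extension of the noetherian ring $R_0$.

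Finally I would apply Proposition \ref{keypro}. Part (iii) gives that $R_1$ is Auslander-Gorenstein, and part (i) gives the equality of self-injective dimensions $\mathrm{injdim}(R_1) = \mathrm{injdim}(R_0) \leq 2d$, finishing the proof for all $n \geq m$ and all normal open subgroups $J_n \leq G_{\pi^n\mathcal{L}}$. The only subtle point is to make sure the threshold $m$ produced in the first step is uniform in the choice of $J_n$, which is transparent because the Auslander-Gorenstein property is an intrinsic feature of $\widehat{U(\pi^n\mathcal{L})}_K$ and the subsequent ascent argument is formal. Thus the main obstacle is not any individual step but simply ensuring that the non-equivariant input from \cite{AWIII} is available in the form needed for the finite étale-type extension given by the crossed product by $G/J_n$.
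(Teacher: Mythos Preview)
Your proposal is correct and follows essentially the same route as the paper: first invoke \cite[Theorem 4.3]{AWIII} to obtain an $m$ such that $\widehat{U(\pi^n\mathcal{L})}_K$ is Auslander-Gorenstein of dimension at most $2d$ for all $n\geq m$, then use Remark \ref{keyre} (the crossed-product description $R_1\cong R_0\ast G/J_n$) to verify the hypotheses of Lemma \ref{keylemma1}, and finally apply Proposition \ref{keypro}(i),(iii) to transfer the Auslander-Gorenstein property and the injective-dimension bound to $R_1$. Your added remarks on noetherianity of $R_1$ and uniformity of $m$ in $J_n$ are correct and make explicit what the paper leaves implicit.
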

\begin{proof}
Following \cite[Theorem $4.3$]{AWIII}, there exists $m \geq 0 $ such that the ring $\widehat{U(\pi^n \mathcal{L}})_K $ is Auslander-Gorenstein of dimension at most $2d$ for all $n \geq m$.
Thanks to Proposition \ref{keypro} and Remark \ref{keyre} it follows that $\widehat{U(\pi^n\mathcal{L})}_K \rtimes_J G$ is  Auslander-Gorenstein of dimension at most $2d$.
\end{proof}

Recall from \cite[Section $5.1$]{AWIII} that a two-sided Fr\'{e}chet-Stein algebra $A \simeq \varprojlim_n A_n$ is called \textbf{c-Auslander-Gorenstein} of dimension at most $d$ if each $A_n$ is an Auslander-Gorenstein ring with injective dimension at most $d$ for a non negative integer $d.$  If A is c-Auslander-Gorenstein of dimension at most $d$, then every (non-zero) coadmissible $A$-module has grade at most $d$. Moreover, every coadmissible $A$-module $M$ satisfies the \textbf{c-Auslander condition}, i.e. for any $i \geq 0$, one has $j_A(N) \geq i$ whenever $N$ is a coadmissible (right) submodule of $Ext^i_A(M,A)$.

\begin{theorem} \label{keytheorem}
Let $\bX=Sp(A)$ be a smooth affinoid variety of dimension $d$ and $G$ be a compact $p$-adic Lie group acting continuously on $\bX$ such that $(\bX,G)$ is small. Then the Fr\'{e}chet-Stein $K$-algebra $\wideparen{\mathcal{D}}(\bX,G)$ is coadmissibly Auslander-Gorenstein of dimension at most $2d$.
\end{theorem}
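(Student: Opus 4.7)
The strategy is to assemble the pieces already developed in the excerpt and read off the conclusion from the definition of coadmissibly Auslander-Gorenstein. By the smallness assumption, the Fr\'{e}chet-Stein presentation
\[
\wideparen{\mathcal{D}}(\bX,G)\;\simeq\;\varprojlim_{n\geq 0}\,\widehat{U(\pi^{n}\mathcal{L})_{K}}\rtimes_{J_{n}}G
\]
is available for a suitable $G$-stable free $\mathcal{A}$-Lie lattice $\mathcal{L}\subset\mathcal{T}(\bX)$ and open normal subgroups $J_{n}\trianglelefteq G$ with $J_{n}\leq G_{\pi^{n}\mathcal{L}}$, $\bigcap_{n}J_{n}=\{1\}$. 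Since $\mathcal{L}$ is free (hence in particular smooth) over $\mathcal{A}$, we are in the setting of Corollary~\ref{keycoro}.

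The plan is then as follows. First, I would invoke Corollary~\ref{keycoro} to fix an integer $m\geq 0$ such that, for every $n\geq m$, the Banach $K$-algebra $U_{n}:=\widehat{U(\pi^{n}\mathcal{L})_{K}}\rtimes_{J_{n}}G$ is two-sided noetherian Auslander-Gorenstein with self-injective dimension at most $2d$. Secondly, I would observe that truncating the inverse system at $n=m$ does not change the limit, so
\[
\wideparen{\mathcal{D}}(\bX,G)\;\simeq\;\varprojlim_{n\geq m}U_{n},
\]
and the transition maps $U_{n+1}\to U_{n}$ remain two-sided flat (as these are the defining flat transition maps of the Fr\'{e}chet-Stein structure recalled right after the definition of Fr\'{e}chet-Stein). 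Thus the truncated system is still a Fr\'{e}chet-Stein presentation of $\wideparen{\mathcal{D}}(\bX,G)$.

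Finally, I would match this with the definition recalled from \cite[Section~5.1]{AWIII}: a two-sided Fr\'{e}chet-Stein algebra $U\simeq\varprojlim U_{n}$ is coadmissibly Auslander-Gorenstein of dimension at most $d$ precisely when every $U_{n}$ is Auslander-Gorenstein with self-injective dimension at most $d$. Since each $U_{n}$ in our truncated presentation is Auslander-Gorenstein of self-injective dimension $\leq 2d$, the theorem follows. There is no real obstacle here once Corollary~\ref{keycoro} is in hand; the only small point to be careful about is that c-Auslander-Gorenstein is a property of a chosen Fr\'{e}chet-Stein presentation, so what is being asserted is that the canonical presentation coming from the trivialising pairs $(\pi^{n}\mathcal{L},J_{n})$ witnesses the property, which is exactly what the above two steps establish.
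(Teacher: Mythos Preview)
Your proposal is correct and follows essentially the same approach as the paper: use the Fr\'{e}chet-Stein presentation coming from smallness with a $G$-stable free Lie lattice, invoke Corollary~\ref{keycoro} to get the Auslander-Gorenstein property for $n\geq m$, and conclude from the definition of c-Auslander-Gorenstein. The paper's proof is terser and does not spell out the truncation step you make explicit, but the argument is the same.
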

\begin{proof}
We may choose a $G$-stable affine formal model $\mathcal{A}$ in $A$  and a $G$-stable \textit{free} $\mathcal{A}$-Lie lattice $\mathcal{L}$ in $L=Der_K(A)$ and a good chain $(J_n)$ for $\mathcal{L}$ such that
\[\wideparen{\mathcal{D}}(\bX,G)\simeq \varprojlim_n \widehat{U(\pi^n \mathcal{L}})_K \rtimes_{J_n}G.\]
By Corollary \ref{keycoro}, there exists $m \geq 0 $ such that the ring $\widehat{U(\pi^n \mathcal{L}})_K \rtimes_{J_n}G$ is Auslander-Gorenstein of dimension at most $2d$ for each $n \geq m$, so the theorem follows.
\end{proof}
\begin{definition}\label{defdim} Let $M$ be a non-zero (left or right) coadmissible $\wideparen{\mathcal{D}}(\bX,G)$-module. The dimension of $M$ is defined by:
\[d_G(M):=2d-j_{\wideparen{\mathcal{D}}(\bX,G)}(M).\]
We set $d_G(M)=0$ if $M=0$.
\end{definition}
\begin{re}\label{redim}
\begin{itemize}
\item[(i)] By the above discussion, one has $0\leq d_G(M) \leq 2d$ for any coadmissible $M$.
\item[(ii)] If $H$ is an open subgroup of $G$, then there exists an open normal subgroup $N$ of $G$ which is contained in $H$ (\cite{AW}, Lemma $3.2.1$) and 
\begin{center}
 $\wideparen{\mathcal{D}}(\bX,G) \simeq \wideparen{\mathcal{D}}(\bX,N)\rtimes_N G \simeq \wideparen{\mathcal{D}}(\bX,N)\ast G/N.$
 \end{center}
 Then the $\wideparen{\mathcal{D}}(\bX,G)$-module $M$ is also coadmissible as a $\wideparen{\mathcal{D}}(\bX,N)$-module. Therefore $d_G(M)=d_N(M)$ by Proposition \ref{keypro}(ii) and we obtain $d_G(M)=d_N(M)=d_H(M).$
 For this reason, we will write $d(M)$ instead of $d_G(M)$ for simplicity.
\end{itemize}
\end{re}

\begin{pro} \label{exact} Let \[0 \longrightarrow M_1 \longrightarrow M_2 \longrightarrow M_3 \longrightarrow 0\]
be an exact sequence of coadmissible $\wideparen{\mathcal{D}}(\bX,G)$-modules. Then 
\[d(M_2)=\max\lbrace d(M_1), d(M_3)\rbrace.\]

\end{pro}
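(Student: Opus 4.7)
The claim $d(M_2)=\max\{d(M_1),d(M_3)\}$ is, by Definition \ref{defdim}, equivalent to the grade identity $j_G(M_2)=\min\{j_G(M_1),j_G(M_3)\}$, so my plan is to prove this latter equality. The natural strategy is to reduce the computation of the grades to a single level $D_n := \widehat{U(\pi^n\mathcal{L})}_K \rtimes_{J_n} G$ of the Fr\'echet-Stein presentation and then exploit the Auslander-Gorenstein property proved in Theorem \ref{keytheorem}.

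First, I would invoke Remark \ref{redim}(i): for each $i$, one has $\mathrm{Ext}^i_{\wideparen{\mathcal{D}}(\bX,G)}(M,\wideparen{\mathcal{D}}(\bX,G))\cong \varprojlim_n \mathrm{Ext}^i_{D_n}(D_n\otimes_{\wideparen{\mathcal{D}}(\bX,G)}M,D_n)$, and the grade stabilizes, i.e.\ for $n$ sufficiently large $j_G(M)=j_{D_n}(D_n\otimes_{\wideparen{\mathcal{D}}(\bX,G)}M)$. Applying this simultaneously to the three coadmissible modules $M_1,M_2,M_3$, there exists a single $n$ large enough to realize all three grades at level $D_n$. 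Since the transition maps of a Fr\'echet-Stein algebra are flat and the completed base change $D_n\widehat{\otimes}_{\wideparen{\mathcal{D}}(\bX,G)}(-)$ is exact on short exact sequences of coadmissible modules (a standard fact from \cite{ST2003}), the given short exact sequence induces an exact sequence
\[0 \longrightarrow D_n\otimes M_1 \longrightarrow D_n\otimes M_2 \longrightarrow D_n\otimes M_3 \longrightarrow 0\]
of finitely generated $D_n$-modules. This reduces the problem to proving the grade identity in the AG ring $D_n$.

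At the level $D_n$, I would apply the standard fact that for any short exact sequence $0\to N_1\to N_2\to N_3\to 0$ of finitely generated modules over an Auslander-Gorenstein ring $S$, one has $j_S(N_2)=\min\{j_S(N_1),j_S(N_3)\}$. The inequality $j_S(N_2)\geq \min\{j_S(N_1),j_S(N_3)\}$ is immediate from the long exact sequence
\[\cdots\to \mathrm{Ext}^i_S(N_3,S)\to \mathrm{Ext}^i_S(N_2,S)\to \mathrm{Ext}^i_S(N_1,S)\to \mathrm{Ext}^{i+1}_S(N_3,S)\to\cdots,\]
which forces $\mathrm{Ext}^i_S(N_2,S)=0$ whenever $i$ is below both $j_S(N_1)$ and $j_S(N_3)$. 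For the reverse inequality, the delicate case is when $j_S(N_1)<j_S(N_3)$: here one uses the Auslander condition (AG1) together with the well-known consequence that $j_S(\mathrm{Ext}^{j}_S(N_1,S))=j=j_S(N_1)$, to exclude the possibility that the connecting map $\mathrm{Ext}^{j}_S(N_1,S)\to \mathrm{Ext}^{j+1}_S(N_3,S)$ is injective; the other two cases follow directly from the long exact sequence. This is the step I expect to be the only real subtlety, and it is precisely the point at which the Auslander-Gorenstein hypothesis (guaranteed by Theorem \ref{keytheorem}) is needed.

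Combining these two reductions yields $j_{D_n}(D_n\otimes M_2)=\min\{j_{D_n}(D_n\otimes M_1),j_{D_n}(D_n\otimes M_3)\}$, and by the stabilization at level $n$ this coincides with $\min\{j_G(M_1),j_G(M_3)\}=j_G(M_2)$. Translating back via $d(-)=2d-j_G(-)$ (and using the convention $d(0)=0$ to cover the trivial cases where some $M_i$ may vanish) gives the desired formula $d(M_2)=\max\{d(M_1),d(M_3)\}$.
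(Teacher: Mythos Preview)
Your proof is correct and follows essentially the same route as the paper: reduce to a fixed level $D_n$ via Remark~\ref{redim}(i) and flatness of $\wideparen{\mathcal{D}}(\bX,G)\to D_n$, then invoke the grade identity for short exact sequences over an Auslander--Gorenstein ring. The only difference is that the paper cites this last fact directly from \cite[Proposition~4.5(ii)]{LT} rather than sketching it; note also that your ``delicate case'' argument (via $j_S(\mathrm{Ext}^{j}_S(N_1,S))=j$ and AG1) is needed equally when $j_S(N_1)=j_S(N_3)$, not only when the inequality is strict.
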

\begin{proof} Suppose that 
$$\wideparen{\mathcal{D}}(\bX,G) \cong \varprojlim_n \widehat{U(\pi^n \mathcal{L}})_K \rtimes_{J_n}G$$
for a $G$-stable free Lie lattice $\mathcal{L}$ of $Der_K(\mathcal{O}(\bX))$ and a good chain $(J_n)$ for $\mathcal{L}$. Write $\wideparen{{D}}:=\wideparen{\mathcal{D}}(\bX,G)$ and $D_n:= \widehat{U(\pi^n \mathcal{L}})_K \rtimes_{J_n}G$.
Note that there exists an integer $m$ such that for every $i$ and $n \geq m$, one has that (Remark \ref{redim}(i)):
 $$j_{\wideparen{{D}}}(M_i)= j_{D_n}(D_n \otimes_{\wideparen{{D}}}M_i).$$
 Since $\wideparen{{D}}\longrightarrow D_n$ is a flat morphism (\cite[Remark $3.2$]{ST2003}), it follows that the sequence
 \[0 \longrightarrow D_n\otimes_{\wideparen{{D}}}M_1 \longrightarrow D_n\otimes_{\wideparen{{D}}}M_2 \longrightarrow D_n\otimes_{\wideparen{{D}}}M_3 \longrightarrow 0 \]
 is exact.
 Now applying \cite[Proposition $4.5(ii)$]{LT} gives the result.
\end{proof}
\begin{ex}
The  $\wideparen{\mathcal{D}}(\bX,G)$-module $\wideparen{\mathcal{D}}(\bX,G)$  is of dimension $2d$. Indeed
\[Hom_{\wideparen{\mathcal{D}}(\bX,G)}(\wideparen{\mathcal{D}}(\bX,G), \wideparen{\mathcal{D}}(\bX,G)) \cong \wideparen{\mathcal{D}}(\bX,G).\]
Hence $j(\wideparen{\mathcal{D}}(\bX,G))=0$, so that $d(\wideparen{\mathcal{D}}(\bX,G))= 2d.$  Similarly, the free $\wideparen{\mD}(\bX,G)$-module $\wideparen{\mD}(\bX,G)^n $ of rank $n \geq 1$ is of dimension $2d$. 
\end{ex}
A less trivial example is given by the following proposition:
\begin{pro}\label{example}
Let $\bX$ be a smooth affinoid variety of dimension d and $P\in \mathcal{D}(\bX)$ be a regular differential operator (i.e $P$ is not a zero divisor of $\mathcal{D}(\bX)$). Then the coadmissible left $\wideparen{\mathcal{D}}(\bX,G)$-module $$M= \wideparen{\mathcal{D}}(\bX,G)/\wideparen{\mathcal{D}}(\bX,G)P$$ is of dimension $d(M) \leq 2d-1$. 
\end{pro}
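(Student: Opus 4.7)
The plan is to show $j(M) \geq 1$, for then Definition \ref{defdim} gives $d(M) = 2d - j(M) \leq 2d - 1$. Write $\wpD := \wideparen{\mathcal{D}}(\bX, G)$. Fix, as in the proof of Theorem \ref{keytheorem}, a $G$-stable affine formal model $\mathcal{A}$ of $\mathcal{O}(\bX)$, a $G$-stable free $\mathcal{A}$-Lie lattice $\mathcal{L}$ in $\mathcal{T}(\bX)$, and a good chain $(J_n)_n$ of open normal subgroups of $G$ so that $\wpD \cong \varprojlim_n D_n$ with $D_n := \widehat{U(\pi^n \mathcal{L})}_K \rtimes_{J_n} G$.

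First, I will apply the left-exact contravariant functor $\Hom_{\wpD}(-, \wpD)$ to the right-exact presentation $\wpD \xrightarrow{\cdot P} \wpD \to M \to 0$ and use $\Hom_{\wpD}(\wpD, \wpD) \simeq \wpD$ via $\varphi \mapsto \varphi(1)$, to identify $\Hom_{\wpD}(M, \wpD)$ with the kernel of left multiplication by $P$ on $\wpD$. Since $\wpD = \varprojlim_n D_n$ and $P \in \mathcal{D}(\bX)$ maps into each $D_n$ via the canonical morphism $\iota_n$ of Remark \ref{gamma}, the vanishing of this kernel reduces to showing that left multiplication by $P$ is injective on each $D_n$ (for $n$ sufficiently large).

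To prove this left-regularity in $D_n$, I will use two flatness steps. On the one hand, the proof of Lemma \ref{keylemma2} (combined with Proposition \ref{keypro1}) shows that $D_n$ is free of finite rank, and hence faithfully flat, as a left (and right) $\widehat{U(\pi^n \mathcal{L})}_K$-module; this reduces the problem to left-regularity of $P$ inside $\widehat{U(\pi^n \mathcal{L})}_K$. On the other hand, since $U(\pi^n \mathcal{L})_K = U(\mathcal{L})_K = \mathcal{D}(\bX)$, this in turn reduces to transferring left-regularity of $P$ along the $\pi$-adic completion map $\mathcal{D}(\bX) \longrightarrow \widehat{U(\pi^n \mathcal{L})}_K$; tensoring the injection $\mathcal{D}(\bX) \xrightarrow{\cdot P} \mathcal{D}(\bX)$ (the hypothesis that $P$ is not a zero divisor) along this flat morphism will yield the required injectivity.

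The main obstacle will be the clean invocation of the flatness of the $\pi$-adic completion $\mathcal{D}(\bX) \to \widehat{U(\pi^n \mathcal{L})}_K$; this is part of the basic theory of $\widehat{U(\pi^n \mathcal{L})}_K$ developed in \cite{AWI} (and re-used in the non-equivariant analogue treated in \cite{AWIII}). Once this is in hand, the chain of implications $P$ left-regular in $\mathcal{D}(\bX) \Rightarrow$ left-regular in $\widehat{U(\pi^n \mathcal{L})}_K \Rightarrow$ left-regular in $D_n \Rightarrow$ left-regular in $\wpD$ is a direct unwinding of the Fr\'echet-Stein structure on $\wpD$ and of the free-module description of $D_n$ over the non-equivariant layer $\widehat{U(\pi^n \mathcal{L})}_K$ given by Proposition \ref{keypro1}, leading to $\Hom_{\wpD}(M, \wpD) = 0$, hence $j(M) \geq 1$ and finally $d(M) \leq 2d - 1$.
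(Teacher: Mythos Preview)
Your proof is correct and follows essentially the same strategy as the paper: both arguments amount to showing $\Hom(M,-)=0$ by using that $P$ remains a non-zero-divisor after passing along the flat map $\mathcal{D}(\bX)\to D_n$, and hence $j(M)\geq 1$. The only organizational difference is that the paper works at the Noetherian level $D_n$ and invokes the base-change isomorphism $Ext^i_D(D/DP,D)\otimes_D D_n \cong Ext^i_{D_n}(D_n/D_nP,D_n)$ to compare grades, whereas you propagate left-regularity of $P$ step by step through $\mathcal{D}(\bX)\to \widehat{U(\pi^n\mathcal{L})}_K\to D_n\to \wideparen{\mathcal{D}}(\bX,G)$ and compute $\Hom_{\wideparen{D}}(M,\wideparen{D})=0$ directly; both routes rest on the same flatness input.
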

\begin{proof}
Write $D:= \mathcal{D}(\bX)$ and $\wideparen{D}:= \wideparen{\mathcal{D}}(\bX,G)$. 
Choose a $G$-stable free $\mathcal{A}$-Lie lattice $\mathcal{L}$ of $Der_K(\mathcal{O}(\bX))$ for some $G$-stable affine formal model $\mathcal{A}$ in $\mathcal{O}(\bX)$. Then 
\[\wideparen{D} \cong \varprojlim_n \widehat{U(\pi^n\mathcal{L})}_K \rtimes_{J_n}G\]
  is a Fr\'{e}chet-Stein structure on $\wideparen{D}$. Write $D_n:= \widehat{U(\pi^n\mathcal{L})}_K \rtimes_{J_n}G$, then  
 \[M \cong \varprojlim_n D_n/D_nP.\]
 Thus there is a $n\geq 0$ such that $d(M)= d(D_n/D_nP)$. Furthermore, one has that 
 $$D_n/D_nP \cong D_n \otimes_{D}D/DP.$$
 The ring $D_n$ is flat as a right $D$-module. It follows 
 \[Ext^i_{D}(D/DP, D) \otimes_D D_n \cong Ext^i_{D_n}(D_n\otimes_DD/DP, D_n).\]
 As a consequence, we obtain the inequality $d_{D_n}(D_n/D_nP) \leq d_D(D/DP).$ But 
 $d_D(D/DP)<2d$, since otherwise one would have $j_D(D/DP)=0$, whence $Hom_D(D/DP,D)= \lbrace Q \in D: QP=0 \rbrace \neq 0$, in contradiction to the regularity of $P$. \end{proof}
\medspace

Let $\bX$ be an affinoid variety and $G$ a $p$-adic Lie group acting continuously on $\bX$ such that $(\bX,G)$ is small. The following proposition relates the dimension to the side-changing functors \ref{chanfunc}.
\begin{pro} \label{dimcom} Let $M$ be a coadmissible left $\wideparen{\mathcal{D}}(\bX,G)$-module. Then there is an isomorphism of left $\wideparen{\mathcal{D}}(\bX,G)$-modules
\[Ext^i_{\wideparen{\mathcal{D}}(\bX,G)}(\Omega(\bX) \otimes_{\mathcal{O}(\bX)} M, \wideparen{\mathcal{D}}(\bX,G))\simeq Hom_{\mathcal{O}(\bX)}(\Omega(\bX), Ext^i_{\wideparen{\mathcal{D}}(\bX,G)}(M, \wideparen{\mathcal{D}}(\bX,G))).\]
In particular, $d(M)=d(\Omega(\bX) \otimes_{\mathcal{O}(\bX)} M)$.
\end{pro}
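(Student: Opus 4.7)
The plan is to compute the Ext on the left via a free resolution and to transfer it using the side-changing equivalence together with a Hom-swap identity. Since $(\bX, G)$ is small, the tangent module $\mathcal{T}(\bX)$ admits a free $\mathcal{A}$-Lie lattice for some $G$-stable affine formal model $\mathcal{A}$, so $\Omega(\bX) = \mathrm{Hom}_{\mathcal{O}(\bX)}(\bigwedge^{d}\mathcal{T}(\bX), \mathcal{O}(\bX))$ is a free $\mathcal{O}(\bX)$-module of rank one; in particular the side-changing functor $\mathrm{Hom}_{\mathcal{O}(\bX)}(\Omega(\bX), -)$ and its inverse $\Omega(\bX) \otimes_{\mathcal{O}(\bX)} -$ are exact equivalences on all $\mathcal{O}(\bX)$-modules.

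First I would take a free resolution $P^{\bullet} \to M$ in the category of all left $\wideparen{\mathcal{D}}(\bX,G)$-modules. Applying $\Omega(\bX) \otimes_{\mathcal{O}(\bX)} -$ produces a resolution $\Omega(\bX) \otimes P^{\bullet} \to \Omega(\bX) \otimes M$ whose terms are projective right $\wideparen{\mathcal{D}}(\bX,G)$-modules: the proof of Theorem~\ref{chanfunc}(ii) extends verbatim to show that $\Omega(\bX) \otimes -$ is an exact equivalence between all left and all right $\wideparen{\mathcal{D}}(\bX,G)$-modules (not merely the coadmissible ones), and such an equivalence sends projective objects to projective objects. Hence the Ext on the left of the claimed identity is computed as $h^{i}$ of $\mathrm{Hom}(\Omega(\bX) \otimes P^{\bullet}, \wideparen{\mathcal{D}}(\bX,G))$, with Hom taken in right $\wideparen{\mathcal{D}}(\bX,G)$-modules.

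The core technical step is to establish, for every left $\wideparen{\mathcal{D}}(\bX,G)$-module $P$, a natural isomorphism of left $\wideparen{\mathcal{D}}(\bX,G)$-modules
\[
\mathrm{Hom}_{\wideparen{\mathcal{D}}(\bX,G)}\bigl(\Omega(\bX) \otimes_{\mathcal{O}(\bX)} P,\; \wideparen{\mathcal{D}}(\bX,G)\bigr) \;\simeq\; \mathrm{Hom}_{\mathcal{O}(\bX)}\bigl(\Omega(\bX),\; \mathrm{Hom}_{\wideparen{\mathcal{D}}(\bX,G)}(P, \wideparen{\mathcal{D}}(\bX,G))\bigr),
\]
where on the left Hom is taken in right $\wideparen{\mathcal{D}}(\bX,G)$-modules and on the right in left $\wideparen{\mathcal{D}}(\bX,G)$-modules. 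This results from combining the adjunction between $\Omega(\bX) \otimes -$ and $\mathrm{Hom}_{\mathcal{O}(\bX)}(\Omega(\bX),-)$ (equivalently, using that they are mutually inverse equivalences, so the right-module Hom converts to a left-module Hom with coefficients in $\mathrm{Hom}_{\mathcal{O}(\bX)}(\Omega(\bX), \wideparen{\mathcal{D}}(\bX,G))$, which is the inverse image of the regular right module) with a Hom-swap pulling $\mathrm{Hom}_{\mathcal{O}(\bX)}(\Omega(\bX),-)$ outside. The iso is immediate for $P = \wideparen{\mathcal{D}}(\bX,G)^{n}$ free (both sides reducing to $\mathrm{Hom}_{\mathcal{O}(\bX)}(\Omega(\bX), \wideparen{\mathcal{D}}(\bX,G))^{n}$), and extends to general $P$ by a presentation argument together with left-exactness. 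The main obstacle here is careful bookkeeping of the two commuting left $\wideparen{\mathcal{D}}(\bX,G)$-structures on the bimodule $\mathrm{Hom}_{\mathcal{O}(\bX)}(\Omega(\bX), \wideparen{\mathcal{D}}(\bX,G))$ --- one arising from the side-change of the right $\wideparen{\mathcal{D}}(\bX,G)$-structure on $\wideparen{\mathcal{D}}(\bX,G)$, the other from the left bimodule action --- and verifying that these correspond correctly through the swap.

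Applying this iso termwise along $P^{\bullet}$ and taking cohomology, I would then commute $\mathrm{Hom}_{\mathcal{O}(\bX)}(\Omega(\bX), -)$ past $h^{i}$ using its exactness, arriving at $\mathrm{Hom}_{\mathcal{O}(\bX)}(\Omega(\bX), \mathrm{Ext}^{i}_{\wideparen{\mathcal{D}}(\bX,G)}(M, \wideparen{\mathcal{D}}(\bX,G)))$, which is the desired isomorphism. The equality $d(M) = d(\Omega(\bX) \otimes_{\mathcal{O}(\bX)} M)$ is then immediate: since $\mathrm{Hom}_{\mathcal{O}(\bX)}(\Omega(\bX), -)$ is an equivalence (in particular faithful), one has $\mathrm{Hom}_{\mathcal{O}(\bX)}(\Omega(\bX), N) = 0$ iff $N = 0$, so $j(M) = j(\Omega(\bX) \otimes M)$, and Definition~\ref{defdim} gives the claim.
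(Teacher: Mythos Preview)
Your proposal is correct and follows essentially the same approach as the paper: both rest on the tensor--hom adjunction between $\Omega(\bX)\otimes_{\mathcal{O}(\bX)}-$ and $\mathrm{Hom}_{\mathcal{O}(\bX)}(\Omega(\bX),-)$. The paper phrases this in derived-category language as $R\mathrm{Hom}_{\wideparen{D}}(\Omega\otimes_A^{\mathbb{L}} M,\wideparen{D})\simeq R\mathrm{Hom}_A(\Omega,R\mathrm{Hom}_{\wideparen{D}}(M,\wideparen{D}))$ (using that $\Omega$ is $A$-projective so the derived tensor is the ordinary one), whereas you unpack the same identity by choosing an explicit free resolution and verifying the Hom-swap termwise; the paper then deduces the vanishing equivalence for the dimension claim via the dual-module identity $\mathrm{Hom}_A(\Omega,N)\cong \Omega^*\otimes_A N$ and invertibility of $\Omega$, which is exactly your faithfulness argument in different clothing.
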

\begin{proof}
The proof uses the same arguments as in \cite[Lemma $5.2$]{AWIII}. Write $A=\mathcal{O}(\bX)$, $\Omega:= \Omega(\bX)$, $\wideparen{{D}}:= \wideparen{\mathcal{D}}(\bX,G)$. Then the left hand side is exactly the $i$-th cohomology of the complex $RHom_{\wideparen{{D}}}(\Omega \otimes_{A}^\mathbb{L}  M, \wideparen{{D}})$, as $\Omega$ is a projective $A$-module. Now, the right hand side is the $i$-th cohomology of $RHom_A(\Omega, RHom_{\wideparen{{D}}}(M,\wideparen{{D}}))$. Thus, using the derived tensor-hom adjunction gives the first part of the proposition. For the second part, note that since $\Omega$ is a finitely generated projective $A$-module, one has
\[Hom_{A}(\Omega,Ext^i_{\wideparen{{D}}}(M,\wideparen{{D}})) \cong \Omega^*\otimes_A Ext^i_{\wideparen{{D}}}(M,\wideparen{{D}})),\]
where $\Omega^*=Hom_A(\Omega,A)$ is its dual. Thus, if  $Hom_{A}(\Omega,Ext^i_{\wideparen{{D}}}(M,\wideparen{{D}}))=0 $, then 
\[Ext^i_{\wideparen{{D}}}(M,\wideparen{{D}}) \cong (\Omega\otimes_A\Omega^*) \otimes_A Ext^i_{\wideparen{{D}}}(M,\wideparen{{D}})) \cong \Omega \otimes_A Hom_{A}(\Omega,Ext^i_{\wideparen{{D}}}(M,\wideparen{{D}})) =0. \]
Here, $\Omega\otimes_A\Omega^* \cong A$, as $\Omega$ is an invertible $A$-module. By consequence, $Ext^i_{\wideparen{{D}}}(M,\wideparen{{D}})=0$  if and only if $Hom_{A}(\Omega,Ext^i_{\wideparen{{D}}}(M,\wideparen{{D}}))=0 $ and hence $d(M)=d(\Omega(\bX) \otimes M)$.
\end{proof}
\section{Ext functors for coadmissible equivariant $\mathcal{D}$-modules}\label{section_four}

In this section, we develop the global theory of equivariant Ext functors. This is considerably more complicated than in the non-equivariant setting \cite{AWIII}, since
there is no global sheaf $\wideparen{\mathcal{D}}(-,G)$ playing the role of the coherent sheaf 
$\wideparen{\mathcal{D}}$ in the equivariant setting.

\subsection{Modules over the sheaf of rings $\mathcal{Q}$}\label{section_Q}
In this subsection we prepare on the Banach level the globalization of the Ext functors, by showing several compatibilities of the local Ext groups.\\

Let $\bX$ be a smooth affinoid variety of dimension $d$ and $G$ be a compact $p$-adic Lie group acting continuously on $\bX$. Fix a $G$-stable affine formal model $\mathcal{A}$ in $A=\mathcal{O}(\bX)$, a $G$-stable $\mathcal{A}$-Lie lattice $\mathcal{L}$ of $\mathcal{T}(\bX)=Der_K(A)$ and an open normal subgroup $J$ of $G$ which is contained in $G_\mathcal{L}$ (which means that $(\mathcal{L}, J)$ is a $\mathcal{A}$-trivialising pair).\\ 
\\
\textbf{Notation}: Throughout this section, we will be working under the following notations and assumptions:
\begin{itemize}
\item[$\ast$]$\mathcal{L}$ is a smooth $\mathcal{A}$-module, which means that $\mathcal{L}$ is projective and finitely generated over $\mathcal{A}$.
\item[$\ast$] When $H$ is an open subgroup of $G$, $\bX_{w}(\mathcal{T})/H$ denotes the set of all open affinoid subsets $\bU \in \bX_{w}(\mathcal{T})$ such that $(\bU,H)$ is small. If $\bU \in \bX_{w}(\mathcal{T})/H$, then $H$ is called an \textbf{$\bU$-small} subgroup of $G$.
\end{itemize}

Before Def. \ref{defq}, we have recalled the Grothendieck topologies $\bX_{ac}(\mathcal{L},G) \subset \bX_{w}(\mathcal{L},G)$ of $G$-stable $\mathcal{L}$-accessible resp. $\mathcal{L}$-admissible affinoid subdomains of $\bX$. Recall from \ref{defq} the sheaf of rings on $\bX_{w}(\mathcal{L},G)$

\[\mathcal{Q}(-,G):=\widehat{\mathcal{U}(\mathcal{L})}_K \rtimes_JG.\]

If $H \leq G$ is an open compact subgroup of $G$ and $J$ is contained in $G_{\mathcal{L}}\cap H$, then $\mathcal{Q}(-,H)$ is also a sheaf on the canonical (strong) Grothendieck topology $\bX_{w}(\mathcal{L},H)$ containing all the $H$-stable $\mathcal{L}$-admissible affinoid subdomains of $\bX$.  In the sequel, if there is no ambiguity, we denote $\mathcal{Q}(-,G)$ simply by $\mathcal{Q}$ whenever the groups $G$ and $J$ are given. 
\begin{definition} (\cite[Definition $4.3.17$]{AW}
Let $M$ be a finitely generated $\mathcal{Q}(\bX)$-module. Then there is a presheaf $Loc_\mathcal{Q}(M)$ on $\bX_{ac}(\mathcal{L},G)$  associated to $M$ which is defined as follows: 
\[Loc_\mathcal{Q}(M)(\bY):= \mathcal{Q}(\bY)\otimes_{\mathcal{Q}(\bX)}M\]
for all $\bY \in \bX_{ac}(\mathcal{L},G)$.
\end{definition}
Following \cite[Corollary $4.3.19$]{AW}, under the extra assumption on $\mathcal{L}$ that $[\mathcal{L}, \mathcal{L}]\subset \pi\mathcal{L}$ and $\mathcal{L}.A \subset \pi \mathcal{A}$, then $Loc_\mathcal{Q}(M)$ is a sheaf of $\mathcal{Q}$-modules on $\bX_{ac}(\mathcal{L},G)$ for every finitely generated $\mathcal{Q}(\bX)$-module $M$.

\begin{definition} Let $\mathcal{U}$ be a $\mathbf{X}_{ac}(\mathcal{L},G)$-covering of $\bX$. Then a $\mathcal{Q}$-module $\mathcal{M}$ on $\bX_{ac}(\mathcal{L},G)$ is said to be $\mathcal{U}$-coherent if for any $\bY \in \mathcal{U}$, there exists a finitely generated $\mathcal{Q}(\bY)$-module $M$ such that
\[Loc_{\mathcal{Q}\vert_\mathcal{Y}}(M) \cong \mathcal{M}\vert_\mathcal{Y},\]
where $\mathcal{Y}:= \bX_{ac}(\mathcal{L},G)\cap \bY_w$.
\end{definition}
It is proved in \cite[Theorem $4.3.21$]{AW} that if $[\mathcal{L},\mathcal{L}] \subset \pi\mathcal{L}$, $\mathcal{L}. \mathcal{A} \subset \pi \mathcal{A}$, then for any $\mathcal{U}$-coherent sheaf of $\mathcal{Q}$-modules $\mathcal{M}$, $\mathcal{M}(\bX)$ is a finitely generated $\mathcal{Q}(\bX)$-module and we have an isomorphism of $\mathcal{Q}$-modules
\[Loc_\mathcal{Q}(\mathcal{M}(\bX)) \tilde{\longrightarrow} \mathcal{M}.\]
In the following, we fix a sheaf $\mathcal{M}$ of $\mathcal{Q}$-modules on $\bX_{ac}(\mathcal{L},G)$.

\begin{pro}\label{isoQ} Let $H$ be an open normal subgroup of $G$. There is an isomorphism of right $\mathcal{Q}(\bX,H)$-modules
\[{p}^i_{G,H}(\bX):\,\,\, Ext^i_{\mathcal{Q}(\bX,G)}(\mathcal{M}(\bX),\mathcal{Q}(\bX,G))\tilde{\longrightarrow} Ext^i_{\mathcal{Q}(\bX,H)}(\mathcal{M}(\bX),\mathcal{Q}(\bX,H)).\]
Furthermore, if $H' \leq H$ is another open normal subgroup of $G$, then one has $${p}^i_{H,H'}(\bX) \circ {p}^i_{G,H}(\bX)= {p}^i_{G,H'}(\bX).$$
\end{pro}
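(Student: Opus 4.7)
The plan is to identify this statement as a direct consequence of Lemma \ref{keylemma1}, applied to the ring extension $R_0 := \mathcal{Q}(\bX,H) \hookrightarrow \mathcal{Q}(\bX,G) =: R_1$. Since $H$ is open normal in $G$ (and we may shrink $J$ so that $(\mathcal{L}, J)$ is trivialising with $J \le H$), the argument in the proof of Lemma \ref{keylemma2} applies verbatim with this choice of $H$: the quotient $G/H$ is finite, a set $S = \{g_1 = 1, g_2, \ldots, g_m\}$ of coset representatives of $G/H$ lifts to a basis $\bar S$ of $R_1$ as a free left (and right) $R_0$-module, and this basis satisfies conditions (i)--(iii) of Lemma \ref{keylemma1}. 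The associated two-sided $R_0$-linear projection is precisely the map $p^{\bX}_{G,H,J}$ of \eqref{projection1}.

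Once these hypotheses are verified, Lemma \ref{keylemma1} (and the final \textit{Ext} statement it contains) applied with $M = \mathcal{M}(\bX)$ and $N = R_0$ gives a canonical isomorphism
\[
\mathrm{Ext}^i_{R_1}\bigl(\mathcal{M}(\bX), R_1\otimes_{R_0} R_0\bigr) \;\simeq\; \mathrm{Ext}^i_{R_0}\bigl(\mathcal{M}(\bX), R_0\bigr),
\]
and the canonical identification $R_1 \otimes_{R_0} R_0 \cong R_1$ yields the desired map ${p}^i_{G,H}(\bX)$. Concretely, picking a resolution $P^{\bullet} \to \mathcal{M}(\bX)$ by free $R_1$-modules (which, by freeness of $R_1$ over $R_0$, is simultaneously a free $R_0$-resolution), the isomorphism is induced on cochain complexes by postcomposition with $p^{\bX}_{G,H,J}$. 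The statement that this is a map of right $\mathcal{Q}(\bX,H)$-modules is immediate because $p^{\bX}_{G,H,J}$ is $R_0$-linear on both sides, so the induced map commutes with right multiplication by elements of $R_0$ on the Hom-complexes.

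The remaining task is the cocycle identity $p^i_{H,H'}(\bX) \circ p^i_{G,H}(\bX) = p^i_{G,H'}(\bX)$ for a chain $H' \le H \le G$ of open normal subgroups (which we may take to contain $J$). Since each of these maps is induced by the corresponding ring-level projection applied termwise to $\mathrm{Hom}_{\mathcal{Q}(\bX,\,\cdot\,)}(P^{\bullet}, \mathcal{Q}(\bX,\,\cdot\,))$, it suffices to establish the ring-level identity $p^{\bX}_{H,H',J} \circ p^{\bX}_{G,H,J} = p^{\bX}_{G,H',J}$. The main technical point here — and the step I expect to be the most delicate — is the correct choice of coset representatives: one picks representatives $\{g_i\}$ of $G/H$ with $g_1 = 1$ and representatives $\{h_j\}$ of $H/H'$ with $h_1 = 1$, so that $\{g_i h_j\}$ is a system of representatives for $G/H'$. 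Expressing an element $x \in \mathcal{Q}(\bX,G)$ in the decomposition $x = \sum_{i,j} c_{ij}\, \overline{g_i h_j}$ with $c_{ij} \in \mathcal{Q}(\bX,H')$ and carefully tracking each projection (using $\bar g_i R_0 = R_0 \bar g_i$ to commute coefficients past the basis elements), both sides reduce to $c_{11}$, yielding the identity. Passing back to Hom-complexes and taking cohomology transports the equality to the desired identity for the maps $p^i$.
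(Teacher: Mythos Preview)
Your proposal is correct and follows essentially the same approach as the paper: both invoke Lemma~\ref{keylemma1} via the structure established in Lemma~\ref{keylemma2} for the extension $\mathcal{Q}(\bX,H)\hookrightarrow\mathcal{Q}(\bX,G)$, and both reduce the cocycle identity to a ring-level statement about compatible projections by choosing coset representatives. The only cosmetic difference is that the paper picks a single list $\{1=g_1,\dots,g_m,\dots,g_n\}$ of representatives for $G/H'$ with the first $m$ forming a basis for $H/H'$, whereas you parametrise by products $g_ih_j$; these are equivalent formulations of the same argument.
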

\begin{proof}
Write $M:=\mathcal{M}(\bX)$. The first part of the proposition is in fact a consequence of Lemma \ref{keylemma1} and Lemma \ref{keylemma2}. When $i=0$,   then  
$$p_{G,H}(\bX)(f):={p}^0_{G,H}(\bX)(f)= p_{G,H}^{\bX} \circ f $$ 
for $f \in Hom_{\mathcal{Q}(\bX,G)}(M, \mathcal{Q}(\bX,G))$, where $p_{G,H}^{\bX}$ is the projection map $ \mathcal{Q}(\bX,G) \longrightarrow \mathcal{Q}(\bX,H)$ which is defined in Lemma \ref{keylemma1}.
For the second part, if $H' \leq H$ are open normal subgroups of $G$, then both $H$ and $H'$ are of finite index in $G$ and $H'$ is of finite index in $H$ (since $G$ is compact). Hence we can choose a $\mathcal{Q}(\bX,H')$-basis $\lbrace 1=g_1,g_2,...,g_m,...,g_n \rbrace $ of $\mathcal{Q}(\bX,G)$ such that $\lbrace g_1,...,g_m \rbrace$  is a basis of $\mathcal{Q}(\bX,H)$ as a $\mathcal{Q}(\bX,H')$-module. Then by definition
\begin{center}
$p_{G,H'}^{\bX}(a_1g_1+a_2g_2+...+a_mg_m +...+a_ng_n)=a_1$
\end{center}
and
\begin{center}
$p_{H,H'}^{\bX} \circ p_{G,H}^{\bX}(a_1g_1+a_2g_2+...+a_mg_m +...+a_ng_n)=p_{H,H'}^{\bX}(a_1g_1+a_2g_2+...+a_mg_m)=a_1$
\end{center}
This implies  $p_{G,H'}^{\bX}=p_{H,H'}^{\bX} \circ p_{G,H}^{\bX}$. Therefore ${p}_{H,H'}(\bX) \circ {p}_{G,H}(\bX)= {p}_{G,H'}(\bX)$, which means that the assertion is true for $i=0$. For  $i > 0$, by taking a resolution of $M$ by free $\mathcal{Q}(\bX,G)$-modules of finite rank, the case $i>0$ reduces to the case $i=0$.
\end{proof}
\begin{lemma}\label{lemma4} Let $\varphi: A \rightarrow B$ be a flat morphism of rings and $M$ be a finitely presented $A$-module. There is an isomorphism of right $B$-modules
\[Ext^i_{A}(M,A) {\otimes}_A B \longrightarrow Ext^i_B( B {\otimes}_A M ,B).\]
\end{lemma}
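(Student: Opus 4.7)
The approach is to identify both sides as the cohomology of isomorphic complexes, after fixing a sufficiently finite free resolution of $M$.

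First, I would choose a resolution $P^\bullet \to M \to 0$ of $M$ by finitely generated free left $A$-modules. Such a resolution exists because $M$ is finitely presented and, in the intended applications, $A$ is Noetherian (the rings $\mathcal{Q}(\bX,G)$ arising above are Noetherian by \cite{AW}). Since $B$ is flat over $A$, tensoring yields an exact complex $B\otimes_A P^\bullet \to B\otimes_A M \to 0$ providing a finitely generated free resolution of $B\otimes_A M$ as a left $B$-module.

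Second, for any finitely generated free left $A$-module $P = A^n$ I would exhibit the natural morphism of right $B$-modules
\[\alpha_P : Hom_A(P,A)\otimes_A B \longrightarrow Hom_B(B\otimes_A P,B),\qquad f\otimes b \longmapsto \bigl(b'\otimes p \mapsto b'\varphi(f(p))b\bigr),\]
and argue that $\alpha_P$ is bijective: both sides are canonically $B^n$ via compatibility with finite direct sums, and the case $P=A$ reduces to the multiplication isomorphism $A\otimes_A B \cong B$. Naturality of $\alpha_P$ in $P$ then upgrades the family $\{\alpha_{P^k}\}_k$ to an isomorphism of cochain complexes of right $B$-modules
\[Hom_A(P^\bullet, A)\otimes_A B \;\cong\; Hom_B(B\otimes_A P^\bullet, B).\]

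Finally, I would pass to cohomology on both sides. Using the flatness of $B$ over $A$ once more (to commute the exact functor $-\otimes_A B$ with $H^i$), the left-hand complex has cohomology $Ext^i_A(M,A)\otimes_A B$, while the right-hand complex has cohomology $Ext^i_B(B\otimes_A M, B)$ by definition. Composing these identifications with the complex-level isomorphism above yields the claimed isomorphism of right $B$-modules. The real obstacle is the finiteness of the resolution: for infinite-rank free $P$ the comparison map $\alpha_P$ is no longer an isomorphism in general (a product $\prod_I A$ tensored with $B$ need not equal $\prod_I B$), so it is essential that each $P^k$ be finitely generated, which is precisely what the finite presentation of $M$ together with Noetherianity of $A$ secures.
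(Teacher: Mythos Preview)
Your proof is correct and follows essentially the same route as the paper's (very terse) argument, which also invokes flatness and a finitely generated free resolution; you merely phrase the comparison via an isomorphism of Hom-complexes rather than the Five Lemma, which is the same content. Your remark that one needs each $P^k$ finitely generated free---hence implicitly Noetherianity of $A$, which indeed holds for the rings $\mathcal{Q}(\bX,G)$ to which the lemma is applied---is a useful clarification that the paper leaves tacit.
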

\begin{proof}
This is a consequence of the flatness of $\varphi$ and the Five lemma, using the fact that $M$ is finitely presented as an $A$-module.
\end{proof}

\begin{pro}Let $\bU \in \bX_{ac}(\mathcal{L},G)$. There is a morphism of right $\mathcal{Q}(\bX,G)$-modules
\[\tau_{\bX,\bU,G}^{i}: \,\,Ext^i_{\mathcal{Q}(\bX,G)}(\mathcal{M}(\bX), \mathcal{Q}(\bX,G))\rightarrow Ext^i_{\mathcal{Q}(\bU,G)}(\mathcal{M}(\bU),\mathcal{Q}(\bU,G)).\]
\end{pro}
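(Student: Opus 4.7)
The plan is to construct $\tau^i_{\bX,\bU,G}$ as the natural extension-of-scalars morphism obtained from flat base change along $\mathcal{Q}(\bX,G) \longrightarrow \mathcal{Q}(\bU,G)$. Write $M := \mathcal{M}(\bX)$ throughout.

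The first thing I would record are two standing inputs. Since $\mathcal{M}$ is $\mathcal{U}$-coherent and the ring $\mathcal{Q}(\bX,G) = \widehat{U(\mathcal{L})}_K \rtimes_J G$ is two-sided Noetherian (the Banach algebra $\widehat{U(\mathcal{L})}_K$ is Noetherian by Ardakov--Wadsley and $\mathcal{Q}(\bX,G)$ is a finite crossed product over it because $G/J$ is finite), Theorem $4.3.21$ of \cite{AW} implies that $M$ is finitely generated, hence finitely presented, over $\mathcal{Q}(\bX,G)$, and furthermore that the natural map
\[ \mathcal{Q}(\bU,G) \otimes_{\mathcal{Q}(\bX,G)} M \;\tilde{\longrightarrow}\; \mathcal{M}(\bU) \]
is an isomorphism. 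On the other hand, because $\bU \in \bX_{ac}(\mathcal{L},G)$, Proposition \ref{flat} ensures that the ring morphism $\mathcal{Q}(\bX,G) \longrightarrow \mathcal{Q}(\bU,G)$ is flat on both sides.

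Combining these two inputs, I would invoke Lemma \ref{lemma4} with $A = \mathcal{Q}(\bX,G)$, $B = \mathcal{Q}(\bU,G)$ and the finitely presented module $M$ to obtain a canonical isomorphism of right $\mathcal{Q}(\bU,G)$-modules
\[ \theta_i:\; Ext^i_{\mathcal{Q}(\bX,G)}(M, \mathcal{Q}(\bX,G)) \otimes_{\mathcal{Q}(\bX,G)} \mathcal{Q}(\bU,G) \;\tilde{\longrightarrow}\; Ext^i_{\mathcal{Q}(\bU,G)}(\mathcal{M}(\bU), \mathcal{Q}(\bU,G)). \]
Then I would define
\[ \tau^i_{\bX,\bU,G}(x) \;:=\; \theta_i(x \otimes 1) \]
for $x \in Ext^i_{\mathcal{Q}(\bX,G)}(M, \mathcal{Q}(\bX,G))$, that is, as the composite of the canonical extension-of-scalars map $x \mapsto x \otimes 1$ with $\theta_i$. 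Each of these two factors is right $\mathcal{Q}(\bX,G)$-linear (viewing $\mathcal{Q}(\bU,G)$ as a $\mathcal{Q}(\bX,G)$-bimodule via the restriction homomorphism), so $\tau^i_{\bX,\bU,G}$ is a morphism of right $\mathcal{Q}(\bX,G)$-modules, as required.

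The construction is essentially formal; the only potentially delicate point is checking the hypotheses of Lemma \ref{lemma4}. Finite presentation of $M$ reduces, via the Noetherianity of $\mathcal{Q}(\bX,G)$, to coherence of $\mathcal{M}$ as provided by Theorem $4.3.21$ of \cite{AW} (which relies on the standing assumptions $[\mathcal{L},\mathcal{L}] \subset \pi\mathcal{L}$ and $\mathcal{L}\cdot\mathcal{A} \subset \pi\mathcal{A}$), while two-sided flatness is precisely Proposition \ref{flat}. Once these are in place, no further work is needed. The same recipe, applied to a finitely generated free resolution $P^{\bullet} \to M$, gives a more explicit description of $\tau^i_{\bX,\bU,G}$ at the level of cocycles (by applying $\mathcal{Q}(\bU,G) \otimes_{\mathcal{Q}(\bX,G)} -$ to $\Hom_{\mathcal{Q}(\bX,G)}(P^{\bullet}, \mathcal{Q}(\bX,G))$ and composing with the tensor-hom adjunction), which will be useful if one later wants to verify functoriality in $\bU$ or compatibility with the maps $p^i_{G,H}(\bX)$ of Proposition \ref{isoQ}.
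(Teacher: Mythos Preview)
Your proposal is correct and follows essentially the same approach as the paper: identify $\mathcal{M}(\bU)\cong\mathcal{Q}(\bU,G)\otimes_{\mathcal{Q}(\bX,G)}M$ via coherence of $\mathcal{M}$, invoke the flatness of $\mathcal{Q}(\bX,G)\to\mathcal{Q}(\bU,G)$ from Proposition~\ref{flat}, apply Lemma~\ref{lemma4}, and define $\tau^i_{\bX,\bU,G}$ as the extension-of-scalars map $x\mapsto x\otimes 1$ composed with the resulting isomorphism. You are slightly more explicit than the paper in justifying finite presentation of $M$ (via Noetherianity of $\mathcal{Q}(\bX,G)$) and in flagging the standing hypotheses $[\mathcal{L},\mathcal{L}]\subset\pi\mathcal{L}$, $\mathcal{L}\cdot\mathcal{A}\subset\pi\mathcal{A}$ needed for Theorem~4.3.21 of \cite{AW}, but the argument is the same.
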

\begin{proof}
Denote $M:= \mathcal{M}(\bX)$. Then
\[\mathcal{M}(\bU) \cong \mathcal{Q}(\bU,G)\otimes_{\mathcal{Q}(\bX,G)}M.\]
Since $U$ is $\mathcal{L}$-accessible, the morphism 
\[\mathcal{Q}(\bX,G)\longrightarrow \mathcal{Q}(\bU,G)\]
is flat (Proposition \ref{flat}). Now applying Lemma \ref{lemma4} gives 
\[Ext^i_{\mathcal{Q}(\bU,G)}(\mathcal{M}(\bU),\mathcal{Q}(\bU,G))\cong Ext^i_{\mathcal{Q}(\bX,G)}(M, \mathcal{Q}(\bX,G))\otimes_{\mathcal{Q}(\bX,G)}\mathcal{Q}(\bU, G).\]
By consequence, we obtain the natural morphism of right $\mathcal{Q}(\bX,G)$-modules:
\[\tau_{\bX,\bU,G}^i: \,\,Ext^i_{\mathcal{Q}(\bX,G)}(\mathcal{M}(\bX), \mathcal{Q}(\bX,G))\rightarrow Ext^i_{\mathcal{Q}(\bU,G)}(\mathcal{M}(\bU),\mathcal{Q}(\bU,G)).\]
\end{proof}

\begin{pro} \label{commutativeQ} Let $H$ be a normal open subgroup of $G$ and $\bU \in \bX_{ac}(\mathcal{L},G)$. Then the following diagram is commutative:
\begin{equation}
 \begin{tikzcd}
Ext^i_{\mathcal{Q}(\bX,G)}(\mathcal{M}(\bX), \mathcal{Q}(\bX,G)) \arrow{r}{{p}_{G,H}^i(\bX)} \arrow[swap]{d}{\tau_{\bX,\mathbf{U},G}^i} & Ext^i_{\mathcal{Q}(\bX,H)}(\mathcal{M}(\bX), \mathcal{Q}(\bX,H)) \arrow{d}{\tau_{\bX,\bU,H}^i} \\
Ext^i_{\mathcal{Q}(\bU,G)}(\mathcal{M}(\bU), \mathcal{Q}(\bU,G)) \arrow{r}{{p}_{G,H}^i(\bU)}& Ext^i_{\mathcal{Q}(\bU,H)}(\mathcal{M}(\bU), \mathcal{Q}(\bU,H))
\end{tikzcd}
\end{equation}
\end{pro}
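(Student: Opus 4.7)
The plan is to compute all four $Ext$ groups from a single common free resolution, thereby reducing the assertion to a diagram of $Hom$-complexes at chain level, and then to verify that diagram using the $\mathcal{Q}$-level commutativity already established inside the proof of Corollary \ref{coro3}(ii).

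First I would fix a resolution $P^\bullet \to \mathcal{M}(\bX) \to 0$ by finitely generated free left $\mathcal{Q}(\bX,G)$-modules; this is possible because $\mathcal{M}(\bX)$ is a finitely generated $\mathcal{Q}(\bX,G)$-module by \cite[Theorem $4.3.21$]{AW}. By Lemma \ref{keylemma2} the ring $\mathcal{Q}(\bX,G)$ is free as a $\mathcal{Q}(\bX,H)$-module, so $P^\bullet$ is simultaneously a free $\mathcal{Q}(\bX,H)$-resolution of $\mathcal{M}(\bX)$. By Proposition \ref{flat}, both morphisms $\mathcal{Q}(\bX,G) \to \mathcal{Q}(\bU,G)$ and $\mathcal{Q}(\bX,H) \to \mathcal{Q}(\bU,H)$ are flat, so $\mathcal{Q}(\bU,G) \otimes_{\mathcal{Q}(\bX,G)} P^\bullet$ is a free $\mathcal{Q}(\bU,G)$-resolution of $\mathcal{M}(\bU)$, which is in turn a free $\mathcal{Q}(\bU,H)$-resolution by a second application of Lemma \ref{keylemma2} at $\bU$. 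All four $Ext$ groups appearing in the diagram are therefore computed from this single resolution.

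At the chain level, $p^i_{G,H}(\bX)$ is induced by post-composition with the projection $p^{\bX}_{G,H}\colon\mathcal{Q}(\bX,G)\to\mathcal{Q}(\bX,H)$, while $\tau^i_{\bX,\bU,G}$ is induced by post-composition with the restriction $r^\bU_G\colon\mathcal{Q}(\bX,G)\to\mathcal{Q}(\bU,G)$, after the standard adjunction identification $Hom_{\mathcal{Q}(\bU,G)}(\mathcal{Q}(\bU,G)\otimes_{\mathcal{Q}(\bX,G)}P^k,\mathcal{Q}(\bU,G)) \cong Hom_{\mathcal{Q}(\bX,G)}(P^k,\mathcal{Q}(\bU,G))$; the other two arrows are analogous. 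By additivity of $Hom$ in its first argument, it suffices to verify commutativity on a single free summand, so one may take $P^k=\mathcal{Q}(\bX,G)$ of rank one. Under evaluation at $1$, each of the four $Hom$-groups becomes the corresponding $\mathcal{Q}$-ring, and tracing an element $a\in\mathcal{Q}(\bX,G)$ through the two paths of the diagram produces $r^\bU_H(p^{\bX}_{G,H}(a))$ on one side and $p^{\bU}_{G,H}(r^\bU_G(a))$ on the other.

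The equality of these two elements of $\mathcal{Q}(\bU,H)$ is precisely the $\mathcal{Q}$-level analogue of Corollary \ref{coro3}(ii); indeed, the proof of that corollary establishes exactly this commutativity for $\widehat{U(\mathcal{L})}_K \rtimes_J (-)$ before the $\wideparen{\mathcal{D}}$-statement is deduced by passage to the inverse limit. Passing to cohomology then yields the desired commutativity of the $Ext$-diagram. The main delicate point will be checking that the chain-level maps induced by $p^{\bX}_{G,H}$ and $r^\bU_G$ really do descend, through the adjunction identifications, to the maps $p^i_{G,H}(\bX)$ and $\tau^i_{\bX,\bU,G}$ as defined in the paper; once this bookkeeping is pinned down, the remainder of the argument is purely formal.
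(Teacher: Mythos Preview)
Your proposal is correct and follows essentially the same route as the paper: fix a free $\mathcal{Q}(\bX,G)$-resolution of $\mathcal{M}(\bX)$, use freeness over $\mathcal{Q}(\bX,H)$ and flatness of restriction to compute all four $Ext$ groups from it, then reduce at the chain level to the ring-level square $p^{\bU}_{G,H}\circ r^{\bU}_G = r^{\bU}_H\circ p^{\bX}_{G,H}$, which is exactly the $\mathcal{Q}$-level statement established in the proof of Corollary~\ref{coro3}(ii). The paper leaves the reduction at the level of an arbitrary module $P$ rather than passing to a rank-one summand, but this is a purely cosmetic difference.
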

\begin{proof}
Write $M:= \mathcal{M}(\bX)$. Then $Loc_\mathcal{Q}(M)\cong \mathcal{M}$. Hence
 $$\mathcal{M}(\bU)\cong \mathcal{Q}(\bU,G)\otimes_{\mathcal{Q}(\bX,G)}M\cong \mathcal{Q}(\bU,H)\otimes_{\mathcal{Q}(\bX,H)}M.$$
Now take a resolution $P^{.}$ of $M$ by free $\mathcal{Q}(\bX,G)$-modules of finite rank. Since $\bU \in \bX_{ac}(\mathcal{L},G)$ is supposed to be $\mathcal{L}$-accessible, the ring $\mathcal{Q}(\bU,G)$ is flat over $\mathcal{Q}(\bX,G)$ (Proposition \ref{flat}). This implies  $\mathcal{Q}(\bU,G)\otimes_{\mathcal{Q}(\bX,G)}P^{.}$ is also  a free resolution of $\mathcal{Q}(\bU,G)\otimes_{\mathcal{Q}(\bX,G)}M \cong \mathcal{M}(\bU)$. Hence it reduces to prove that for any $\mathcal{Q}(\bX,G)$-module $P$, the following diagram is commutative:
\[\begin{tikzcd}
Hom_{\mathcal{Q}(\bX,G)}(P,\mathcal{Q}(\bX,G)) \arrow{r}{p_{G,H}(\bX)} \arrow[swap]{d}{} & Hom_{\mathcal{Q}(\bX,H)}(P,\mathcal{Q}(\bX,H))\arrow{d}{} \\
Hom_{\mathcal{Q}(\bU,G)}(\mathcal{Q}(\bU,G)\otimes_{\mathcal{Q}(\bX,G)}P,\mathcal{Q}(\bU,G))\arrow{r}{p_{G,H}(\bU)} & Hom_{\mathcal{Q}(\bU,H)}(\mathcal{Q}(\bU,H)\otimes_{\mathcal{Q}(\bX,H)}P,\mathcal{Q}(\bU,H)).\\
\end{tikzcd}
\]
This means the diagram
\[\begin{tikzcd}
\mathcal{Q}(\bX,G) \arrow{r}{p^{\bX}_{G,H}} \arrow[swap]{d}{} & \mathcal{Q}(\bX,H)\arrow{d}{} \\
\mathcal{Q}(\bU,G)\arrow{r}{p^{\bU}_{G,H}} & \mathcal{Q}(\bU,H)\\
\end{tikzcd}
\]
is commutative, which is already contained in (the proof of) Corollary \ref{coro3}(ii).\\
\end{proof}

\subsection{Globalization of Ext functors}\label{heart}

In this subsection we prepare on the Fr\'echet level the globalization of the Ext functors, by showing several compatibilities of the local Ext groups.\\

 Let $\bX$ be a smooth rigid analytic space and $G$ be a $p$-adic Lie group acting continuously on $\bX$. For each non negative integer $i \in \mathbb{N}$, we will construct a global Ext functor $E^i$ from coadmissible $G$-equivariant left $\mathcal{D}_{\bX}$-modules to coadmissible $G$-equivariant right $\mathcal{D}_{\bX}$-modules. Let $\mathcal{M} \in \mathcal{C}_{\bX/G}$ be a coadmissible $G$-equivariant $\mathcal{D}_{\bX}$-module. Then locally we want $E^i(\mathcal{M})(\bU)$ to be isomorphic to $Ext^i_{\wideparen{\mathcal{D}}(\bU,H)}(\mathcal{M}(\bU), \wideparen{\mathcal{D}}(\bU,H))$ for every open affinoid subset $\bU \in \bX_{w}(\mathcal{T})$ and open subgroup $H \leq G$ such that $(\bU,H)$ is small. We need to show that such a local definition of $E^i(\mathcal{M})(\bU)$ is well-defined, i.e. independent of the choice of the subgroup $H$.

\begin{pro} \label{isoD} Suppose that $\bX$ is a smooth affinoid variety and $G$ is such that $(\bX,G)$ is small and $H$ is an open normal subgroup of $G$. Then for any left $\wideparen{\mathcal{D}}(\bX,G)$-module $M$, there is an isomorphism of right $\wideparen{\mathcal{D}}(\bX,H)$-modules:
\[\wideparen{p}^{i}_{G,H}(\bX):\,\,\, Ext^i_{\wideparen{\mathcal{D}}(\bX,G)}(M,\wideparen{\mathcal{D}}(\bX,G))\tilde{\longrightarrow} Ext^i_{\wideparen{\mathcal{D}}(\bX,H)}(M,\wideparen{\mathcal{D}}(\bX,H)).\]
Furthermore, if $H' \leq H$ is another open normal subgroup of $G$, then one has $$\wideparen{p}^i_{H,H'}(\bX) \circ \wideparen{p}^i_{G,H}(\bX)= \wideparen{p}^i_{G,H'}(\bX).$$
\end{pro}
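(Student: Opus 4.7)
The proof will parallel Proposition \ref{isoQ}, but now at the level of the completed (Fréchet-Stein) algebras instead of at a single Banach level. The input that makes the argument work at this higher level is Proposition \ref{keypro1}, which gives exactly the hypotheses required by Lemma \ref{keylemma1}.

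First I would apply Lemma \ref{keylemma1} with
\[
R_0 = \wideparen{\mathcal{D}}(\bX,H), \qquad R_1 = \wideparen{\mathcal{D}}(\bX,G), \qquad N = R_0.
\]
By Proposition \ref{keypro1}, there is a basis $\tilde{S}=\{\tilde g_1,\dots,\tilde g_m\}$ of $R_1$ as a right $R_0$-module satisfying conditions $(i),(ii),(iii)$ of Lemma \ref{keylemma1}. Since $R_1\otimes_{R_0}R_0 \cong R_1$, the lemma immediately supplies a natural isomorphism
\[
\mathrm{Ext}^i_{R_1}(M,R_1)\;\tilde{\longrightarrow}\;\mathrm{Ext}^i_{R_0}(M,R_0),
\]
which is the desired $\wideparen{p}^{i}_{G,H}(\bX)$. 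At $i=0$ this map is explicitly $f\mapsto p^{\bX}_{G,H}\circ f$, where $p^{\bX}_{G,H}$ is the two-sided $R_0$-linear projection constructed in Proposition \ref{keypro1}; for $i>0$ one obtains it by taking a resolution of $M$ by free $R_1$-modules (which is simultaneously a free resolution of $M$ over $R_0$, by the basis statement) and applying the $i=0$ case functorially.

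For the compatibility $\wideparen{p}^{i}_{H,H'}(\bX)\circ \wideparen{p}^{i}_{G,H}(\bX)=\wideparen{p}^{i}_{G,H'}(\bX)$, it suffices, by the same resolution trick, to check the case $i=0$, which in turn reduces to the identity
\[
p^{\bX}_{H,H'}\circ p^{\bX}_{G,H}=p^{\bX}_{G,H'}
\]
of projection maps on the Fréchet-Stein algebras themselves. Since $G$ is compact and $H'\trianglelefteq H\trianglelefteq G$ are open, $H'$ has finite index in $G$ and $H/H'\subset G/H'$. I would pick coset representatives $1=g_1,\dots,g_m,\dots,g_n$ of $G$ modulo $H'$ such that $g_1,\dots,g_m$ represent $H/H'$; passing to the inverse limit of the analogous choices at each Lie-lattice level (as in Lemma \ref{keylemma2} and Proposition \ref{keypro1}) yields a basis $\tilde g_1,\dots,\tilde g_n$ of $\wideparen{\mathcal{D}}(\bX,G)$ over $\wideparen{\mathcal{D}}(\bX,H')$ whose first $m$ elements form a basis of $\wideparen{\mathcal{D}}(\bX,H)$ over $\wideparen{\mathcal{D}}(\bX,H')$. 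With respect to this compatible basis both sides of the identity send $\sum_{i=1}^n a_i\tilde g_i\in \wideparen{\mathcal{D}}(\bX,G)$ to $a_1\in \wideparen{\mathcal{D}}(\bX,H')$, and the equality is immediate.

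The one step that requires care is the compatible choice of bases in the second part: one must verify that the inverse limit procedure defining $p^{\bX}_{G,H}$, $p^{\bX}_{H,H'}$, $p^{\bX}_{G,H'}$ can be made simultaneously (i.e.\ that for a cofinal system of $\mathcal{A}$-trivialising pairs $(\mathcal{L},J)$ with $J\subset H'$, the coset representatives at the Banach level $\widehat{U(\mathcal{L})}_K\rtimes_J(-)$ can be chosen compatibly for $G\supset H\supset H'\supset J$). This is essentially automatic because $G/J$ is finite once $J$ is chosen, but it is where the argument has to be run with a little bookkeeping; everything else is formal from Lemma \ref{keylemma1} and Proposition \ref{keypro1}.
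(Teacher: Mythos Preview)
Your proposal is correct and follows essentially the same route as the paper: both invoke Proposition~\ref{keypro1} to verify that $\wideparen{\mathcal{D}}(\bX,H)\hookrightarrow\wideparen{\mathcal{D}}(\bX,G)$ satisfies the hypotheses of Lemma~\ref{keylemma1}, obtain the Ext-isomorphism from that lemma, and reduce the transitivity statement to the identity $p^{\bX}_{H,H'}\circ p^{\bX}_{G,H}=p^{\bX}_{G,H'}$ checked on a compatible choice of coset representatives, exactly as in Proposition~\ref{isoQ}. Your discussion of the compatible bases at each Banach level is a bit more explicit than the paper (which simply refers back to the proof of Proposition~\ref{isoQ}), but the content is the same.
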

\begin{proof} Since Lemma \ref{keylemma1} holds for the morphism of rings $ \wideparen{\mathcal{D}}(\bX,H) \longrightarrow \wideparen{\mathcal{D}}(\bX,G)$ (Proposition \ref{keypro1}), the proof of this proposition uses exactly the same arguments as in the proof of Proposition \ref{isoQ}. We just write down here the definition of $\wideparen{p}^{i}_{G,H}(\bX)$. Let $P^.$ be a resolution of $M$ by free $\wideparen{\mathcal{D}}(\bX,G)$-modules. Then $\wideparen{p}_{G,H}^i(\bX)$ is determined by taking the $i-th$-cohomology of the following isomorphism of complexes:
\begin{align*}
Hom_{\wideparen{\mathcal{D}}(\bX,G)}(P^., \wideparen{\mathcal{D}}(\bX,G)) &\longrightarrow Hom_{\wideparen{\mathcal{D}}(\bX,H)}(P^.,\wideparen{\mathcal{D}}(\bX,H))\\
f^. &\longmapsto  p_{G,H}^{\bX} \circ f^.
\end{align*}

In particular, when $i=0$ then for every $f \in Hom_{\wideparen{\mathcal{D}}(\bX,G)}(M, \wideparen{\mathcal{D}}(\bX,G))$, one has 
$$\wideparen{p}_{G,H}(\bX)(f):=\wideparen{p}_{G,H}^0(\bX)(f):= p_{G,H}^{\bX} \circ f.$$
Here we recall that $p_{G,H}^{\bX}$ is the projection map 
\begin{align*}
p_{G,H}^{\bX}: \,\,\wideparen{\mathcal{D}}(\bX,G) &\longrightarrow \wideparen{\mathcal{D}}(\bX,H)\\
\sum_{i=0}^m a_i\bar{g}_i &\longmapsto a_0,
\end{align*}
where $\bar{g_0},...,\bar{g}_m$ denote the images of the set of cosets $G/H$ (which is finite) in $\wideparen{\mathcal{D}}(\bX,G)$.
\end{proof}



 Let $(\bX,G)$ be small as above and $M$ be a coadmissible (left) $\wideparen{\mathcal{D}}(\bX,G)$-module. Suppose that  $H \leq G$ is an open normal subgroup of $G$. Let us choose a $G$-stable free $\mathcal{A}$-Lie lattice $\mathcal{L}$ for some $G$-stable affine formal model $\mathcal{A}$ in $\mathcal{O}(\bX)$ and a good chain ($J_n)$ for this Lie lattice such that $J_n \leq H$ for any $n$. Then we may form the sheaves of rings
\begin{equation}\label{Q1}
\mathcal{Q}_n(-,G)=\widehat{\mathcal{U}(\pi^n\mathcal{L})}_K\rtimes_{J_n}G,\,\, \text{and}\,\,\mathcal{Q}_n(-,H)=\widehat{\mathcal{U}(\pi^n\mathcal{L})}_K\rtimes_{J_n}H
\end{equation}
 on $\bX_{ac}(\mathcal{L},G)$ and $\bX_{ac}(\mathcal{L},H)$ respectively. Hence 
\begin{center}
 $\wideparen{\mathcal{D}}(\bX,G)\simeq \varprojlim_n \mathcal{Q}_n(\bX,G)$ and $\wideparen{\mathcal{D}}(\bX,H)\simeq \varprojlim_n \mathcal{Q}_n(\bX,H)$. 
\end{center}
 Thus the projection map (\ref{projection2}) : $p_{G,H}^{\bX}: \wideparen{\mathcal{D}}(\bX,G) \longrightarrow \wideparen{\mathcal{D}}(\bX,H) $ is defined as the inverse limit of the maps (\ref{projection1}):  $p_{G,H,n}^{\bX}: \mathcal{Q}_n(\bX,G) \longrightarrow \mathcal{Q}_n(\bX,H)$. Suppose that $M\cong \varprojlim_n M_n$ with $M_n= \mathcal{Q}_n(\bX,G)\otimes_{\wideparen{\mathcal{D}}(\bX,G)}M$, which is finitely generated over $\mathcal{Q}_n(\bX,G)$. Then following Proposition \ref{isoQ} for every $n$, there is also an isomorphism of $D_n(\bX,H)$-modules
\[{p}^i_{G,H,n}(\bX):\,\,\, Ext^i_{\mathcal{Q}_n(\bX,G)}(M_n,\mathcal{Q}_n(\bX,G))\tilde{\longrightarrow} Ext^i_{\mathcal{Q}_n(\bX,H)}(M_n,\mathcal{Q}_n(\bX,H)).\]

\begin{lemma} \label{plim} There is a commutative diagram
\[\begin{tikzcd}
Ext^i_{\wideparen{\mathcal{D}}(\bX,G)}(M,\wideparen{\mathcal{D}}(\bX,G)) \arrow{r}{\wideparen{p}_{G,H}^i(\bX)} \arrow[swap]{d}{} & Ext^i_{\wideparen{\mathcal{D}}(\bX,H)}(M, \wideparen{\mathcal{D}}(\bX,H))\arrow{d}{} \\
Ext^i_{\mathcal{Q}_n(\bX,G)}(M_n,\mathcal{Q}_n(\bX,G)) \arrow{r}{{p}^i_{G,H,n}(\bX)} & Ext^i_{\mathcal{Q}_n(\bX,H)}(M_n,\mathcal{Q}_n(\bX,H)).
\end{tikzcd}
\]
In particular, $\wideparen{p}_{G,H}^i(\bX)$ equals the inverse limit of the maps ${p}_{G,H,n}^i(\bX)$.\\

\end{lemma}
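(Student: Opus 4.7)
The plan is to prove both statements by working at the level of free resolutions. Choose a resolution $P^{\cdot}\to M$ by free $\wideparen{\mathcal{D}}(\bX,G)$-modules of finite rank (since $M$ is coadmissible, hence finitely generated). By Proposition \ref{keypro1}, $\wideparen{\mathcal{D}}(\bX,G)$ is free as a $\wideparen{\mathcal{D}}(\bX,H)$-module, so $P^{\cdot}$ is also a free resolution of $M$ viewed as a $\wideparen{\mathcal{D}}(\bX,H)$-module. By the definition of $\wideparen{p}^i_{G,H}(\bX)$ given in Proposition \ref{isoD}, this map is obtained by taking the $i$-th cohomology of the chain map $f^{\cdot}\mapsto p^{\bX}_{G,H}\circ f^{\cdot}$ from $Hom_{\wideparen{\mathcal{D}}(\bX,G)}(P^{\cdot},\wideparen{\mathcal{D}}(\bX,G))$ to $Hom_{\wideparen{\mathcal{D}}(\bX,H)}(P^{\cdot},\wideparen{\mathcal{D}}(\bX,H))$.

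Next, apply the flat base-change $-\otimes_{\wideparen{\mathcal{D}}(\bX,G)}\mathcal{Q}_n(\bX,G)$ (flatness comes from the Fr\'{e}chet-Stein structure, see \cite[Remark $3.2$]{ST2003}): the complex $P_n^{\cdot}:=\mathcal{Q}_n(\bX,G)\otimes_{\wideparen{\mathcal{D}}(\bX,G)}P^{\cdot}$ is a free resolution of $M_n$ over $\mathcal{Q}_n(\bX,G)$, and again over $\mathcal{Q}_n(\bX,H)$ by Lemma \ref{keylemma2}. The map $p^i_{G,H,n}(\bX)$ is then obtained by taking cohomology of the chain map $g^{\cdot}\mapsto p^{\bX}_{G,H,J_n}\circ g^{\cdot}$ applied to $Hom_{\mathcal{Q}_n(\bX,G)}(P_n^{\cdot},\mathcal{Q}_n(\bX,G))$. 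The vertical maps in the diagram are the natural base-change morphisms $Ext^i_{\wideparen{\mathcal{D}}(\bX,*)}(M,\wideparen{\mathcal{D}}(\bX,*))\to Ext^i_{\mathcal{Q}_n(\bX,*)}(M_n,\mathcal{Q}_n(\bX,*))$ for $*\in\{G,H\}$, induced on the chain level by $q_{*,J_n}\colon \wideparen{\mathcal{D}}(\bX,*)\to\mathcal{Q}_n(\bX,*)$ via $f\mapsto q_{*,J_n}\circ f$ together with the canonical identification of $Hom_{\mathcal{Q}_n(\bX,*)}(P_n^{\cdot},\mathcal{Q}_n(\bX,*))$ with $Hom_{\wideparen{\mathcal{D}}(\bX,*)}(P^{\cdot},\mathcal{Q}_n(\bX,*))$.

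With all maps written at the chain level, commutativity of the diagram reduces precisely to the identity $q_{H,J_n}\circ p^{\bX}_{G,H}=p^{\bX}_{G,H,J_n}\circ q_{G,J_n}$, which is the content of Corollary \ref{coro3}(i). Since cohomology is functorial, the diagram commutes on $Ext^i$.

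For the final assertion, taking the inverse limit over $n$ of the commutative squares and using that the right-hand vertical maps assemble into isomorphisms $Ext^i_{\wideparen{\mathcal{D}}(\bX,*)}(M,\wideparen{\mathcal{D}}(\bX,*))\simeq \varprojlim_n Ext^i_{\mathcal{Q}_n(\bX,*)}(M_n,\mathcal{Q}_n(\bX,*))$ (this is \cite[Lemma $8.4$]{ST2003} applied to the Fr\'{e}chet-Stein presentations of $\wideparen{\mathcal{D}}(\bX,G)$ and $\wideparen{\mathcal{D}}(\bX,H)$, noting that $M$ is coadmissible over both) yields the desired identification $\wideparen{p}^i_{G,H}(\bX)=\varprojlim_n p^i_{G,H,n}(\bX)$. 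The only delicate point I anticipate is bookkeeping the identifications of the $Hom$ complexes under base change so that the two chain-level squares really match; this is a routine adjunction argument once the projection $p^{\bX}_{G,H}$ and its quotients $p^{\bX}_{G,H,J_n}$ are recognised as arising from the same underlying splitting of $G$ into $H$-cosets (the splitting used in the proof of Lemma \ref{keylemma2} and Proposition \ref{keypro1}).
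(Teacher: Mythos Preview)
Your proof is correct and follows essentially the same route as the paper: take a free resolution of $M$ over $\wideparen{\mathcal{D}}(\bX,G)$, note it is simultaneously a free resolution over $\wideparen{\mathcal{D}}(\bX,H)$, base-change to $\mathcal{Q}_n$ by flatness, and reduce the commutativity to Corollary~\ref{coro3}(i); the inverse-limit identification then follows from \cite[Lemma~8.4]{ST2003}.

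One small slip: you write ``since $M$ is coadmissible, hence finitely generated'' to justify a finite-rank free resolution. Coadmissible modules over a Fr\'echet--Stein algebra need not be finitely generated, so this implication is false. Fortunately your argument does not actually use finite rank anywhere (nor does the paper's), so simply drop the parenthetical and take an arbitrary free resolution.
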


\begin{proof}

Note that $\wideparen{\mathcal{D}}(\bX,G)$ (which is finitely freely generated as a $\wideparen{\mathcal{D}}(\bX,H)$-module ) is a coadmissible $\wideparen{\mathcal{D}}(\bX,H)$-module. It follows that $M$ is coadmissbile as a $\wideparen{\mathcal{D}}(\bX,H)$-module, so 
\[Ext^i_{\wideparen{\mathcal{D}}(\bX,G)}(M,\wideparen{\mathcal{D}}(\bX,G)) \cong \varprojlim_n Ext^i_{\mathcal{Q}_n(\bX,G)}(M_n,\mathcal{Q}_n(\bX,G))\]
and
\[Ext^i_{\wideparen{\mathcal{D}}(\bX,H)}(M,\wideparen{\mathcal{D}}(\bX,H)) \cong \varprojlim_n Ext^i_{\mathcal{Q}_n(\bX,H)}(M_n,\mathcal{Q}_n(\bX,H)).\]
 \cite[Lemma $8.4$]{ST2003}. These isomorphisms give the definitions of the two vertical arrows of the diagram in the lemma.\\
For any $\wideparen{\mathcal{D}}(\bX,G)$-module $P$ (which is not necessary coadmissible), we have an isomorphism of $\mathcal{Q}_n(\bX,H)$-modules
\[\mathcal{Q}_n(\bX,G)\otimes_{\wideparen{\mathcal{D}}(\bX,G)} P \simeq (\mathcal{Q}_n(\bX,H)\otimes_{\wideparen{\mathcal{D}}(\bX,H)}\wideparen{\mathcal{D}}(\bX,G))\otimes_{\wideparen{\mathcal{D}}(\bX,G)} P \simeq \mathcal{Q}_n(\bX,H)\otimes_{\wideparen{\mathcal{D}}(\bX,H)}  P.\]

Now, let $P^{.}\rightarrow M \rightarrow 0$ be a projective resolution of $M$ by free $\wideparen{\mathcal{D}}(\bX,G)$-modules. Since $\wideparen{\mathcal{D}}(\bX,G)$ is free over $\wideparen{\mathcal{D}}(\bX,H)$ on both sides, $P^.$ is also a projective resolution of $M$ in $Mod(\wideparen{\mathcal{D}}(\bX,H))$. Moreover, it is proved \cite[Remark $3.2$]{ST2003} that the canonical maps $\wideparen{\mathcal{D}}(\bX,G) \rightarrow \mathcal{Q}_n(\bX,G)$ and $\wideparen{\mathcal{D}}(\bX,H)\rightarrow \mathcal{Q}_n(\bX,H)$ are right flat, so that $\mathcal{Q}_n(\bX,G) \otimes P$ and $\mathcal{Q}_n(\bX,H) \otimes P$ are projective resolutions of $\mathcal{Q}_n(\bX,G) \otimes M$ and $\mathcal{Q}_n(\bX,H) \otimes M$, respectively. Thus, by definitions of $\wideparen{p}^i_{G,H}(\bX)$ and $p^i_{G,H,n}(\bX)$ it suffices to show that for any $\wideparen{\mathcal{D}}(\bX,G)$ -module $P$, the diagram 
 \[\begin{tikzcd}
Hom_{\wideparen{\mathcal{D}}(\bX,G)}(P,\wideparen{\mathcal{D}}(\bX,G)) \arrow{r}{p_{G,H}^{\bX} \circ} \arrow[swap]{d}{id \bar{\otimes} -} & Hom_{\wideparen{\mathcal{D}}(\bX,H)}(P, \wideparen{\mathcal{D}}(\bX,H))\arrow{d}{id \bar{\otimes} -} \\
Hom_{\mathcal{Q}_n(\bX,G)}(\mathcal{Q}_n(\bX,G)\otimes P,\mathcal{Q}_n(\bX,G)) \arrow{r}{p_{G,H,n}^{\bX}\circ} & Hom_{\mathcal{Q}_n(\bX,H)}(\mathcal{Q}_n(\bX,H)\otimes P,\mathcal{Q}_n(\bX,H))
\end{tikzcd}
\]
is commutative. (Note that, for every $f \in Hom_{\wideparen{\mathcal{D}}(\bX,G)}(P,\wideparen{\mathcal{D}}(\bX,G))$, the map  
$$id\bar{\otimes}f \in Hom_{\mathcal{Q}_n(\bX,G)}(\mathcal{Q}_n(\bX,G)\otimes P,\mathcal{Q}_n(\bX,G)) $$ 
is defined by $(id \bar{\otimes}f) (a\otimes m)=af(m)$ with $a \in \mathcal{Q}_n(\bX,G)$, $m \in P$).
This reduces to show that the diagram
\[\begin{tikzcd}
\wideparen{\mathcal{D}}(\bX,G) \arrow{r}{p_{G,H}^{\bX}} \arrow[swap]{d}{} & \wideparen{\mathcal{D}}(\bX,H)\arrow{d}{} \\
\mathcal{Q}_n(\bX,G)\arrow{r}{p_{G,H,n}^{\bX}} & \mathcal{Q}_n(\bX,H)\\
\end{tikzcd}
\]
is commutative. Now the proof can be completed by applying Corollary \ref{coro3}(i).
\end{proof}

Now let $\bX$ be a smooth rigid analytic space, let $G$ be a $p$-adic Lie group which acts continuously on $\bX$. Let $\mathcal{M} \in \mathcal{C}_{\bX/G}$ be a coadmissible $G$-equivariant left $\mathcal{D}_{\bX}$-module. Fix an open affinoid subset $\bU \in \bX_{w}(\mathcal{T})$. Recall that for any $\bU$-small subgroup $H \leq G$, one has an isomorphism of coadmissible $H$-equivariant $\mathcal{D}_{\bU}$-modules:
 \[\mathcal{M}\vert_{\bU} \simeq Loc^{\wideparen{\mathcal{D}}(\bU,H)}_{\bU}(\mathcal{M}(\bU)).\]

\begin{definition} If  $(\bU,H)$ is small, we define for all $i \geq 0$: $$E^i(\mathcal{M})(\bU,H):=  Ext^i_{\wideparen{\mathcal{D}}(\bU,H)}(\mathcal{M}(\bU), \wideparen{\mathcal{D}}(\bU,H)).$$
\end{definition}
 This is, in fact, a coadmissible right $\wideparen{\mathcal{D}}(\bU,H)$-module. 
 
 \begin{pro} \label{aa} Let $H' \leq H$ be  $\bU$-small open subgroups  of $G$. There is an isomorphism of right $\mathcal{D}(\bU)$-modules:
\[\wideparen{p}^i_{H',H}(\bU): \,\, Ext^i_{\wideparen{\mathcal{D}}(\bU,H')}(\mathcal{M}(\bU),\wideparen{\mathcal{D}}(\bU,H'))\tilde{\longrightarrow} Ext^i_{\wideparen{\mathcal{D}}(\bU,H)}(\mathcal{M}(\bU),\wideparen{\mathcal{D}}(\bU,H)).\] 
The pairs $(E^i(\mathcal{M})(\bU,H), \wideparen{p}_{H',H}^i(\bU))$ form an inverse system when $H', H$ run over the (partially ordered) set of all $\bU$-small subgroups of $G$.
\end{pro}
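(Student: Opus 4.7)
The plan is to reduce the general case to the situation handled by Proposition \ref{isoD}, where one of the subgroups is normal in the other. First I would use the standard fact from the theory of compact $p$-adic Lie groups: since $H$ is compact and $H'\leq H$ is open, the intersection $N:=\bigcap_{h\in H}hH'h^{-1}$ is an open normal subgroup of $H$ contained in $H'$. Because $\mathcal{T}(\bU)$ admits an $H$-stable free Lie lattice (the smallness of $(\bU,H)$) and any subgroup of $H$ automatically stabilises such a Lie lattice, the pair $(\bU,N)$ is again small, and so is $(\bU,N)$ viewed as a subgroup of $H'$ (which is normal in $H'$ as well).

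Next, I would apply Proposition \ref{isoD} twice, once to the normal pair $N\trianglelefteq H$ and once to $N\trianglelefteq H'$, to obtain isomorphisms of right $\mathcal{D}(\bU)$-modules
\[
\wideparen{p}^{i}_{H,N}(\bU):E^i(\mathcal{M})(\bU,H)\xrightarrow{\sim} E^i(\mathcal{M})(\bU,N),\qquad \wideparen{p}^{i}_{H',N}(\bU):E^i(\mathcal{M})(\bU,H')\xrightarrow{\sim}E^i(\mathcal{M})(\bU,N).
\]
Composing gives an isomorphism $E^i(\mathcal{M})(\bU,H')\xrightarrow{\sim}E^i(\mathcal{M})(\bU,H)$ that I will take as the definition of $\wideparen{p}^{i}_{H',H}(\bU)$.

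The main obstacle is showing that this definition does not depend on the choice of $N$. Given two admissible choices $N_1,N_2$ one can replace them by their intersection $N_1\cap N_2$, which is still an open normal subgroup of $H$ contained in $H'$, and then invoke the transitivity relation of Proposition \ref{isoD}, namely $\wideparen{p}^{i}_{H,N_j}(\bU)=\wideparen{p}^{i}_{N_1\cap N_2,N_j}(\bU)\circ\wideparen{p}^{i}_{H,N_1\cap N_2}(\bU)$ for $j=1,2$, and the analogous relation with $H$ replaced by $H'$; these relations force the two candidate definitions of $\wideparen{p}^{i}_{H',H}(\bU)$ to agree.

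Finally, for the inverse system property, given $H''\leq H'\leq H$ all $\bU$-small I would pick an open normal subgroup $N\leq H''$ of $H$ (which is then also normal in $H'$ and $H''$), unwind all three isomorphisms through $E^i(\mathcal{M})(\bU,N)$, and read off $\wideparen{p}^{i}_{H'',H}(\bU)=\wideparen{p}^{i}_{H',H}(\bU)\circ \wideparen{p}^{i}_{H'',H'}(\bU)$ as a direct consequence of the compatibility statement in Proposition \ref{isoD}. This confirms that $(E^i(\mathcal{M})(\bU,H),\wideparen{p}^{i}_{H',H}(\bU))$ forms an inverse system as $H$ varies over the $\bU$-small open subgroups of $G$.
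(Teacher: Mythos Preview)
Your proposal is correct and follows essentially the same route as the paper: choose an open normal subgroup $N\trianglelefteq H$ contained in $H'$, apply Proposition~\ref{isoD} to both pairs $(H,N)$ and $(H',N)$, define $\wideparen{p}^{i}_{H',H}(\bU):=(\wideparen{p}^{i}_{H,N}(\bU))^{-1}\circ\wideparen{p}^{i}_{H',N}(\bU)$, and then verify independence of $N$ via the transitivity in Proposition~\ref{isoD}. The only cosmetic differences are that the paper invokes \cite[Lemma~3.2.1]{AW} for the existence of $N$ rather than writing down the core $\bigcap_{h\in H}hH'h^{-1}$, and it checks independence by comparing a nested pair $N'\leq N$ of normal subgroups rather than passing to $N_1\cap N_2$; your explicit verification of the cocycle relation for a triple $H''\leq H'\leq H$ is a welcome addition that the paper leaves implicit.
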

\begin{proof}
Since $H' \leq G$ is open compact in $H$, there is an open normal subgroup $N$ of $H$ which is contained in $H'$ (\cite[Lemma $3.2.1$]{AW}). Hence following Proposition \ref{isoD}, one has the following isomorphism:
\[\wideparen{p}^i_{H',N}(\bU): \,\, Ext^i_{\wideparen{\mathcal{D}}(\bU,H')}(\mathcal{M}(\bU),\wideparen{\mathcal{D}}(U,H'))\tilde{\longrightarrow} Ext^i_{\wideparen{\mathcal{D}}(\bU,N)}(\mathcal{M}(\bU),\wideparen{\mathcal{D}}(\bU,N))\]  
and
\[\wideparen{p}^i_{H,N}(\bU): \,\, Ext^i_{\wideparen{\mathcal{D}}(\bU,H)}(\mathcal{M}(\bU),\wideparen{\mathcal{D}}(\bU,H))\tilde{\longrightarrow} Ext^i_{\wideparen{\mathcal{D}}(\bU,N)}(\mathcal{M}(\bU),\wideparen{\mathcal{D}}(\bU,N)).\] 
Now, we define $$\wideparen{p}^i_{H',H}(\bU):= (\wideparen{p}^i_{H,N}(\bU))^{-1}\circ \wideparen{p}^i_{H',N}(\bU).$$
 By definition $\wideparen{p}^i_{H',H}(\bU)$ is an isomorphism of $\mD(\bU)$-modules. Furthermore, this is independent from the choice of an open normal subgroup $N$ of $H$. Indeed, if $N^{'}\leq N$ is an other normal subgroup of $H$, then $N'$ is also normal in $N$, thus Proposition \ref{isoD} gives
 \begin{center}
 $\wideparen{p}^i_{H',N'}(\bU)=\wideparen{p}^i_{H',N}(\bU)\circ \wideparen{p}^i_{N,N'}(\bU)$ and $\wideparen{p}^i_{H,N'}(\bU)=\wideparen{p}^i_{H,N}(\bU)\circ \wideparen{p}^i_{N,N'}(\bU)$.
 \end{center}
 Consequently
 \begin{align*}
(\wideparen{p}^i_{H,N'}(\bU))^{-1}\circ \wideparen{p}^i_{H',N'}(\bU)&=(\wideparen{p}^i_{H,N}(\bU)\circ \wideparen{p}^i_{N,N'}(\bU))^{-1}\circ \wideparen{p}^i_{H',N}(\bU)\circ \wideparen{p}^i_{N,N'}(\bU)\\
&=(\wideparen{p}^i_{H,N}(\bU))^{-1}\circ \wideparen{p}^i_{H',N}(\bU).
\end{align*}
\end{proof}

\begin{re}\label{rep} If $H'$ is normal in $G$, then we may choose $N=H'$ in the proof of the above proposition and we have
\[\wideparen{p}^i_{H',H}(\bU)= (\wideparen{p}^i_{H,H'}(\bU))^{-1}.\] 
\end{re}
Thanks to Proposition \ref{aa}, we are ready to give the following definition:
\begin{definition}\label{keydef} For every open affinoid subset $\bU \in \bX_{w}(\mathcal{T})$, we define:
\[E^i (\mathcal{M})(\bU):= \varprojlim_H E^i(\mathcal{M})(\bU,H)= \varprojlim_H Ext^i_{\wideparen{\mathcal{D}}(\bU,H)}(\mathcal{M}(\bU),\wideparen{\mathcal{D}}(\bU,H)),\] 
where the inverse limit is taken over the set of all $\bU$-small subgroups $H$ of $G$.
\end{definition}
\begin{re}\label{proj} $E^i (\mathcal{M})(\bU)$ obviously has a structure of right $\mathcal{D}(\bU)$-module. Furthermore, we obtain from Proposition $\ref{isoD}$ that the natural map
\[E^i (\mathcal{M})(\bU) \longrightarrow E^i(\mathcal{M})(\bU,H)\]
is a bijection for every $\bU$-small subgroup $H$ of $G$.
\end{re}

\begin{lemma}\label{lem} Let $U\cong\varprojlim_n U_n$, $V\cong\varprojlim_n V_n$ be Fr\'{e}chet-Stein $K$-algebras and $U \rightarrow V$ be a continuous morphism of Fr\'{e}chet-Stein algebras. Suppose that for each $n$, the induced morphism of rings $ U_n \rightarrow V_n$ is flat. Then for any coadmissible $U$-module $M$, there is an isomorphism of right $V$-modules
\[Ext^i_{U}(M,U) \wideparen{\otimes}_U V \longrightarrow Ext^i_V( V \wideparen{\otimes}_U M ,V).\]
\end{lemma}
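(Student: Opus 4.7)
The plan is to reduce the statement to a level-by-level comparison of $\mathrm{Ext}$ groups and then take an inverse limit. Write $M_n := U_n \otimes_U M$, so $M_n$ is finitely presented over $U_n$ and $M \cong \varprojlim_n M_n$. Note also that $V_n \otimes_{U_n} M_n \cong V_n \otimes_U M$, which is the $n$-th level of the coadmissible $V$-module $V \wideparen{\otimes}_U M$.

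The key input at each level is Lemma \ref{lemma4}: applied to the flat morphism $U_n \to V_n$ and the finitely presented $U_n$-module $M_n$, it yields a natural isomorphism of right $V_n$-modules
\[
\mathrm{Ext}^i_{U_n}(M_n, U_n) \otimes_{U_n} V_n \;\xrightarrow{\sim}\; \mathrm{Ext}^i_{V_n}\!\bigl(V_n \otimes_{U_n} M_n,\, V_n\bigr).
\]
The naturality of this map in $n$ (coming from the compatibility of the transition maps $U_{n+1}\to U_n$ and $V_{n+1}\to V_n$, and from functoriality of $\mathrm{Ext}$) ensures that these isomorphisms assemble into an isomorphism of inverse systems.

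Next I would use the Schneider--Teitelbaum description of $\mathrm{Ext}$ over Fréchet--Stein algebras \cite[Lemma 8.4]{ST2003}: for the coadmissible $U$-module $M$,
\[
\mathrm{Ext}^i_U(M, U) \;\cong\; \varprojlim_n \mathrm{Ext}^i_{U_n}(M_n, U_n),
\]
and analogously over $V$ applied to the coadmissible $V$-module $V \wideparen{\otimes}_U M$ (whose $V_n$-level is $V_n \otimes_U M \cong V_n \otimes_{U_n} M_n$), yielding
\[
\mathrm{Ext}^i_V(V\wideparen{\otimes}_U M,\, V) \;\cong\; \varprojlim_n \mathrm{Ext}^i_{V_n}(V_n\otimes_{U_n}M_n,\, V_n).
\]
On the other hand, by definition of the completed tensor product of coadmissible modules \cite[Section 7.3]{AWI}, $\mathrm{Ext}^i_U(M,U)\wideparen{\otimes}_U V$ is the coadmissible $V$-module whose $V_n$-level is $\mathrm{Ext}^i_{U_n}(M_n,U_n)\otimes_{U_n} V_n$. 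Taking inverse limits in the level-wise isomorphism above then produces the desired comparison map and shows it is an isomorphism.

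The main obstacle is ensuring that both sides are genuinely coadmissible so that the Schneider--Teitelbaum description applies. For the right-hand side this is clear since $V\wideparen{\otimes}_U M$ is coadmissible over $V$ by construction. For the left-hand side, one has to check that $\mathrm{Ext}^i_U(M,U)$ is a coadmissible right $U$-module with the expected $U_n$-levels; this is the content of \cite[Lemma 8.4]{ST2003} combined with the fact that the $\mathrm{Ext}^i_{U_n}(M_n,U_n)$ are finitely generated (for instance, when $U_n$ is noetherian and $M_n$ finitely presented) and form a coherent inverse system. Once this coadmissibility is in place the rest of the argument is a formal assembly of the level-wise statements.
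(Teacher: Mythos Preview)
Your proposal is correct and follows essentially the same route as the paper's proof: reduce to the level-by-level isomorphism $\mathrm{Ext}^i_{U_n}(M_n,U_n)\otimes_{U_n}V_n \cong \mathrm{Ext}^i_{V_n}(V_n\otimes_{U_n}M_n,V_n)$ via Lemma~\ref{lemma4}, identify both sides as inverse limits using \cite[Lemma 8.4]{ST2003} and the definition of $\wideparen{\otimes}$, and conclude. If anything, your version is more careful about the coadmissibility checks that the paper leaves implicit.
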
 
\begin{proof}
Since $M$ is coadmissible as $U$-module, we have the following isomorphism:
\[M \cong \varprojlim_n U_n \otimes_U M = \varprojlim_n M_n\]
with $M_n:= U_n \otimes_U M$ for every $n$. Hence $V \wideparen{\otimes}_UM\cong \varprojlim_n V_n \otimes_{U_n}M_n$ and this implies:
\[Ext^i_{U}(M,U) \wideparen{\otimes}_U V \cong \varprojlim_n Ext^i_{U_n}(M_n,U_n) {\otimes}_{U_n} V_n \]
and 
\[Ext^i_V( V \wideparen{\otimes}_U M ,V) \cong \varprojlim_n Ext^i_{V_n}( V_n {\otimes}_{U_n} M_n ,V_n).\]
So it suffices to prove, for every $n$, the existence of compatible isomorphisms of right $V_n$-modules
\[Ext^i_{U_n}(M_n,U_n) {\otimes}_{U_n} V_n \tilde{\longrightarrow} Ext^i_{V_n}( V_n {\otimes}_{U_n} M_n ,V_n).\]
This follows from Lemma \ref{lemma4}.
\end{proof} 
\begin{pro}\label{res2} Suppose that $(\bU,H)$ is small and $\bV \subset \bU $ is an open affinoid subset in $\bX_{w}(\mathcal{T})/ H$, then there is a morphism of right $\wideparen{\mathcal{D}}(\bU,H)$-modules
\[\wideparen{\tau}_{\bU,\bV, H}^i: \,\,E^i(\mathcal{M})(\bU,H)\rightarrow E^i(\mathcal{M})(\bV,H).\]
If $\bW \subset \bV\subset \bU$ are open subsets in $\bX_{w}(\mathcal{T})/H$, then the diagram
\[\begin{tikzcd}[column sep=small]
E^i(\mathcal{M})(\bU,H) \arrow{r}{\wideparen{\tau}_{\bU,\bV,H}^i}  \arrow{rd}{\wideparen{\tau}_{\bU,\bW,H}^i} 
  & E^i(\mathcal{M})(\bV,H) \arrow{d}{\wideparen{\tau}_{\bV,\bW,H}^i} \\
    & E^i(\mathcal{M})(\bW,H)
\end{tikzcd}\]
is commutative.
\end{pro}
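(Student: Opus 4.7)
The plan is to obtain the restriction map as the composition of a canonical base-change map with the flat base-change isomorphism for Ext over Fréchet-Stein algebras provided by Lemma~\ref{lem}. To set this up, I fix an $H$-stable affine formal model $\mathcal{A}$ of $\mathcal{O}(\bU)$ and an $H$-stable free $\mathcal{A}$-Lie lattice $\mathcal{L}$ in $\mathcal{T}(\bU)$ witnessing that $(\bU,H)$ is small. Since $\bV \in \bX_w(\mathcal{T})/H$ is an affinoid subdomain of $\bU$, I may replace $\mathcal{L}$ by $\pi^n\mathcal{L}$ for $n$ sufficiently large so that $\bV$ becomes $\mathcal{L}$-accessible in $\bU$ (by the rescaling remark following Proposition~\ref{flat}). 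Picking an $H$-stable $\mathcal{L}$-stable formal model $\mathcal{B} \subset \mathcal{O}(\bV)$ and a good chain $(J_n)$ common to both $\bU$ and $\bV$, I obtain Fréchet-Stein presentations
\[
\wideparen{\mathcal{D}}(\bU,H)\cong\varprojlim_n \widehat{U(\pi^n\mathcal{L})}_K\rtimes_{J_n}H, \qquad \wideparen{\mathcal{D}}(\bV,H)\cong\varprojlim_n \widehat{U(\mathcal{B}\otimes_{\mathcal{A}}\pi^n\mathcal{L})}_K\rtimes_{J_n}H,
\]
in which each level morphism is flat on both sides by Proposition~\ref{flat}.

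Next, by the definition of the localisation functor, $\mathcal{M}(\bV) \cong \wideparen{\mathcal{D}}(\bV,H)\wideparen{\otimes}_{\wideparen{\mathcal{D}}(\bU,H)}\mathcal{M}(\bU)$, and $\mathcal{M}(\bU)$ is coadmissible over $\wideparen{\mathcal{D}}(\bU,H)$. Therefore Lemma~\ref{lem}, applied to the continuous Fréchet-Stein morphism $\wideparen{\mathcal{D}}(\bU,H)\to\wideparen{\mathcal{D}}(\bV,H)$, yields a canonical isomorphism of right $\wideparen{\mathcal{D}}(\bV,H)$-modules
\[
E^i(\mathcal{M})(\bU,H)\wideparen{\otimes}_{\wideparen{\mathcal{D}}(\bU,H)}\wideparen{\mathcal{D}}(\bV,H)\;\xrightarrow{\;\sim\;}\;E^i(\mathcal{M})(\bV,H).
\]
I then define $\wideparen{\tau}^i_{\bU,\bV,H}$ as the composition of the canonical "coefficient-extension" map $x\mapsto x\wideparen{\otimes}1$ with this isomorphism; $\wideparen{\mathcal{D}}(\bU,H)$-linearity on the right is immediate. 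For transitivity with $\bW\subset\bV\subset\bU$, I rescale the Lie lattice $\mathcal{L}$ on $\bU$ further so that both $\bV$ and $\bW$ are $\mathcal{L}$-accessible in $\bU$ and so that $\bW$ is also accessible inside the induced lattice on $\bV$; this is achievable by iterating the rescaling argument. The commutativity of the triangle then reduces, via associativity of the completed tensor product
\[
\bigl(E^i(\mathcal{M})(\bU,H)\wideparen{\otimes}_{\wideparen{\mathcal{D}}(\bU,H)}\wideparen{\mathcal{D}}(\bV,H)\bigr)\wideparen{\otimes}_{\wideparen{\mathcal{D}}(\bV,H)}\wideparen{\mathcal{D}}(\bW,H)\;\cong\;E^i(\mathcal{M})(\bU,H)\wideparen{\otimes}_{\wideparen{\mathcal{D}}(\bU,H)}\wideparen{\mathcal{D}}(\bW,H),
\]
to the obvious equality $(x\wideparen{\otimes}1)\wideparen{\otimes}1=x\wideparen{\otimes}1$, which is functorial in the Lemma~\ref{lem} isomorphism.

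The main obstacle I anticipate is the bookkeeping of the choices: arranging simultaneously a single Lie lattice $\mathcal{L}$, a compatible chain $(J_n)$, and formal models on $\bU$, $\bV$, $\bW$ so that the three Fréchet-Stein presentations fit into one compatible inverse system and Proposition~\ref{flat} applies at every level. A secondary concern is checking that $\wideparen{\tau}^i_{\bU,\bV,H}$ is independent of the auxiliary choices of $\mathcal{L}$, $\mathcal{B}$, and $(J_n)$ made in the construction; this is handled by passing to a common refinement of any two such choices and invoking the functoriality of Lemma~\ref{lem} together with the independence statements already established for the presheaf $\mathcal{Q}$ in Section~4.1 (compare Proposition~\ref{isoQ} and Proposition~\ref{commutativeQ}). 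No new conceptual input beyond these ingredients should be required.
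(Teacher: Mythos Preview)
Your proposal is correct and follows essentially the same route as the paper: rescale the Lie lattice so that $\bV$ (and, for transitivity, $\bW$) becomes $\mathcal{L}$-accessible, invoke Proposition~\ref{flat} to get level-wise flatness, apply Lemma~\ref{lem} to identify $E^i(\mathcal{M})(\bU,H)\wideparen{\otimes}_{\wideparen{\mathcal{D}}(\bU,H)}\wideparen{\mathcal{D}}(\bV,H)\simeq E^i(\mathcal{M})(\bV,H)$, and define $\wideparen{\tau}^i_{\bU,\bV,H}$ as $m\mapsto m\wideparen{\otimes}1$ followed by this isomorphism; transitivity then reduces to associativity of the completed tensor product (the paper cites \cite[Corollary~7.4]{AWI} for this). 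Your extra paragraph on independence of the auxiliary choices is a reasonable caution, but the paper does not treat it explicitly here either, and it is not needed for the statement as written.
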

\begin{proof} We choose a free $\mathcal{A}$-Lie lattice $\mathcal{L}$ of $\mathcal{T}(\bU)$ for some $H$-stable affine formal model $\mathcal{A}$ of $\mathcal{O}(\bU)$ and a good chain $(J_n)$ for $\mathcal{L}$. By rescaling $\mathcal{L}$, we may assume that $\bV$ is $\mathcal{L}$-accessible.\\ Recall the sheaves $\mathcal{Q}_n(-,H)$ on $\bU_{ac}(\mathcal{L},H)$. Under these assumptions, the morphism  $$ \mathcal{Q}_n(\bU,H) \longrightarrow \mathcal{Q}_n(\bV,H)$$ is  flat.
Thus we can apply  Lemma \ref{lem} and obtain:
\[E^i(\mathcal{M})(\bU,H)\wideparen{\otimes}_{\wideparen{\mathcal{D}}(\bU,H)}\wideparen{\mathcal{D}}(\bV,H) \simeq E^i(\mathcal{M})(\bV,H).\]
 This provides a natural map of right $\wideparen{\mathcal{D}}(\bU,H)$-modules
\begin{align*}
 E^i(\mathcal{M})(\bU,H) & \longrightarrow E^i(\mathcal{M})(\bU,H)\wideparen{\otimes}_{\wideparen{\mathcal{D}}(\bU,H)}\wideparen{\mathcal{D}}(\bV,H) \simeq E^i(\mathcal{M})(\bV,H)\\
 m &\longmapsto m \wideparen{\otimes} 1.
\end{align*} 
 If $\bW \subset \bV \subset \bU$ are open subsets in $\bX_{w}(\mathcal{T})/ H$, then following \cite[Corollary $7.4$]{AWI}
\begin{align*}
E^i(\mathcal{M})(\bU,H)\wideparen{\otimes}_{\wideparen{\mathcal{D}}(\bU,H)} \wideparen{\mathcal{D}}(\bW,H) & \simeq E^i(\mathcal{M})(\bU,H)\wideparen{\otimes}_{\wideparen{\mathcal{D}}(\bU,H)} \wideparen{\mathcal{D}}(\bV,H)  \wideparen{\otimes}_{\wideparen{\mathcal{D}}(\bV,H)} \wideparen{\mathcal{D}}(\bW,H)\\
&\simeq E^i(\mathcal{M})(\bV,H)\wideparen{\otimes}_{\wideparen{\mathcal{D}}(\bV,H)} \wideparen{\mathcal{D}}(\bW,H)\,\,\ ( \simeq E^i(\mathcal{M})(\bW,H)).
\end{align*}
Hence the commutative diagram follows.
\end{proof}

\begin{pro} \label{commutativeE} Let $H$ be an open compact subgroup of $G$ and $\bU, \bV \in \bX_{w}(\mathcal{T})/H$ such that $ \bV \subset \bU$. Suppose that $N \leq H$ is another open compact subgroup of $G$. Then the following diagram is commutative: 
\begin{equation}\label{diag}
 \begin{tikzcd}
E^i(\mathcal{M})(\bU,N) \arrow{r}{\wideparen{p}_{N,H}^i(\bU)} \arrow[swap]{d}{\wideparen{\tau}_{\bU,\bV,N}^{i}} & E^i(\mathcal{M})(\bU,H) \arrow{d}{\wideparen{\tau}_{\bU,\bV,H}^i} \\%
E^i(\mathcal{M})(\bV,N) \arrow{r}{\wideparen{p}_{N,H}^i(\bV)}& E^i(\mathcal{M})(\bV,H).
\end{tikzcd}
\end{equation}

\end{pro}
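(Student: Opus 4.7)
The plan is to reduce the commutativity of \eqref{diag} to the analogous statement at the level of the noetherian Banach algebras $\mathcal{Q}_n$, where it is provided by Proposition \ref{commutativeQ}. The reduction proceeds in three stages.

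First, I reduce to the case where $N$ is an open normal subgroup of $H$. By \cite[Lemma 3.2.1]{AW} choose an open normal subgroup $N' \trianglelefteq H$ contained in $N$; it is automatically $\bU$- and $\bV$-small. By the construction of $\wideparen{p}^i_{-,-}$ in Proposition \ref{aa}, we may factor
\[ \wideparen{p}^i_{N,H}(\bU) = (\wideparen{p}^i_{H,N'}(\bU))^{-1} \circ \wideparen{p}^i_{N,N'}(\bU), \]
and likewise on $\bV$. Hence the diagram \eqref{diag} decomposes into two sub-diagrams, one with horizontal arrows $\wideparen{p}^i_{N,N'}$ (and $N' \trianglelefteq N$), the other with horizontal arrows $(\wideparen{p}^i_{H,N'})^{-1}$ (and $N' \trianglelefteq H$). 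It therefore suffices to check the case where $N \trianglelefteq H$; by Remark \ref{rep}, the horizontal maps are then inverses of $\wideparen{p}^i_{H,N}$, so I may instead verify commutativity of the diagram obtained by reversing the two horizontal arrows (i.e.\ with $\wideparen{p}^i_{H,N}(\bU)$ on top and $\wideparen{p}^i_{H,N}(\bV)$ on the bottom).

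Second, I invoke the Fréchet–Stein structure. Fix an $H$-stable affine formal model $\mathcal{A}$ of $\mathcal{O}(\bU)$, a free $H$-stable $\mathcal{A}$-Lie lattice $\mathcal{L}$ in $\mathcal{T}(\bU)$, and a good chain $(J_n)$ for $\mathcal{L}$ with $J_n \subseteq N$ for all $n$. After rescaling $\mathcal{L}$, we may assume that $\bV$ is $\mathcal{L}$-accessible, so that the sheaves $\mathcal{Q}_n(-,H)$ and $\mathcal{Q}_n(-,N)$ of \eqref{Q1} are defined and give Fréchet–Stein presentations $\wideparen{\mathcal{D}}(\bU,H) \cong \varprojlim_n \mathcal{Q}_n(\bU,H)$, $\wideparen{\mathcal{D}}(\bV,H) \cong \varprojlim_n \mathcal{Q}_n(\bV,H)$, and analogously for $N$. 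Writing $M_n := \mathcal{Q}_n(\bU,H) \otimes_{\wideparen{\mathcal{D}}(\bU,H)} \mathcal{M}(\bU)$, Lemma \ref{plim} identifies $\wideparen{p}^i_{H,N}(\bU)$ with $\varprojlim_n p^i_{H,N,n}(\bU)$, and similarly on $\bV$.

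Third, I express the restriction maps as inverse limits. Proposition \ref{res2}, combined with Lemma \ref{lem} applied to the flat morphisms $\mathcal{Q}_n(\bU,H) \to \mathcal{Q}_n(\bV,H)$ supplied by Proposition \ref{flat}, realizes $\wideparen{\tau}^i_{\bU,\bV,H}$ (resp.\ $\wideparen{\tau}^i_{\bU,\bV,N}$) as the inverse limit over $n$ of the maps $\tau^i_{\bU,\bV,H,n}$ (resp.\ $\tau^i_{\bU,\bV,N,n}$) for the $\mathcal{Q}_n$-sheaves. At each level $n$, the four corners are precisely the diagram of Proposition \ref{commutativeQ}, which commutes. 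Passing to $\varprojlim_n$ yields commutativity of the reversed diagram, and inverting the horizontal arrows gives \eqref{diag}.

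The main obstacle is the third step: verifying that $\wideparen{\tau}^i$ is genuinely the inverse limit of the finite-level $\tau^i$ and that this identification is compatible with the projections $\wideparen{p}^i_{H,N}$. This amounts to a compatibility analogue of Lemma \ref{plim} for the restriction direction, and relies crucially on the flatness of $\mathcal{Q}_n(\bU,H) \to \mathcal{Q}_n(\bV,H)$ together with the fact that $\mathcal{Q}_n(\bU,H)$ is free over $\mathcal{Q}_n(\bU,N)$ in a way compatible with restriction, as already recorded in Corollary \ref{coro3}(ii).
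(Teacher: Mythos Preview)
Your proposal is correct and follows essentially the same strategy as the paper's proof: reduce to the case $N\trianglelefteq H$, pass to the Fr\'echet--Stein presentation via the sheaves $\mathcal{Q}_n$, and invoke Proposition~\ref{commutativeQ} at each level. The only organisational difference is that the paper establishes the normal case first and then deduces the general case by the chain of equalities
\[
\wideparen{\tau}_{\bU,\bV,H}^i \circ \wideparen{p}_{N,H}^i(\bU)=\wideparen{p}_{N,H}^i(\bV)\circ\wideparen{\tau}_{\bU,\bV,N}^i
\]
using an auxiliary $N'\trianglelefteq H$ with $N'\leq N$, whereas you perform the reduction up front by factoring $\wideparen{p}^i_{N,H}$ through $N'$; the two presentations are interchangeable. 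Your closing remark about checking that $\wideparen{\tau}^i_{\bU,\bV,H}=\varprojlim_n\tau^i_{\bU,\bV,H,n}$ is well taken: the paper handles this point only implicitly by appealing to Lemma~\ref{plim} and the coadmissibility of all modules involved, and the ingredients you cite (Lemma~\ref{lem}, Proposition~\ref{flat}, Corollary~\ref{coro3}(ii)) are exactly what make this identification valid.
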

\begin{proof}
Firstly, suppose that $N$ is normal in $H$. Then following Remark \ref{rep}
\begin{center}
$\wideparen{p}^i_{N,H}(\bU)= (\wideparen{p}^i_{H,N}(\bU))^{-1}$ and $\wideparen{p}^i_{N,H}(\bV)= (\wideparen{p}^i_{H,N}(\bV))^{-1}.$
\end{center}
We need to prove that:
 \[\wideparen{\tau}_{\bU,\bV,N}^i \circ \wideparen{p}_{H,N}^i (\bU) = \wideparen{p}_{H,N}^i (\bV) \circ \wideparen{\tau}_{\bU,\bV,H}^i.\]
For this we choose a $H$-stable free $\mathcal{A}$-Lie lattice $\mathcal{L}$ in $\mathcal{T}(\bU)$ for some $H$-stable affine formal model $\mathcal{A}$ of $\mathcal{O}(\bU)$ and a good chain $(J_n)$ for $\mathcal{L}$ such that $J_n \leq N$ for any $n$. By rescaling $\mathcal{L}$ if necessary, we may suppose in addition that $\bV$ is $\mathcal{L}$-accessible, which means that $\bV \in \bU_{ac}(\mathcal{L},H)$. Consider the sheaves of rings 
\begin{center}
$ \mathcal{Q}_n(-,H)=\widehat{\mathcal{U}(\pi^n\mathcal{L})}_K\rtimes_{J_n}H$ and $\mathcal{Q}_n(-,N)=\widehat{\mathcal{U}(\pi^n\mathcal{L})}_K\rtimes_{J_n}N$ 
\end{center}
on $\bU_{ac}(\mathcal{L},H)$ and $\bU_{ac}(\mathcal{L},N)$ respectively. Since $\bV \in \bU_{ac}(\mathcal{L},H)$, then 
\begin{center}
$\wideparen{\mathcal{D}}(\bU,H)=\varprojlim_n\mathcal{Q}_n(\bU,H)$ and $\wideparen{\mathcal{D}}(\bU,N) =\varprojlim_n\mathcal{Q}_n(\bU,N)$,\\
$\wideparen{\mathcal{D}}(\bV,H)=\varprojlim_n\mathcal{Q}_n(\bV,H)$ and $\wideparen{\mathcal{D}}(\bV,N) =\varprojlim_n\mathcal{Q}_n(\bV,N).$
\end{center}
 Since all modules appearing in the diagram (\ref{diag}) are coadmissible, following Lemma \ref{plim}, it suffices to prove that:
\[\begin{tikzcd}
Ext^i_{\mathcal{Q}_n(\bU,H)}(\mathcal{Q}_n(\bU,H)\otimes_{} \mathcal{M}(\bU),\mathcal{Q}_n(\bU,H)) \arrow{r}{{p}^i_{H,N,n}(\bU)} \arrow[swap]{d}{\tau_{\bU,\bV,H}^i} & Ext^i_{\mathcal{Q}_n(\bU,N)}(\mathcal{Q}_n(\bU,N)\otimes_{} \mathcal{M}(\bU),\mathcal{Q}_n(\bU,N))\arrow{d}{\tau_{\bU,\bV,N}^i} \\
Ext^i_{\mathcal{Q}_n(\bV,H)}(\mathcal{Q}_n(\bV,H)\otimes_{} \mathcal{M}(\bV),\mathcal{Q}_{n}(\bV,H)) \arrow{r}{{p}^i_{H,N,n}(\bV)} & Ext^i_{\mathcal{Q}_n(\bV,N)}(\mathcal{Q}_n(\bV,N)\otimes_{}\mathcal{M}(\bV),
\mathcal{Q}_n(\bV,N))
\end{tikzcd}\]
is commutative. Now, Proposition \ref{commutativeQ} gives the result for the case $N$ is normal in $H$.\\

When $N$ is not longer necessarily normal in $H$, then there is an open normal subgroup $N'$ of $H$ in $N$ (as $H$ is compact and $N$ is open). Then
\begin{align*}
\wideparen{\tau}_{\bU,\bV,H}^i \circ \wideparen{p}_{N,H}^i (\bU)&= \wideparen{\tau}_{\bU,\bV,H}^i \circ( \wideparen{p}_{N',H}^i (\bU)\circ (\wideparen{p}_{N',N}^i (\bU))^{-1})\\
&=\wideparen{p}_{N',H}^i (\bV)\circ \wideparen{\tau}_{\bU,\bV,N}^i\circ (\wideparen{p}_{N',N}^i (\bU))^{-1}\\
&=\wideparen{p}_{N,H}^i (\bV) \circ \wideparen{p}_{N',N}^i (\bV) \circ \wideparen{\tau}_{\bU,\bV,N}^i\circ (\wideparen{p}_{N',N}^i (\bU))^{-1}\\
&= \wideparen{p}_{N,H}^i (\bV) \circ \wideparen{\tau}_{\bU,\bV,N}^i \circ \wideparen{p}_{N',N}^i (\bU)\circ (\wideparen{p}_{N',N}^i (\bU))^{-1}\\
&=\wideparen{p}_{N,H}^i (\bV) \circ \wideparen{\tau}_{\bU,\bV,N}^i.
\end{align*}
Hence the commutativity of (\ref{diag}) holds for $N$.
\end{proof}

\begin{pro} \label{res}  For every $\bU, \bV \in \bX_{w}(\mathcal{T})$ such that $\bV \subset \bU$, there is a right  $\mathcal{D}(\bU)$-linear restriction map
\[\wideparen{\tau}_{\bU,\bV}^i: E^i(\mathcal{M})(\bU) \longrightarrow E^i (\mathcal{M})(\bV).\] 

\end{pro}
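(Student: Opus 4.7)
The plan is to define $\wideparen{\tau}^i_{\bU,\bV}$ by factoring through a local subgroup $H$ of $G$ that works simultaneously for both $\bU$ and $\bV$, transporting elements using the bijections of Remark \ref{proj}. The crucial preliminary observation is that for any $\bV\subset\bU$ in $\bX_w(\mathcal{T})$, there exists an open compact subgroup $H\leq G$ making both $(\bU,H)$ and $(\bV,H)$ small: I would start with any open compact $H_0\subset G_{\bU}\cap G_{\bV}$ (which is open in $G$), shrink it to $H_1$ so that $(\bU,H_1)$ is small (using \cite[\S 3.4]{AW}), and then further shrink $H_1$ to $H$ so that $(\bV,H)$ becomes small. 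Smallness of $(\bU,-)$ persists under shrinking within $G_{\bU}$, so $H$ works for both. Moreover, such $H$ form a cofinal family in the poset of open compact subgroups of $G_{\bU}\cap G_{\bV}$.

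Given such an $H$, I would define $\wideparen{\tau}^i_{\bU,\bV}$ as the composition
\[
E^i(\mathcal{M})(\bU) \xrightarrow{\sim} E^i(\mathcal{M})(\bU,H) \xrightarrow{\wideparen{\tau}^i_{\bU,\bV,H}} E^i(\mathcal{M})(\bV,H) \xrightarrow{\sim} E^i(\mathcal{M})(\bV),
\]
where the outer arrows are the Remark \ref{proj} bijections (valid because $H$ is $\bU$-small and $\bV$-small respectively) and the middle arrow is the restriction map of Proposition \ref{res2}. To verify independence of the choice, I would take two admissible choices $H_1,H_2$ and extract, from the cofinality above, a common refinement $H'\leq H_1\cap H_2$ that is again simultaneously $\bU$- and $\bV$-small. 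Applying Proposition \ref{commutativeE} with $(H,N)=(H_j,H')$ for $j=1,2$ then produces commutative squares relating $\wideparen{\tau}^i_{\bU,\bV,H_j}$ and $\wideparen{\tau}^i_{\bU,\bV,H'}$ through the isomorphisms $\wideparen{p}^i_{H',H_j}$; combined with the fact that the Remark \ref{proj} bijections are compatible with the transition maps in the inverse limit, this forces the two candidate definitions to coincide.

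Right $\mathcal{D}(\bU)$-linearity is then automatic: the Remark \ref{proj} bijections are right $\mathcal{D}(\bU)$-linear by construction, and $\wideparen{\tau}^i_{\bU,\bV,H}$ is right $\wideparen{\mathcal{D}}(\bU,H)$-linear, hence right $\mathcal{D}(\bU)$-linear via the canonical homomorphism $\iota:\mathcal{D}(\bU)\to \wideparen{\mathcal{D}}(\bU,H)$ of Remark \ref{gamma}. The right $\mathcal{D}(\bU)$-module structure on $E^i(\mathcal{M})(\bV)$ is obtained by restriction of scalars along the canonical map $\mathcal{D}(\bU)\to\mathcal{D}(\bV)$. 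The main obstacle I anticipate is the cofinality assertion of the first paragraph: once one has a clean statement that simultaneously small subgroups exist and form a cofinal system inside $G_{\bU}\cap G_{\bV}$, the rest is a formal diagram chase powered by Proposition \ref{commutativeE}.
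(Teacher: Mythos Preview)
Your proposal is correct and follows essentially the same strategy as the paper. The paper's proof also chooses an $H$ that is simultaneously $\bU$-small and $\bV$-small (obtained there by starting with a $\bV$-small $N$ and shrinking inside $N_{\bU}$), uses $\wideparen{\tau}^i_{\bU,\bV,H}$ from Proposition~\ref{res2}, and invokes Proposition~\ref{commutativeE} together with Proposition~\ref{aa} to check independence of choices; the only cosmetic difference is that the paper phrases the target map as a family of maps $E^i(\mathcal{M})(\bU)\to E^i(\mathcal{M})(\bV,N)$ indexed by $\bV$-small $N$ and then invokes the universal property of the inverse limit, whereas you use the Remark~\ref{proj} bijection on the $\bV$-side directly.
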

\begin{proof}
Let $N$ be a $\bV$-small subgroup of $G$ and $N_{\bU}$ the stabilizer of $\bU$ in $N$. Then there exists a $\bU$-small subgroup $H$ inside $N_{\bU}$, which is normal in $N$ \cite[Lemma $3.2.1$]{AW}. By Proposition \ref{res2}, one has a morphism of right $\mathcal{D}(\bU)$-modules
\[\wideparen{\tau}^i_{\bU,\bV,H}: E^i(\mathcal{M})(\bU,H) \longrightarrow E^i (\mathcal{M})(\bV,H).\]
Then we can define a right $\mathcal{D}(\bU)$-linear morphism 

$$E^i(\mathcal{M})(\bU)\longrightarrow E^i(\mathcal{M})(\bV,N)$$ 

as the composition
\begin{equation*}
 E^i(\mathcal{M})(\bU)=\varprojlim_H E^i(\mathcal{M})(\bU,H) \longrightarrow E^i(\mathcal{M})(\bU,H)\xrightarrow{\wideparen{\tau}_{\bU,\bV,H}^i} E^i(\mathcal{M})(\bV,H) \xrightarrow{\wideparen{p}^i_{H,N}(\bV)} E^i(\mathcal{M})(\bV,N).
 \end{equation*}
 If $H'$ is another open  $\bU$-small subgroup of $H$ in $N_{\bU}$, then Proposition \ref{aa} and Proposition \ref{commutativeE} ensure that this map is independent of the choice of $H$. It amounts to showing that if $N' \leq N$ is another $\bV$-small subgroup in $G$, then the following diagram is commutative:
 \begin{equation}\label{diag1}
 \begin{tikzcd}
 E^i(\mathcal{M})(\bU) \arrow{r}{} \arrow{rd}{} & E^i(\mathcal{M})(\bV,N')  \arrow{d}{\wideparen{p}^i_{N',N}(\bV)} \\& E^i(\mathcal{M})(\bV,N).
 \end{tikzcd}
 \end{equation}
 
 If we take $H':= N'_{\bU} \cap H$, then $H'$ is a $\bU$-small subgroup of $N'_{\bU}$. Again by Proposition \ref{commutativeE} and Proposition \ref{aa}, it follows that the diagram 
 \[
  \setlength{\arraycolsep}{2pt}
  \begin{array}{*{9}c}
     E^i(\mathcal{M})(\bU,H') & \xrightarrow{\wideparen{\tau}_{\bU,\bV,H'}^{i}} & E^i(\mathcal{M})(\bV,H') & \xrightarrow{\wideparen{p}^i_{H',N'}(\bV)} & E^i(\mathcal{M})(\bV,N') \\
    \Ldownarrow & & \Ldownarrow & & \Ldownarrow & & \\
     E^i(\mathcal{M})(\bU,H) & \xrightarrow{\wideparen{\tau}_{\bU,\bV,H}^{i}} & E^i(\mathcal{M})(\bV,H) & \xrightarrow{\wideparen{p}^i_{H,N}(\bV)} & E^i(\mathcal{M})(\bV,N)
  \end{array}
\]
 is commutative, so that the triangle (\ref{diag1}) is commutative.
 Now, by the universal property of the inverse limit, this induces a right $\mathcal{D}(\bU)$-linear map 
 \begin{center}
 $E^i(\mathcal{M})(\bU)=\varprojlim_H E^i(\mathcal{M})(\bU,H) \longrightarrow E^i(\mathcal{M})(\bV)=\varprojlim_N E^i(\mathcal{M})(\bV,N)$.
 \end{center}
\end{proof}

\begin{re} Thanks to Proposition $\ref{res}$, we see that $E^i(\mathcal{M})$ is a presheaf of $\mathcal{D}_{\bX}$-modules on the set $\bX_{w}(\mathcal{T})$. Furthermore, $E^i(\mathcal{M})\vert_{\bU} = E^i(\mathcal{M}\vert_{\bU})$, if $\bU \in \bX_w{(\mathcal{T})}$ is an open affinoid subset of $\bX$.
 \end{re}
 
Let us now define a $G$-equivariant structure on the presheaf $E^i(\mathcal{M})$ of right $\mathcal{D}_{\bX}$-modules on $\bX_{w}(\mathcal{T})$. Let  $g \in G$ and $ \bU \in \bX_{w}(\mathcal{T})$. Recall that $g$ defines a morphism
\begin{align*}
g=g^\mathcal{O}(\bU): \,\, \mathcal{O}(\bU) & \longrightarrow \mathcal{O}(g\bU)\\
f &\longmapsto g.f.
\end{align*}  
Here, for any function $f \in \mathcal{O}(\bU)$, the function $g.f \in \mathcal{O}(g\bU)$ is defined as
\begin{center}
$(g.f)(y):= f(g^{-1}y), \, \forall y \in g\bU$
\end{center}

This induces an isomorphism of $K$-Lie algebras
\begin{align*}
g^{\mathcal{T}}:={g}^{\mathcal{T}}(\bU)  : \,\, \mathcal{T}(\bU) &\longrightarrow \mathcal{T}(g\bU)\\
v &\longmapsto g\circ v\circ g^{-1}
\end{align*}
which is linear relative to $g^\mathcal{O}(\bU)$. \\
Let $H$ be a $\bU$-small subgroup of $G$. Suppose that  $\mathcal{A}$ is an $H$-stable formal model in $\mathcal{O}(\bU)$ and $\mathcal{L}$ is an $H$-stable $\mathcal{A}$-Lie lattice in $\mathcal{T}(\bU)$. 
\begin{lemma}\label{Glemma}
\begin{itemize}
\item[(i)] $g(\mathcal{A})$ is a $gHg^{-1}$-stable formal model of $\mathcal{O}(g\bU)$ and ${g}^\mathcal{T}(\mathcal{L})$ is a $gHg^{-1}$-stable $g(\mathcal{A})$-Lie lattice in $\mathcal{T}(g\bU)$. If $\mathcal{L}$ is smooth (resp. free ) over $\mathcal{A}$, then ${g}^\mathcal{T}(\mathcal{L})$ is smooth (resp. free) over $g(\mathcal{A})$.
\item[(ii)] Let $\bV \in \bX_{w}(\mathcal{T})$ be an open affinoid subset in $\bU$. If  $\bV$ is a $\mathcal{L}$-accessible subdomain of $\bU$ then $g\bV$ is a ${g}^\mathcal{T}(\mathcal{L})$-accessible subdomain of $g\bU$.
\end{itemize}
\end{lemma}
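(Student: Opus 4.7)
Both parts follow from transporting structure across the isomorphisms $g^{\mathcal{O}}(\bU) : \mathcal{O}(\bU) \xrightarrow{\sim} \mathcal{O}(g\bU)$ and $g^{\mathcal{T}}(\bU) : \mathcal{T}(\bU) \xrightarrow{\sim} \mathcal{T}(g\bU)$ attached to the rigid-analytic automorphism induced by $g$. The two computational inputs I would record first are the compatibility
\[
g^{\mathcal{T}}(v)\bigl(g^{\mathcal{O}}(f)\bigr) \;=\; g^{\mathcal{O}}\bigl(v(f)\bigr) \qquad (v \in \mathcal{T}(\bU),\ f \in \mathcal{O}(\bU)),
\]
which is immediate from $g^{\mathcal{T}}(v) = g \circ v \circ g^{-1}$, together with the conjugation identity $(ghg^{-1})^{\mathcal{O}} = g^{\mathcal{O}} \circ h^{\mathcal{O}} \circ (g^{-1})^{\mathcal{O}}$ for every $h \in H$.

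For (i), since $g^{\mathcal{O}}$ is an isomorphism of $K$-affinoid algebras, hence a homeomorphism in the canonical Banach topology, it sends any affine formal model of $\mathcal{O}(\bU)$ to an affine formal model of $\mathcal{O}(g\bU)$. The conjugation identity immediately gives $(ghg^{-1})(g(\mathcal{A})) = g(h(\mathcal{A})) \subset g(\mathcal{A})$, so $g(\mathcal{A})$ is $gHg^{-1}$-stable. Since $g^{\mathcal{T}}$ is a $K$-Lie algebra isomorphism linear over $g^{\mathcal{O}}$, a parallel argument shows that $g^{\mathcal{T}}(\mathcal{L})$ is a bracket-stable, $gHg^{-1}$-stable, finitely presented $g(\mathcal{A})$-submodule of $\mathcal{T}(g\bU)$ spanning it over $K$. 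Smoothness and freeness are preserved for the same reason: any $\mathcal{A}$-basis $e_1,\dots,e_n$ of $\mathcal{L}$ is carried by $g^{\mathcal{T}}$ to a $g(\mathcal{A})$-basis $g^{\mathcal{T}}(e_1),\dots,g^{\mathcal{T}}(e_n)$ of $g^{\mathcal{T}}(\mathcal{L})$, and a splitting $\mathcal{A}^n \cong \mathcal{L} \oplus \mathcal{L}'$ transports along $g^{\mathcal{O}}$ to a splitting over $g(\mathcal{A})$.

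For (ii), I would induct on the number of accessibility steps $n$, treating the rational case first. The base case $n = 0$ is trivial since $\bV = \bU$. For the inductive step, fix a chain $\bV \subset \bZ \subset \bU$ with $\bV = \bZ(f)$ or $\bZ(1/f)$ and a $\mathcal{L}$-stable affine formal model $\mathcal{C} \subset \mathcal{O}(\bZ)$ satisfying $\mathcal{L}.f \subset \pi \mathcal{C}$. Applying $g$ produces the chain $g\bV \subset g\bZ \subset g\bU$, and from $(g.f)(y) = f(g^{-1}y)$ one checks directly that $g\bV$ equals $(g\bZ)(g.f)$ or $(g\bZ)(1/(g.f))$. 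By induction $g\bZ$ is $g^{\mathcal{T}}(\mathcal{L})$-accessible in $n-1$ steps; part (i) applied to $(\bZ, \mathcal{C})$ gives that $g(\mathcal{C})$ is a $g^{\mathcal{T}}(\mathcal{L})$-stable affine formal model of $\mathcal{O}(g\bZ)$; and the displayed compatibility yields $g^{\mathcal{T}}(\mathcal{L}).(g.f) = g^{\mathcal{O}}(\mathcal{L}.f) \subset \pi g(\mathcal{C})$. Finally, a general (possibly non-rational) $\mathcal{L}$-accessible $\bV$ is handled by applying $g$ to a finite covering by $\mathcal{L}$-accessible rational subdomains and invoking part (i) once more to produce the required $g^{\mathcal{T}}(\mathcal{L})$-stable formal model on $\mathcal{O}(g\bV)$.

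The only mildly delicate point is bookkeeping the conjugation action when verifying $gHg^{-1}$-stability; every other step is a direct transport of structure along the $K$-affinoid isomorphism $g^{\mathcal{O}}$.
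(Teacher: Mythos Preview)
Your proposal is correct and follows essentially the same approach as the paper: transport of structure along the affinoid isomorphism $g^{\mathcal{O}}$ for part (i), and induction on the number of accessibility steps for part (ii), using the compatibility $g^{\mathcal{T}}(v)(g.f) = g.(v(f))$ to verify the derivation condition $g^{\mathcal{T}}(\mathcal{L}).(g.f) \subset \pi\, g(\mathcal{C})$. If anything, your write-up is slightly more complete than the paper's, which reduces at the outset to the rational case and only treats $\bV = \bZ(f)$ explicitly, whereas you also record the Laurent case $\bZ(1/f)$ and spell out how the general $\mathcal{L}$-admissible (non-rational) case follows by pushing forward a finite rational covering.
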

\begin{proof}
\begin{itemize}
\item[(i)]Let $g \in G$ and $f \in \mathcal{O}(\bU)$.
Since the morphism $g: \mathcal{O}(\bU) \longrightarrow \mathcal{O}(g\bU)$ is $K$-linear, then 
\item[$\centerdot$]$Kg(\mathcal{A})= g(K\mathcal{A})=g(\mathcal{O}(\bU))=\mathcal{O}(g\bU).$
\item[$\centerdot$] if $h \in H$ then $ghg^{-1}(g(\mathcal{A}))=g({h}\mathcal{A}) \subset g(\mathcal{A})$, so that $g(\mathcal{A})$ is $gHg^{-1}$-stable.\\

Similarly,
\item[$\centerdot$] $K{g}^\mathcal{T}(\mathcal{L})= {g}^\mathcal{T}(K\mathcal{L})={g}^{\mathcal{T}}(\mathcal{T}(\bU))=\mathcal{T}(g\bU)$.
\item[$\centerdot$]$(ghg^{-1})^\mathcal{T}({g}^\mathcal{T}(\mathcal{L}))=(gh)^\mathcal{T}(\mathcal{L}) \subset {g}^\mathcal{T}(\mathcal{L}).$ Hence $\mathcal{L}$ is a $gHg^{-1}$-stable Lie lattice in $\mathcal{T}(gU).$
It remains to prove that if $\mathcal{L}$ is smooth (resp. free ) over $\mathcal{A}$, then ${g}^\mathcal{T}(\mathcal{L})$ is smooth (resp. free) over $g(\mathcal{A})$. But this is straighforward in view of the fact that we have the bijection
\[g^\mathcal{T}\vert_\mathcal{L}: \mathcal{L} \tilde{\longrightarrow} g^\mathcal{T}(\mathcal{L})\]
which is linear with respect to the (iso)morphism of rings $g\vert_\mathcal{A}: \mathcal{A} \longrightarrow g(\mathcal{A})$. 
\item[(ii)]
 Without loss of generality, we may suppose that $\bU=\bX$ and $\bV$ is a rational subset of $\bX$. We prove $(ii)$ by induction on $n$. If $\bV$ is $\mathcal{L}$-accessible in zero steps, then $\bV=\bX$ and $\bV$ is ${g}^\mathcal{T}(\mathcal{L})$-accessible in zero steps. Now, suppose that the statement is true for $n-1$. Let $\bV$ be $\mathcal{L}$-accessible in $n$ steps. We may assume that there is a chain $\bV \subset \bZ \subset \bX$ such that $\bZ$ is $\mathcal{L}-$accessible in $(n-1)$ steps, $\bV=\bZ(f)$  for some non zero $f \in \mathcal{O}(\bZ)$ and there is a $\mathcal{L}$-stable formal model $\mathcal{C} \subset \mathcal{O}(\bZ)$ such that $\mathcal{L}.f \subset \pi \mathcal{C}$. Then $g\bV= \lbrace g y : y \in \bV \rbrace$ and 
 \begin{center}
 $(g\bZ)(g.f)= \lbrace gy : \vert (g.f)(gy)\vert \leq 1, \forall y \in \bZ \rbrace = \lbrace gy: \vert f(g^{-1}gy) \vert = \vert f(y) \vert \leq 1, \forall y \in \bV  \rbrace$. 
 \end{center}
 Hence $g\bV=(g\bZ)(gf)$. By assumption $g\bZ \subset \bX$ is ${g}^\mathcal{T}(\mathcal{L})$-accessible in $(n-1)$ steps. Furthermore, by $(i)$, $g( \mathcal{C})$ is a $gHg^{-1}$-stable formal model of $\mathcal{O}(g\bZ)$ and it is straightforward that ${g}^\mathcal{T}(\mathcal{L}) .(gf) \subset \pi.(g(\mathcal{C}))$. This shows that $g\bU$ is also $g\mathcal{L}$-accessible in $n$-steps. \end{itemize}
\end{proof}

Let $(\bU,H)$ be small. Recall the isomorphism of $K-$algebras
\[\wideparen{g}_{\bU,H}: \wideparen{\mathcal{D}}(\bU,H)\tilde{\longrightarrow} \wideparen{\mathcal{D}}(g\bU,gHg^{-1})\]
and the isomorphism
\[g^\mathcal{M}_{\bU,H}: \,\, \mathcal{M}(\bU) \longrightarrow \mathcal{M}(g\bU)\]
which is linear with respect to $\wideparen{g}_{\bU,H}$ (since $\mathcal{M}\in \mathcal{C}_{\bX/G}$).\\

\begin{pro}\label{Giso1}
Suppose that $(\bU,H)$ is small and $g \in G$. 
There exists a $K$-linear map
\begin{align*}
g^{E^i(\mathcal{M})}_{\bU,H}\,\,\,: E^i(\mathcal{M})(\bU,H)  \longrightarrow E^i(\mathcal{M})(g\bU, gHg^{-1})
\end{align*}
such that
for every $ a \in \wideparen{\mathcal{D}}(\bU,H), m \in E^i(\mathcal{M})(\bU,H)$, we have:
\begin{equation}\label{equationg}
g^{E^i(\mathcal{M})}_{\bU,H}(ma)= g^{E^i(\mathcal{M})}_{\bU,H}(m). \wideparen{g}_{\bU,H}(a).
\end{equation}
\end{pro}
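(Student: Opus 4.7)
The plan is to construct $g^{E^i(\mathcal{M})}_{\bU,H}$ by transporting a free resolution of $\mathcal{M}(\bU)$ along the $K$-algebra isomorphism $\wideparen{g}_{\bU,H}$ together with the $\wideparen{g}_{\bU,H}$-semilinear isomorphism $g^\mathcal{M}_{\bU,H}$, then applying $\mathrm{Hom}(-,\wideparen{\mathcal{D}})$ and taking cohomology. First I would choose a free resolution $P^\bullet \to \mathcal{M}(\bU) \to 0$ by free left $\wideparen{\mathcal{D}}(\bU,H)$-modules. Since $\wideparen{g}_{\bU,H}$ is an isomorphism of $K$-algebras, each $P^k \cong \wideparen{\mathcal{D}}(\bU,H)^{I_k}$ transports to a free $\wideparen{\mathcal{D}}(g\bU, gHg^{-1})$-module $\tilde{P}^k := \wideparen{\mathcal{D}}(g\bU, gHg^{-1})^{I_k}$, together with a $K$-linear bijection $\phi^k : P^k \to \tilde{P}^k$ which is semilinear relative to $\wideparen{g}_{\bU,H}$. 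The differentials of $P^\bullet$ transport via the $\phi^k$ to $\wideparen{\mathcal{D}}(g\bU, gHg^{-1})$-linear differentials on $\tilde{P}^\bullet$, giving a free resolution of $\mathcal{M}(g\bU)$ whose augmentation is forced to coincide with $g^\mathcal{M}_{\bU,H}\circ (\phi^0)^{-1}$ (on generators), since both $g^\mathcal{M}_{\bU,H}$ and $\phi^0$ are $\wideparen{g}_{\bU,H}$-semilinear.

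Next I would define a $K$-linear map of Hom complexes
\[
\Phi : \mathrm{Hom}_{\wideparen{\mathcal{D}}(\bU,H)}(P^\bullet, \wideparen{\mathcal{D}}(\bU,H)) \longrightarrow \mathrm{Hom}_{\wideparen{\mathcal{D}}(g\bU, gHg^{-1})}(\tilde{P}^\bullet, \wideparen{\mathcal{D}}(g\bU, gHg^{-1}))
\]
by $f \longmapsto \wideparen{g}_{\bU,H} \circ f \circ (\phi^\bullet)^{-1}$. A direct check using that $\wideparen{g}_{\bU,H}$ is a ring isomorphism and each $\phi^k$ is $\wideparen{g}_{\bU,H}$-semilinear shows that the image is indeed $\wideparen{\mathcal{D}}(g\bU, gHg^{-1})$-linear and commutes with the transported differentials. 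Taking $i$-th cohomology yields the desired $K$-linear map $g^{E^i(\mathcal{M})}_{\bU,H}$, and the standard argument that any two free resolutions are chain-homotopy equivalent (with homotopies transporting in the same way) gives independence of the choice of $P^\bullet$.

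For the identity \eqref{equationg}, recall that the right $\wideparen{\mathcal{D}}(\bU,H)$-module structure on $\mathrm{Hom}_{\wideparen{\mathcal{D}}(\bU,H)}(P^k, \wideparen{\mathcal{D}}(\bU,H))$ is given by $(f \cdot a)(p) = f(p)\, a$. Then for $\tilde{p} = \phi^k(p)$,
\[
\Phi(f \cdot a)(\tilde{p}) \;=\; \wideparen{g}_{\bU,H}\bigl(f(p)\,a\bigr) \;=\; \wideparen{g}_{\bU,H}(f(p)) \cdot \wideparen{g}_{\bU,H}(a) \;=\; \bigl(\Phi(f)(\tilde{p})\bigr) \cdot \wideparen{g}_{\bU,H}(a),
\]
and passing to $i$-th cohomology delivers the claimed semi-linearity of $g^{E^i(\mathcal{M})}_{\bU,H}$.

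The only genuinely delicate point is verifying that the transported complex $\tilde{P}^\bullet$ is a bona fide free resolution of $\mathcal{M}(g\bU)$ in the category of left $\wideparen{\mathcal{D}}(g\bU,gHg^{-1})$-modules; this amounts to checking the compatibility of $\phi^\bullet$, $g^\mathcal{M}_{\bU,H}$ and $\wideparen{g}_{\bU,H}$ on a generating set, which is immediate since $\wideparen{g}_{\bU,H}$ is a $K$-algebra isomorphism and $g^\mathcal{M}_{\bU,H}$ is semilinear with respect to it. Once this is in place, all remaining verifications are formal.
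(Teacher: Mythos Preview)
Your proposal is correct and follows essentially the same line as the paper's proof. Both arguments pick a free resolution $P^\bullet$ of $\mathcal{M}(\bU)$, transport it across the ring isomorphism $\wideparen{g}_{\bU,H}$, define the map on Hom-complexes by post-composing with $\wideparen{g}_{\bU,H}$, and check the semilinearity identity componentwise before passing to cohomology.

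The only cosmetic difference is in how the transported resolution is packaged. The paper keeps the \emph{same} underlying modules $P^k$ but equips them with the $\wideparen{\mathcal{D}}(g\bU,{}^gH)$-action obtained by restriction along $\wideparen{g}_{\bU,H}^{-1}$ (denoting the result ${}^gP^\bullet$), so that the map on Hom-complexes is simply $f\mapsto \wideparen{g}_{\bU,H}\circ f$ with no extra $\phi^k$ appearing. You instead build genuinely new free modules $\tilde P^k$ and carry explicit semilinear bijections $\phi^k$, giving $f\mapsto \wideparen{g}_{\bU,H}\circ f\circ(\phi^k)^{-1}$. These are two descriptions of the same construction; your version makes the transport maps visible, while the paper's restriction-of-scalars phrasing is slightly more economical since no $\phi^k$'s need to be named. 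Your remark about independence of the resolution, which the paper leaves implicit, is a welcome addition.
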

\begin{proof}

Denote ${}^gH:=gHg^{-1}$. We construct a map
\[g^{E^i(\mathcal{M})}_{\bU,H}: \,\, Ext^i_{\wideparen{\mathcal{D}}(\bU,H)}(\mathcal{M}(\bU), \wideparen{\mathcal{D}}(\bU,H))\longrightarrow Ext^i_{\wideparen{\mathcal{D}}(g\bU,{}^gH)}(\mathcal{M}(g\bU), \wideparen{\mathcal{D}}(g\bU,{}^gH))\]
as follows. Let $P^.\rightarrow \mathcal{M}(\bU)\rightarrow 0$ be a free resolution of $\mathcal{M}(\bU)$ as a $\wideparen{\mathcal{D}}(\bU,H)$-module. Then by regarding each term of the complex $P^.$ as a $\wideparen{\mathcal{D}}(g\bU,{}^gH)$-module via the isomorphism of rings $\wideparen{g}_{\bU,H}^{-1} : \,\,\wideparen{\mathcal{D}}(g\bU,{}^g H)\tilde{\longrightarrow} \wideparen{\mathcal{D}}(\bU,H)$, we can also view $P^.$ as a free resolution of $\mathcal{M}(g\bU) \simeq \mathcal{M}(\bU)$ by $\wideparen{\mathcal{D}}(g\bU,{}^gH)$-modules and denote it by ${}^gP^.$. Thus, the map $g^{E^i(\mathcal{M})}_{\bU,H}$ can be defined by applying the i-th cohomology functor to the morphism of complexes whose components are morphisms of the form:
\begin{align*}
\phi_{\bU,H}^g: \,\, Hom_{\wideparen{\mathcal{D}}(\bU,H)}(P^k, \wideparen{\mathcal{D}}(\bU,H) )& \longrightarrow  Hom_{\wideparen{\mathcal{D}}(g\bU,{}^gH)}({}^g(P^k), \wideparen{\mathcal{D}}(g\bU,{}^gH))\\
f &\longmapsto \,\,\,\,\, \wideparen{g}_{\bU,H} \circ f,
\end{align*}
where ${}^g(P^k)$ denotes the component $P^k$ of the complex $P^.$ viewed as a $\wideparen{\mathcal{D}}(g\bU,{}^gH)$-module  via the morphism $\wideparen{g}_{\bU,H}^{-1}$. We need to check the following facts:
\begin{itemize}
\item[$1.$] $\wideparen{g}_{\bU,H} \circ f \in Hom_{\wideparen{\mathcal{D}}(g\bU,{}^gH)}({}^g(P^k), \wideparen{\mathcal{D}}(g\bU,{}^gH))$, which means $\wideparen{g}_{\bU,H} \circ f$ is $\wideparen{\mathcal{D}}(g\bU,{}^gH)$-linear. Indeed, if $b \in \wideparen{\mathcal{D}}(g\bU,{}^gH)$ and $m \in {}^g(P^k)$, then:
\begin{align*}
(\wideparen{g}_{\bU,H} \circ f) (b.m)=\wideparen{g}_{\bU,H}(f(\wideparen{g^{-1}}_{\bU,H}(b)m)= \wideparen{g}_{\bU,H}(\wideparen{g^{-1}}_{\bU,H}(b)f(m))=b(\wideparen{g}_{\bU,H} \circ f) (m).
\end{align*}
Here the second equality follows from the fact that $f$ is $\wideparen{\mathcal{D}}(\bU,H)$-linear and the third one is based on the fact that $\wideparen{g}_{\bU,H} $ is a morphism of $K$-algebras.
\item[$2.$] For any $a \in \wideparen{\mathcal{D}}(\bU,H)$ and $f \in Hom_{\wideparen{\mathcal{D}}(\bU,H)}(P^k, \wideparen{\mathcal{D}}(\bU,H) )$, we check that: \[\phi_{\bU,H}^g(fa)=\phi_{\bU,H}^g(f)\wideparen{g}_{\bU,H}(a).\]
Let $ m \in {}^g(P)^k$. We compute:
\[\phi_{\bU,H}^g(fa)(m)= \wideparen{g}_{\bU,H}( f(m)a)= \wideparen{g}_{\bU,H}(f(m))\wideparen{g}_{\bU,H}(a)= \phi_{\bU,H}^g(f)(m)\wideparen{g}_{\bU,H}(a).\]
\end{itemize}
Finally, by definition of $g^{E^i(\mathcal{M})}_{\bU,H}$, this implies (\ref{equationg}).
\end{proof}

Next, we study some properties of the morphisms $g^{E^i(\mathcal{M})}_{\bU,H}$ with $g \in G$. Let $\mathcal{L}$ be a $H$-stable free $\mathcal{A}$-Lie lattice of $\mathcal{T}(\bU)$ for some $H$-stable affine formal model $\mathcal{A}$ in $\mathcal{O}(\bU)$.
Write $\mathcal{A}':= g(\mathcal{A})$  and $\mathcal{L'}:=g^\mathcal{T}(\mathcal{L})$. Lemma \ref{Glemma} shows us that there is a bijection between the following Grothendieck topologies:
\begin{align*}
\bU_{ac}(\mathcal{L},H) &\longrightarrow (g\bU)_{ac}(\mathcal{L}', gHg^{-1})\\
\bV &\longmapsto g\bV.
\end{align*}
Furthermore, if $J \leq G_{\mathcal{L}}$ is an open normal subgroup of $G$ such that $(J,\mathcal{L})$ is an $\mathcal{A}$-trivialising pair in $H$, then $(gJg^{-1},\mathcal{L}')$ is also an $\mathcal{A}'$-trivialising pair in $gHg^{-1}$. Let $(J_n)_n$ be a good chain for $\mathcal{L}$ and recall the sheaves $\mathcal{Q}_n$ from (\ref{Q1}). If $\bV \in \bU_{ac}(\mathcal{L},H)$, there is an isomorphism of $K$-algebras:
\[g^{\mathcal{Q}_n}_{\bV,H}: \mathcal{Q}_n(\bV,H) \longrightarrow \mathcal{Q}_n(g\bV,gHg^{-1}).\]
These maps satisfy
\[\wideparen{g}_{\bV,H}=\varprojlim_n g^{\mathcal{Q}_n}_{\bV,H}.\]
Let $\mathcal{M}$ be a coadmissible $G$-equivariant $\mathcal{D}_{\bX}$-module. For each $n$, we define the following presheaves. Let $\bV \in \bU_w(\mathcal{L},H)$, then:
 \begin{equation}\label{M1}
 \mathcal{M}_n(\bV):= \mathcal{Q}_n(\bV,H) \otimes_{\wideparen{\mathcal{D}}(\bU,H)}\mathcal{M}(\bU)
\end{equation}
and
\begin{equation}\label{M2}
\mathcal{M}_n(g\bV):= \mathcal{Q}_n(g\bV, gHg^{-1}) \otimes_{\wideparen{\mathcal{D}}(g\bU,gHg^{-1})}\mathcal{M}(g\bU).
 \end{equation}
 Note that they defined sheaves of modules on $\bU_w(\mathcal{L},H)$ and on $(g\bU)_w(\mathcal{L'},gHg^{-1})$, respectively. If $\bV \in \bU_w(\mathcal{L},H)$, the isomorphism 
\[g^\mathcal{M}_{\bV,H}: \mathcal{M}(\bV) \longrightarrow \mathcal{M}(g\bV)\]
induces an isomorphism
\begin{align*}
g^{\mathcal{M}_n}_{\bV,H}: \,\,\mathcal{M}_n(\bV) &\longrightarrow \mathcal{M}_n(g\bV)\\
s \otimes m &\longmapsto g_{\bV,H}^{\mathcal{Q}_n}(s) \otimes g^\mathcal{M}_{\bV,H}(m).
\end{align*}

We have the following result:
\begin{pro} \label{Giso} Let $g \in G$. There is an isomorphism
\[g^{E^i(\mathcal{M})}_{\bU,H,n}: \,\,  Ext_{\mathcal{Q}_n(\bU,H)}^i(\mathcal{M}_n(\bU), \mathcal{Q}_n(\bU,H)) \longrightarrow Ext_{\mathcal{Q}_n(g\bU,{}^gH)}^i(\mathcal{M}_n(g\bU), \mathcal{Q}_n(g\bU,{}^gH)) \]
such that 
\begin{itemize}
\item[$1.$] For any $s \in \mathcal{Q}_n(\bU,H)$ and $m \in Ext_{\mathcal{Q}_n(\bU,H)}^i(\mathcal{M}_n(\bU), \mathcal{Q}_n(\bU,H))$, one has 
\[g^{E^i(\mathcal{M})}_{\bU,H,n}(ms)= g^{E^i(\mathcal{M})}_{\bU,H,n}(m).g^{\mathcal{Q}_n}_{\bU,H}(s).\]
\item[$2.$] Let $\bV \in \bU_{ac}(\mathcal{L},H)$. Then the following diagram is commutative:
\[
  \setlength{\arraycolsep}{2pt}
  \begin{array}{*{9}c}
        Ext_{\mathcal{Q}_n(\bU,H)}^i(\mathcal{M}_n(\bU), \mathcal{Q}_n(\bU,H)) &  \xrightarrow{g_{\bU,H, n}^{E^i(\mathcal{M})}} & Ext_{\mathcal{Q}_n(g\bU,{}^gH)}^i(\mathcal{M}_n(g\bU), \mathcal{Q}_n(g\bU,{}^gH))
    \\
    \Ldownarrow{\tau_{H,n}^i} & & \Ldownarrow{\tau_{gH,n}^{i}} & & 
    \\
    Ext_{\mathcal{Q}_n(\bV,H)}^i(\mathcal{M}_n(\bV), \mathcal{Q}_n(\bV,H)) &  \xrightarrow{g_{\bV,H,n}^{E^i(\mathcal{M})}} & Ext_{\mathcal{Q}_n(g\bV,{}^gH)}^i(\mathcal{M}_n(g\bV), \mathcal{Q}_n(g\bV,{}^gH))
  \end{array}
.\]
Here $\tau_{H,n}^i$ and $\tau_{gH,n}^i$ are restriction maps which are defined in Proposition $\ref{commutativeQ}$.
\end{itemize}
\end{pro}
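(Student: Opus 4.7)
The plan is to mimic the construction of Proposition \ref{Giso1} at the level of the Banach algebras $\mathcal{Q}_n$ instead of the Fr\'echet-Stein algebras $\wideparen{\mathcal{D}}$. The essential inputs are the isomorphism of $K$-algebras $g^{\mathcal{Q}_n}_{\bU,H}\colon\mathcal{Q}_n(\bU,H)\xrightarrow{\sim}\mathcal{Q}_n(g\bU,{}^gH)$ and the fact that the module isomorphism $g^{\mathcal{M}_n}_{\bU,H}$ is semilinear with respect to it (where ${}^gH:=gHg^{-1}$). Concretely, I would choose a free resolution $P^{\bullet}\to\mathcal{M}_n(\bU)\to 0$ by free $\mathcal{Q}_n(\bU,H)$-modules. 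Restricting scalars along $(g^{\mathcal{Q}_n}_{\bU,H})^{-1}$ turns each $P^k$ into a free $\mathcal{Q}_n(g\bU,{}^gH)$-module ${}^gP^k$, and the semilinearity of $g^{\mathcal{M}_n}_{\bU,H}$ ensures that ${}^gP^{\bullet}$ is a free resolution of $\mathcal{M}_n(g\bU)$. The prescription $f\mapsto g^{\mathcal{Q}_n}_{\bU,H}\circ f$ yields a morphism of cochain complexes from $\mathrm{Hom}_{\mathcal{Q}_n(\bU,H)}(P^{\bullet},\mathcal{Q}_n(\bU,H))$ to $\mathrm{Hom}_{\mathcal{Q}_n(g\bU,{}^gH)}({}^gP^{\bullet},\mathcal{Q}_n(g\bU,{}^gH))$, and its $i$-th cohomology defines $g^{E^i(\mathcal{M})}_{\bU,H,n}$. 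The verifications that $g^{\mathcal{Q}_n}_{\bU,H}\circ f$ is $\mathcal{Q}_n(g\bU,{}^gH)$-linear and that the right-linearity property in (1) holds repeat verbatim the two short computations already carried out in the proof of Proposition \ref{Giso1}, using only that $g^{\mathcal{Q}_n}_{\bU,H}$ is a morphism of $K$-algebras.

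The main task is the commutative square in (2). Here I would invoke Proposition \ref{flat}: since $\bV\in\bU_{ac}(\mathcal{L},H)$, the algebra $\mathcal{Q}_n(\bV,H)$ is flat over $\mathcal{Q}_n(\bU,H)$, and by Lemma \ref{Glemma}(ii) the analogous flatness holds for $g\bV\in(g\bU)_{ac}(\mathcal{L}',{}^gH)$. Consequently $\mathcal{Q}_n(\bV,H)\otimes_{\mathcal{Q}_n(\bU,H)}P^{\bullet}$ is a free resolution of $\mathcal{M}_n(\bV)\simeq\mathcal{Q}_n(\bV,H)\otimes_{\mathcal{Q}_n(\bU,H)}\mathcal{M}_n(\bU)$, and with this choice the restriction map $\tau^i_{H,n}$ of Proposition \ref{commutativeQ} is induced cohomologically from the tensor-evaluation map $f\mapsto 1\otimes f$ (this is precisely how the isomorphism of Lemma \ref{lemma4} is constructed). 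The same description applies on the $g$-side via the resolution ${}^g(\mathcal{Q}_n(\bV,H)\otimes P^{\bullet})$. The commutativity of the square in (2) then reduces, at the cochain level, to the commutativity of the square of $K$-algebras
\[
\begin{tikzcd}
\mathcal{Q}_n(\bU,H) \arrow{r}{g^{\mathcal{Q}_n}_{\bU,H}} \arrow{d} & \mathcal{Q}_n(g\bU,{}^gH) \arrow{d} \\
\mathcal{Q}_n(\bV,H) \arrow{r}{g^{\mathcal{Q}_n}_{\bV,H}} & \mathcal{Q}_n(g\bV,{}^gH)
\end{tikzcd}
\]
whose vertical maps are the restriction maps of the sheaf $\mathcal{Q}_n$.

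The principal obstacle is therefore to verify this final square, i.e.\ to check that the $g$-action on sections of $\mathcal{Q}_n$ intertwines the restriction maps along the bijection $\bV\mapsto g\bV$ between $\bU_{ac}(\mathcal{L},H)$ and $(g\bU)_{ac}(\mathcal{L}',{}^gH)$ from Lemma \ref{Glemma}. This is ultimately a consequence of the functorial construction of $\mathcal{Q}_n(-,H)$: the datum $(\mathcal{A},\mathcal{L},J_n)$ on $\bU$ is transported by $g$ to $(g(\mathcal{A}),g^{\mathcal{T}}(\mathcal{L}),gJ_ng^{-1})$ on $g\bU$ (Lemma \ref{Glemma}(i)), and both $\pi$-adic completion of enveloping algebras and the crossed product $\rtimes_{J_n}H$ are natural with respect to this transport. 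Once this bookkeeping is settled, everything else is formal, and the resulting map $g^{E^i(\mathcal{M})}_{\bU,H,n}$ is automatically an isomorphism because the analogous construction for $g^{-1}$ provides a two-sided inverse.
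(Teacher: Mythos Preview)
Your approach is essentially identical to the paper's: define $g^{E^i(\mathcal{M})}_{\bU,H,n}$ by the same transport-of-structure argument as in Proposition~\ref{Giso1}, reduce part~(2) via flatness and a free resolution to the $i=0$ level, and then observe that the Hom-square commutes precisely because the underlying square of $K$-algebras $\mathcal{Q}_n(\bU,H)\to\mathcal{Q}_n(\bV,H)$ versus $\mathcal{Q}_n(g\bU,{}^gH)\to\mathcal{Q}_n(g\bV,{}^gH)$ commutes. The only difference is that where you invoke ``functorial construction'' and ``bookkeeping'' for this last square, the paper cites \cite[Definition~3.4.9(c) and Proposition~3.4.10]{AW} directly; you should do the same, since that reference records exactly the compatibility $r_n^g\circ g^{\mathcal{Q}_n}_{\bU,H}=g^{\mathcal{Q}_n}_{\bV,H}\circ r_n$ you need and spares you the transport-of-data verification.
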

\begin{proof}
\begin{itemize}
\item[$(1.)$] We define $g^{E^i(\mathcal{M})}_{\bU,H,n}$ similarly as defining $g^{E^{i}(\mathcal{M})}_{\bU,H}$ in Proposition \ref{Giso}. Let $P_n^{.} \rightarrow \mathcal{M}_n(\bU) \rightarrow 0$ be a resolution of $\mathcal{M}_n(\bU)$ by free $\mathcal{Q}_n(\bU,H)$-modules. Then by considering each term of this resolution as a $\mathcal{Q}_n(g\bU,{}^gH)$-module via the isomorphism of $K$-algebras $g^{\mathcal{Q}_n}_{\bU,H}: \mathcal{Q}_n(\bU,H) \longrightarrow \mathcal{Q}_n(g\bU,{}^gH)$, we see that $P_n^{.}$ is also a resolution of $\mathcal{M}_n(g\bU)$ by free $\mathcal{Q}_n(g\bU,{}^gH)$-modules. Let us denote this by ${}^gP_n^{.}$. Then the morphism $g^{E^i(\mathcal{M})}_{\bU,H,n}$ is determined by taking the $i$-th cohomology of the following morphism of complexes:
\begin{align*}
Hom_{\mathcal{Q}_n(\bU,H)}(P_n^{.}, \mathcal{Q}_n(\bU,H)) &\longrightarrow Hom_{\mathcal{Q}_n(g\bU,^{g}H)}({}^gP_n^{.}, \mathcal{Q}_n(g\bU,{}^gH))\\
f &\longmapsto g^{\mathcal{Q}_n}_{\bU,H} \circ f.
\end{align*}
Now the required property can be proved similarly as for $g^{E^i(\mathcal{M})}_{\bU,H}$ in proposition \ref{Giso}.
\item[$(2.)$] Note that 
\[\mathcal{M}_n(\bV) = \mathcal{Q}_n(\bV,H) \otimes_{\wideparen{\mathcal{D}}(\bV,H)}\mathcal{M}(\bV)\cong \mathcal{Q}_n(\bV,H) \otimes_{\mathcal{Q}_n(\bU,H)}\mathcal{M}_n(\bU).\]
\[\mathcal{M}_n(g\bV) = \mathcal{Q}_n(g\bV,{}^gH) \otimes_{\wideparen{\mathcal{D}}(g\bV,{}^gH)}\mathcal{M}(g\bV)\cong \mathcal{Q}_n(g\bV,{}^gH) \otimes_{\mathcal{Q}_n(g\bU,{}^gH)}\mathcal{M}_n(g\bU).\]
By taking a projective resolution of $\mathcal{M}_n(\bU)$ by free $\mathcal{Q}_n(\bU,H)$-modules together with the flatness of the morphisms $\mathcal{Q}_n(\bU,H)\rightarrow \mathcal{Q}_n(\bV,H)$ and $\mathcal{Q}_n(g\bU,{}^gH)\rightarrow \mathcal{Q}_n(g\bV,{}^gH)$ (Proposition \ref{flat}), it reduces to show the assertion for $i=0$, which means that the diagram
\[
  \setlength{\arraycolsep}{2pt}
  \begin{array}{*{9}c}
        Hom_{\mathcal{Q}_n(\bU,H)}(\mathcal{M}_n(\bU), \mathcal{Q}_n(\bU,H)) & \Lrightarrow & Hom_{\mathcal{Q}_n(g\bU,{}^gH)}(\mathcal{M}_n(g\bU), \mathcal{Q}_n(g\bU,{}^gH))
    \\
    \Ldownarrow & & \Ldownarrow & & 
    \\
    Hom_{\mathcal{Q}_n(\bV,H)}(\mathcal{M}_n(\bV), \mathcal{Q}_n(\bV,H)) & \Lrightarrow & Hom_{\mathcal{Q}_n(g\bV,{}^gH)}(\mathcal{M}_n(g\bV), \mathcal{Q}_n(g\bV,{}^gH))
  \end{array}
\]
is commutative. \\
Let $f : \mathcal{M}_n(\bU) \rightarrow \mathcal{Q}_n(\bU,H)$ be a $\mathcal{Q}_n(\bU,H)$-linear morphism and write $r_{n}$, $r_{n}^g$ for the restrictions $\mathcal{Q}_n(\bU,H)\rightarrow \mathcal{Q}_n(\bV,H)$ and $\mathcal{Q}_n(g\bU,{}^gH)\rightarrow \mathcal{Q}_n(g\bV,{}^gH)$, respectively. For $ a \in \mathcal{Q}_n(g\bV,{}^gH)$, $m \in \mathcal{M}_n(g\bU)$, we have:
\begin{center}
$\left( 1\bar{\otimes}r_{n}^g\circ (g^{\mathcal{Q}_n}_{\bU,H}\circ f \circ (g^{\mathcal{M}_n}_{\bU,H})^{-1})\right)(a \otimes m)=a.r_{n}^g(g^{\mathcal{Q}_n}_{\bU,H}( f( (g^{\mathcal{M}_n}_{\bU,H})^{-1}(m)))) $
\end{center}
and
\begin{center}
$\left(g^{\mathcal{Q}_n}_{\bV,H} \circ (1\bar{\otimes} r_n \circ f) \circ (g^{\mathcal{M}_n}_{\bV,H})^{-1}\right)(a \otimes m)= a.g^{\mathcal{Q}_n}_{\bV,H}(r_n(f((g^{\mathcal{M}_n}_{\bV,H})^{-1}(m)))).$
\end{center}
So it reduces to prove that for any $b \in \mathcal{Q}_n(g\bV,{}^gH)$, one has that:
\[r_n^g(g^{\mathcal{Q}_n}_{\bU,H}((b))=g^{\mathcal{Q}_n}_{\bV,H}(r_n(b)),\]
which is a consequence of  \cite[Definition $3.4.9(c)$ and Proposition $3.4.10$]{AW}.
\end{itemize}
\end{proof}

\textbf{Notation:}  In the sequel, whenever $\bV, H$ are given and whenever there is no ambiguity, we simply write $\wideparen{g}$ for $\wideparen{g}_{\bV,H}$ and $g^{\mathcal{M}_n}$, $g^{\mathcal{Q}_n}, g^{E^i(\mathcal{M})}_n$ etc. instead of $g^{\mathcal{M}_n}_{\bV,H}$, $g^{\mathcal{Q}_n}_{\bV,H}$, $g^{E^i(\mathcal{M})}_{\bV,H,n}$ etc. respectively.\\

\begin{pro} The following diagram is commutative:
\[
  \setlength{\arraycolsep}{2pt}
  \begin{array}{*{9}c}
        Ext_{\wideparen{\mathcal{D}}(\bU,H)}^i(\mathcal{M}(\bU), \wideparen{\mathcal{D}}(\bU,H)) & \xrightarrow{g_{}^{E^i(\mathcal{M})}} & Ext_{\wideparen{\mathcal{D}}(g\bU,{}^gH)}^i(\mathcal{M}(g\bU), \wideparen{\mathcal{D}}(g\bU,{}^gH))
    \\
    \Ldownarrow & & \Ldownarrow & & 
    \\
    Ext_{\mathcal{Q}_n(\bU,H)}^i(\mathcal{M}_n(\bU), \mathcal{Q}_n(\bU,H)) & \xrightarrow{g_{n}^{E^i(\mathcal{M})}} & Ext_{\mathcal{Q}_n(g\bU,{}^gH)}^i(\mathcal{M}_n(g\bU), \mathcal{Q}_n(g\bU,{}^gH)).
  \end{array}
\]
\end{pro}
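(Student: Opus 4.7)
The plan is to reduce the commutativity to a statement about Hom groups (via a chosen free resolution) and then to the commutativity of the defining ring maps. Concretely, choose a free resolution $P^{\cdot} \to \mathcal{M}(\bU) \to 0$ of $\mathcal{M}(\bU)$ as a $\wideparen{\mathcal{D}}(\bU,H)$-module. Since the canonical map $\wideparen{\mathcal{D}}(\bU,H) \to \mathcal{Q}_n(\bU,H)$ is right flat (see \cite[Remark $3.2$]{ST2003}), the complex $P_n^{\cdot}:= \mathcal{Q}_n(\bU,H)\otimes_{\wideparen{\mathcal{D}}(\bU,H)} P^{\cdot}$ is a free resolution of $\mathcal{M}_n(\bU)$ over $\mathcal{Q}_n(\bU,H)$. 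Viewing $P^{\cdot}$ and $P_n^{\cdot}$ through the isomorphisms $\wideparen{g}_{\bU,H}^{-1}$ and $(g^{\mathcal{Q}_n}_{\bU,H})^{-1}$ furnishes free resolutions ${}^g P^{\cdot}$ and ${}^g P_n^{\cdot}$ of $\mathcal{M}(g\bU)$ and $\mathcal{M}_n(g\bU)$ over $\wideparen{\mathcal{D}}(g\bU,{}^gH)$ and $\mathcal{Q}_n(g\bU,{}^gH)$, respectively. By construction of the two $g$-maps (Propositions \ref{Giso1} and \ref{Giso}), the horizontal arrows are obtained by taking the $i$-th cohomology of the cochain maps $f\mapsto \wideparen{g}_{\bU,H}\circ f$ and $f\mapsto g^{\mathcal{Q}_n}_{\bU,H}\circ f$.

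With this presentation, the vertical base-change arrows are realised by the cochain maps $f \mapsto \mathrm{id}\bar{\otimes} f$ (tensoring along $\wideparen{\mathcal{D}}(\bU,H)\to \mathcal{Q}_n(\bU,H)$, using the identification
\[\mathcal{Q}_n(\bU,H) \otimes_{\wideparen{\mathcal{D}}(\bU,H)} \wideparen{\mathcal{D}}(\bU,H) \;\simeq\; \mathcal{Q}_n(\bU,H))\]
and analogously on the right. So it suffices to check, for every term $P^k$ of the resolution, that the square of Hom groups
\[
\begin{array}{ccc}
\mathrm{Hom}_{\wideparen{\mathcal{D}}(\bU,H)}(P^k, \wideparen{\mathcal{D}}(\bU,H)) & \longrightarrow & \mathrm{Hom}_{\wideparen{\mathcal{D}}(g\bU,{}^gH)}({}^gP^k, \wideparen{\mathcal{D}}(g\bU,{}^gH)) \\
\downarrow & & \downarrow \\
\mathrm{Hom}_{\mathcal{Q}_n(\bU,H)}(P_n^k, \mathcal{Q}_n(\bU,H)) & \longrightarrow & \mathrm{Hom}_{\mathcal{Q}_n(g\bU,{}^gH)}({}^gP_n^k, \mathcal{Q}_n(g\bU,{}^gH))
\end{array}
\]
commutes. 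Unwinding the definitions, chasing an element $f$ both ways gives $g^{\mathcal{Q}_n}_{\bU,H}\circ (q_n\circ f)$ and $q_n\circ (\wideparen{g}_{\bU,H}\circ f)$, where $q_n$ denotes the canonical map from $\wideparen{\mathcal{D}}$ to $\mathcal{Q}_n$. Hence the whole question reduces to the commutativity of the square of ring homomorphisms
\[
\begin{array}{ccc}
\wideparen{\mathcal{D}}(\bU,H) & \xrightarrow{\;\wideparen{g}_{\bU,H}\;} & \wideparen{\mathcal{D}}(g\bU,{}^gH) \\
\downarrow & & \downarrow \\
\mathcal{Q}_n(\bU,H) & \xrightarrow{\;g^{\mathcal{Q}_n}_{\bU,H}\;} & \mathcal{Q}_n(g\bU,{}^gH),
\end{array}
\]
which is immediate from the defining relation $\wideparen{g}_{\bU,H} = \varprojlim_n g^{\mathcal{Q}_n}_{\bU,H}$ recalled just before (\ref{M1})-(\ref{M2}).

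The only delicate point is ensuring that the \emph{same} complex $P^{\cdot}$ (respectively $P_n^{\cdot}$), merely re-interpreted via $\wideparen{g}^{-1}$ (respectively $(g^{\mathcal{Q}_n})^{-1}$), is used on both sides; this is exactly how $g^{E^i(\mathcal{M})}_{\bU,H}$ and $g^{E^i(\mathcal{M})}_{\bU,H,n}$ were defined in Propositions \ref{Giso1} and \ref{Giso}, so there is no choice issue. I expect no further obstacle: once the computation is pinned down at the level of free resolutions, everything collapses to the compatibility of $\wideparen{g}$ with its finite-level counterparts $g^{\mathcal{Q}_n}$.
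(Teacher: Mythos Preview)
Your proposal is correct and follows essentially the same approach as the paper's proof: both reduce, via a free resolution and the flatness of $\wideparen{\mathcal{D}}(\bU,H)\to\mathcal{Q}_n(\bU,H)$, to the commutativity of the underlying square of ring maps, which follows from $\wideparen{g}_{\bU,H}=\varprojlim_n g^{\mathcal{Q}_n}_{\bU,H}$. The paper phrases the reduction as ``reduce to $i=0$'' and then carries out the computation on $\Hom_{\wideparen{\mathcal{D}}(\bU,H)}(\mathcal{M}(\bU),\wideparen{\mathcal{D}}(\bU,H))$ with explicit elements (using $g^{E^0(\mathcal{M})}(f)=\wideparen{g}\circ f\circ(g^{\mathcal{M}})^{-1}$), whereas you work directly at the level of each term $P^k$ of the resolution; these are equivalent unpackings of the same argument.
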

 
 \begin{proof}
  First, we note that the morphisms 
  \begin{center}
  $q_{\bU,n}: \wideparen{\mathcal{D}}(\bU,H)\longrightarrow \mathcal{Q}_n(\bU,H)$ and $q_{g\bU,n}: \wideparen{\mathcal{D}}(g\bU,{}^gH)\longrightarrow \mathcal{Q}_n(g\bU,{}^gH)$ 
  \end{center}
  are flat. By using a resolution $P^. \longrightarrow \mathcal{M}(\bU) \longrightarrow 0$ of $\mathcal{M}(\bU)$ by free $\wideparen{\mathcal{D}}(\bU,H)$-modules, it reduces to show the commutativity of the above diagram for the case where $i=0$. Let $f \in Hom_{\wideparen{\mathcal{D}}(\bU,H)}(\mathcal{M}(\bU),\wideparen{\mathcal{D}}(\bU,H))$, then by definition:
 \begin{center}
 $g^{E^0(\mathcal{M})}_{}(f)= \wideparen{g}\circ f \circ \left(g^\mathcal{M}\right)^{-1}. $
 \end{center}
 Let $s \in \mathcal{Q}_n(g\bU,{}^gH)$ and $m \in \mathcal{M}_n(g\bU)$. It follows that 
 \begin{center}
 $\left(id \bar{\otimes}q_{g\bU,n}(\wideparen{g}\circ f\circ \left(g^{\mathcal{M}}\right)^{-1}\right)(s \otimes m)= s.q_{g\bU,n}(\wideparen{g}(f((g^\mathcal{M})^{-1}(m))))$
 \end{center}
 and
 \begin{center}
 $\left(g^{\mathcal{Q}_n}\circ (1\bar{\otimes}q_{\bU,n}\circ f )\circ (g^{\mathcal{M}_n})^{-1}\right)(s \otimes m)=s.g^{\mathcal{Q}_n}(q_{\bU,n}(f((g^{\mathcal{M}})^{-1}(m)))).$
 \end{center}
 Now the result follows from the commutativity of the diagram
\[
 \begin{tikzcd}
\wideparen{\mathcal{D}}(\bU,H) \arrow{r}{\wideparen{g}} \arrow[swap]{d}{q_{\bU,n}} & \wideparen{\mathcal{D}}(g\bU,{}^gH)\arrow{d}{q_{g\bU,n}} \\
\mathcal{Q}_n(\bU,H) \arrow{r}{{g}^{\mathcal{Q}_n}} & \mathcal{Q}_n(g\bU,{}^gH) \\
\end{tikzcd}
\]
which is evident as $\wideparen{g}=\varprojlim_n g^{\mathcal{Q}_n}$.
 \end{proof}
 
\begin{re} The above proposition shows that for any $g \in G$ , $\bU \in X_{w}(\mathcal{T})$ and $H\leq G$ such that $(\bU,H)$ is small, the following equality holds:  
\[g^{E^i(\mathcal{M})}_{\bU,H} = \varprojlim_n g^{E^i(\mathcal{M})}_{\bU,H,n}.\]
\end{re} 
 
\begin{pro} \label{comm3}
If $N \leq H$ and $\bV$ is a $N$-stable subdomain of $\bU$ in $\bX_{w}(\mathcal{T})$, the diagram
\[
  \setlength{\arraycolsep}{2pt}
  \begin{array}{*{9}c}
    E^i(\mathcal{M})(\bV,N) & \xrightarrow{g^{E^i(\mathcal{M})}_{\bV,N}} & E^i(\mathcal{M})(g\bV,gNg^{-1}) 
    \\
    \Ldownarrow{\wideparen{p}^i_{N,H}(\bV)} & & \Ldownarrow{\wideparen{p}^i_{N,H}(\bV)} & & 
    \\
       E^i(\mathcal{M})(\bV,H) & \xrightarrow{g^{E^i(\mathcal{M})}_{\bV,H}} & E^i(\mathcal{M})(g\bV,gHg^{-1})
    \\
    \Big\uparrow{\wideparen{\tau}^i_{\bU,\bV,H}} & & \Big\uparrow{\wideparen{\tau}^i_{\bU,\bV,H}} & & 
    \\
    E^i(\mathcal{M})(\bU,H) & \xrightarrow{g^{E^i(\mathcal{M})}_{\bU,H}} & E^i(\mathcal{M})(g\bU,gHg^{-1})
  \end{array}
\]
is commutative.
\end{pro}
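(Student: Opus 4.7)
The diagram decomposes into two squares sharing a common middle row: the top square expresses compatibility of the $g$-action with the projection maps $\wideparen{p}^i_{N,H}$, and the bottom square expresses its compatibility with the restriction maps $\wideparen{\tau}^i$. I will prove each square separately; the commutativity of the full rectangle then follows automatically.

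For the bottom square, the strategy is to reduce to the Banach level. Choose a free $H$-stable $\mathcal{A}$-Lie lattice $\mathcal{L}$ in $\mathcal{T}(\bU)$ for some $H$-stable affine formal model $\mathcal{A}$ of $\mathcal{O}(\bU)$, together with a good chain $(J_n)$, and rescale if necessary so that $\bV \in \bU_{ac}(\mathcal{L},H)$. By coadmissibility, $E^i(\mathcal{M})(\bU,H)$ is isomorphic to the inverse limit of the modules $Ext^i_{\mathcal{Q}_n(\bU,H)}(\mathcal{M}_n(\bU), \mathcal{Q}_n(\bU,H))$, and similarly for the other three corners of the square. The map $g^{E^i(\mathcal{M})}_{\bU,H}$ is the inverse limit of its $\mathcal{Q}_n$-level counterparts $g^{E^i(\mathcal{M})}_{\bU,H,n}$ by the remark preceding this proposition, while $\wideparen{\tau}^i_{\bU,\bV,H}$ is built from the Banach restriction maps via Lemma \ref{lem} and Proposition \ref{res2}. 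The commutativity thus reduces to its finite-level analogue, which is exactly Proposition \ref{Giso}(2).

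For the top square, I would first reduce to the case where $N$ is normal in $H$: if it is not, choose an open normal subgroup $N' \trianglelefteq H$ contained in $N$ and write $\wideparen{p}^i_{N,H}(\bV)$ in terms of the maps $\wideparen{p}^i_{\cdot,N'}$ as in the proof of Proposition \ref{aa}, handling each factor separately. Assuming $N \trianglelefteq H$, Remark \ref{rep} identifies $\wideparen{p}^i_{N,H}(\bV)$ and $\wideparen{p}^i_{gNg^{-1},gHg^{-1}}(g\bV)$ with the inverses of the maps of the form $\wideparen{p}^i_{H,N}(\bV)$, which by Proposition \ref{isoD} are themselves induced by composition with the Banach projections $p^{\bV}_{H,N}$. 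Choosing a free resolution of $\mathcal{M}(\bV)$ over $\wideparen{\mathcal{D}}(\bV,H)$ — which is simultaneously a free resolution over $\wideparen{\mathcal{D}}(\bV,N)$ by Proposition \ref{keypro1}, and whose transport under $\wideparen{g}$ gives free resolutions of $\mathcal{M}(g\bV)$ over the $g$-conjugate rings — the claim at arbitrary $i$ reduces to the identity
\[
p^{g\bV}_{gHg^{-1},gNg^{-1}} \circ \wideparen{g}_{\bV,H} \;=\; \wideparen{g}_{\bV,N} \circ p^{\bV}_{H,N}
\]
of maps $\wideparen{\mathcal{D}}(\bV,H) \to \wideparen{\mathcal{D}}(g\bV,gNg^{-1})$. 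Via Lemma \ref{plim} and Corollary \ref{coro3}(i), this reduces in turn to the analogous identity at level $\mathcal{Q}_n$: that the isomorphism $g^{\mathcal{Q}_n}_{\bV,H}$ carries coset representatives for $N$ in $H$ to coset representatives for $gNg^{-1}$ in $gHg^{-1}$ compatibly with the direct sum decompositions defining the projection maps.

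The main obstacle is this final structural identity, which requires checking that the coset-representative description of the projections $p$ is compatible with the conjugation action $\wideparen{g}$. This is ultimately a consequence of the fact that $\wideparen{g}$ is a ring isomorphism conjugating the image of $H$ into that of $gHg^{-1}$ inside the completed skew-group algebra, but pinning it down requires a careful unwinding of the crossed product construction. Once this has been verified, stacking the two commutative squares yields the full diagram's commutativity.
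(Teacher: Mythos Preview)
Your proposal is correct and follows essentially the same approach as the paper's proof: both arguments decompose the diagram into the two squares, reduce the bottom square to Proposition~\ref{Giso}(2) by passing to the $\mathcal{Q}_n$-level, and reduce the top square (after replacing $N$ by a normal subgroup of $H$) to the ring-level identity $p^{g\bV}_{{}^gH,{}^gN}\circ\wideparen{g}_{\bV,H}=\wideparen{g}_{\bV,N}\circ p^{\bV}_{H,N}$, which is then verified at the $\mathcal{Q}_n$-level using explicit coset representatives. The paper carries out this last verification in full rather than leaving it as an ``obstacle'', but what you identify as needing to be checked is exactly what the paper checks.
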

\begin{proof} It is enough to prove the proposition for the case where $N$  is normal in $H$, as the general case can be proved by choosing an open normal subgroup of $H$ which is contained in $N$.
\begin{itemize}
\item[$1.$] 
Let us prove the commutativity of the upper square.  Take a projective resolution of $\mathcal{M}(\bV)$ by free modules in $Mod(\wideparen{\mathcal{D}}(\bV,H))$. It is enough to show that for any (left) $\wideparen{\mathcal{D}}(\bV,H)$-module $P$,  the diagram
 \[\begin{tikzcd}
Hom_{\wideparen{\mathcal{D}}(\bV,H)}(P, \wideparen{\mathcal{D}}(\bV,H)) \arrow{r}{\phi_{\bV,H}^g} \arrow[swap]{d}{\wideparen{p}_{H,N}(\bV)} & Hom_{\wideparen{\mathcal{D}}(g\bV,{}^gH)}({}^gP, \wideparen{\mathcal{D}}(g\bV,{}^gH))\arrow{d}{\wideparen{p}_{{}^{g}H,{}^gN}(g\bV)} \\
Hom_{\wideparen{\mathcal{D}}(\bV,N)}(P, \wideparen{\mathcal{D}}(\bV,N))\arrow{r}{\phi_{\bV,N}^g} & Hom_{\wideparen{\mathcal{D}}(g\bV,gNg^{-1})}({}^gP, \wideparen{\mathcal{D}}(g\bV,{}^gN))\\
\end{tikzcd}
\]
is commutative. It means that if $f \in Hom_{\wideparen{\mathcal{D}}(\bV,H)}(P, \wideparen{\mathcal{D}}(\bV,H))$, then one has : 
$$\wideparen{p}_{{}^gH,{}^gN}(g\bV)( \wideparen{g}_{\bV,H} \circ f)= \wideparen{g}_{\bV,N}\circ \wideparen{p}_{H,N}(\bV) (f). $$
But this reduces to proving that the diagram 
\begin{equation}\label{diag3}
\begin{tikzcd}
\wideparen{\mathcal{D}}(\bV,H) \arrow{r}{\wideparen{g}_{\bV,H}} \arrow[swap]{d}{p_{H,N}^{\bV}} & \wideparen{\mathcal{D}}(g\bV,{}^gH)\arrow{d}{p_{{}^gH,{}^gN}^{\bV}} \\
\wideparen{\mathcal{D}}(\bV,N) \arrow{r}{\wideparen{g}_{\bV,N}} & \wideparen{\mathcal{D}}(g\bV,{}^gN) \\
\end{tikzcd}
\end{equation}
is commutative. For this, choose a $H$-stable free $\mathcal{A}$-Lie lattice $\mathcal{L}$ for some $H$-stable formal model $\mathcal{A}$ of $\mathcal{O}(\bV)$ and a good chain $(J_n)$ for $\mathcal{L}$. Recall from Lemma \ref{Glemma}(i) that $\mathcal{L'}={g}^\mathcal{T}(\mathcal{L})$ is a $gHg^{-1}$-stable free $g(\mathcal{A})$-Lie lattice in $\mathcal{T}(g\bU)$. For  a fixed natural integer $n \in \mathbb{N}$, we consider the following diagram:
\[\begin{tikzcd}
\widehat{U(\pi^n\mathcal{L})}_K \rtimes_{J_n}H \arrow{r}{g_{\bV,H}^{\mathcal{Q}_n}} \arrow[swap]{d}{p_{H,N,n}^{\bV}} & \widehat{U(\pi^{n}\mathcal{L'})}_K \rtimes_{gJ_ng^{-1}}gHg^{-1} \arrow{d}{p_{{}^gH,{}^gN,n}^{\bV}} \\
\widehat{U(\pi^n\mathcal{L})}_K \rtimes_{J_n}N \arrow{r}{g_{\bV,N}^{\mathcal{Q}_n}} & \widehat{U(\pi^n\mathcal{L'})}_K \rtimes_{gJ_ng^{-1}}gNg^{-1}. \\
\end{tikzcd}\]
Let  $\lbrace g_1=1,...,g_m,...,g_n\rbrace$  be a set of representatives of cosets of $G$ modulo $J_n$ such that $\lbrace \bar{g_1}=1,\bar{g_2},\bar{g_m},.., \bar{g}_n\rbrace$ is a basis of $\widehat{U(\pi^n\mathcal{L})}_K \rtimes_{J_n}H$ and $\lbrace \bar{g}_1,..., \bar{g}_m \rbrace$ is a basis of $\widehat{U(\pi^n\mathcal{L})}_K \rtimes_{J_n}N$ over the ring $\widehat{U(\pi^n\mathcal{L})_K}$ (as  left modules). Then we get a basis of $\widehat{U(\pi^{n}\mathcal{L'})}_K \rtimes_{gJ_ng^{-1}}gHg^{-1} $ (respectively, of $\widehat{U(\pi^n\mathcal{L'})}_K \rtimes_{J_n}gNg^{-1}$) over the ring $\widehat{U(\pi^n\mathcal{L'})_K}$ induced by classes of  $\lbrace gg_1g^{-1},...,gg_mg^{-1},...,g g_n g^{-1}\rbrace$ (respectively, of $\lbrace g g_1g^{-1},...,gg_mg^{-1}\rbrace $ )  modulo $gJ_ng^{-1}$. This implies, by definition of the projection maps $p_{H,N,n}^{\bV}$ and $p_{{}^gH,{}^gN,n}^{g\bV}$, that the above diagram is commutative for each $n$, which produces the commutativity of (\ref{diag3}).

\item[$2.$] It remains to show the commutativity of the lower square. We still fix a $H$-stable free $\mathcal{A}$-Lie lattice of $\mathcal{T}(\bU)$, a good chain $(J_n)$ for $\mathcal{L}$ and keep notations as above. Suppose in addition that $\bV$ is an $\mathcal{L}$- accessible subdomain of $\bU$ (by rescaling $\mathcal{L}$). Then $g\bV$ is an $\mathcal{L'}$- accessible subdomain of $g\bU$ by Lemma \ref{Glemma}(ii). 
Now, since all morphisms of the lower square are linear maps between coadmissible modules, it is enough to show that the diagram
\[\begin{tikzcd}
Ext^i_{\mathcal{Q}_n(\bU,H)}(\mathcal{M}_n(\bU),\mathcal{Q}_n(\bU,H)) \arrow{r}{} \arrow[swap]{d}{} & Ext^i_{\mathcal{Q}_n(g\bU,{}^gN)}\mathcal{M}_n(g\bU),\mathcal{Q}_n(gU,{}^gH))\arrow{d}{} \\
Ext^i_{\mathcal{Q}_n(\bV,H)}(\mathcal{M}_n(\bV),\mathcal{Q}_{n}(\bV,H)) \arrow{r}{} & Ext^i_{\mathcal{Q}_n(g\bV,{}^gH)},(\mathcal{M}_n(g\bV),\mathcal{Q}_{n}(g\bV, {}^gH))
\end{tikzcd}\]
is commutative. This is indeed Proposition \ref{Giso}(2).
\end{itemize}
\end{proof}
All our efforts so far culminate in the following theorem.
\begin{theorem}\label{thm1}
Let $\bX$ be a smooth rigid analytic space and $G$ be a $p$-adic Lie group acting continuously on $\bX$. Let $\mathcal{M} \in \mathcal{C}_{\bX/G}$, then for all $i \geq 0$, $E^i(\mathcal{M})$ is a $G$-equivariant presheaf of right $\mathcal{D}_{\bX}$-modules on $\bX_{w}(\mathcal{T})$.
\end{theorem}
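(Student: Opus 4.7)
The plan is to assemble the structural results established in the preceding propositions into the claimed $G$-equivariant presheaf structure. The existence of the restriction maps $\wideparen{\tau}^i_{\bU,\bV}: E^i(\mathcal{M})(\bU) \longrightarrow E^i(\mathcal{M})(\bV)$ for $\bV \subset \bU$ in $\bX_w(\mathcal{T})$, together with their functoriality $\wideparen{\tau}^i_{\bV,\bW} \circ \wideparen{\tau}^i_{\bU,\bV} = \wideparen{\tau}^i_{\bU,\bW}$, already follow from Proposition \ref{res} together with the commutative triangle of Proposition \ref{res2} propagated through the inverse limits defining $E^i(\mathcal{M})(-)$. So only the construction of the $G$-equivariant structure and the verification of its compatibility with restriction remain to be treated.

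For each $g \in G$ and $\bU \in \bX_w(\mathcal{T})$, I would define the action map $g^{E^i(\mathcal{M})}_{\bU}: E^i(\mathcal{M})(\bU) \longrightarrow E^i(\mathcal{M})(g\bU)$ as the inverse limit of the maps $g^{E^i(\mathcal{M})}_{\bU,H}$ produced in Proposition \ref{Giso1}, as $H$ ranges over $\bU$-small subgroups of $G$. The upper square of Proposition \ref{comm3} guarantees that these maps commute with the transition morphisms $\wideparen{p}^i_{N,H}(\bU)$ of the inverse system, so they do pass to the limit. Conjugation $H \mapsto gHg^{-1}$ provides an order-preserving bijection between $\bU$-small and $g\bU$-small subgroups of $G$, matching the source and target indexing sets, and the resulting $K$-linear map $g^{E^i(\mathcal{M})}_{\bU}$ is semilinear with respect to $g^{\mathcal{D}}_{\bU}: \mathcal{D}(\bU) \longrightarrow \mathcal{D}(g\bU)$ by passage to the limit of the identity (\ref{equationg}).

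The cocycle property $(gh)^{E^i(\mathcal{M})}_{\bU} = g^{E^i(\mathcal{M})}_{h\bU} \circ h^{E^i(\mathcal{M})}_{\bU}$ and the triviality $1^{E^i(\mathcal{M})}_{\bU} = \mathrm{id}$ reduce, at each fixed small pair, to the analogous identities for $\wideparen{g}_{\bU,H}$ and $g^{\mathcal{M}}_{\bU,H}$, which hold because $\mathcal{M} \in \mathcal{C}_{\bX/G}$; tracing these through the explicit construction of $g^{E^i(\mathcal{M})}_{\bU,H}$ (as the $i$-th cohomology of the morphism of complexes induced by $\wideparen{g}_{\bU,H}$) is direct. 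Finally, compatibility with restriction, i.e.\ the equality $g^{E^i(\mathcal{M})}_{\bV} \circ \wideparen{\tau}^i_{\bU,\bV} = \wideparen{\tau}^i_{g\bU,g\bV} \circ g^{E^i(\mathcal{M})}_{\bU}$, follows from the lower square of Proposition \ref{comm3} after choosing a normal open subgroup $N$ of a $\bU$-small subgroup $H$ with $N$ being both $\bU$-small and $\bV$-small.

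The main obstacle is essentially the bookkeeping in this last step: one must arrange that the intermediate subgroup $N$ can be chosen so that $(\bU,N)$, $(\bV,N)$, $(g\bU, gNg^{-1})$ and $(g\bV, gNg^{-1})$ are simultaneously small, and then thread the three squares of Proposition \ref{comm3} through the four inverse limits defining $E^i(\mathcal{M})$ on $\bU,\bV,g\bU,g\bV$. Nothing deep is required beyond the flatness and finiteness properties already exploited in Propositions \ref{aa}, \ref{res2} and \ref{commutativeE}; the difficulty is combinatorial rather than analytic, and amounts to verifying that all of the \emph{a priori} choices of $N$ lead to the same map on inverse limits.
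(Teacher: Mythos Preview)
Your proposal is correct and follows essentially the same approach as the paper. The paper's proof organizes the same ingredients---Proposition \ref{res2} for the presheaf triangle, Proposition \ref{comm3} for the compatibility of the $G$-action with both the transition maps $\wideparen{p}^i_{N,H}$ and the restrictions $\wideparen{\tau}^i_{\bU,\bV,H}$, and the identity $\wideparen{gh}_{\bU,H}=\wideparen{g}_{h\bU,{}^hH}\circ\wideparen{h}_{\bU,H}$ for the cocycle condition---into explicit cube-style diagrams connecting the inverse-limit objects $E^i(\mathcal{M})(-)$ to chosen representatives $E^i(\mathcal{M})(-,H)$ via the bijections of Remark \ref{proj}, which amounts to exactly the bookkeeping you describe in your final paragraph.
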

\begin{proof}
Let $\bW \subset \bV \subset \bU$ be affinoid subdomains of $\bX$ in $\bX_{w}(\mathcal{T})$. By \cite[Lemma $3.4.7$]{AW} there exists an open compact subgroup $H \leq G$ such that the pairs $(\bW,H), (\bV,H), (\bU,H)$ are all small.  We consider the following diagram:
\begin{center}
\begin{tikzpicture}[thick]
  [scale=.15,auto=left,every node/.style={circle,fill=white!25}]
  \node (n1) at (0,0) {$E^i(\mathcal{M})(\bU,H)$};
  \node (n2) at (8,0)  {$E^i(\mathcal{M})(\bV,H)$};
  \node (n3) at (4,5)  {$E^i(\mathcal{M})(\bW,H)$};
  \node (n4) at (2,1) {$E^i(\mathcal{M})(\bU)$};
  \node (n5) at (6,1)  {$E^i(\mathcal{M})(\bV)$};
  \node (n6) at (4,3)  {$E^i(\mathcal{M})(\bW)$};

  \draw [->] (n1) -- (n2);
  \draw [->] (n1) -- (n3);
  \draw [->] (n3) -- (n2);

  \draw [->] (n4) -- (n5);
  \draw [->] (n4) -- (n6);
  \draw [->] (n5) -- (n6);

  \draw [->] (n4) -- (n1);
  \draw [->] (n5) -- (n2);
  \draw [->] (n6) -- (n3);
  
\end{tikzpicture}
\end{center}
The three quadrilaterals are commutative by definition. The outer triangle is commutative by Proposition \ref{res2} and the three arrows connecting the two triangles are bijections by Remark \ref{proj}. Hence the inner triangle is commutative and this proves that $E^i (\mathcal{M})$ is a presheaf.\\
\\
Next, fix $g \in G$ and  $\bU \in \bX_{w}(\mathcal{T})$. We define 
\[g^{E^i(\mathcal{M})}(\bU):\,\, E^i(\mathcal{M})(\bU)\longrightarrow E^i(\mathcal{M})(g\bU)\]
to be the inverse limit of the maps $g^{E^i(\mathcal{M})}_{\bU,H}$ in Proposition \ref{Giso1}. Then
\item[$\star$] By  (\ref{equationg}) (Proposition \ref{Giso1}), it is straightforward to see that $ g^{E^i(\mathcal{M})}(m.a)= g^{E^i(\mathcal{M})}(m).{g}^\mathcal{D}(a) $ for any $ a \in \mathcal{D}(\bU)$ and $m \in E^i(\mathcal{M})(\bU)$.\\
\item[$\star$] Assume that $\bV \subset \bU$ are in $\bX_{w}(\mathcal{T})$. Let $H$ be a $\bU$-small subgroup  of $G_{\bU} \cap G_{\bV}$. We consider the following  diagram:
\begin{center}
\begin{tikzpicture}[thick]
  [scale=.15,auto=left,every node/.style={circle,fill=white!25}]
  \node (n1) at (0,7) {$E^i(\mathcal{M})(\bU,H)$};
  \node (n5) at (2,5)  {$E^i(\mathcal{M})(\bU)$};
  \node (n6) at (5,5)  {$E^i(\mathcal{M})(g\bU)$};
  \node (n2) at (7,7) {$E^i(\mathcal{M})(g\bU,{}^gH)$};
  \node (n7) at (5,2)  {$E^i(\mathcal{M})(g\bV)$};
  \node (n8) at (2,2)  {$E^i(\mathcal{M})(\bV)$};
  \node (n4) at (0,0)  {$E^i(\mathcal{M})(\bV,H)$};
  \node (n3) at (7,0)  {$E^i(\mathcal{M})(g\bV,{}^gH)$};
    
  \draw [->] (n1) -- (n2);
  \draw [->] (n2) -- (n3);
  \draw [->] (n4) -- (n3);
  \draw [->] (n1) -- (n4);

  \draw [->] (n5) -- (n1);
  \draw [->] (n6) -- (n2);
  \draw [->] (n8) -- (n4);
  \draw [->] (n7) -- (n3);

  \draw [->] (n5) -- (n6);
  \draw [->] (n6) -- (n7);
  \draw [->] (n5) -- (n8);
  \draw [->] (n8) -- (n7);
\end{tikzpicture}
\end{center}
Note that the outer square is commutative by Proposition \ref{comm3}, the four trapezia are commutative by definition and the arrows connecting the two squares are bijections. This proves that the inner square is commutative. Hence $g^{E^i(\mathcal{M})} : \,\,  E^i( \mathcal{M}) \longrightarrow g^\ast (E^i (\mathcal{M}))$ is a morphism of presheaves on $\bX_{w}(\mathcal{T})$.
\item[$\star$] Finally, if $g, h \in G$, we need to show that $(gh)^{E^i(\mathcal{M})}=g^{E^i(\mathcal{M})}\circ h^{E^i(\mathcal{M})}$. By taking a free resolution of $\mathcal{M}(\bU)$ by free $\wideparen{\mathcal{D}}(\bU,H)$-modules, it is enough to show that for any $\wideparen{\mathcal{D}}(\bU,H)$-module $P$, the diagram 
\[\begin{tikzcd}
Hom_{\wideparen{\mathcal{D}}(\bU,H)}(P, \wideparen{\mathcal{D}}(\bU,H)) \arrow{r}{\phi_{\bU,H}^{h}}  \arrow{rd}{\phi_{\bU,H}^{gh}} 
  & Hom_{\wideparen{\mathcal{D}}(h\bU,hHh^{-1})}({}^hP,\wideparen{\mathcal{D}}(h\bU,hHh^{-1})) \arrow{d}{\phi_{h\bU,{}^hH}^{g}} \\ 
    & Hom_{\wideparen{\mathcal{D}}(gh\bU,ghHh^{-1}g^{-1})}({}^{gh}P,\wideparen{\mathcal{D}}(gh\bU,ghHh^{-1}g^{-1})) 
\end{tikzcd}\]
is commutative. Let $f \in Hom_{\wideparen{\mathcal{D}}(\bU,H)}(P, \wideparen{\mathcal{D}}(\bU,H))$, then 
\begin{center}
$\phi_{h\bU,{}^hH}^g \circ \phi_{\bU,H}^h(f)= \phi_{h\bU,{}^hH}^g( \wideparen{h}_{\bU,H}\circ f)= \wideparen{g}_{h\bU,{}^hH} \circ \wideparen{h}_{\bU,H}\circ f $ 
\end{center}
 while $\phi^{gh}_{\bU,H}= \wideparen{gh}_{\bU,H} \circ f$. Hence the commutativity of the diagram follows from the equality $\wideparen{gh}_{\bU,H}= \wideparen{g}_{g\bU, {}^hH} \circ \wideparen{h}_{\bU,H}$, which is from \cite[Lemma $3.4.3$]{AW}.

\end{proof}
\subsection{The Ext functors on the category $\mathcal{C}_{\bX/G}$} 
We keep all the notation of the preceding section. 
We now show that 
for any  $\mathcal{M} \in \mathcal{C}_{\bX/G}$ and any $i\geq 0$, the presheaf $E^i(\mathcal{M})$ on $\bX_{w}(\mathcal{T})$ constructed in the previous subsection is in fact a sheaf and extends therefore to a $G$-equivariant $\mathcal{D}_{\bX}$-module on $\bX$. It then turns out that it actually defines an object in $\mathcal{C}^r_{\bX/G}.$
\\

We first assume that $(\bX,G)$ is small and let $\mathcal{M} \in \mathcal{C}_{\bX/G}$ be a sheaf of  coadmissible $G$-equivariant left $\mathcal{D}_{\bX}$-modules. 
\begin{lemma}\label{l4} Let $\bU \in \bX_{w}(\mathcal{T})$ and $H$ be a $\bU$-small subgroup of $G$. Then there is an isomorphism of right $\wideparen{\mathcal{D}}(\bU,H)$-modules
\[\Phi^i_{\bU,H}: \,\,Ext^i_{\wideparen{\mathcal{D}}(\bX,G)}(\mathcal{M}(\bX), \wideparen{\mathcal{D}}(\bX,G))\wideparen{\otimes}_{\wideparen{\mathcal{D}}(\bX,H)}\wideparen{\mathcal{D}}(\bU,H)  \tilde{\longrightarrow} Ext^i_{\wideparen{\mathcal{D}}(\bU,H)}(\mathcal{M}(\bU), \wideparen{\mathcal{D}}(\bU,H)).\]
\end{lemma}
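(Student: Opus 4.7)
The plan is to construct $\Phi^i_{\bU,H}$ as a composition of two isomorphisms that are already at our disposal: Proposition \ref{isoD}, which compares $\Ext$ over $\wideparen{\mathcal{D}}(\bX,G)$ with $\Ext$ over $\wideparen{\mathcal{D}}(\bX,H)$, and Lemma \ref{lem}, the Fr\'echet--Stein analogue of flat base change for $\Ext$.

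First, I would apply Proposition \ref{isoD} (possibly after choosing an open normal subgroup $N$ of $G$ contained in $H$ and using the transitivity property, in the spirit of the proof of Proposition \ref{aa}) to obtain a canonical isomorphism of right $\wideparen{\mathcal{D}}(\bX,H)$-modules
\[Ext^i_{\wideparen{\mathcal{D}}(\bX,G)}(\mathcal{M}(\bX),\wideparen{\mathcal{D}}(\bX,G)) \tilde{\longrightarrow} Ext^i_{\wideparen{\mathcal{D}}(\bX,H)}(\mathcal{M}(\bX),\wideparen{\mathcal{D}}(\bX,H)).\]
Because the map is right $\wideparen{\mathcal{D}}(\bX,H)$-linear, completed tensoring with $\wideparen{\mathcal{D}}(\bU,H)$ preserves it and produces an isomorphism after $\wideparen{\otimes}_{\wideparen{\mathcal{D}}(\bX,H)}\wideparen{\mathcal{D}}(\bU,H)$.

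Next, I would apply Lemma \ref{lem} to the continuous morphism of Fr\'echet--Stein algebras $\wideparen{\mathcal{D}}(\bX,H) \longrightarrow \wideparen{\mathcal{D}}(\bU,H)$. To verify its hypotheses, pick a $G$-stable (hence $H$-stable) affine formal model $\mathcal{A}$ in $\mathcal{O}(\bX)$ and a $G$-stable free $\mathcal{A}$-Lie lattice $\mathcal{L}$ in $\mathcal{T}(\bX)$, as furnished by smallness of $(\bX,G)$. After rescaling $\mathcal{L}$ we may assume that $\bU$ is $\mathcal{L}$-accessible. Choosing a good chain $(J_n)$ for $\mathcal{L}$ with $J_n\subset H$, we obtain compatible Fr\'echet--Stein presentations $\wideparen{\mathcal{D}}(\bX,H) \cong \varprojlim_n \mathcal{Q}_n(\bX,H)$ and $\wideparen{\mathcal{D}}(\bU,H) \cong \varprojlim_n \mathcal{Q}_n(\bU,H)$, and the level-$n$ transition maps $\mathcal{Q}_n(\bX,H) \longrightarrow \mathcal{Q}_n(\bU,H)$ are flat on both sides by Proposition \ref{flat}. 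Lemma \ref{lem} then yields
\[Ext^i_{\wideparen{\mathcal{D}}(\bX,H)}(\mathcal{M}(\bX),\wideparen{\mathcal{D}}(\bX,H))\wideparen{\otimes}_{\wideparen{\mathcal{D}}(\bX,H)}\wideparen{\mathcal{D}}(\bU,H) \tilde{\longrightarrow} Ext^i_{\wideparen{\mathcal{D}}(\bU,H)}\bigl(\wideparen{\mathcal{D}}(\bU,H)\wideparen{\otimes}_{\wideparen{\mathcal{D}}(\bX,H)}\mathcal{M}(\bX),\wideparen{\mathcal{D}}(\bU,H)\bigr).\]
Finally, since $\mathcal{M}\in \mathcal{C}_{\bX/G}$ is obtained by localisation, the equivalence of categories between $\mathcal{C}_{\wideparen{\mathcal{D}}(\bX,H)}$ and $\mathcal{C}_{\bX/H}$ gives $\wideparen{\mathcal{D}}(\bU,H)\wideparen{\otimes}_{\wideparen{\mathcal{D}}(\bX,H)}\mathcal{M}(\bX)\cong \mathcal{M}(\bU)$. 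Composing the three isomorphisms defines $\Phi^i_{\bU,H}$.

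The main point requiring care is the bookkeeping with Fr\'echet--Stein structures: one must ensure that the same choices of $\mathcal{A}$, $\mathcal{L}$ (after rescaling) and good chain $(J_n)\subset H$ simultaneously witness the presentations of $\wideparen{\mathcal{D}}(\bX,H)$ and $\wideparen{\mathcal{D}}(\bU,H)$, verify that $\wideparen{p}^i_{G,H}(\bX)$ of Proposition \ref{isoD} is genuinely right $\wideparen{\mathcal{D}}(\bX,H)$-linear (which follows from the fact that $p^{\bX}_{G,H}$ is two-sided $\wideparen{\mathcal{D}}(\bX,H)$-linear, cf.\ Proposition \ref{keypro1}), and check the compatibility of the localisation $\mathcal{M}(\bU)\cong \wideparen{\mathcal{D}}(\bU,H)\wideparen{\otimes}_{\wideparen{\mathcal{D}}(\bX,H)}\mathcal{M}(\bX)$. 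Once level-$n$ flatness from Proposition \ref{flat} is in hand, the remaining arguments are purely formal.
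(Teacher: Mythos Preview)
Your proposal is correct and follows essentially the same route as the paper's proof: apply Proposition~\ref{isoD} to pass from $\wideparen{\mathcal{D}}(\bX,G)$ to $\wideparen{\mathcal{D}}(\bX,H)$, tensor with $\wideparen{\mathcal{D}}(\bU,H)$, and then invoke Lemma~\ref{lem} together with the identification $\mathcal{M}(\bU)\cong \wideparen{\mathcal{D}}(\bU,H)\wideparen{\otimes}_{\wideparen{\mathcal{D}}(\bX,H)}\mathcal{M}(\bX)$ coming from localisation. Your write-up is in fact slightly more careful than the paper's in two places: you flag the normality hypothesis in Proposition~\ref{isoD} and indicate how to handle a general $\bU$-small $H$ via an auxiliary normal subgroup, and you spell out why the level-$n$ flatness hypothesis of Lemma~\ref{lem} is met (rescaling $\mathcal{L}$ to make $\bU$ $\mathcal{L}$-accessible and invoking Proposition~\ref{flat}).
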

\begin{proof} Recall that $\mathcal{M} \cong Loc_{\bX}(\mathcal{M}(\bX))$, so that 
$$\mathcal{M}(\bU)\simeq \wideparen{\mathcal{D}}(\bU,H) \wideparen{\otimes}_{ \wideparen{\mathcal{D}}(\bX,H)} \mathcal{M}(\bX). $$
By applying Proposition \ref{isoD}, we obtain an isomorphism of right $\wideparen{\mathcal{D}}(\bX,H)$-modules
\begin{equation}\label{equation1}
\wideparen{p}^i_{G,H}(\bX): Ext^i_{\wideparen{\mathcal{D}}(\bX,G)}(\mathcal{M}(\bX), \wideparen{\mathcal{D}}(\bX,G))\tilde{\longrightarrow} Ext^i_{\wideparen{\mathcal{D}}(\bX,H)}(\mathcal{M}(\bX), \wideparen{\mathcal{D}}(\bX,H)).
\end{equation}
Hence
\begin{equation}\label{equation2}
Ext^i_{\wideparen{\mathcal{D}}(\bX,G)}(\mathcal{M}(\bX), \wideparen{\mathcal{D}}(\bX,G))\wideparen{\otimes}_{\wideparen{\mathcal{D}}(\bX,H)}\wideparen{\mathcal{D}}(\bU,H) \tilde{\longrightarrow} Ext^i_{\wideparen{\mathcal{D}}(\bX,H)}(\mathcal{M}(\bX), \wideparen{\mathcal{D}}(\bX,H))\wideparen{\otimes}_{\wideparen{\mathcal{D}}(\bX,H)}\wideparen{\mathcal{D}}(\bU,H).
\end{equation}
Finally, apply Lemma \ref{lem} gives:
\begin{equation}\label{equation3}
Ext^i_{\wideparen{\mathcal{D}}(\bX,H)}(\mathcal{M}(\bX), \wideparen{\mathcal{D}}(\bX,H))\wideparen{\otimes}_{\wideparen{\mathcal{D}}(\bX,H)}\wideparen{\mathcal{D}}(\bU,H) \tilde{\longrightarrow} Ext^i_{\wideparen{\mathcal{D}}(\bU,H)}(\mathcal{M}(\bU), \wideparen{\mathcal{D}}(\bU,H)).
\end{equation}
\end{proof}
 Let us explain how the isomorphism $\Phi^0_{\bU,H}$ looks like, when $H$ is an open normal subgroup in $G$. Write $\Phi_{\bU,H}:= \Phi^0_{\bU,H}$. Then
\[\Phi_{\bU,H}: \,\,Hom_{\wideparen{\mathcal{D}}(\bX,G)}(\mathcal{M}(\bX), \wideparen{\mathcal{D}}(\bX,G))\wideparen{\otimes}_{\wideparen{\mathcal{D}}(\bX,H)}\wideparen{\mathcal{D}}(\bU,H)  \tilde{\longrightarrow} Hom_{\wideparen{\mathcal{D}}(\bU,H)}(\mathcal{M}(\bU), \wideparen{\mathcal{D}}(\bU,H)).\]
Let us choose a $G$-stable Lie lattice $\mathcal{L}$ of $\mathcal{T}(\bX)$ such that $\bU$ is $\mathcal{L}$-accessible. Let $(J_n)$ be a good chain for $\mathcal{L}$.  Then we can take the sheaves $\mathcal{Q}_n$ into account and obtain that:
\begin{center}
 $\wideparen{\mathcal{D}}(\bX,G)= \varprojlim_n\mathcal{Q}_n(\bX,G)$, $\wideparen{\mathcal{D}}(\bX,H)= \varprojlim_n\mathcal{Q}_n(\bX,H)$ and $\wideparen{\mathcal{D}}(\bU,H)= \varprojlim_n\mathcal{Q}_n(\bU,H).$
 \end{center} 
 Write $M:= \mathcal{M}(\bX)\cong \varprojlim_n M_n$. Then $\mathcal{M}_n(\bU)= \mathcal{Q}_n(\bU,H)\otimes_{\mathcal{Q}_n(\bX,H)}M_n$ .The morphism $\Phi_{\bU,H}$ is defined as the inverse limit of an inverse system $(\Phi_{\bU,H,n})_n$ of morphisms, where
\[\Phi_{\bU,H,n}: Hom_{\mathcal{Q}_n(\bX,G)}(M_n, \mathcal{Q}_n(\bX,G)){\otimes}_{\mathcal{Q}_n(\bX,H)}\mathcal{Q}_n(\bU,H)  \tilde{\longrightarrow} Hom_{\mathcal{Q}_n(\bU,H)}(\mathcal{M}_n(\bU), \mathcal{Q}_n(\bU,H))\]
  is defined as follows. If $f_n: M_n \longrightarrow \mathcal{Q}_n(\bX,G)$ is a $\mathcal{Q}_n(\bX,G)$-linear morphism and $ a \in \mathcal{Q}_n(\bU,H)$, then applying (\ref{equation1}), we obtain the $\mathcal{Q}_n(\bX,H)$-linear morphism $$p_{G,H,n}^{\bX} \circ f_n: \,\, M_n \longrightarrow \mathcal{Q}_n(\bX,H),$$ 
  
where $p_{G,H,n}^{\bX}$ is defined in (\ref{projection1}). Next, $(p_{G,H,n}^{\bX} \circ f_n) \otimes a$ is the image of $f_n \otimes a$ via the isomorphism (\ref{equation2}). Finally, by applying the isomorphism (\ref{equation3}), we get the map 
\begin{align*}
1 \bar{\otimes} ((p_{G,H,n}^{\bX} \circ f_n).a): \,\, \mathcal{Q}_n(\bU,H)\otimes M_n &\longrightarrow \mathcal{Q}_n(\bU,H)\\
 b \otimes m &\longmapsto b. p_{G,H}(f_n(m)).a.
\end{align*}
Note that in the above formula, we identify  \,$p_{G,H,n}^{\bX} (f_n(m))\in \mathcal{Q}_n(\bX,H)$ with its image in $\mathcal{Q}_n(\bU,H)$ via the canonical morphism $\mathcal{Q}_n(\bX,H) \longrightarrow \mathcal{Q}_n(\bU,H)$.  Therefore
\begin{equation}\label{equality}
\Phi_{U,H,n}(f_n)= id \bar{\otimes} ((p_{G,H,n}^{\bX} \circ f_n).a ) \in Hom_{\mathcal{Q}_n(\bU,H)}(\mathcal{M}_n(\bU), \mathcal{Q}_n(\bU,H)).
\end{equation}
\\
Recall that ${}^rLoc_{\bX}(-)$ denotes the localisation functor on the category $\mathcal{C}_{\wideparen{\mathcal{D}}(\bX,G)}^r$ of coadmissible right $\wideparen{\mathcal{D}}(\bX,G)$-modules.
\begin{pro}\label{iso2}  Suppose that $(\bX,G)$ is small. There is an isomorphism of presheaves of right $\mathcal{D}_{\bX}$-modules on $\bX_{w}(\mathcal{T})$
\[\Phi: \,\, {}^rLoc_{\bX}( Ext^i_{\wideparen{\mathcal{D}}(\bX,G)}(\mathcal{M}(\bX), \wideparen{\mathcal{D}}(\bX,G)) \tilde{\longrightarrow} E^i(\mathcal{M}).\]
\end{pro}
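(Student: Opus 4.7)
The plan is to construct $\Phi$ sectionwise by taking the inverse limit over $\bU$-small subgroups of the isomorphisms $\Phi^i_{\bU,H}$ of Lemma \ref{l4}. Set $N := Ext^i_{\wideparen{\mathcal{D}}(\bX,G)}(\mathcal{M}(\bX), \wideparen{\mathcal{D}}(\bX,G))$, which is a coadmissible right $\wideparen{\mathcal{D}}(\bX,G)$-module in view of Theorem \ref{keytheorem} and the standard fact that $Ext^i$ of a coadmissible module over a Fr\'echet-Stein algebra into the algebra itself is again coadmissible (cf.\ \cite[Lemma 8.4]{ST2003}). For each $\bU \in \bX_w(\mathcal{T})$ the sections on both sides take the form
\[{}^rLoc_{\bX}(N)(\bU)=\varprojlim_{H} N\wideparen{\otimes}_{\wideparen{\mathcal{D}}(\bX,H)}\wideparen{\mathcal{D}}(\bU,H), \quad E^i(\mathcal{M})(\bU)=\varprojlim_{H} Ext^i_{\wideparen{\mathcal{D}}(\bU,H)}(\mathcal{M}(\bU),\wideparen{\mathcal{D}}(\bU,H)),\]
with the inverse limits running over $\bU$-small subgroups of $G$, and Lemma \ref{l4} supplies an isomorphism $\Phi^i_{\bU,H}$ at each level.

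The first step is to check that the family $\lbrace\Phi^i_{\bU,H}\rbrace_H$ is compatible with the transition maps of the two systems, so that $\Phi(\bU):=\varprojlim_H\Phi^i_{\bU,H}$ is a well-defined isomorphism of right $\mathcal{D}(\bU)$-modules. For $H' \leq H$ open $\bU$-small subgroups, after reducing to the normal case via \cite[Lemma 3.2.1]{AW}, the right-hand transition map is $\wideparen{p}^i_{H',H}(\bU)$ from Proposition \ref{aa}. Unwinding the three-step composition defining $\Phi^i_{\bU,H}$ as recorded in formula (\ref{equality}), this compatibility reduces to the transitivity $\wideparen{p}^i_{H,H'}(\bX)\circ\wideparen{p}^i_{G,H}(\bX)=\wideparen{p}^i_{G,H'}(\bX)$ established in Proposition \ref{isoD}, together with Corollary \ref{coro3}(i) matching the global projection with its truncations.

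The second step is to verify that $\Phi$ commutes with the restriction maps. Given $\bV\subset\bU$ in $\bX_w(\mathcal{T})$, choose an open compact $H\leq G$ such that both $(\bU,H)$ and $(\bV,H)$ are small, which is possible by \cite[Lemma 3.4.7]{AW}. On the ${}^rLoc$-side, restriction is tensoring with $\wideparen{\mathcal{D}}(\bV,H)$ along the flat homomorphism $\wideparen{\mathcal{D}}(\bU,H)\to\wideparen{\mathcal{D}}(\bV,H)$; on the $E^i$-side it is $\wideparen{\tau}^i_{\bU,\bV,H}$, which by Proposition \ref{res2} is obtained by the same base change after the flatness-based identification of Lemma \ref{lem}. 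Compatibility of $\Phi^i_{\bU,H}$ with $\Phi^i_{\bV,H}$ then follows from the naturality of the three constructions in Lemma \ref{l4} together with Corollary \ref{coro3}(ii).

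The main obstacle is the bookkeeping in the first step: one must chase each of the three pieces of the composition defining $\Phi^i_{\bU,H}$ (namely $\wideparen{p}^i_{G,H}(\bX)$, the identification $\mathcal{M}(\bU)\cong\wideparen{\mathcal{D}}(\bU,H)\wideparen{\otimes}_{\wideparen{\mathcal{D}}(\bX,H)}\mathcal{M}(\bX)$, and the flatness-based isomorphism of Lemma \ref{lem}) through both transition maps. All the required ingredients are already in place---Proposition \ref{isoD} for variation of $H$, Corollary \ref{coro3} for the interaction with restriction, and Lemma \ref{lem} with Proposition \ref{flat} for flatness---so once these diagram chases are completed, $\Phi$ is a sectionwise bijection compatible with restrictions, and therefore an isomorphism of presheaves of right $\mathcal{D}_{\bX}$-modules on $\bX_w(\mathcal{T})$.
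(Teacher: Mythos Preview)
Your proposal is correct and follows essentially the same route as the paper: construct $\Phi(\bU)$ as the inverse limit of the $\Phi^i_{\bU,H}$ from Lemma \ref{l4}, verify compatibility with the transition maps in $H$ using transitivity of the projections (Proposition \ref{isoD}) and the compatibilities of Corollary \ref{coro3}, and then check compatibility with restrictions via Proposition \ref{res2}. The only difference is presentational: where you invoke the relevant lemmas and leave the diagram chase abstract, the paper carries it out explicitly by descending to the $\mathcal{Q}_n$-level, taking a free resolution of $M_n$, and verifying the identity (\ref{eq}) elementwise.
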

\begin{proof}
Write $M:=\mathcal{M}(\bX)$ and fix an open affinoid subset $\bU \in \bX_{w}(\mathcal{T})$. By Lemma \ref{l4}, for any $\bU$-small subgroup $H$ of $G$, there is an isomorphism of right $ \wideparen{\mathcal{D}}(\bU,H)$-modules
\[\Phi^i_{U,H}: \,\, Ext^i_{\wideparen{\mathcal{D}}(\bX,G)}(\mathcal{M}(\bX), \wideparen{\mathcal{D}}(\bX,G))\wideparen{\otimes}_{\wideparen{\mathcal{D}}(\bX,H)}\wideparen{\mathcal{D}}(\bU,H)  \tilde{\longrightarrow} Ext^i_{\wideparen{\mathcal{D}}(\bU,H)}(\mathcal{M}(\bU), \wideparen{\mathcal{D}}(\bU,H)).\]
If  $H' \leq H$ is another $\bU$-small subgroup of $G$, we need to show that 
\begin{equation}\label{iso1}
\begin{tikzcd}
Ext^i_{\wideparen{\mathcal{D}}(\bX,G)}(M, \wideparen{\mathcal{D}}(\bX,G))\wideparen{\otimes}_{\wideparen{\mathcal{D}}(X,H')}\wideparen{\mathcal{D}}(\bU,H') \arrow{r}{\Phi^i_{\bU,H'}} \arrow[swap]{d}{} & Ext^i_{\wideparen{\mathcal{D}}(\bU,H')}(\mathcal{M}(\bU), \wideparen{\mathcal{D}}(\bU,H'))\arrow{d}{\wideparen{p}^i_{H',H}} \\
Ext^i_{\wideparen{\mathcal{D}}(\bX,G)}(M, \wideparen{\mathcal{D}}(\bX,G))\wideparen{\otimes}_{\wideparen{\mathcal{D}}(\bX,H)}\wideparen{\mathcal{D}}(\bU,H)\arrow{r}{\Phi^i_{\bU,H}} & Ext^i_{\wideparen{\mathcal{D}}(\bU,H)}(\mathcal{M}(\bU), \wideparen{\mathcal{D}}(\bU,H))\\
\end{tikzcd}
\end{equation}
is commutative. 
It suffices to assume that $H'$ and $H$ are normal in $G$. Then $\wideparen{p}^i_{H'H}(\bU)$ is the inverse of the map $\wideparen{p}^i_{H,H'}(\bU)$ (which is defined in Proposition \ref{isoD}). So it is equivalent to show that the diagram
\begin{center}
\begin{tikzpicture}[thick]
  [scale=.15,auto=left,every node/.style={circle,fill=white!25}]
  \node (n1) at (0,0) {$Ext^i_{\wideparen{\mathcal{D}}(\bX,G)}(M, \wideparen{\mathcal{D}}(\bX,G))\wideparen{\otimes}_{\wideparen{\mathcal{D}}(\bX,H)}\wideparen{\mathcal{D}}(\bU,H)$};
  \node (n2) at (8,0)  {$Ext^i_{\wideparen{\mathcal{D}}(\bU,H)}(\mathcal{M}(\bU), \wideparen{\mathcal{D}}(\bU,H))$};
  \node (n3) at (0,2)  {$Ext^i_{\wideparen{\mathcal{D}}(\bX,G)}(M, \wideparen{\mathcal{D}}(\bX,G))\wideparen{\otimes}_{\wideparen{\mathcal{D}}(\bX,H')}\wideparen{\mathcal{D}}(\bU,H')$};
  \node (n4) at (8,2) {$Ext^i_{\wideparen{\mathcal{D}}(\bU,H')}(\mathcal{M}(\bU), \wideparen{\mathcal{D}}(\bU,H'))$};
  
  \draw [->] (n1) -- (n2);
  \draw [->] (n3) -- (n1);
  \draw [->] (n3) -- (n4);
  \draw [->] (n2) -- (n4);
\end{tikzpicture}
\end{center}
is commutative.\\
\\
Fix a $G$-stable free $\mathcal{A}$-Lie lattice $\mathcal{L}$ in $\mathcal{T}(\bX)$ for some $G$-stable affine formal model $\mathcal{A}$ of $\mathcal{O}(\bX)$ and a good chain $(J_n)$ for $\mathcal{L}$. By rescaling $\mathcal{L}$ if necessary, we may suppose that $\bU$ is $\mathcal{L}$-accessible \cite[Prop. 7.6]{AWI}. Recall the sheaves $\mathcal{Q}_n$ and $\mathcal{M}_n$ in (\ref{Q1}), (\ref{M1}), and (\ref{M2}).
Then
\begin{center}
$\wideparen{\mathcal{D}}(\bX,G)= \varprojlim_n \mathcal{Q}_n(\bX,G)$, $\wideparen{\mathcal{D}}(\bU,H)= \varprojlim_n \mathcal{Q}_n(\bU,H)$ and $\wideparen{\mathcal{D}}(\bU,H')= \varprojlim_n \mathcal{Q}_n(\bU,H')$.
\end{center}
Thus $M \cong \varprojlim_n M_n$, with $M_n:= \mathcal{Q}_n(\bX,G)\otimes_{\wideparen{\mathcal{D}}(\bX,G)}M.$
Since the morphisms in the above square are linear between coadmissible modules, it is enough to prove that the diagram
\begin{center}
\begin{tikzpicture}[thick]
  [scale=.15,auto=left,every node/.style={circle,fill=white!25}]
  \node (n1) at (0,0) {$Ext^i_{\mathcal{Q}_n(\bX,G)}({M}_n, \mathcal{Q}_n(\bX,G))\otimes_{\mathcal{Q}_n(\bX,H)}\mathcal{Q}_n(\bU,H)$};
  \node (n2) at (8,0)  {$Ext^i_{\mathcal{Q}_n(\bU,H)}(\mathcal{M}_n(\bU), \mathcal{Q}_n(\bU,H))$};
  \node (n3) at (0,2)  {$Ext^i_{\mathcal{Q}_n(\bX,G)}({M}_n, \mathcal{Q}_n(\bX,G))\otimes_{\mathcal{Q}_n(\bX,H')}\mathcal{Q}_n(\bU,H')$};
  \node (n4) at (8,2) {$Ext^i_{\mathcal{Q}_n(\bU,H')}(\mathcal{M}_n(\bU), \mathcal{Q}_n(\bU,H'))$};
  
  \draw [->] (n1) -- (n2);
  \draw [->] (n3) -- (n1);
  \draw [->] (n3) -- (n4);
  \draw [->] (n2) -- (n4);
\end{tikzpicture}
\end{center}
is commutative. \\
Now, by taking a free resolution of $M_n$ as a $\mathcal{Q}_n(\bX,G)$-module and by using the flatness of the morphisms $\mathcal{Q}_n(\bX,H')\longrightarrow \mathcal{Q}_n(\bU,H')$ and $\mathcal{Q}_n(\bX,H)\longrightarrow \mathcal{Q}_n(\bU,H)$ (Proposition \ref{flat}), it remains to prove that, for any $\mathcal{Q}_n(\bX,G)$-module $P$, the diagram
\begin{center}
\begin{tikzpicture}[thick]
  [scale=.15,auto=left,every node/.style={circle,fill=white!25}]
  \node (n1) at (0,0) {$Hom_{\mathcal{Q}_n(\bX,G)}(P, \mathcal{Q}_n(\bX,G))\otimes\mathcal{Q}_n(\bU,H)$};
  \node (n2) at (8,0)  {$Hom_{\mathcal{Q}_n(\bU,H)}(\mathcal{Q}_n(\bU,H) \otimes P, \mathcal{Q}_n(\bU,H))$};
  \node (n3) at (0,2)  {$Hom_{\mathcal{Q}_n(\bX,G)}(P, \mathcal{Q}_n(\bX,G))\otimes\mathcal{Q}_n(\bU,H')$};
  \node (n4) at (8,2) {$Hom_{\mathcal{Q}_n(\bU,H')}(\mathcal{Q}_n(\bU,H') \otimes P, \mathcal{Q}_n(\bU,H'))$};
  
  \draw [->] (n1) -- (n2);
  \draw [->] (n3) -- (n1);
  \draw [->] (n3) -- (n4);
  \draw [->] (n2) -- (n4);
\end{tikzpicture}
\end{center}
is commutative.  \\
Let $f \in Hom_{\mathcal{Q}_n(\bX,G)}(P, \mathcal{Q}_n(\bX,G))$ and $a \in \mathcal{Q}_n(\bU,H)$, then we need to show that:
\begin{equation}\label{eq}
p_{H,H',n}^{\bU}\circ ( 1 \bar{\otimes}({p}_{G,H,n}^{\bX}\circ f )i(a)) = 1\bar{\otimes}((p_{ G,H',n}^{\bX} \circ f)a).
\end{equation}
Where, $i : \mathcal{Q}_n(\bU,H') \longrightarrow \mathcal{Q}_n(\bU,H)$ is the natural inclusion. Let $b \in \mathcal{Q}_n(\bU,H')$ and $m \in P$, then we compute by using (\ref{equality})
\begin{align*}
& p_{H,H',n}^{\bU}\circ ( 1 \bar{\otimes}({p}_{G,H,n}^{\bX}\circ f)i(a))(b \otimes m) = p_{H,H',n}^{\bU} ( b {p}_{G,H,n}^{\bX}(f(m))i(a))\\
&=bp_{H,H',n}^{\bU}({p}_{G,H,n}^{\bX}(f(m)))a =b p_{H,H',n}^{\bU} \circ p_{G,H,n}^{\bX}(f(m)) a = b p_{G,H',n}^{\bX}(f(m)) a.
\end{align*}
Thus, the equality (\ref{eq}) is proved and so the commutativity of the diagram (\ref{iso1}) follows. As a consequence of this, by taking the inverse limit of the maps $\Phi^i_{\bU,H}$, we obtain a right  $\mathcal{D}(\bU)$-linear isomorphism 
\[\Phi^i(\bU): \,\, {}^rLoc_{\bX}( Ext^i_{\wideparen{\mathcal{D}}(\bX,G)}(\mathcal{M}(\bX), \wideparen{\mathcal{D}}(\bX,G))(\bU) \tilde{\longrightarrow} E^i(\mathcal{M})(\bU).\]
Finally, $\Phi^i$  being a morphism of presheaves amounts to showing that if $\bV \subset \bU$ are open subsets in $\bX_{w}(\mathcal{T})$ and $H$ is an open normal subgroup of $G$ which stabilizes $\bU$ and $\bV$,  the following diagram is commutative:
\[\begin{tikzcd}
Ext^i_{\wideparen{\mathcal{D}}(\bX,G)}(M, \wideparen{\mathcal{D}}(\bX,G))\wideparen{\otimes}_{\wideparen{\mathcal{D}}(\bX,H)}\wideparen{\mathcal{D}}(\bU,H) \arrow{r}{\phi^i_{\bU,H}} \arrow[swap]{d}{} & Ext^i_{\wideparen{\mathcal{D}}(\bU,H)}(\mathcal{M}(\bU), \wideparen{\mathcal{D}}(\bU,H))\arrow{d}{} \\
Ext^i_{\wideparen{\mathcal{D}}(\bX,G)}(M, \wideparen{\mathcal{D}}(\bX,G))\wideparen{\otimes}_{\wideparen{\mathcal{D}}(\bX,H)}\wideparen{\mathcal{D}}(\bV,H)\arrow{r}{\phi^i_{\bV,H}} & Ext^i_{\wideparen{\mathcal{D}}(\bV,H)}(\mathcal{M}(\bV), \wideparen{\mathcal{D}}(\bV,H)).\\
\end{tikzcd}
\]
 This is indeed a consequence of Proposition \ref{res2}, where it is proved that: 
 \[Ext^i_{\wideparen{\mathcal{D}}(\bV,H)}(\mathcal{M}(\bV), \wideparen{\mathcal{D}}(\bV,H))\cong Ext^i_{\wideparen{\mathcal{D}}(\bU,H)}(\mathcal{M}(\bU), \wideparen{\mathcal{D}}(\bU,H))\wideparen{\otimes}_{\wideparen{\mathcal{D}}(\bU,H)}\wideparen{\mathcal{D}}(\bV,H).\]
\end{proof}

\begin{coro}\label{coro1}
Let $\mathcal{M} \in \mathcal{C}_{\bX/G}$ be a coadmissible $G$-equivariant $\mathcal{D}_{\bX}$-module on $\bX$. The presheaf $E^i(\mathcal{M})$ is a sheaf on the basis $\bX_{w}(\mathcal{T})$ of the Grothendieck topology on $\bX$. In particular, this can be extended to a sheaf on $\bX_{rig}$, which is still denoted by  $E^i(\mathcal{M})$. 
\end{coro}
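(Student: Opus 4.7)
The plan is to reduce to the case $(\bX,G)$ small, invoke Proposition \ref{iso2}, and then pass to the globalisation by using that $\bX_w(\mathcal{T})$ is a basis for the strong Grothendieck topology on $\bX_{rig}$. Since being a sheaf on this basis is a local property, and since $E^i(\mathcal{M})|_\bU \cong E^i(\mathcal{M}|_\bU)$ for any $\bU \in \bX_w(\mathcal{T})$, I may work locally on an $\bX_w(\mathcal{T})$-covering. By the very definition of $\mathcal{C}_{\bX/G}$, I can choose this covering $\mathcal{U}$ so that for each $\bU \in \mathcal{U}$, there is an open compact $H \leq G$ making $(\bU,H)$ small and $\mathcal{M}|_\bU \cong Loc_\bU^{\wideparen{\mD}(\bU,H)}(\mathcal{M}(\bU))$.

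On such a small pair, $\mathcal{M}(\bU)$ is a coadmissible left $\wideparen{\mD}(\bU,H)$-module and, as $\wideparen{\mD}(\bU,H)$ is two-sided Fr\'{e}chet-Stein (Theorem \ref{keytheorem}), the module
\[ N_\bU := Ext^i_{\wideparen{\mD}(\bU,H)}(\mathcal{M}(\bU),\wideparen{\mD}(\bU,H)) \]
is a coadmissible right $\wideparen{\mD}(\bU,H)$-module (the argument is as in \cite[Lemma 8.4]{ST2003}, reducing via the Fr\'{e}chet-Stein presentation to the fact that each $Ext^i_{D_n}$ of a finitely generated module against $D_n$ is finitely generated over $D_n$). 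Applying Proposition \ref{iso2} with $(\bU,H)$ in place of $(\bX,G)$ yields an isomorphism of presheaves on $\bU_w(\mathcal{T})$
\[ \Phi_\bU:\ {}^rLoc_\bU(N_\bU) \xrightarrow{\;\sim\;} E^i(\mathcal{M}|_\bU) = E^i(\mathcal{M})|_\bU. \]
Because the right-module localisation functor ${}^rLoc_\bU$ produces $H$-equivariant sheaves of right $\mD_\bU$-modules (the right-module analogue of Theorem $3.5.8/3.5.11$ of \cite{AW} recalled in the excerpt), the left hand side is a sheaf on $\bU_w(\mathcal{T})$. Transporting along $\Phi_\bU$ shows that $E^i(\mathcal{M})|_\bU$ is a sheaf on $\bU_w(\mathcal{T})$ for every $\bU \in \mathcal{U}$.

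Gluing these local sheaf properties along the covering $\mathcal{U}$ then gives the sheaf property of $E^i(\mathcal{M})$ on $\bX_w(\mathcal{T})$: for an admissible covering $\{\bV_j\}$ of some $\bV \in \bX_w(\mathcal{T})$ by elements of $\bX_w(\mathcal{T})$, one refines it through $\mathcal{U}$ and applies the local result together with the compatibility of the $\Phi_\bU$ under restrictions (which is built into Proposition \ref{iso2} and the restriction maps constructed in Proposition \ref{res2}). Finally, since $\bX_w(\mathcal{T})$ forms a basis for the Grothendieck topology on $\bX_{rig}$ (\cite[Lemma 9.3]{AWI}), any sheaf on this basis extends uniquely to a sheaf on all of $\bX_{rig}$ by the standard extension principle, which yields the desired global sheaf still denoted $E^i(\mathcal{M})$.

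The main obstacle I anticipate is not the reduction to the small case, but verifying that the gluing of the local identifications $\Phi_\bU$ with ${}^rLoc_\bU(N_\bU)$ is coherent on overlaps $\bU \cap \bU'$, where different open compact subgroups $H, H'$ have been chosen; however, this is already controlled by the transition isomorphisms $\wideparen{p}^i_{H',H}$ of Proposition \ref{aa} and the commutative squares of Proposition \ref{commutativeE}, so it amounts to book-keeping rather than new content.
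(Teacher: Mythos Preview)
Your proposal is correct and follows essentially the same route as the paper: reduce to a small pair $(\bU,H)$, apply Proposition \ref{iso2} to identify $E^i(\mathcal{M})|_\bU$ with ${}^rLoc_\bU(Ext^i_{\wideparen{\mD}(\bU,H)}(\mathcal{M}(\bU),\wideparen{\mD}(\bU,H)))$, conclude the sheaf property locally, and then extend from the basis $\bX_w(\mathcal{T})$. The paper's version is terser---it simply observes that every $\bU\in\bX_w(\mathcal{T})$ admits a $\bU$-small $H$, so no special covering or overlap book-keeping is needed---but your more careful discussion of the gluing is harmless and the content is the same.
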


\begin{proof}
Fix  $\bU \in \bX_{w}(\mathcal{T})$ and let $H$ be a $\bU$-small open subgroup of $G$. Then following Proposition \ref{iso2}
\[E^i(\mathcal{M})\vert_{\bU} \simeq {}^rLoc_{\bU}(Ext^i_{\wideparen{\mathcal{D}}(\bU,H)}(\mathcal{M}(\bU), \wideparen{\mathcal{D}}(\bU,H)).\]
Since the right hand side is a sheaf on $\bU_w(\mathcal{T}\vert_{\bU})$, one has that $E^i(\mathcal{M})\mid_{\bU}$ is also a sheaf on $\bU_w(\mathcal{T}\vert_{\bU})$. It follows that the presheaf $E^i(\mathcal{M})$ is actually a sheaf on $\bX_{w}(\mathcal{T})$ as claimed.
\end{proof}
\begin{theorem}\label{thm2}
Let $\mathcal{M}$ be a coadmissible $G$-equivariant left $\mD_{\bX}$-module. Then $E^i(\mathcal{M})$ is a coadmissible $G$-equivariant right $\mathcal{D}_{\bX}$-module for every  $\mathcal{M} \in \mathcal{C}_{\bX/G}$ and every $i \geq 0$.
\end{theorem}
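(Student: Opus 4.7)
The plan is to reduce the statement to the local situation and then invoke the coadmissibly Auslander--Gorenstein property of $\wideparen{\mD}(\bU,H)$ established in Theorem \ref{keytheorem}. By Theorem \ref{thm1} and Corollary \ref{coro1} we already know that $E^i(\mathcal{M})$ is a well-defined $G$-equivariant sheaf of right $\mD_{\bX}$-modules. What remains is to verify the coadmissibility condition in the sense of the right-module version of Definition 2.22, i.e.\ to exhibit an admissible covering of $\bX$ by affinoids $\bU\in \bX_{w}(\mathcal{T})$ together with $\bU$-small open compact subgroups $H\leq G$ and coadmissible right $\wideparen{\mD}(\bU,H)$-modules $N_{\bU,H}$ such that $E^i(\mathcal{M})\vert_{\bU}\cong {}^rLoc_{\bU}(N_{\bU,H})$ as $H$-equivariant locally Fr\'echet $\mD_{\bU}$-modules.

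First I would choose the covering. Because $\mathcal{M}\in \mathcal{C}_{\bX/G}$, its coadmissibility furnishes a $\bX_{w}(\mathcal{T})$-covering $\mathcal{U}$ of $\bX$ such that for every $\bU\in\mathcal{U}$ there is an $\bU$-small subgroup $H\leq G$ and a coadmissible $\wideparen{\mD}(\bU,H)$-module $M_{\bU,H}$ with $\mathcal{M}\vert_{\bU}\cong Loc_{\bU}(M_{\bU,H})$. I will use this very covering, and set
\[
N_{\bU,H}:=Ext^i_{\wideparen{\mD}(\bU,H)}(\mathcal{M}(\bU), \wideparen{\mD}(\bU,H)).
\]
Applying Proposition \ref{iso2} to the small pair $(\bU,H)$ gives an isomorphism
$E^i(\mathcal{M})\vert_{\bU}\cong {}^rLoc_{\bU}(N_{\bU,H})$ of presheaves, hence of sheaves, and compatibly with the $H$-equivariant structure (this is precisely what was unpacked in the construction of the maps $g^{E^i(\mathcal{M})}_{\bU,H}$ and the commutative diagrams of Proposition \ref{comm3}). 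Thus it suffices to verify that $N_{\bU,H}$ is a coadmissible right $\wideparen{\mD}(\bU,H)$-module.

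This is the key step. By Theorem \ref{keytheorem} the Fr\'echet--Stein algebra $\wideparen{\mD}(\bU,H)\cong \varprojlim_n D_n$, with $D_n=\widehat{U(\pi^n\mathcal{L})}_K\rtimes_{J_n}H$, is coadmissibly Auslander--Gorenstein of dimension at most $2\dim \bU$. Writing $M:=\mathcal{M}(\bU)$ and $M_n:=D_n\otimes_{\wideparen{\mD}(\bU,H)}M$, coadmissibility of $M$ gives $M\cong \varprojlim_n M_n$ with each $M_n$ a finitely generated $D_n$-module. By \cite[Lemma 8.4]{ST2003} we then have
\[
Ext^i_{\wideparen{\mD}(\bU,H)}(M,\wideparen{\mD}(\bU,H))\;\cong\;\varprojlim_n Ext^i_{D_n}(M_n,D_n).
\]
Each term on the right is a finitely generated right $D_n$-module because $D_n$ is noetherian and $M_n$ is finitely presented. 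The flatness of the transition maps $D_{n+1}\to D_n$, combined with Lemma \ref{lemma4}, identifies the transition map $Ext^i_{D_{n+1}}(M_{n+1},D_{n+1})\otimes_{D_{n+1}}D_n \xrightarrow{\sim} Ext^i_{D_n}(M_n,D_n)$, exhibiting $(Ext^i_{D_n}(M_n,D_n))_n$ as a coadmissible system of right modules. Hence $N_{\bU,H}$ is a coadmissible right $\wideparen{\mD}(\bU,H)$-module, as required.

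The main obstacle in the plan is bookkeeping rather than any single hard step: one must check that the local identifications $E^i(\mathcal{M})\vert_{\bU}\cong {}^rLoc_{\bU}(N_{\bU,H})$ are compatible with the canonical Fr\'echet topologies on both sides and with the $H$-equivariant structure, so that we obtain an isomorphism in the category of $H$-equivariant locally Fr\'echet right $\mD_{\bU}$-modules (not merely an abstract isomorphism of right $\mD_{\bU}$-module sheaves). This compatibility is however built into the construction of $E^i$ through Proposition \ref{iso2} and the analysis of the morphisms $g^{E^i(\mathcal{M})}_{\bU,H}$, so no new argument is needed: assembling these pieces concludes the proof that $E^i(\mathcal{M})\in \mathcal{C}^r_{\bX/G}$.
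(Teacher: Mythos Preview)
Your overall strategy matches the paper's: reduce to the small case, identify $E^i(\mathcal{M})\vert_{\bU}$ with ${}^rLoc_{\bU}(N_{\bU,H})$ via Proposition~\ref{iso2}, and check $N_{\bU,H}$ is coadmissible. The coadmissibility argument you give (via \cite[Lemma~8.4]{ST2003} and flatness of the transition maps) is fine, though you do not actually need Theorem~\ref{keytheorem} here---the Fr\'echet--Stein structure alone suffices.

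There is, however, a genuine gap. You assert that the isomorphism $E^i(\mathcal{M})\vert_{\bU}\cong {}^rLoc_{\bU}(N_{\bU,H})$ from Proposition~\ref{iso2} is automatically $H$-equivariant, citing Proposition~\ref{comm3}. But Proposition~\ref{iso2} only produces an isomorphism of presheaves of right $\mathcal{D}_{\bX}$-modules; it says nothing about compatibility with the $G$-actions on the two sides. Proposition~\ref{comm3} establishes internal compatibilities of the maps $g^{E^i(\mathcal{M})}_{\bU,H}$ (with restriction and with the $\wideparen{p}$-maps), which is what makes $E^i(\mathcal{M})$ itself $G$-equivariant in Theorem~\ref{thm1}; it does \emph{not} compare those maps to the $G$-action $g^{\mathcal{N}'}_{\bU,H}$ on ${}^rLoc_{\bX}(N_{\bU,H})$. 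In the paper, verifying that $\Phi^i$ intertwines $g^{\mathcal{N}}_{\bU,H}$ and $g^{\mathcal{N}'}_{\bU,H}$ is precisely the content of the proof of Theorem~\ref{thm2}: it requires passing to the $\mathcal{Q}_n$-level, unwinding the explicit formula~(\ref{equality}) for $\Phi_{\bU,H,n}$, and checking an identity of the form $p_{G,{}^gH,n}(f(m)\gamma_n(g^{-1}))=g^{\mathcal{Q}_n}_{\bU,H}(p_{G,H,n}(\gamma_n(g^{-1})f(m)))$ using the commutative diagram~(\ref{comm2}). This is not mere bookkeeping---it is the substantive new computation in the proof, and nothing you cite covers it. You also need to say a word about the Fr\'echet topologies (the paper handles this in its first paragraph via \cite[Lemma~3.6.5]{AW}).
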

\begin{proof}
First, let us show that $E^i(\mathcal{M})$ is a sheaf of $G$- equivariant locally Fr\'{e}chet right $\mathcal{D}_{\bX}$-modules. Let $\bU \in \bX_{w}(\mathcal{T})$ and $H$ be a $\bU$-small subgroup of $G$. Then the bijection 
$$E^i(\mathcal{M})(\bU) \simeq E^i(\mathcal{M})(\bU,H)= Ext^i_{\wideparen{\mD}(\bU,H)}(\mathcal{M}(\bU), \wideparen{\mathcal{D}}(\bU,H))$$ 
from Remark \ref{proj} tells us that $E^i(\mathcal{M})(\bU)$ can be equipped with a canonical Fr\'{e}chet topology transferred from the canonical topology on $Ext^i_{\wideparen{\mD}(\bU,H)}(\mathcal{M}(\bU), \wideparen{\mathcal{D}}(\bU,H))$. This topology does not depend on the choice of $H$, so that $E^i(\mathcal{M})(\bU)$ becomes a coadmissible (right) $\wideparen{\mathcal{D}}(\bU,H)$-module. It remains to check that if $g \in G$ then each map $g^{E^i(\mathcal{M})}(\bU): E^i(\mathcal{M})(\bU) \longrightarrow E^i(\mathcal{M})(g\bU)$ is continuous for any $\bU \in \bX_{w}(\mathcal{T})$. Indeed, note that the map $g^{E^i(\mathcal{M})}(\bU)$ is a linear isomorphism with respect to the $K$- algebras isomorphism $\wideparen{g}_{\bU,H}: \wideparen{\mathcal{D}}(\bU,H) \longrightarrow  \wideparen{\mathcal{D}}(g\bU,gHg^{-1})$. We obtain that $g^{E^i(\mathcal{M})}(\bU)$ is continuous by \cite[Lemma $3.6.5$]{AW}. Thus $E^i(\mathcal{M})$ is in $Frech^r(G-\mathcal{D}_{\bX})$.\\
\\
Next,  write $M := \mathcal{M}(\bX)$. In view of Theorem \ref{thm1}, Proposition \ref{iso2} and Corollary \ref{coro1}, it remains to prove that when $(\bX,G)$ is small, the morphism $$  \Phi^i : \,\, {}^rLoc_{\bX}(  Ext^i_{\wideparen{\mathcal{D}}(\bX,G)}(M, \wideparen{\mathcal{D}}(\bX,G))) \longrightarrow E^i(\mathcal{M})$$ is indeed a $G$-equivariant morphism. \\
\\
In the sequel, to simplify the notations, we write 
\begin{center}
$\mathcal{N}:= E^i(\mathcal{M})$ and $\mathcal{N'}:= {}^rLoc_{\bX}(  Ext^i_{\wideparen{\mathcal{D}}(\bX,G)}(M, \wideparen{\mathcal{D}}(\bX,G))).$
\end{center}
 Let $ \bU \in \bX_{w}(\mathcal{T})$ and $g\in G$. Then by definition, $\Phi^i(\bU)= \varprojlim_H \Phi^i_{\bU,H}$, it reduces to prove that for any $\bU$-small subgroup $H$ of $G$ which is normal, the  diagram
\[\begin{tikzcd}
Ext^i_{\wideparen{\mathcal{D}}(\bX,G)}(M, \wideparen{\mathcal{D}}(\bX,G)) \wideparen{\otimes}_{\wideparen{\mathcal{D}}(\bX,H)}\wideparen{\mathcal{D}}(\bU,H) \arrow{r}{g^{\mathcal{N'}}_{\bU,H}} \arrow[swap]{d}{\Phi^i_{\bU,H}} & Ext^i_{\wideparen{\mathcal{D}}(\bX,G)}(M, \wideparen{\mathcal{D}}(\bX,G)) \wideparen{\otimes}_{\wideparen{\mathcal{D}}(\bX,{}^gH)}\wideparen{\mathcal{D}}(g\bU,{}^gH)\arrow{d}{\Phi^i_{g\bU,{}^gH}} \\
Ext^i_{\wideparen{\mathcal{D}}(\bU,H)}(\wideparen{\mathcal{D}}(\bU,H) \wideparen{\otimes}_{\wideparen{\mathcal{D}}(\bX,H)}M, \wideparen{\mathcal{D}}(\bU,H))\arrow{r}{g^\mathcal{N}_{\bU,H}} & Ext^i_{\wideparen{\mathcal{D}}(g\bU,{}^gH)}(\wideparen{\mathcal{D}}(g\bU,{}^gH) \wideparen{\otimes}_{\wideparen{\mathcal{D}}(\bX,{}^gH)}M, \wideparen{\mathcal{D}}(g\bU,{}^{g}H))\\
\end{tikzcd}\]
is commutative. Here recall that $g^{\mathcal{N}}_{\bU,H}$ and  $g^{\mathcal{N}'}_{\bU,H}$ correspond to the $G$-equivariant structures on the sheaf $\mathcal{N}$ and on $\mathcal{N'}$ respectively.\\
Choose a Lie lattice $\mathcal{L}$ in $\mathcal{T}(\bX)$ and a good chain $(J_n)$ for $\mathcal{L}$ such that 
$$\wideparen{\mathcal{D}}(\bX,G)= \varprojlim_n \mathcal{Q}_n(\bX,G).$$ 
By rescaling $\mathcal{L}$, we may suppose that $\bU$ is $\mathcal{L}$-accessible. This implies 
  $$\wideparen{\mathcal{D}}(\bU,H)= \varprojlim_n \mathcal{Q}_n(\bU,H).$$ Now, following Lemma \ref{Glemma}, $g\bU$ is also $\mathcal{L'}$-accessible with $\mathcal{L}':={g}^\mathcal{T}(\mathcal{L})$. Thus $\mathcal{L}'$ together with the good chain $(gJ_ng^{-1})$ defines the  Frechet-Stein structures 
  \begin{center}
  $\wideparen{\mathcal{D}}(\bX,{}^gH)=\varprojlim_n \mathcal{Q}_n(\bX,{}^gH)$ and $\wideparen{\mathcal{D}}(g\bU,{}^g H)= \varprojlim_n \mathcal{Q}_n(g\bU,{}^gH).$
  \end{center}
 Since each map of the above diagram is a linear map between coadmissible modules, they can be regarded as the inverse limits of systems of morphisms:
\begin{center}
$\Phi^i_{\bU,H}=\varprojlim_n \Phi^i_{\bU,H,n}$, \, $\Phi^i_{g\bU,{}^gH}=\varprojlim_n \Phi^i_{g\bU,{}^gH,n}$ \\
$g^\mathcal{N}_{\bU,H}=\varprojlim_n g^\mathcal{N}_{\bU,H,n}$  $g^\mathcal{N'}_{\bU,H}=\varprojlim_n g^\mathcal{N'}_{\bU,H,n}.$
\end{center}
As a consequence, it is enough to prove that the diagram
\[\begin{tikzcd}
Ext^i_{\mathcal{Q}_n(\bX,G)}(M_n, \mathcal{Q}_n(\bX,G)) \otimes \mathcal{Q}_n(\bU,H) \arrow{r}{g^{\mathcal{N'}}_{\bU,H,n}} \arrow[swap]{d}{\Phi^i_{\bU,H,n}} & Ext^i_{\mathcal{Q}_n(\bX,G)}(M_n, \mathcal{Q}_n(\bX,G)) \otimes \mathcal{Q}_n(g\bU,{}^gH)\arrow{d}{\Phi^i_{g\bU,{}^gH,n}}\\
Ext^i_{\mathcal{Q}_n(\bU,H)}(\mathcal{Q}_n(\bU,H) \otimes M_n, \mathcal{Q}_n(\bU,H))\arrow{r}{g ^{\mathcal{N}}_{\bU,H,n}} & Ext^i_{\mathcal{Q}_n(g\bU,{}^gH)}(\mathcal{Q}_n(g\bU,{}^gH) \otimes M_n,\mathcal{Q}_n(g\bU,{}^{g}H))\\
\end{tikzcd}\]
is commutative. Here we assume that $M=\varprojlim_n M_n$, with respect to the given Frechet-Stein structure on $\wideparen{\mathcal{D}}(\bX,G)$. After taking a resolution of $M_n$ by free $\mathcal{Q}_n(\bX,G)$-modules, it amounts to proving the commutativity of the above diagram for the case $i=0$, which means that the following diagram is commutative:
\[\begin{tikzcd}
Hom_{\mathcal{Q}_n(\bX,G)}(M_n, \mathcal{Q}_n(\bX,G)) \otimes \mathcal{Q}_n(\bU,H) \arrow{r}{g^{\mathcal{N}'}_{\bU,H,n}} \arrow[swap]{d}{\Phi_{\bU,H,n}} & Hom_{\mathcal{Q}_n(\bX,G)}(M_n, \mathcal{Q}_n(\bX,G)) \otimes \mathcal{Q}_n(g\bU,{}^gH)\arrow{d}{\Phi_{g\bU,{}^gH,n}} \\
Hom_{\mathcal{Q}_n(\bU,H)}(\mathcal{Q}_n(\bU,H) \otimes M_n, \mathcal{Q}_n(\bU,H))\arrow{r}{g ^{\mathcal{N}}_{U,H,n}} & Hom_{\mathcal{Q}_n(g\bU,{}^gH)}(\mathcal{Q}_n(g\bU,{}^gH) \otimes M_n,\mathcal{Q}_n(g\bU,{}^{g}H))\\
\end{tikzcd}\]
Let  $f \in Hom_{\mathcal{Q}_n(\bX,G)}(M_n, \mathcal{Q}_n(\bX,G))$ and $a \in \mathcal{Q}_n(\bU,H)$. It is enough to show that:
\[\Phi_{g\bU,{}^gH}\left( g^\mathcal{N'}_{\bU,H,n}\left( f\otimes a\right)\right)= g^{\mathcal{N}}_{\bU,H,n} \left( \Phi_{\bU,H,n}\left(f\otimes a\right)\right) \]
Since $g^\mathcal{N'}_{\bU,H,n}\left( f\otimes a\right)=(f\gamma_n(g)).g^{\mathcal{Q}_n}_{\bU,H}(a)$ and $\Phi_{\bU,H,n}(f \otimes a)= 1 \bar{\otimes} ( {p}_{G,H,n} (f) ). a$ (which are morphisms in  $Hom_{\mathcal{Q}_n(gU,{}^gH)}(\mathcal{Q}_n(g\bU,{}^gH) \otimes M_n,\mathcal{Q}_n(g\bU,{}^{g}H))$), it is equivalent to show that:
\begin{center}
$1 \bar{\otimes}({p}_{G,{}^gH,n}( (f \gamma_n(g^{-1})).{g}^{\mathcal{Q}_n}_{\bU, H}(a))) = {g}^{\mathcal{Q}_n}_{\bU,H} \circ ( 1 \bar{\otimes} ( {p}_{G,H,n} (f) ). a ) \circ ({g^{-1}})^{\mathcal{M}_n}_{\bU,H} $
\end{center}
where 
$$\gamma_n : G \longrightarrow \mathcal{Q}_n(\bX,G)^\times=\left(\widehat{U(\pi^n \mathcal{L})}_K \rtimes_{J_n} G\right)^\times $$ is the canonical group homomorphism from Remark \ref{gamma}.\\
Let $m \in M_n, b \in \mathcal{Q}_n(g\bU,{}^gH)$, we compute
\begin{align*}
(1 \bar{\otimes}(p_{G,{}^gH,n}\circ (f \gamma_n(g^{-1})).{g}_{\bU,H}^{\mathcal{Q}_n}(a)))(b \otimes m)= b.\, p_{G,{}^gH,n}(f(m)\gamma_n(g^{-1})){g}^{\mathcal{Q}_n}_{\bU,H}(a)
\end{align*}
and
\begin{align*}
&({g}_{\bU,H,}^{\mathcal{Q}_n} \circ ( 1 \bar{\otimes} ( p_{G,H,n}^{\bX} \circ f ). a ) \circ ( {g^{-1}})^{\mathcal{M}_n}_{\bU,H})(b \otimes m )\\
&= {g}_{\bU,H}^{\mathcal{Q}_n}\circ ( 1 \bar{\otimes} ( p_{G,H,n}^{\bX} \circ f ). a )({g^{-1}}_{\bU,H}^{\mathcal{Q}_n}(b)\otimes \gamma_n(g^{-1})m)\\
&={g}_{\bU,H}^{\mathcal{Q}_n}({g^{-1}}_{\bU,H}^{\mathcal{Q}_n}(b)).{g}_{\bU,H}^{\mathcal{Q}_n}(p_{G,H,n}^{\bX}(f(\gamma_n(g^{-1})m)a)\\
&= b.{g}_{\bU,H}^{\mathcal{Q}_n}(p_{G,H,n}^{\bX}(f(\gamma_n(g^{-1})m)){g}^{\mathcal{Q}_n}_{\bU,H}(a).\\
\end{align*}
Here, we identify the element $ p_{G,H,n}^{\bX}(f(\gamma_n(g^{-1})m) \in \mathcal{Q}_n(\bX,H)$ with its image in $\mathcal{Q}_n(\bU,H)$ via the natural restriction $\mathcal{Q}_n(\bX,H) \longrightarrow \mathcal{Q}_n(\bU,H)$ and the element $p_{G,{}^gH,n}(f(m)\gamma_n(g^{-1}) )\in \mathcal{Q}_n(\bX,{}^gH)$ with its image in $\mathcal{Q}_n(g\bU,{}^gH)$ via  $\mathcal{Q}_n(\bX,{}^gH) \longrightarrow \mathcal{Q}_n(g\bU,{}^gH).$
Thus, it remains to show that for any $m \in M_n$, one has
\begin{equation}\label{comm1}
p_{G,{}^gH,n}(f(m)\gamma_n({g^{-1}}))={g}_{\bU,H}^{\mathcal{Q}_n}(p_{G,H,n}^{\bX}(\gamma_n(g^{-1})f(m)).
\end{equation}
Consider the following diagram:
\begin{equation}\label{comm2}
\begin{tikzcd}
 \mathcal{Q}_n(\bX,G) \arrow{r}{Ad_{\gamma_n(g)}} \arrow[swap]{d}{p_{G,H,n}^{\bX}} & \mathcal{Q}_n(\bX,G)\arrow{d}{p_{G,gHg^{-1},n}} \\
\mathcal{Q}_n(\bX,H)\arrow{r}{{g}_{X,H}^{\mathcal{Q}_n}}\arrow{d}{} & \mathcal{Q}_n(\bX,gHg^{-1}) \arrow{d}{}\\
\mathcal{Q}_n(\bU,H)\arrow{r}{{g}^{\mathcal{Q}_n}_{\bU,H}} & \mathcal{Q}_n(g\bU,gHg^{-1}).
\end{tikzcd}
\end{equation}
By \cite[Definition $3.4.9(c)$ and Propostion $3.4.10$]{AW}, we see that $Ad_{\gamma_n(g)}={g}^{\mathcal{Q}_n}_{\bX,H}$ on $\mathcal{Q}_n(\bX,H) \subset \mathcal{Q}_n(\bX,G)$ and the commutativity of the lower diagram of the diagram (\ref{comm2}) follows from loc.cit. On the other hand, it is proved in the proof of Proposition \ref{comm3} that the upper diagram of (\ref{comm2}) is commutative. 
 Hence we may compute as follows:
 \begin{align*}
 p_{G,{}^gH,n}(f(m) \gamma_n(g^{-1}))&=p_{G,{}^gH,n}(\gamma_n(g^{})\gamma_n(g^{-1})f(m)\gamma_n (g^{-1}))\\
 &=p_{G,{}^gH,n}(g_{X,H}^{\mathcal{Q}_n}(\gamma_n(g^{-1})f(m)))\\
 &= {g}^{\mathcal{Q}_n}_{\bU,H}(p_{G,H}(\gamma_n(g^{-1})f(m)).
 \end{align*}
Hence we obtain the commutativity of (\ref{comm2}) and so the theorem follows.

\end{proof}
\begin{definition} \label{e-def} Let $\mathcal{M} \in \mathcal{C}_{\bX/G}$, then we define for any non-negative integer $i \geq 0$:
\[\mathcal{E}^i(\mathcal{M}):= \mathcal{H}om_{\mathcal{O}_{\bX}}(\Omega_{\bX}, E^i(\mathcal{M})).\]
\end{definition}
\begin{pro}
For every $i \geq 0$, $\mathcal{E}^i$ is an endofunctor on the category $\mathcal{C}_{\bX/G}$.\end{pro}
\begin{proof}
Following Theorem \ref{chanfunc} and Theorem \ref{thm2}, the sheaf $\mathcal{E}^i(\mathcal{M})$ is a coadmissible $G$-equivariant left $\mathcal{D}_{\bX}$-module. Now if $f: \mathcal{M} \longrightarrow \mathcal{M'}$ is a morphism of coadmissible $G$-equivariant left $\mathcal{D}_{\bX}$-modules, then for any $\bU \in \bX_{w}(\mathcal{T})$ and any $\bU$-small subgroup $H$ of $G$, it follows that the $\wideparen{\mathcal{D}}(\bU,H)$-linear map $f(\bU): \mathcal{M}(\bU) \longrightarrow \mathcal{M'}(\bU)$ induces a morphism 
\[Ext^i_{\wideparen{\mathcal{D}}(\bU,H)}(\mathcal{M'}(\bU), \wideparen{\mathcal{D}}(\bU,H))\longrightarrow Ext^i_{\wideparen{\mathcal{D}}(\bU,H)}(\mathcal{M}(\bU), \wideparen{\mathcal{D}}(\bU,H)),\]
which is right $\wideparen{\mathcal{D}}(\bU,H)$-linear. Hence by \cite[Lemma $3.6.5$]{AW}, this is a continuous map with respect to the natural Fr\'{e}chet topologies on both sides. In this way we obtain a morphism of $G$-equivariant locally Fr\'{e}chet $\mathcal{D}_{\bX}$modules
$$E^i(f): E^i(\mathcal{M'}) \longrightarrow E^i(\mathcal{M})$$
whose local sections are continuous. Now, if $g: \mathcal{M'} \longrightarrow \mathcal{M''}$ is another morphism in $\mathcal{C}_{\bX/G}$, then it is straightforward to show that $E^i(id)=id$ and $E^i(g \circ f)=E^i(f) \circ E^i(g)$, which ensures that $E^i$ is a functor from $\mathcal{C}_{\bX/G}$ into $\mathcal{C}^r_{\bX/G}$. Finally $\mathcal{E}^i$ is a composition of two functors, so it is a functor from $\mathcal{C}_{\bX/G}$ into itself, as claimed.
\end{proof}

\section{Dimension and weakly holonomic equivariant $\mathcal{D}$-modules}\label{section_five}

\subsection{Dimension theory for coadmissible equivariant $\mathcal{D}$-modules}\label{subsec_dim}

In this section, we fix a smooth rigid analytic $K$-variety $\bX$ of dimension $d$  and a $p$-adic Lie group $G$ acting continuously on $\bX$. We are now ready to introduce the notion of dimension for coadmissible $G$-equivariant $\mathcal{D}_{\bX}$-modules. Recall that the set $\bX_{w}(\mathcal{T})$ is a basis for the Grothendieck topology on $\bX$.
\begin{definition} Let $\mathcal{M}\in \mathcal{C}_{\bX/G}$. Let $\mathcal{U}$ be an admissible covering of $\bX$ by affinoid subdomains in $\bX_{w}(\mathcal{T})$. The \textit{dimension} of $\mathcal{M}$ with respect to $\mathcal{U}$ is defined as follows:
\[d_{\mathcal{U}}(\mathcal{M}):= \sup \left\lbrace d(\mathcal{M}(\bU)) \vert \bU \in \mathcal{U}\right\rbrace,\]
where $d(\mathcal{M}(\bU))$ is the dimension of the coadmissible $\wideparen{\mathcal{D}}(\bU,H)$-module $\mathcal{M}(\bU)$ for some $\bU$-small subgroup $H$ of $G$. 
\end{definition}
Recall that $d(\mathcal{M}(\bU))$ does not depend on the choice of $H$, cf. remark \ref{redim}. 
 
\begin{pro}  Let $\mathcal{M}\in \mathcal{C}_{\bX/G}$ and let $\mathcal{U}$ and $\mathcal{V}$ be two admissible coverings of $\bX$ by elements in $\bX_{w}(\mathcal{T})$. Then $d_{\mathcal{U}}(\mathcal{M})= d_{\mathcal{V}}(\mathcal{M})$.
\end{pro}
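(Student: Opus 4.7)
The plan is to compare both sides to an intrinsic, covering-independent invariant extracted from the sheaves $E^i(\mathcal{M})$ of Definition~\ref{keydef}. Assume $\mathcal{M}\neq 0$ (the case $\mathcal{M}=0$ gives $d_{\mathcal{U}}(\mathcal{M})=0$ on both sides by convention), and define
\[
j(\mathcal{M}) \,:=\, \min\bigl\{\, i \geq 0 \,:\, E^i(\mathcal{M}) \neq 0 \,\bigr\}.
\]
By Corollary~\ref{coro1}, each $E^i(\mathcal{M})$ is a genuine sheaf on $\bX$, so $j(\mathcal{M})$ is intrinsic. By Remark~\ref{proj}, for any $\bU \in \bX_{w}(\mathcal{T})$ and any $\bU$-small $H \leq G$ we have $E^i(\mathcal{M})(\bU) \cong Ext^i_{\wideparen{\mathcal{D}}(\bU,H)}(\mathcal{M}(\bU), \wideparen{\mathcal{D}}(\bU,H))$, so setting $j(0):=+\infty$ gives $j(\mathcal{M}(\bU)) = \min\{i : E^i(\mathcal{M})(\bU) \neq 0\}$ and hence
\[
d_{\mathcal{U}}(\mathcal{M}) \,=\, 2d - \inf_{\bU \in \mathcal{U}} j(\mathcal{M}(\bU)).
\]
It therefore suffices to prove $\inf_{\bU \in \mathcal{U}} j(\mathcal{M}(\bU)) = j(\mathcal{M})$.

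The key ingredient is a localisation inequality: for any $\bW \subset \bU$ both in $\bX_w(\mathcal{T})$ one has $j(\mathcal{M}(\bW)) \geq j(\mathcal{M}(\bU))$. Indeed, by \cite[Lemma~$3.4.7$]{AW} we may choose a compact open $H \leq G$ such that both $(\bU,H)$ and $(\bW,H)$ are small, and after rescaling a common free Lie lattice to make $\bW$ accessible, the proof of Proposition~\ref{res2} (via Lemma~\ref{lem}) yields
\[
E^i(\mathcal{M})(\bW,H) \,\cong\, E^i(\mathcal{M})(\bU,H) \,\wideparen{\otimes}_{\wideparen{\mathcal{D}}(\bU,H)}\, \wideparen{\mathcal{D}}(\bW,H).
\]
Hence $E^i(\mathcal{M})(\bU) = 0$ forces $E^i(\mathcal{M})(\bW) = 0$, proving the inequality.

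To show $j(\mathcal{M}) \leq \inf_{\bU \in \mathcal{U}} j(\mathcal{M}(\bU))$, note that the infimum ranges over a bounded subset of $\mathbb{N}$ and so is attained at some $\bU_0 \in \mathcal{U}$ with $\mathcal{M}(\bU_0) \neq 0$. Then $E^{j(\mathcal{M}(\bU_0))}(\mathcal{M})(\bU_0) \neq 0$, so the global sheaf $E^{j(\mathcal{M}(\bU_0))}(\mathcal{M})$ is nonzero and $j(\mathcal{M}) \leq j(\mathcal{M}(\bU_0))$. Conversely, fix $i < \inf_{\bU \in \mathcal{U}} j(\mathcal{M}(\bU))$, so that $E^i(\mathcal{M})(\bU) = 0$ for every $\bU \in \mathcal{U}$. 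For any $\bV \in \bX_w(\mathcal{T})$, the family $\{\bV \cap \bU\}_{\bU \in \mathcal{U}}$ is an admissible covering of $\bV$, and since $\bX_w(\mathcal{T})$ is a basis for the Grothendieck topology on $\bX$ (\cite[Lemma~$9.3$]{AWI}), it can be refined into an admissible covering $\{\bW_\alpha\}$ of $\bV$ with each $\bW_\alpha \in \bX_w(\mathcal{T})$ contained in some member of $\mathcal{U}$. The localisation inequality yields $E^i(\mathcal{M})(\bW_\alpha) = 0$ for every $\alpha$, and the sheaf property of $E^i(\mathcal{M})$ forces $E^i(\mathcal{M})(\bV) = 0$. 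As $\bV$ runs through the basis $\bX_w(\mathcal{T})$, this shows $E^i(\mathcal{M}) = 0$ and hence $j(\mathcal{M}) > i$, giving $j(\mathcal{M}) \geq \inf_{\bU \in \mathcal{U}} j(\mathcal{M}(\bU))$.

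The main subtlety is the refinement step: one must ensure that intersections of basis elements with arbitrary covering pieces can be re-covered by basis elements still contained in a single covering piece. This is guaranteed by the basis property of $\bX_w(\mathcal{T})$ combined with the stability of admissible coverings under restriction, and is what allows the sheaf property of $E^i(\mathcal{M})$ to be leveraged against an arbitrary admissible covering.
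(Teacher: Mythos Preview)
Your proof is correct, and it takes a genuinely different route from the paper's argument. The paper works directly at the level of modules: it passes to a common refinement, then for each $\bU_0\in\mathcal{U}$ covered by finitely many $\bU_1,\dots,\bU_k\in\mathcal{V}$ it chooses a single small subgroup $H$ stabilising all of them, rescales a common Lie lattice so that each $\bU_i$ is $\mathcal{L}$-accessible, and invokes the faithful flatness of $\bigoplus_i\mathcal{Q}_n(\bU_i,H)$ over $\mathcal{Q}_n(\bU_0,H)$ (from \cite[Theorem~4.3.14]{AW} and \cite[Proposition~7.5(c)]{AWI}) to conclude $j_H(\mathcal{M}(\bU_0))=\min_i j_H(\mathcal{M}(\bU_i))$. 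Your approach instead packages the local Ext groups into the global sheaves $E^i(\mathcal{M})$ and reduces the statement to the tautology that the vanishing of a sheaf is independent of the covering used to test it; the only analytic input you need is the one-sided flatness already embedded in Proposition~\ref{res2}. Your argument is cleaner and makes the invariance conceptually transparent, at the cost of importing the entire $E^i$ machinery of Section~4.2; the paper's argument is more self-contained and isolates precisely the faithful-flatness step that drives the equality, which is also what one would need if one wanted the sharper pointwise statement $d(\mathcal{M}(\bU_0))=\max_i d(\mathcal{M}(\bU_i))$ rather than just equality of the global suprema.
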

\begin{proof}
We may assume that $\mathcal{V}$ is a refinement of $\mathcal{U}$ and every element of $\mathcal{U}$ has an admissible covering by elements of $\mathcal{V}$. Let $\bU_1,...,\bU_k \in \mathcal{V}$ be a cover of $\bU_0 \in \mathcal{U}$ (which is quasi-compact!).  We fix an open compact subgroup $H$ of $G$ such that $(\bU_0,H)$ is small and choose a $H$-stable affine formal model $\mathcal{A}$ in $\mathcal{O}(\bU_0)$ and a $H$-stable smooth $\mathcal{A}$-Lie lattice $\mathcal{L}$ in $\mathcal{T}(\bU_0)$. Then by \cite[ Lemma $4.4.1$]{AW}, we may assume that $H$ stabilizes $\mathcal{A}$, $\mathcal{L}$ and each member $\bU_i$ in $\mathcal{V}$. By replacing $\mathcal{L}$ by a sufficiently large $\pi$-power multiple, we may also assume that each $ \bU_i$ is a $\mathcal{L}$-accessible affinoid subspace in $\bU_0$ so that $\bU_1,...,\bU_k \in (\bU_0)_{ac}(\mathcal{L},H)$ and they form an $(\bU_0)_{ac}(\mathcal{L},H)$-covering. Recall from section \ref{section_Q} the sheaf of rings $\mathcal{Q}_n(-,H)$ and the sheaf of modules $\mathcal{M}_n$ on the Grothendieck topology $\bX_{ac}(\mathcal{L},H)$, which are induced by $\mathcal{M}$. Then
\begin{center}
$\wideparen{\mathcal{D}}(\bU_i,H)\simeq \varprojlim_n \mathcal{Q}_n(\bU_i, H)$ \, and \, $\mathcal{M}(\bU_i)\simeq \varprojlim_n \mathcal{M}_n(\bU_i)$ \, for all $i=0,1,..,k$.
\end{center} 
Each $\mathcal{M}(\bU_i)$ is a coadmissible $\wideparen{\mathcal{D}}(\bU_i,H)$-module and  by Definition \ref{defdim}, $d(\mathcal{M}(\bU_i))= 2d-j_H(\mathcal{M}(\bU_i))$ for each $i$.\\
Now by \cite[Theorem $4.3.14$]{AW}, one has that $\bigoplus_{i=1}^{k}\mathcal{Q}_n(\bU_i,H)$ is a faithfully flat right $\mathcal{Q}_n(\bU_0,H)$-module. Thus applying \cite[Proposition $7.5(c)$]{AWI} gives that $\bigoplus_{i=1}^k \wideparen{\mathcal{D}}(\bU_i,H)$ is c-faithfully flat over $\wideparen{\mathcal{D}}(\bU_0,H)$. On the other hand, the completed tensor product commutes with finite direct sum, so that:
\begin{align*}
& Ext^i_{\wideparen{\mathcal{D}}(\bU_0,H)}(\mathcal{M}(\bU_0), \wideparen{\mathcal{D}}(\bU_0,H)) \wideparen{\otimes}_{\wideparen{\mathcal{D}}(\bU_0,H)}\oplus_{i=1}^k\wideparen{\mathcal{D}}(\bU_i,H)\\
 &\simeq \oplus_{i=1}^kExt^i_{\wideparen{\mathcal{D}}(\bU_0,H)}(\mathcal{M}(\bU_0), \wideparen{\mathcal{D}}(\bU_0,H))\wideparen{\otimes}_{\wideparen{\mathcal{D}}(\bU_0,H)}\wideparen{\mathcal{D}}(\bU_i,H)\\
 &\simeq \oplus_{i=1}^kExt^i_{\wideparen{\mathcal{D}}(\bU_i,H)}(\mathcal{M}(\bU_i), \wideparen{\mathcal{D}}(\bU_i,H)).
\end{align*}

By consequence, one has
\[j_H(\mathcal{M}(\bU_0))=\inf\lbrace j_H(\mathcal{M}(\bU_i))\,: \mathcal{M}(\bU_i)\neq 0, i=1, 2, ..., k \rbrace,\]
so the proposition follows immediately.
\end{proof}

The above proposition means that the dimension of a module $\mathcal{M}\in \mathcal{C}_{\bX/G}$ does not depend on the choice of an admissible covering $\mathcal{U}$ of $\bX$. Hence we we will from now on denote it by $d_{\bX}(\mathcal{M})$ or simply $d(\mathcal{M})$ if the space $\bX$ is clear. By definition, $$0 \leq d_{\bX}(\mathcal{M}) \leq 2d.$$

\subsection{Bernstein's inequality}

Let $\bX$ be a smooth rigid analytic variety and $G$ be a $p$-adic Lie group acting continuously on $\bX$. We say that 
{\it Bernstein's inequality holds in $\mathcal{C}_{\bX/G}$}, if $d(\mathcal{M})\geq \dim \bX$
for all nonzero $\mathcal{M}\in \mathcal{C}_{\bX/G}$. In this section, we show that Bernstein's inequality holds in $\mathcal{C}_{\bX/G}$ whenever $\bX$ has good reduction, i.e. admits a formal model which is smooth. We do not know whether this result generalizes to all $\bX$ (with continuous $G$-action). In the case $G=1$, we know that Bernstein's inequality holds in $\mathcal{C}_{\bX}$ for any $\bX$, cf. \cite[Thm. 6.2]{AWIII}.
\vskip5pt
We recall the following basic notion. 
Let $R$ be a commutative ring and $A$ a commutative $R$-algebra. Let $L$ be a $(R,A)$-Lie algebra. Let $I\subset A$ be an ideal. A finite set $\lbrace x_1,...,x_d\rbrace$ of elements in $L$ is an \textit{$I$-standard basis} if it satisfies the following conditions:
\begin{itemize}
\item[(i)] $\lbrace x_1,...,x_d\rbrace$ is a basis of $L$ as an $A$-module (which implies that $L$ is free over $A$),
\item[(ii)] There exists a set $F= \lbrace f_1,...,f_r\rbrace \subset I$ with $r \leq d$ which generates $I$ such that
\begin{center}
$x_i.f_j=\delta_{ij}$ for all $1 \leq i \leq d$ and $1 \leq j \leq r$.
\end{center}
\end{itemize}

\begin{lemma} \label{bernsteinlemma} Let $(\bX,G)$ be small and suppose that 
$\bX=Sp(K\langle x_1,...,x_d \rangle)$ is a polydisc. Then Bernstein's inequality holds in $\mathcal{C}_{\bX/G}$. 
\end{lemma}
\begin{proof}
Let $\mathcal{M}$ be a non-zero module in $\mathcal{C}_{\bX/G}$ and $M:=\mathcal{M}(\bX) \in \mathcal{C}_{\wideparen{\mathcal{D}}(\bX,G)}$. Denote by $\partial_1,...,\partial_d$ the partial derivations with respect to coordinates $x_1,...,x_d$. Write $\mathcal{A}:=\mathcal{R}\langle x_1,...,x_d\rangle$ and $\mathcal{L}:= Der_{\mathcal{R}}(\mathcal{A})= \partial_1\mathcal{A}\oplus...\oplus \partial_d\mathcal{A}$. Then $\mathcal{A}$ is an affine formal model of $\mathcal{O}(\bX)=K\langle x_1,...,x_d\rangle$ and $\mathcal{L}$ is a free $\mathcal{A}$-Lie lattice in $\mathcal{T}(\bX)$. Now, we can choose an open subgroup $H$ of $G$ which stabilises $\mathcal{A}$ and $\mathcal{L}$ \cite[Lemma 3.2.4/3.2.8]{AW}. Thus, $(\bX,H)$ is small and
\[\wideparen{\mathcal{D}}(\bX,H)\cong \varprojlim_n \widehat{U(\pi^n\mathcal{L})}_K\rtimes_{J_n}H\]
for any choice of a good chain ${(J_n)}_n$ for $\mathcal{L}$. Note that $d_{\wideparen{\mathcal{D}}(\bX,G)}(M)=d_{\wideparen{\mathcal{D}}(\bX,H)}(M)$ (Remark \ref{redim} (ii)) and there exist $n$ sufficiently large such that
\begin{center}
$j_{\wideparen{\mathcal{D}}(\bX,H)}(M)= j_{\widehat{U(\pi^n\mathcal{L})}_K\rtimes_{J_n}H}(M_n)$, with $M_n= (\widehat{U(\pi^n\mathcal{L})}_K\rtimes_{J_n}H) \otimes_{\wideparen{\mathcal{D}}(\bX,H)}M$.
\end{center}
On the other hand, Proposition \ref{keypro} and Lemma \ref{keylemma2} tell us that 
\[j_{\widehat{U(\pi^n\mathcal{L})}_K\rtimes_{J_n}H}(M_n)=j_{\widehat{U(\pi^n\mathcal{L})}_K}(M_n).\]
Now applying \cite[Corollary $7.4$]{AWirr} gives $j_{\widehat{U(\pi^n\mathcal{L})}_K}(M_n)\leq d$ and so $d(\mathcal{M})\geq d$ as claimed.
\end{proof}

\begin{lemma}\label{lem_cov1} Let $\bX$ be a smooth affinoid of dimension $d$.
 Let $\mathcal{A}$ be an affine formal model and $\mathcal{L}$ a smooth Lie lattice in $\mathcal{T}(\bX)$. Suppose that $\mathcal{U}$ is a finite $\bX_{ac}(\mathcal{L})$-covering by 
 affinoid subdomains $\bX_i$ with $\mathcal{L}$-stable affine formal models $\mathcal{A}_i$ of $\mathcal{O}(\bX_i)$. Let $\mathcal{L}_i=\mathcal{A}_i\otimes_{\mathcal{A}}\mathcal{L}$. Suppose that $\widehat{U(\mathcal{L})}_K$ and each $\widehat{U(\mathcal{L}_i)}_K$ is Auslander-Gorenstein of dimension at most $2d$. Then $$d(\mathcal{M})=\sup d(\mathcal{M}_i)$$
for any non-zero finitely generated $\widehat{U(\mathcal{L})}_K$-module $\mathcal{M}$, where $
\mathcal{M}_i=\mathcal{O}(X_i)\otimes\mathcal{M}$.
\end{lemma}
\begin{proof} The map $\widehat{U(\mathcal{L})}_K\rightarrow\oplus_i \widehat{U(\mathcal{L}_i)}_K$ is faithfully flat by \cite[Thm. 4.9]{AWII}. By definition of the grade and faithfully flat descent for Ext-groups, this implies $ j(\mathcal{M})=\inf \{ j(\mathcal {M}_i) \mid \mathcal{M}_i \neq 0\}.$
\end{proof}
We prepare the next result with an auxiliary lemma. 
Let $\bX$ be a smooth affinoid with $A=\mO(\bX)$. Let $\mathcal{A}$ be an affine formal model and $\mathcal{I}\subset \mathcal{A}$ an ideal. Let $\mathcal{L}$ be an $\mA$-Lie lattice in $\mathcal{T}(\bX)$. Recall the normalizer 
$N_\mathcal{L}(\mathcal{I}):=\{x\in\mL: x\cdot \mI \subseteq\mI \}$
of $\mathcal{I}$ in $\mathcal{A}$ and let
$$\mathcal{N}:=N_\mathcal{L}(\mathcal{I})/\mI\mL.$$
Let 
$W:=\widehat{U(\mathcal{N})_K}$ and 
 $U:=\widehat{U(\mathcal{L})_K}$ and set $I=\mathcal{I}A$. The $(W,U)$-bimodule $U/I U$
gives rise to the functor $$i_{+}: M\mapsto M\otimes_{W} U/I U$$ from finitely generated right $W$-modules to finitely generated right 
 $U$-modules. For a set $F\subseteq A$, we let $C_{\mL}(F):=\{x\in\mL: x\cdot f =0 \text{ for all }f\in F\}$ be the centralizer 
of $F$ in $\mL.$ 
\begin{lemma}\label{lem_int1}
Suppose there are generators $F=\{f_1,...,f_r\}$ for the ideal $I$ such that 
$\mL\cdot f_i \subseteq\mA$ for each $1\leq i \leq r$ and elements $x_1,...,x_r\in\mL$ such that $x_i\cdot f_j=\delta_{ij}$. Then $j_{U}(i_+(M))=j_{W}(M)+r.$
\end{lemma}
\begin{proof}
Let $\mC=C_{\mL}(F)$. According to \cite[Lem. 4.1]{AWII} one has 
$ \mL= \big(\oplus_{i=1,...,r}\mA x_i \big) \oplus \mC.$
In particular, $\mC$ is a smooth $(\mR,\mA)$-Lie algebra. Moreover, the equality implies that $$N_{\mL}(\mI)=\big(\oplus_{i=1,...,r}\mI x_i \big) \oplus \mC.$$
Indeed, any $z=\sum_{i=1}^{r} a_ix_i +c$ with $c\in\mC$ such that $a_k\notin\mI$ for some $k$ does not belong to $N_{\mL}(\mI)$, because of $z\cdot f_k=a_k\notin\mI$. This shows the forward inclusion. For the reverse inclusion, note that $\oplus_{i=1,...,r}\mI x_i\subseteq N_{\mL}(\mI)$ is clear, since $x_i\cdot \mA\subseteq\mA$ for any $i$. Moreover, if $x\in \mC$ and $f\in\mI$, say $f=\sum_j a_jf_j$ with $a_j\in A$, then $$x\cdot f=\sum_j a_j(x\cdot f_j)+ (x\cdot a_j)f_j= \sum_j (x\cdot a_j)f_j\in I\cap \mA=\mI,$$ 
which proves $\mC\subseteq N_{\mL}(\mI)$. The statement implies that the natural map $\mC\rightarrow N_{\mL}(\mI)/\mI\mL$ is surjective.
Its kernel equals $\mI\mL\cap\mC=\mI\mC$, so that $\mC/\mI\mC\simeq \mN$ as $(R,\mA)$-Lie algebras. Let $g_1,...,g_s$ generate the ideal $\mI$ in $\mA$ and let $\overline{\mA}=\mA/\mI$.
As in the proof of \cite[5.8]{AWII}, this induces a complex of $U(\mC)$-modules 
$$U(\mC)^s\longrightarrow U(\mC)\longrightarrow U(\overline{\mA}\otimes_{\mA}\mC)\longrightarrow 0,$$
which is exact, according to \cite[2.3]{AWI}, since $\mC$ is a smooth $(\mR,\mA)$-Lie algebra.
It stays exact after $\pi$-adic completion, since $U(\mC)$ is noetherian and all modules are of finite type. This yields an isomorphism of Banach algebras $V/FV\simeq W$, where 
$V:=\widehat{U(\mathcal{C})_K}$. Hence as right $U$-modules
$ i_+(M)=M \otimes_{W} U/I U\simeq M\otimes_{V} U.$
The claim now follows from \cite[6.2]{AWIII}.
\end{proof}
For example, the hypothesis of the preceding lemma are satisfied, if $\mL$ admits an $\mI$-standard basis. As to the existence of such bases, we mention the following integral version of 
\cite[6.2]{AWII}.
\begin{lemma}\label{lem_cov2}
Suppose that $\mathcal{A}$ is an affine formal model, $\mathcal{I}\subset \mathcal{A}$ an ideal with $\mathcal{I}\neq\mathcal{I}^2$. Let $\mathcal{L}$ be an $(\mR, \mA)$-Lie algebra which is free as $\mathcal{A}$-module. Suppose that $N_\mathcal{L}(\mathcal{I})/\mathcal{I}\mathcal{L}$ and $\mathcal{I}/\mathcal{I}^2$ are free as $\overline{\mathcal{A}}=\mA /\mI$-modules and that the map $\mathcal{L}/ \mathcal{I}\mathcal{L} \rightarrow {\rm Hom}_{\overline{\mathcal{A}}}(\mathcal{I}/\mathcal{I}^2,\overline{\mathcal{A}})$ is surjective. Then there is an element $g\in 1+\mI$, such that with $\mA':=\mA\langle1/g\rangle, \mL':=\mA'\otimes_{\mA}\mL$ and
$\mI':=\mA'\otimes_{\mA}\mI$,
the $(\mR, \mA')$-Lie algebra $\mL'$ admits an $\mI'$-standard basis. The formal open subscheme $D(1/g)={\rm Spf} \mA'$ of ${\rm Spf} \mA$ contains the closed subspace ${\rm Spf} (\overline{\mathcal{A}}).$
\end{lemma}
\begin{proof} As in \cite[6.2]{AWII}, one obtains an element $abc\in 1+\mI\subset \mA$ (in the notation of loc.cit.) such that with $g:=abc$, $\mA':=\mA\langle1/g\rangle$ and
$\mI':=\mA'\langle1/g\rangle\otimes_{\mA}\mI$ the $(\mR, \mA')$-Lie algebra $\mL'$ admits an $\mI'$-standard basis. Moreover, $D(1/g)$ contains the closed subspace ${\rm Spf} (\mA/\mI).$ Indeed, 
suppose there is $\frak{p}\in {\rm Spf}(\mA/\mI)$, an open prime ideal of $\mA$ which contains $\mI$, such that $g(\frak{p})=0$, i.e. $g\in \frak{p}$. Since $g\in 1+\mI$, this implies $1\in \frak{p}$, a contradiction. 
\end{proof}

\begin{pro}\label{bernsteinlem}
Let $(\bX,G)$ be small and suppose that $\bX$ has good reduction. Then Bernstein's inequality holds in $\mathcal{C}_{\bX/G}$.
\end{pro}
\begin{proof}
By Prop. \ref{dimcom} it suffices to consider right modules. 
If $\bX$ is a polydisc, we are done by lemma \ref{bernsteinlemma}. So assume that $\bX$ is not a polydisc. Let a nonzero $\mathcal{M}\in \mathcal{C}^r_{\bX/G}$ be given and let $M:=\mathcal{M}(\bX) \in \mathcal{C}^r_{\wideparen{\mathcal{D}}(\bX,G)}$. By hypothesis, there is a formal polydisc $\mathcal{Y}={\rm Spf}(\mathcal{A})$ with $\mathcal{A}=\mathcal{R}\langle y_1,...,y_d \rangle$ and an ideal $\mI\subset\mA$ such that $\overline{\mathcal{A}}:=\mathcal{A}/\mathcal{I}$ is a smooth affine formal model for $\mathcal{O}(\bX)$. Let $\bY=\mathcal{Y}_K$ be the generic fibre of $\mathcal{Y}$.
Let $\mL:=Der_{\mathcal{R}}(\mathcal{A})$. Then $\mN:=N_\mathcal{L}(\mathcal{I})/\mathcal{I}\mathcal{L}\simeq Der_{\mathcal{R}}(\overline{\mathcal{A}})$ is a $\overline{\mathcal{A}}$-Lie lattice in $\mathcal{T}(\bX)$ and 
there is a compact open subgroup $H$ of $G$ which stablizes both $\overline{\mathcal{A}}$ and 
$\mN$, cf. \cite[3.2.4 and 3.2.8]{AW}. Hence
\[\wideparen{\mathcal{D}}(\bX,H)\cong \varprojlim_n \widehat{U(\pi^n\mathcal{N})}_K\rtimes_{J_n}H\]
for any choice of a good chain $(J_\bullet)$ for $\mathcal{N}$, cf.  \cite[3.3.4]{AW}.
Note that $d_{\wideparen{\mathcal{D}}(\bX,G)}(M)=d_{\wideparen{\mathcal{D}}(\bX,H)}(M)$ (Remark \ref{redim} (ii)) and there exist $n$ sufficiently large such that
\begin{center}
$j_{\wideparen{\mathcal{D}}(\bX,H)}(M)= j_{\widehat{U(\pi^n\mathcal{N})}_K\rtimes_{J_n}H}(M_n)$, with $M_n= (\widehat{U(\pi^n\mathcal{N})}_K\rtimes_{J_n}H) \otimes_{\wideparen{\mathcal{D}}(\bX,H)}M$.
\end{center}
On the other hand, Proposition \ref{keypro} and Lemma \ref{keylemma2} tell us that 
$j_{\widehat{U(\pi^n\mathcal{N})}_K\rtimes_{J_n}H}(M_n)=j_{\widehat{U(\pi^n\mathcal{N})}_K}(M_n).$ It therefore suffices to prove for sufficiently large $n$ the main assertion:
 $$d_{\bX}\leq  d_{\widehat{U(\pi^n\mathcal{N})}_K}(M_n).$$
 
\vskip5pt

Our proof relies on Bernstein's inequality for completed deformed Weyl algebras as proved in 
\cite[7.4]{AWirr}: for any $n\geq 1$, the Banach algebra $\widehat{U(\pi^n\mathcal{L})_K}$ is of this type, whence 
$$ d\leq d_{\widehat{U(\pi^n\mathcal{L})_K}}(N)$$
for any finitely generated nonzero right $\widehat{U(\pi^n\mathcal{L})_K}$-module $N$. 

\vskip5pt

Let $\mathcal{X}={\rm Spf}(\overline{\mathcal{A}})$ and let $i:\mX\rightarrow\mY$ be the induced embedding. The second fundamental sequence for differentials associated to the smooth embedding $i$ is exact. Dualizing it yields the exact sequence of 
$\overline{\mathcal{A}}$-modules
$$ 0 \rightarrow \mN=N_\mathcal{L}(\mathcal{I})/\mathcal{I}\mathcal{L} \rightarrow \mathcal{L}/ \mathcal{I}\mathcal{L} \rightarrow {\rm Hom}_{\overline{\mathcal{A}}}(\mathcal{I}/\mathcal{I}^2,\overline{\mathcal{A}})\rightarrow 0.$$
Note that 
multiplication by $\pi^n$ gives $\pi^n\mN=N_{\pi^n\mathcal{L}}(\mathcal{I})/\mathcal{I}(\pi^n\mathcal{L}).$ Now let 
$W:=\widehat{U(\pi^n\mathcal{N})_K}$ and 
 $U:=\widehat{U(\pi^n\mathcal{L})_K}$ and consider the functor $$i_{+}: M\mapsto M\otimes_{W} U/I U$$ from finitely generated right $W$-modules to finitely generated right 
 $U$-modules. Here $I=\mathcal{I}A$.
 The functor is compatible with localization in the following sense. Let
 $\mY'={\rm Spf}(\mA')\subset \mY$ be an open affine subscheme with generic fibre 
 $\bY'=\mY'_K$ inducing the closed embedding
 $$i': \mathcal{X}'=\mathcal{X}\cap\mathcal{Y}'\rightarrow\mathcal{Y}'.$$ 
  Let $\mathcal{I}'=\mI\mA'$ and $\mathcal{L}'=\mA'\otimes_{\mA} \mL$ and 
$\mN'=N_\mathcal{L'}(\mathcal{I}')/\mathcal{I}'\mathcal{L}'$. 
The above exact sequence tensored over $\overline{\mathcal{A}}$ with 
$\overline{\mathcal{A}'}=\mA'/\mI'$ remains exact by flatness and proves that 
$\mN'\simeq \mA'\otimes_{\mA} \mN.$ In particular, $\pi^n\mN'\simeq  \mA'\otimes_{\mA} (\pi^n\mN)$. Let $W'=\widehat{U(\pi^n\mathcal{N}')_K}$ and $U'=\widehat{U(\pi^n\mathcal{L}')_K}$ giving rise, as above, to a functor $(i')_+$. Evaluating this functor on the finitely generated 
$W'$-module $M'=M\otimes_W W'$ yields
$$(i')_+(M')=(M\otimes_W W') \otimes_{W'} U'/I' U'\simeq (M\otimes_W U/I U) \otimes_U U'=i_+(M)\otimes_U U'. $$
 In other words, the sheaf $\mathrm{Loc}(i_+(M))$ on 
 $\bY_{w}(\mathcal{L})$ has local sections over $\bY'$ equal to $(i')_+(M')$.

\vskip5pt

Since $i$ is a smooth embedding, there is a finite covering of $\mY$ by affine open formal subschemes $\mY'$ such that 
$$ 0 \rightarrow \mN'=N_{\mathcal{L}'}(\mathcal{I}')/\mathcal{I}'\mathcal{L}' \rightarrow \mathcal{L}'/ \mathcal{I}'\mathcal{L}' \rightarrow {\rm Hom}_{\overline{\mathcal{A}'}}(\mathcal{I}'/\mathcal{I}'^2,\overline{\mathcal{A}'})\rightarrow 0$$
is a sequence of finite free $\overline{\mathcal{A}'}$-modules. Passing to connected components, we may suppose that each $\mY'$ is connected.
There are two cases. 

Suppose first that $\mI'=\mI'^2$. Then either $\mI'=0$ or $\mI'=\mA'$ (compare \cite[6.3 ]{AWII}) and so either $\mY'\subset\mX$ or $\mY'\cap \mX=\emptyset$. Note that $\mY'\subset\mX$ implies $d={\rm dim} \bY \leq {\rm dim} \bX$, whence $\bX=\bY$, a contradiction.

Suppose secondly that $\mI'\neq \mI'^2$. According to \ref{lem_cov2}, there is an affine open formal subscheme $\mY''={\rm Spf} \mA''\subset\mY'$ containing the 
closed subspace $\mX':=\mY'\cap \mX$ and such that with $\mL'':=\mA''\otimes_{\mA'}\mL'$ and
$\mI'':=\mA''\otimes_{\mA'}\mI'$ the following holds:
the $(\mR, \mA'')$-Lie algebra $\mL''$ admits an $\mI''$-standard basis.
By replacing $\mY'$ by $\mY''$ and by adding finitely many connected affine open formal subschemes in the open complement $\mY\setminus \mX$, we may therefore assume that each member of our finite
covering $\mY'$ of $\mY$ satisfies exactly one of the following conditions: either
$\mY'\cap\mX=\emptyset$ or else $\mL'$ admits an $\mI'$-standard basis. 
\vskip5pt
Let $\bY'=\mY'_K$ and $\bX'=\mX'_K$. Note that $U$ and $U'$ is Auslander-Gorenstein of dimension at most $2d$ according to \cite[4.3]{AWIII} and its proof (this is the case $m=0$ in the notation of loc.cit., since we have Gorenstein models and smooth Lie lattices). Of course, any $\bY'$ is $\mL$-admissible, since $\mA'$ is an $\mL$-stable formal model of $\bY'$. According to \ref{lem_cov1}, we therefore have 
$ d(N)=\sup_{\bY'} d(N')$
for any nonzero finitely generated $\widehat{U(\pi^n\mathcal{L})}_K$-module $N$. For similar reasons, we have that $W$ and $W'$ is Auslander-Gorenstein of dimension at most $2d_{\bX}$. Any $\bX'$ is $\mN$-admissible, since $\overline{\mA}'$ is an $\mN$-stable formal model of $\bX'$. According to \ref{lem_cov1}, we have 
$ d(M)=\sup_{\bX'} d(M')$
for any nonzero finitely generated $\widehat{U(\pi^n\mathcal{N})}_K$-module $M$.

We now finish the proof of the main assertion on the $\widehat{U(\pi^n\mathcal{N})}_K$-module $M_n$. Let $N:=i_+(M_n)$. Its dimension is then computed over those $N'$ such that 
$\mY'\cap\mX\neq\emptyset$. Hence, there is $\mX'$ with $d(N)=d_{\bY'}(N')$ and such that $\mL'$ admits an $\mI'$-standard basis. Let $f_1,...,f_r$ be the corresponding set of generators for $\mI'$. The hypothesis in Lem. \ref{lem_int1} for the Lie lattice $\mL'$ and the ideal $I'$ are therefore satisfied. Hence, the same is true for the Lie lattice $\pi^n\mL'$ relative to the generators $\frac{f_j}{\pi^n}$ for $I'$ and we obtain 
$j_{\widehat{U(\pi^n\mathcal{L}')}_K}((i')_+(M'_n))=j_{\widehat{U(\pi^n\mathcal{N}')}_K}(M_n')+r$. The compatibility of $i_+$ with localization shows that $N'=(i')_+(M'_n)$, so this yields $$ d\leq d_{\widehat{U(\pi^n\mathcal{L}')}_K}(N')=2d-j_{\widehat{U(\pi^n\mathcal{L}')}_K}(N')=2d-j_{\widehat{U(\pi^n\mathcal{N}')}_K}(M_n')-r.$$
Because of $d=d_{\bX}+r$, the right-hand side equals 

$$2r+(2d_{\bX}-j_{\widehat{U(\pi^n\mathcal{N}')}_K}(M_n'))-r=d_{\widehat{U(\pi^n\mathcal{N}')}_K}(M_n')+r$$
and so $d_{\bX}=d-r\leq d_{\widehat{U(\pi^n\mathcal{N}')}_K}(M_n')$. Hence 
$$d_{\bX}\leq \sup_{\bX'} d_{\widehat{U(\pi^n\mathcal{N}')}_K}(M_n') = d_{\widehat{U(\pi^n\mathcal{N})}_K}(M_n).$$ 
\end{proof}

The following result is a direct consequence of the preceding proposition, given the local nature of the dimension function, cf. \ref{subsec_dim}.
\begin{coro}\label{bernsteinlem_global}
Let $\bX$ be a smooth rigid variety and $G$ be a $p$-adic Lie group which acts continuously on $\bX$. Suppose that $\bX$ has good reduction.
Then Bernstein's inequality holds in $\mathcal{C}_{\bX/G}$.
\end{coro}



\subsection{Weakly holonomic equivariant $\mathcal{D}$-modules and duality}

Let $\bX$ be a smooth rigid analytic variety of dimension $d$ and $G$ be a $p$-adic Lie group which acts continuously on $\bX$. 
\begin{definition} A module $\mathcal{M}\in \mathcal{C}_{\bX/G}$ is called \textit{weakly holonomic} if $d(\mathcal{M}) \leq \dim \bX$. The category of  weakly holonomic equivariant  $\mathcal{D}_{\bX}$-modules is denoted by $\mathcal{C}^{wh}_{\bX/G}$. 
\end{definition}
There is an analogous version of the preceding definition for right modules in $\mathcal{C}^r_{\bX/G}$.
\vskip5pt
Here is a first example in dimension one.
\begin{ex} Let $(\bX,G)$ be small with $\dim \bX=1$. Let $P\in \mathcal{D}(\bX)$ be a regular differential operator and $M =\wideparen{\mathcal{D}}(\bX,G)/\wideparen{\mathcal{D}}(\bX,G)P$. 
Then $Loc_{\bX}(M)\in \mathcal{C}^{wh}_{\bX/G}$. Indeed, this follows from \ref{example}.
\end{ex}

\begin{pro}\label{exact1}
Let 
\[0 \longrightarrow \mathcal{M}_1\longrightarrow \mathcal{M}_0\longrightarrow \mathcal{M}_2 \longrightarrow 0\]
be an exact sequence in $\mathcal{C}_{\bX/G}$. Then $\mathcal{M}_0$ is weakly holonomic   if and only if $\mathcal{M}_1, \mathcal{M}_2$ are weakly holonomic. The category $\mathcal{C}^{wh}_{\bX/G}$ is a full abelian subcategory of $\mathcal{C}_{\bX/G}$ closed under extensions.
\end{pro}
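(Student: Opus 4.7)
The plan is to reduce everything to the affinoid/coadmissible setting and apply Proposition \ref{exact}, which states that for a short exact sequence $0 \to M_1 \to M_2 \to M_3 \to 0$ of coadmissible $\wideparen{\mathcal{D}}(\bX,G)$-modules one has $d(M_2) = \max\{d(M_1), d(M_3)\}$. The only new ingredient needed is the globalised version of this identity: for any short exact sequence in $\mathcal{C}_{\bX/G}$ one has
\[ d(\mathcal{M}_0) = \max\{ d(\mathcal{M}_1), d(\mathcal{M}_2)\}. \]
Once this is established, weak holonomicity of $\mathcal{M}_0$ (i.e.\ $d(\mathcal{M}_0)\leq \dim \bX$) is equivalent to the simultaneous bounds $d(\mathcal{M}_i)\leq \dim \bX$ for $i=1,2$, which is exactly what the proposition asserts.

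To prove the global identity, I would first choose an admissible covering $\mathcal{U}$ of $\bX$ by affinoid subdomains $\bU \in \bX_{w}(\mathcal{T})$ such that for every $\bU \in \mathcal{U}$ there is a $\bU$-small compact open subgroup $H\leq G$; such a covering exists by \cite[Lemma $3.4.7$]{AW}. For each such $(\bU,H)$, the localisation equivalence $Loc_{\bU}: \mathcal{C}_{\wideparen{\mathcal{D}}(\bU,H)}\tilde{\longrightarrow} \mathcal{C}_{\bU/H}$ is exact (its quasi-inverse is given by taking global sections), so the short exact sequence in $\mathcal{C}_{\bX/G}$ restricts to a short exact sequence
\[ 0 \longrightarrow \mathcal{M}_1(\bU) \longrightarrow \mathcal{M}_0(\bU) \longrightarrow \mathcal{M}_2(\bU) \longrightarrow 0 \]
of coadmissible $\wideparen{\mathcal{D}}(\bU,H)$-modules. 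Proposition \ref{exact} then gives $d(\mathcal{M}_0(\bU)) = \max\{d(\mathcal{M}_1(\bU)), d(\mathcal{M}_2(\bU))\}$. Taking the supremum over $\bU \in \mathcal{U}$ and using that suprema commute with finite maxima yields the desired identity $d(\mathcal{M}_0)=\max\{d(\mathcal{M}_1),d(\mathcal{M}_2)\}$; independence from the choice of covering was already proved in Section $4.3.1$.

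With the identity in hand, the proof of the proposition is immediate: if $\mathcal{M}_0$ is weakly holonomic then $\max\{d(\mathcal{M}_1),d(\mathcal{M}_2)\}=d(\mathcal{M}_0)\leq \dim \bX$, so both $\mathcal{M}_i$ are weakly holonomic; conversely, if $\mathcal{M}_1, \mathcal{M}_2$ are both weakly holonomic then $d(\mathcal{M}_0)=\max\{d(\mathcal{M}_1),d(\mathcal{M}_2)\}\leq \dim \bX$. There is no real obstacle here since the hard work was done in Proposition \ref{exact} (flatness of the transition maps in the Fr\'echet--Stein structure together with the behaviour of grade in exact sequences over Auslander--Gorenstein rings); the only subtle point that deserves a careful line is the exactness of restriction to $\bU$, which follows from the localisation being an equivalence of abelian categories.
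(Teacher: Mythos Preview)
Your proof is correct and follows essentially the same approach as the paper: pick an admissible covering of $\bX$ by affinoids in $\bX_w(\mathcal{T})$, apply Proposition~\ref{exact} on each piece to get $d(\mathcal{M}_0(\bU))=\max\{d(\mathcal{M}_1(\bU)),d(\mathcal{M}_2(\bU))\}$, and conclude. The paper is slightly terser (it does not explicitly pass through the global identity $d(\mathcal{M}_0)=\max\{d(\mathcal{M}_1),d(\mathcal{M}_2)\}$ nor justify exactness of restriction), but the argument is the same.
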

\begin{proof}
Let $\mathcal{U}$ be an admissible covering of $\bX$ by affinoid subdomains in $\bX_{w}(\mathcal{T})$. For every $\bU \in \mathcal{U}$, it follows from Proposition \ref{exact} that
\[d(\mathcal{M}_0(\bU))=\max\lbrace d(\mathcal{M}_1(\bU)), d(\mathcal{M}_2(\bU))\rbrace.\]
This implies all statements. \end{proof}
Recall the family of endofunctors $\mathcal{E}^{i}$ on $\mathcal{C}_{\bX/G}$, cf. Definition \ref{e-def}, for $i\geq 0$.
\begin{pro}\label{thm3}
The functor $\mathcal{E}^{d}$ takes values in the subcategory $\mathcal{C}^{wh}_{\bX/G}$.
\end{pro}
\begin{proof}
Since this is a local problem, we may assume that $(\bX,G)$ is small. Let $A=\mO(\bX)$. Let $\mathcal{M} \in \mathcal{C}_{\bX/G}$. Then $\mathcal{M} \simeq Loc_{\bX}(M)$, with $M=\mathcal{M}(\bX)\in \mathcal{C}_{\wideparen{\mathcal{D}}(\bX,G)}$. By Proposition \ref{iso2} and Theorem \ref{chanfunc}, one has that:
\[\mathcal{E}^d(\mathcal{M}) \simeq Loc_{\wideparen{\mathcal{D}}(\bX,G)}(Hom_A(\Omega(\bX), Ext^d_{\wideparen{\mathcal{D}}(\bX,G)}(M, \wideparen{\mathcal{D}}(\bX,G)))).\]
The grade of the coadmissible (right) module $Ext^d_{\wideparen{\mathcal{D}}(\bX,G)}(M, \wideparen{\mathcal{D}}(\bX,G))$  is $\geq d$, by the c-Auslander condition, so that its dimension is less than $d$. Now Proposition \ref{dimcom} implies
\[d(Hom_A(\Omega(\bX), Ext^d_{\wideparen{\mathcal{D}}(\bX,G)}(M, \wideparen{\mathcal{D}}(\bX,G)))) \leq d.\]
This shows that $\mathcal{E}^d(\mathcal{M})\in \mathcal{C}^{wh}_{\bX/G}$.
\end{proof}

\begin{lemma}\label{CM} Suppose that Bernstein's inequality holds in $\mathcal{C}_{\bX/G}$ and let 
$\mathcal{M} \in \mathcal{C}_{\bX/G}^{wh}$. Then $\mathcal{E}^i(\mathcal{M})=0$ for any $i \neq d$.  
\end{lemma}
\begin{proof} We may suppose $\mathcal{M} \neq 0$. Then 
$j(\mathcal{M})\geq d$ by Bernstein's inequality, whence $\mathcal{E}^i(\mathcal{M})=0$ for all $i<d$. So let $i>d$. Then $j(\mathcal{E}^i(\mathcal{M}))\geq i>d$ by the c-Auslander condition, so that 
$d(\mathcal{E}^i(\mathcal{M}))<d$. So $\mathcal{E}^i(\mathcal{M})=0$, by Bernstein's inequality. \end{proof}
The preceding lemma motivates the following definition. 
\begin{definition} Suppose that Bernstein's inequality holds in $\mathcal{C}_{\bX/G}$. The \textit{duality functor} $\mathbb{D}$ on $\mathcal{C}_{\bX/G}^{wh}$ into itself is defined as follows:
\[\mathbb{D}(\mathcal{M}):= \mathcal{E}^{d}(\mathcal{M})= \mathcal{H}om_{\mathcal{O}_{\bX}}(\Omega_{\bX}, E^{d}(\mathcal{M}))\]
for any $\mathcal{M} \in \mathcal{C}_{\bX/G}^{wh}$.
\end{definition}

\begin{pro}\label{main_2} Suppose that Bernstein's inequality holds in $\mathcal{C}_{\bX/G}$. There is a natural isomorphism of functors $\mathbb{D}^2\simeq {\rm id}$ on $\mathcal{C}^{wh}_{\bX/G}$.
\end{pro}
\begin{proof}
This can be proved along the lines of the proof of \cite[Proposition $7.3$]{AWIII} for weakly holonomic $\mathcal{D}_{\bX}$-modules. By the local nature of the functor $\mathbb{D}$ and since $\bX_{w}(\mathcal{T})$ is a basis for the Grothendieck topology on $\bX$, we may assume that $(\bX,G)$ is small and and need to show the existence of an isomorphism \begin{center}
$\Gamma(\bX, \mathbb{D}^2 (\mathcal{M})) \simeq \Gamma(\bX,\mathcal{M})$ 
\end{center}
compatible with restriction maps whenever $\bU\subset\bX$ in 
$\bX_{w}(\mathcal{T})$. We may assume that there are $G$-stable formal models $\mathcal{A}$ and $\mathcal{B}$ for $A=\mathcal{O}(\bX)$ and $B=\mathcal{O}(\bU)$ respectively, such that 
$\mathcal{O}(\bX)\rightarrow \mathcal{O}(\bU)$ maps $\mathcal{A}$ into $\mathcal{B}$. We choose a $G$-stable free $\mathcal{A}$-Lie lattice $\mathcal{L}$ for
$\mathcal{T}(\bX)$ such that $\mathcal{B}$ exhibits $\bU$ as $\mathcal{L}$-accessible.
We may assume that $[\mathcal{L},\mathcal{L}]\subseteq \pi\mathcal{L}$ and that 
$\mathcal{L}\cdot\mathcal{A}\subseteq\pi\mathcal{A}$.
 Choose a good chain $J_n$ for $\mathcal{L}$. Then 
\begin{center}
$S:=\wideparen{\mathcal{D}}(\bX,G)\simeq \varprojlim_n S_n$ with $S_n:=\widehat{U(\pi^n\mathcal{L})}_K \rtimes_{J_n} G$
\end{center}
and
\begin{center}
$T:=\wideparen{\mathcal{D}}(\bU,G)\simeq \varprojlim_n T_n$ with 
$T_n:=\widehat{U(\pi^n(\mathcal{B}\otimes_{\mathcal{A}}\mathcal{L}))}_K \rtimes_{J_n} G$
\end{center}
are realizations of the corresponding Fréchet-Stein structures of $S$ and $T$.
According to Cor. \ref{keycoro}, we may assume that $D_n$ and $T_n$ are Auslander-Gorenstein rings of dimension at most $2d$ for each $n\geq 0$. Write $M:= \Gamma(\bX,\mathcal{M}) \cong \varprojlim_n M_n$ with $M_n:=  S_n \otimes_S M $ and $N:= \Gamma(\bU,\mathcal{M}) \cong \varprojlim_n N_n$ with $N_n:=  T_n \otimes_T N.$ Note that $N_n=T_n \otimes_{S_n} M_n$.
Let $\Omega:=\Omega(\bX)$.
By Proposition \ref{dimcom}, one has:
\begin{align*}
\Gamma(\bX,\mathbb{D}^2 (\mathcal{M}))&= Hom_A(\Omega, Ext^d_S(Hom_A(\Omega,Ext^d_{S}(M,S)),S)\\
&\simeq Ext^d_S(\Omega \otimes_A Hom_A(\Omega, Ext^d_S(M,S)),S)\\
&\simeq Ext^d_S(Ext^d_S(M,S),S).
\end{align*}
According to \cite[Lemma 8.4]{ST2003}, one has $Ext^d_S(M,S) \cong \varprojlim_n Ext^d_{S_n}(M_n, S_n)$. Moreover, Lemma \ref{CM} implies that 
$Ext^{i}_{S_n}(M_n, S_n)=0$ for any $i\neq d$. The classical duality over Auslander-Gorenstein rings  \cite[Theorem $4$]{Iwa} therefore gives the $S_n$-linear isomorphism

$$Ext^d_{S_n}(Ext^d_{S_n}(M_n,{S}_n),{S}_n))\simeq M_n.$$
Recall here that the duality morphism comes from the usual convergent spectral sequence
with $E_2$-term 
$$E^{l,m}_2:=Ext^l_{S_n}(Ext^{-m}_{S_n}(M_n,S_n),S_n)$$
and abutment $E^{l+m}:=M_n$ for $l+m=0$, resp. $:=0$ for $l+m\neq 0$,
appearing in the global duality $M_n\rightarrow R\Hom_{S_n}(M_n,S_n)$. We have the commutative diagram
\[\begin{tikzcd}
S_{n} \arrow{r}{} \arrow[swap]{d}{} & T_{n}\arrow{d}{} \\
S_{n-1} \arrow{r}{} & T_{n-1}\\
\end{tikzcd}\]
in which all ring homomorphism are flat: the horizontal ones according to \cite[Thm. 4.3.14]{AW}. The base extension $S_{n-1}\otimes_{S_{n}} (\cdot)$ transforms the whole spectral sequence and hence the duality morphism for the $S_n$-module $M_n$ into the corresponding spectral sequence and duality morphism for the $S_{n-1}$-module $M_{n-1}=S_{n-1}\otimes_{S_{n}} M_n$. The base extension $T_{n}\otimes_{S_{n}} (\cdot)$ transforms the whole spectral sequence and hence the duality morphism for the $S_n$-module $M_n$ into the corresponding spectral sequence and duality morphism for the $T_{n}$-module $N_n=T_{n}\otimes_{S_{n}} M_n$. This means that we may pass the above $S_n$-linear isomorphism to the projective limit in $n$ to obtain the isomorphism
$$\Gamma(\bX, \mathbb{D}^2 (\mathcal{M})) = Ext^d_S(Ext^d_S(M,S),S)\simeq M= \Gamma(\bX,\mathcal{M})$$
and that this latter isomorphism is indeed compatible with the restriction maps to $\bU\subset\bX$. 
\end{proof}

\section{Preservation of weak holonomicity and examples}\label{section_six}
\subsection{Extension.}
We generalize the extension functor \cite[Section $7.2$]{AWIII} to the equivariant setting. This functor is defined on the category of $G$-equivariant coherent $\mathcal{D}_{\bX}$-module and takes values in the category $\mathcal{C}_{\bX/G}$. We will show that it preserves weak holonomicity in a suitable sense.\\
\\
Let $\bX$ be a smooth rigid analytic space and $G$  a $p$-adic Lie group acting continuously on $\bX$.
\begin{lemma}\label{extension} Suppose that $(\bX,G)$ is small and that $H$ is an open subgroup of $G$. The natural map
\[\wideparen{\mathcal{D}}(\bX,H) \otimes_{\mathcal{D}(\bX)\rtimes H} (\mathcal{D}(\bX) \rtimes G)\longrightarrow \wideparen{\mathcal{D}}(\bX,G) \]
is an isomorphism. If $M$ is a $\mathcal{D}(\bX)\rtimes G$-module, the natural morphism
\[\wideparen{\mathcal{D}}(\bX,H) \otimes_{\mathcal{D}(\bX)\rtimes H} M \tilde{\longrightarrow} \wideparen{\mathcal{D}}(\bX,G) \otimes_{\mathcal{D}(\bX)\rtimes G} M \]
is bijective.
\end{lemma}
\begin{proof}
Following \cite[Proposition $3.4.10$]{AW} there is a bijection
$\wideparen{\mathcal{D}}(\bX,H) \otimes_{K[H]}K[G]\simeq \wideparen{\mathcal{D}}(\bX,G),$
which factors into 
\[\wideparen{\mathcal{D}}(\bX,H) \otimes_{K[H]}K[G]\longrightarrow \wideparen{\mathcal{D}}(\bX,H) \otimes_{\mathcal{D}(\bX)\rtimes H} (\mathcal{D}(\bX) \rtimes G)\longrightarrow \wideparen{\mathcal{D}}(\bX,G).\]
The first morphism is surjective, which implies that both morphisms are in fact bijective. This implies the first claim of the lemma. The second claim follows from this. 
\end{proof}
\begin{pro} Suppose that $(\bX,G)$ is small. Let $M$ be a $\mathcal{D}(\bX)\rtimes G$-module which is coherent as a $\mathcal{D}(\bX)$-module. Then the tensor product
\[ \wideparen{M}:= \wideparen{\mathcal{D}}(\bX,G) \otimes_{\mathcal{D}(\bX)\rtimes G}M\]
is a coadmissible $\wideparen{\mathcal{D}}(\bX,G)$-module. 
\end{pro}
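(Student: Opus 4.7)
My plan is to verify Fréchet--Stein coadmissibility directly from the presentation of $\wideparen{\mathcal{D}}(\bX,G)$. Choose a $G$-stable free $\mathcal{A}$-Lie lattice $\mathcal{L} \subset \mathcal{T}(\bX)$ and a good chain $(J_n)_n$ for $\mathcal{L}$ so that
\[
\wideparen{\mathcal{D}}(\bX,G) \cong \varprojlim_n D_n, \qquad D_n := \widehat{U(\pi^n\mathcal{L})}_K \rtimes_{J_n} G,
\]
with each $D_n$ a noetherian Banach $K$-algebra. It suffices to show that $\wideparen{M}_n := D_n \otimes_{\wideparen{\mathcal{D}}(\bX,G)} \wideparen{M}$ is finitely generated over $D_n$, that the system $(\wideparen{M}_n)_n$ is coherent, and that the natural map $\wideparen{M} \to \varprojlim_n \wideparen{M}_n$ is bijective.

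The first two conditions are routine. By associativity of tensor product, $\wideparen{M}_n \cong D_n \otimes_{\mathcal{D}(\bX)\rtimes G} M$. Since $M$ is coherent over the noetherian ring $\mathcal{D}(\bX)$ it is finitely generated over $\mathcal{D}(\bX) \rtimes G$, so any surjection $(\mathcal{D}(\bX)\rtimes G)^r \twoheadrightarrow M$ yields $D_n^r \twoheadrightarrow \wideparen{M}_n$. The coherence $D_n \otimes_{D_{n+1}} \wideparen{M}_{n+1} \cong \wideparen{M}_n$ follows from another application of associativity.

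The heart of the argument is the identification $\wideparen{M} \cong \varprojlim_n \wideparen{M}_n$. I will apply the Corollary to Lemma \ref{extension} with $H = \{1\}$ to obtain $\wideparen{M} \cong \wideparen{\mathcal{D}}(\bX) \otimes_{\mathcal{D}(\bX)} M$, which by \cite{AWI} is a coadmissible $\wideparen{\mathcal{D}}(\bX)$-module because $M$ is coherent over the noetherian ring $\mathcal{D}(\bX)$. In particular $\wideparen{M} \cong \varprojlim_n \widehat{U(\pi^n\mathcal{L})}_K \otimes_{\mathcal{D}(\bX)} M$ as a Fréchet space. To transfer this identification to the $\wideparen{\mathcal{D}}(\bX,G)$ side, I would apply Lemma \ref{extension} with $H = J_n$ at each finite level; this requires a decomposition $D_n \cong \widehat{U(\pi^n\mathcal{L})}_K \otimes_{\mathcal{D}(\bX)\rtimes J_n} (\mathcal{D}(\bX)\rtimes G)$ of right $\mathcal{D}(\bX)\rtimes G$-modules, derivable from the coset basis of $D_n$ over $\widehat{U(\pi^n\mathcal{L})}_K$ together with the trivialisation $\beta_{\mathcal{L}_n}: J_n \to \widehat{U(\pi^n\mathcal{L})}_K^\times$. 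This yields $\wideparen{M}_n \cong \widehat{U(\pi^n\mathcal{L})}_K \otimes_{\mathcal{D}(\bX)\rtimes J_n} M$, and produces a canonical surjection of inverse systems $(\widehat{U(\pi^n\mathcal{L})}_K \otimes_{\mathcal{D}(\bX)} M)_n \twoheadrightarrow (\wideparen{M}_n)_n$ whose level-$n$ kernel is generated by relations of the shape $b(\beta_{\mathcal{L}_n}(j)-1) \otimes m - b \otimes (j-1)m$ with $j \in J_n$.

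The main obstacle is showing that this kernel has vanishing inverse limit, which I would attack via a Mittag--Leffler argument: since $\beta_{\mathcal{L}_n}$ is continuous with $\beta_{\mathcal{L}_n}(1) = 1$ and $\bigcap_n J_n = \{1\}$, the element $\beta_{\mathcal{L}_n}(j) - 1$ lies in a suitably small ideal of $\widehat{U(\pi^n\mathcal{L})}_K$ as $n$ grows, and likewise $(j-1)m$ becomes small in the Fréchet topology on $\wideparen{M}$ coming from the already-established coadmissibility over $\wideparen{\mathcal{D}}(\bX)$. Carefully quantifying this decay using the continuity of the trivialisation and the Mittag--Leffler property of coadmissible Fréchet--Stein systems is the technical heart of the argument; once settled, it delivers $\wideparen{M} \cong \varprojlim_n \wideparen{M}_n$ and thus the coadmissibility of $\wideparen{M}$.
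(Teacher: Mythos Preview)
Your argument has a genuine gap at its hinge point. You invoke the Corollary to Lemma~\ref{extension} with $H=\{1\}$ to obtain $\wideparen{M}\cong\wideparen{\mathcal{D}}(\bX)\otimes_{\mathcal{D}(\bX)}M$, but that Corollary requires $H$ to be an \emph{open} subgroup of $G$. For a compact $p$-adic Lie group of positive dimension the trivial subgroup is not open, so this application is illegitimate, and the displayed isomorphism is not available to you. Without it you have no topological handle on $\wideparen{M}$ against which to compare $\varprojlim_n\wideparen{M}_n$, so the remainder of the argument does not get off the ground. Separately, the Mittag--Leffler step you describe at the end is only a heuristic: you would need precise control on the images of the kernels under the transition maps, and ``$\beta_{\mathcal{L}_n}(j)-1$ lies in a suitably small ideal'' is not enough without a quantitative statement tied to the Fr\'echet--Stein structure.

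The paper avoids both of these difficulties by a different and much shorter route. Rather than verifying the inverse-limit condition directly, one observes that $\wideparen{\mathcal{D}}(\bX,G)\otimes_{\mathcal{D}(\bX)}M$ is coadmissible (because $M$ is finitely presented over $\mathcal{D}(\bX)$) and that $\wideparen{M}$ is its quotient by the kernel $I$ of the obvious surjection. The point is then that $I$ is \emph{finitely generated}: since $G$ is compact $p$-adic it is topologically finitely generated, say by $g_1,\dots,g_r$, and if $m_1,\dots,m_s$ generate $M$ over $\mathcal{D}(\bX)$ then the finite set $\{g_i\otimes m_j-1\otimes g_im_j\}$ generates $I$. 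One checks this using the skew relation $g_i x g_i^{-1}=g_i.x$ to absorb $\mathcal{D}(\bX)$-coefficients, and then passes to arbitrary $g\in G$ by a density argument together with the closedness of finitely generated coadmissible submodules. A quotient of a coadmissible module by a coadmissible submodule is coadmissible, and you are done.
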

\begin{proof}
Since $G$ is compact $p$-adic Lie group, it is topologically finitely generated. As $M$ is finitely presented as a $\mathcal{D}(\bX)$-module, it follows that the $\wideparen{\mathcal{D}}(\bX,G)$-module $ \wideparen{\mathcal{D}}(\bX,G) \otimes_{\mathcal{D}(\bX)}M$ is coadmissible \cite[Corollary $3.4(v)$]{ST2003}. Now, let $g_1, g_2,...,g_r$ be a set of topological generators for $G$ and $m_1,...,m_s\in M$ which generate $M$ as a $\mathcal{D}(\bX)$-module and let $I$ be the $\wideparen{\mathcal{D}}(\bX,G)$-submodule generated by the finite set $\lbrace g_i \otimes m_j - 1\otimes g_im_j \rbrace$. Then $I$ is a coadmissible $\wideparen{\mathcal{D}}(\bX,G)$-module by \cite[Corollary $3.4(iv)$]{ST2003}. There is a surjective map
$$f: \wideparen{\mathcal{D}}(\bX,G) \otimes_{\mathcal{D}(\bX)}M \longrightarrow \wideparen{\mathcal{D}}(\bX,G) \otimes_{\mathcal{D}(\bX)\rtimes G}M.$$
We will show that $I=\ker(f)$, which proves the proposition. The inclusion $I\subseteq \ker(f)$ is clear. Moreover, $\ker(f)$ is generated over $\mathcal{D}(\bX,G)$ by the elements $g\otimes m-1\otimes gm$ with $g\in G, m\in M$.
Let $ x \in L=\mathcal{T}(\bX) $. Then $g_ixg_i^{-1}=g_i.x$ in $\mathcal{D}(\bX)\rtimes G=U(L)\rtimes G$, so that we can compute as follows:
\begin{center}
$g_i\otimes xm_j-1\otimes g_ixm_j=(g_ixg_i^{-1})g_i\otimes m_j -1\otimes (g_ixg_i^{-1})g_im_j= (g_i.x)g_i\otimes m_j-1\otimes (g_i.x)g_im_j=(g_i.x)(g_i\otimes m_j-1\otimes g_im_j)$
\end{center}
Hence $I$ contains all elements of the form $g_i\otimes m-1\otimes g_im$ with $m \in M$. Now, let $g \in G$ and $(g_n) \in \langle g_1,...,g_r\rangle$ such that $\lim g_n=g$. Note that the coadmissible module $\wideparen{\mathcal{D}}(\bX,G)\otimes_{\mathcal{D}(\bX)}M$ has a natural Fr\'{e}chet topology such that the map $\wideparen{\mathcal{D}}(\bX,G) \longrightarrow \wideparen{\mathcal{D}}(\bX,G)\otimes_{\mathcal{D}(\bX)}M$ is continuous. this implies:
$$\lim (g_n \otimes m-1\otimes g_nm )=g\otimes m -1\otimes g. $$
Here we note that $G \subset \wideparen{\mathcal{D}}(\bX,G)$. Since $I$ is a closed subspace of $\wideparen{\mathcal{D}}(\bX,G)\otimes_{\mathcal{D}(\bX)}M$ \cite[Lemma $3.6$]{ST2003}, we obtain $g\otimes m-1\otimes gm\in I$ for any $g \in G$ and $m\in M$. Thus $I=\ker(f)$.
\end{proof}
Let $\mathcal{M}$ be a $G$-equivariant $\mathcal{D}_{\bX}$-module which is coherent as a $\mathcal{D}_{\bX}$-module. Then we define the presheaf $E_{\bX/G}(\mathcal{M})$ on $\bX_{w}(\mathcal{T})$ as follows. Let $\bU \in \bX_{w}(\mathcal{T})$. Then define
\[E_{\bX/G}(\mathcal{M})(\bU):= \varprojlim_H\wideparen{\mathcal{D}}(\bU,H)\otimes_{\mathcal{D}(\bU) \rtimes H}\mathcal{M}(\bU)\]
where the inverse limit is taken over the set of all $\bU$-small subgroups $H$ of $G$. 
\begin{pro}
The presheaf $E_{\bX/G}(\mathcal{M})$ extends to a coadmissible $G$-equivariant  $\mathcal{D}_{\bX}$-module (still denoted by $E_{\bX/G}(\mathcal{M})$).
\end{pro}
\begin{proof}
For every $\bU\in \bX_{w}(\mathcal{T})$, there is a $\bU$-small open subgroup $H$ of $G$.
To verify the sheaf property on $\bX_{w}(\mathcal{T})$, we may therefore assume that $(\bX,G)$ is small. Denote 
\begin{center}
$M:= \mathcal{M}(\bX)$ and $\wideparen{M}=\wideparen{\mathcal{D}}(\bX,G)\otimes_{\mathcal{D}(\bX)\rtimes G}M$. 
\end{center}
Let $\bU \in \bX_{w}(\mathcal{T})$. Then
\[\wideparen{\mathcal{D}}(\bU,H)\wideparen{\otimes}_{\wideparen{\mathcal{D}}(\bX,H)}\wideparen{M}\cong \wideparen{\mathcal{D}}(\bU,H)\wideparen{\otimes}_{\wideparen{\mathcal{D}}(\bX,H)} \wideparen{\mathcal{D}}(\bX,H) \otimes_{\mathcal{D}(\bX)\rtimes H}M\cong \wideparen{\mathcal{D}}(\bU,H)\otimes_{\mathcal{D}(\bX)\rtimes H}M.\]
Furthermore, since  $\mathcal{M}$ is a coherent $\mathcal{D}_{\bX}$-module, one has  
\[\mathcal{M}(\bU)\cong \mathcal{D}(\bU)\otimes_{\mathcal{D}(\bX)}M.\]
Consequently, $\mathcal{M}(\bU) \cong \mathcal{D}(\bU) \otimes_{\mathcal{D}(\bX)}M \cong \mathcal{D}(\bU)\rtimes H \otimes_{\mathcal{D}(\bX)\rtimes H}M$, whence
\[E_{\bX/G}(\mathcal{M})(\bU)\cong \wideparen{\mathcal{D}}(\bU,H)\otimes_{\mathcal{D}(\bU)\rtimes H}\mathcal{M}(\bU)\cong \wideparen{\mathcal{D}}(\bU,H)\otimes_{\mathcal{D}(\bX)\rtimes H}M \cong \wideparen{\mathcal{D}}(\bU,H)\wideparen{\otimes}_{\wideparen{\mathcal{D}}(\bX,H)}\wideparen{M}.\]
This proves  $E_{\bX/G}(\mathcal{M}) \cong Loc_{\bX}(\wideparen{M}),$ which implies the proposition. \end{proof}
Let ${\rm Coh}(G-\mathcal{D}_{\bX})$ be the category of $G$-equivariant coherent $\mathcal{D}_{\bX}$-modules. 
\begin{coro} The formation of $E_{\bX/G}(\mathcal{M})$ is a functor
$$E_{\bX/G}: {\rm Coh}(G-\mathcal{D}_{\bX})\longrightarrow \mathcal{C}_{\bX/G}.$$
\end{coro}

Recall the dimension of a coherent $\mathcal{D}_{\bX}$-module $\mathcal{M}$ on the smooth rigid analytic variety $\bX$, cf. \cite{MN}. The module $\mathcal{M}$ is said to be of {\it minimal dimension} if its dimension is not greater than $\dim \bX$.
\begin{pro}\label{min_ext}
Let $\mathcal{M}\in {\rm Coh}(G-\mathcal{D}_{\bX})$ be of minimal dimension. Then $E_{\bX/G}(\mathcal{M})\in  \mathcal{C}^{wh}_{\bX/G}.$ 
\end{pro}
\begin{proof}
Since the question is local, we may assume that $(\bX,G)$ is small. Choose a $G$-stable free Lie lattice $\mathcal{L}$ and a good chain $(J_n)_n$ for $\mathcal{L}$ such that $\wideparen{\mathcal{D}}(\bX,G)\cong \widehat{U(\pi^n\mathcal{L})}_K \rtimes_{J_n}G$. Write $D:= \mathcal{D}(\bX)$, $\wideparen{{D}}:= \wideparen{\mathcal{D}}(\bX,G)$, $D_n:= \widehat{U(\pi^n\mathcal{L})}_K \rtimes_{J_n}G$,  $M:= \mathcal{M}(\bX)$ and let $d:= \dim \bX$. Recall that by definition $\wideparen{M}= \wideparen{{D}}\otimes_{D \rtimes G}M$. By assumption $d(M)\leq d$. Now $\wideparen{{D}}$-module $\wideparen{{D}} \otimes_{D}M$ is coadmissible nd there is a $n$ sufficiently large, such that $j_{D_n}(D_n\otimes_D M)=j_{\wideparen{{D}}}(\wideparen{{D}}\otimes_D M)$. As we know that $D_n$ is flat over $D$, it follows that
\[Ext^i_{D}(M, D) \otimes_{D} D_n \cong Ext^i_{D_n}(D_n \otimes_D M,D_n).\]
Thus $j_{D_n}(D_n\otimes_D M) \geq j_D(M)$, which implies
 $$d(\wideparen{{D}}\otimes_DM)=d(D_n\otimes_DM) \leq d(M)\leq d.$$
Since $\wideparen{{D}}\otimes_DM$ surjects onto $\wideparen{M}$, this proves
$d(\wideparen{M}) \leq d$.
\end{proof}
Here is the link to classical holonomic algebraic $D$-modules when $\bX$ is algebraic. Let for the rest of this subsection $\mathbb{X}$ be a smooth $K$-scheme of locally finite type and $\bX=\mathbb{X}^{an}$ be its rigid analytification. Consider the induced morphism of locally ringed $\mathit{G}$-spaces $\rho: \bX \rightarrow \mathbb{X}$ and the pull-back functor  
$$\rho^\ast\mathcal{M}=\mathcal{O}_{\bX} \otimes_{\rho^{-1}\mathcal{O}_\mathbb{X}}\rho^{-1}\mathcal{M}$$ on $\mathcal{O}_\mathbb{X}$-modules. 
Since $\rho^\ast \mathcal{D}_\mathbb{X}=\mathcal{D}_{\bX}$, this restricts to a functor from (coherent) $\mathcal{D}_\mathbb{X}$-modules to (coherent) $\mathcal{D}_{\bX}$-modules.\\
\begin{lemma} \label{indlemma} Let $\mathcal{M}$ be a holonomic $\mathcal{D}_\mathbb{X}$-module. Then $\rho^\ast\mathcal{M}$ is a $\mathcal{D}_{\bX}$-module of minimal dimension.
\begin{proof}
Let $\mathcal{M}$  be a nonzero holonomic $\mathcal{D}_\mathbb{X}$-module. Let $U$ be an open affine subdomain of $\mathbb{X}$ over which $\mathcal{M}$ is nonzero. Let $\mathcal{U}=\lbrace \bU_i , i \in I\rbrace$ be an admissible covering of $\rho^{-1}U$ by affinoid subdomains of $\bX$. As $\rho$ is flat, we may suppose that $\mathcal{O}_\mathbb{X}(U) \rightarrow \mathcal{O}_{\bX}(\bU_i)$ is flat, whence $\mathcal{D}_\mathbb{X}(U) \rightarrow \mathcal{D}_{\bX}(\bU_i)$ is flat. 
Since $\rho^\ast \mathcal{M}(\bU_i) =  \mathcal{D}_{\bX}(\bU_i)\otimes_{\mathcal{D}_\mathbb{X}(U)}\mathcal{M}(U),$ one obtains
\[Ext^n_{\mathcal{D}_{\bX}(\bU_i)}(\rho^\ast \mathcal{M}(\bU_i), \mathcal{D}_{\bX}(\bU_i))\cong Ext^n_{\mathcal{D}_\mathbb{X}(U)}(\mathcal{M}(U), \mathcal{D}_\mathbb{X}(U))\otimes_{\mathcal{D}_\mathbb{X}(U)}\mathcal{D}_{\bX}(\bU_i).\]
By consequence 
$$j_{\mathcal{D}_\mathbb{X}(U)}(\mathcal{M}(U))\leq j_{\mathcal{D}_{\bX}(\bU_i)}(\rho^\ast\mathcal{M}(\bU_i)).$$
Since $\mathcal{M}$ is holonomic and nonzero over $U$, we have $j_{\mathcal{D}_\mathbb{X}(U)}(\mathcal{M}(U))= \dim \mathbb{X} $, which implies that $ \dim \mathbb{X} \leq j_{\mathcal{D}_{\bX}(\bU_i)}(\rho^\ast\mathcal{M}(\bU_i))$ and $d_{\mathcal{D}_{\bX}(\bU_i)}(\rho^\ast\mathcal{M}(\bU_i))\leq \dim \mathbb{X}=\dim \bX$ for every $i$. Letting $U$ vary implies $d(\rho^\ast\mathcal{M})\leq \dim \bX$, as claimed.
\end{proof}
\end{lemma}

\subsection{Equivariant integrable connections}
Let $\bX$ be a smooth rigid analytic variety and $G$ be a $p$-adic Lie group which acts continuously on $\bX$. A $G$-equivariant $\mathcal{D}_{\bX}$-module, which is coherent as an 
$\mathcal{O}_{\bX}$-module will be called a \textit{$G$-equivariant integrable connection}. The subcategory of ${\rm Coh}(G-\mathcal{D}_{\bX})$ consisting of the 
$G$-equivariant integrable connections on $\bX$ is denoted by ${\rm Con}(G-\mathcal{D}_{\bX})$.
Of course, ${\rm Con}(G-\mathcal{D}_{\bX})$ contains the structure sheaf $\mathcal{O}_{\bX}$.
\begin{pro}\label{Frech} Let $\mathcal{M}$  be a $G$-equivariant integrable connection on $\bX$. Then $\mathcal{M}\in {\rm Frech}(G-\mathcal{D}_{\bX})$.
\end{pro}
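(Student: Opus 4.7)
The plan is to verify the two requirements in the definition of $Frech(G-\mathcal{D})$, namely (a) a natural $K$-Fr\'echet topology on each section $\mathcal{M}(\bU)$ with $\bU\in\bX_w(\mathcal{T})$, and (b) continuity of the transition maps $g^{\mathcal{M}}(\bU):\mathcal{M}(\bU)\to \mathcal{M}(g\bU)$ for $g\in G$.

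First I would note that every $\bU\in \bX_w(\mathcal{T})$ is affinoid, so $\mathcal{O}(\bU)$ is a $K$-affinoid algebra in particular a $K$-Banach algebra. Since $\mathcal{M}$ is an integrable connection it is locally free of finite rank over $\mathcal{O}_{\bX}$, hence coherent, and therefore $\mathcal{M}(\bU)$ is a finitely generated (in fact finitely presented and locally free) $\mathcal{O}(\bU)$-module. Any such module over an affinoid algebra carries a canonical $K$-Banach topology, independent of the chosen presentation (as the quotient topology on any surjection $\mathcal{O}(\bU)^n\twoheadrightarrow \mathcal{M}(\bU)$), and every $\mathcal{O}(\bU)$-module morphism between such modules is automatically continuous. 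This equips $\mathcal{M}(\bU)$ with the desired Fr\'echet (even Banach) structure.

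Next I would check continuity of the action maps. For $g\in G$ the map $g^{\mathcal{O}}(\bU):\mathcal{O}(\bU)\to\mathcal{O}(g\bU)$ is a continuous isomorphism of $K$-Banach algebras (this is part of the continuity of the $G$-action on $\bX$, see \cite[Section 3.1]{AW}). The $G$-equivariant structure on $\mathcal{M}$ provides a $K$-linear isomorphism $g^{\mathcal{M}}(\bU):\mathcal{M}(\bU)\to \mathcal{M}(g\bU)$ which is linear relative to $g^{\mathcal{O}}(\bU)$. By the same standard principle cited above (this is the content of \cite[Lemma 3.6.5]{AW}, already invoked in the proof of Theorem \ref{thm2}), any such semi-linear bijection between finitely generated modules over affinoid algebras is continuous. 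Hence $g^{\mathcal{M}}(\bU)$ is continuous, as required.

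The main, but very mild, subtlety is to know that when $\bU$ does not trivialise $\mathcal{M}$ the canonical topology on the finitely generated $\mathcal{O}(\bU)$-module $\mathcal{M}(\bU)$ is still well-defined and functorial; this is a classical fact about finitely generated modules over Noetherian Banach algebras, so no real obstacle arises. Putting the two points together shows $\mathcal{M}\in Frech(G-\mathcal{D})$.
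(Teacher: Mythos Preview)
Your proposal is correct and follows essentially the same approach as the paper: endow $\mathcal{M}(\bU)$ with the canonical Banach topology of a finitely generated module over the affinoid algebra $\mathcal{O}(\bU)$, then argue that the semi-linear map $g^{\mathcal{M}}(\bU)$ is automatically continuous. The only minor differences are that the paper explicitly invokes Kiehl's theorem to pass from coherence of $\mathcal{M}|_{\bU}$ to finite generation of $\mathcal{M}(\bU)$, and for the continuity step it cites \cite[3.7.3/2--3]{BoRe} directly (viewing $\mathcal{M}(g\bU)$ as an $\mathcal{O}(\bU)$-module via $g^{\mathcal{O}}(\bU)$) rather than \cite[Lemma 3.6.5]{AW}, which in that reference is stated for coadmissible modules over Fr\'echet--Stein algebras rather than for finitely generated Banach modules.
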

\begin{proof}
Let $\bU \in \bX_{w}(\mathcal{T})$ be an affinoid subdomain. Then $\mathcal{M}\vert_{\bU}$ is a coherent $\mathcal{O}_{\bU}$-module, so that by Kiehl's theorem, $\mathcal{M}(\bU)$ is a coherent $\mathcal{O}(\bU)$-module. this implies $\mathcal{M}(\bU)$ has a canonical Banach topology by \cite[Chapter $3$, Proposition $3.7.3.3$]{BoRe}. For any $g \in G$, the map 
$$g^\mathcal{M}(\bU): \mathcal{M}(\bU)\longrightarrow \mathcal{M}(g\bU)$$
is a bijection which is linear with respect to the continuous morphism of $K$-Banach algebras $g^\mathcal{O}(\bU): \mathcal{O}(\bU) \longrightarrow \mathcal{O}(g\bU)$. If we consider the $\mathcal{O}(g\bU)$-module $\mathcal{M}(g\bU)$ as a $\mathcal{O}(\bU)$, then $\mathcal{M}(g\bU)$ is  coherent as a $\mathcal{O}(\bU)$-module such that the map $\mathcal{O}(\bU) \times \mathcal{M}(g\bU) \longrightarrow \mathcal{M}(g\bU)$ is continuous
and $g^\mathcal{M}(\bU)$ is a $\mathcal{O}(\bU)$-linear map. By \cite[Chapter $3$, Proposition $3.7.3.2$]{BoRe}, $g^\mathcal{M}(\bU)$ is continuous between Banach spaces. Since Banach spaces are Fr\'{e}chet spaces, this proves that $\mathcal{M} \in {\rm Frech}(G-\mathcal{D}_{\bX})$. 
\end{proof}

\begin{lemma}\label{ext} Suppose that $(\bX,G)$ is small. Let $M$ be a $\mathcal{D}(\bX)\rtimes G $-module which is coherent over $\mathcal{O}(\bX)$. Let $\mathcal{L}$ be a $G$-stable $\mathcal{A}$-Lie lattice $\mathcal{L}$ in $\mathcal{T}(\bX)$ for some $G$-stable affine formal model $\mathcal{A}$ of $\mathcal{O}(\bX)$. Then there exists $m\geq 0$ such that there is a structure of $\widehat{U(\pi^n\mathcal{L})_K}\rtimes G$-module on $M$ for all $n \geq m$ which extends the given $\mathcal{D}(\bX)\rtimes G $-action.
\end{lemma}
\begin{proof} 
Let $A=\mathcal{O}(\bX)$. 
By assumption $M$ is finitely generated as an $A$-module. Let $S$ be a finite generating set of $M$ on $A$. Then $\mathcal{M}:=\mathcal{A}S$ is an $\mathcal{A}$-submodule of $M$ which generates $M$ over $K$. Furthermore, since  $\mathcal{L}$ is an $\mathcal{A}$-Lie lattice by assumption, there exists $m \geq 0$ such that for all $n\geq m$, $\pi^n\mathcal{L}. \mathcal{M} \subset \mathcal{M}$, forcing $\mathcal{M}$ to be an $U(\pi^n\mathcal{L})$-module. Now, since $\mathcal{A}$ is $\pi$-adically complete, so is the finitely generated $\mathcal{A}$-module 
$\mathcal{M}$, so that $\mathcal{M}$ is also an $\widehat{U(\pi^n\mathcal{L})}$-module. Therefore, $M\cong K\otimes \mathcal{M}$ is a $\widehat{U(\pi^n\mathcal{L})}_K$-module. On the other hand, we see that the structure of $\widehat{U(\pi^n\mathcal{L})}_K$-module (which extends the given $\mathcal{D}(\bX)$-action) on $M$ is compatible with the $G$-action. 
\end{proof}
\begin{pro} \label{exam1} Suppose that $(\bX,G)$ is small. Let $M$ be a $\mathcal{D}(\bX)\rtimes G $-module which is coherent over $\mathcal{O}(\bX)$. Let $\mathcal{L}$ be a $G$-stable free $\mathcal{A}$-Lie lattice $\mathcal{L}$ for some $G$-stable affine formal model $\mathcal{A}$ of $\mathcal{O}(\bX)$. The $\mathcal{D}(\bX)\rtimes G$-action on $M$ extends to a $\wideparen{\mathcal{D}}(\bX,G)$-module structure if the following condition holds in $M$:
\begin{equation}\label{stronge}
g.m=\beta_{\pi^n\mathcal{ L}}(g)m \,\,\, \text{for all}\,\,\, m\in M  \,\,\text{and}\,\, g\in J_n.
\end{equation}
for some good chain $(J_\bullet)$ for $\mathcal{L}$.
In this case, $M\simeq \wideparen{\mathcal{D}}(\bX,G)\otimes_{\mathcal{D}(\bX)\rtimes G}M$ and $M$ is a coadmissible $\wideparen{\mathcal{D}}(\bX,G)$-module. 
\end{pro}
\begin{proof}
By Lemma \ref{ext}, the module $M$ is a 
$\widehat{U(\pi^n\mathcal{L})}_K\rtimes G$-module for sufficiently large $n$. Clearly, $M$ is a $\wideparen{\mathcal{D}}(\bX,G)$-module, if and only if the $\widehat{U(\pi^n\mathcal{L})}_K\rtimes G$-module structure factors through the quotient
$\widehat{U(\pi^n\mathcal{L})_K}\rtimes_{J_n} G$ for some good chain $(J_\bullet)$ for $\mathcal{L}$. Hence, if and only if the condition (\ref{stronge}) holds. In this case, the natural morphism $i: M \longrightarrow \widehat{U(\pi^n\mathcal{L})}_K \rtimes G \otimes_{U(L) \rtimes G}M$ is an isomorphism. Indeed, the $ \widehat{U(\pi^n\mathcal{L})}_K \rtimes G$-linear map 
\begin{align*}
j:\,\,\,\, \widehat{U(\pi^n\mathcal{L})}_K \rtimes G \otimes_{U(L) \rtimes G}M &\longrightarrow M\\
a \otimes m &\longmapsto am
\end{align*}
satisfies $j \circ i = id_M$ and $i$ is injective. For the surjectivity of $i$, we note that the ring $\widehat{U(\pi^n\mathcal{L})}_K \rtimes G$ (resp. $U(L) \rtimes G$) consists of elements of the form $\sum a_ig_i$, where the sum is finite  with $g_i \in G$ and $a_i \in \widehat{U(\pi^n\mathcal{L})}_K$ (resp. $a_i \in U(L)$). As a consequence, the map $ \widehat{U(\pi^n\mathcal{L})}_K \otimes_{U(L)}M \longrightarrow \widehat{U(\pi^n\mathcal{L})}_K \rtimes G \otimes_{U(L) \rtimes G}M$ is surjective. On the other hand, the map $i$ factors through
\begin{center}
$M \tilde{\longrightarrow} \widehat{U(\pi^n\mathcal{L})}_K \otimes_{U(L)}M \longrightarrow \widehat{U(\pi^n\mathcal{L})}_K \rtimes G \otimes_{U(L) \rtimes G}M$.
\end{center}
Where the first map is an isomorphism by \cite[Lemma $7.2$]{AWII}. Therefore $i$ is surjective, so it is an isomorphism as claimed. As a consequence, this proves that the canonical morphism 
\[M \longrightarrow \widehat{U(\pi^n\mathcal{L})}_K \rtimes_{J_n}G \otimes_{U(L)\rtimes G} M\]
is an isomorphism of $\widehat{U(\pi^n\mathcal{L})}_K \rtimes_{J_n}G$-modules, as $M$ is also an $\widehat{U(\pi^n\mathcal{L})}_K \rtimes_{J_n} G$-module. Passing to the projective limit over all $n$ shows that $M\simeq \wideparen{\mathcal{D}}(\bX,G)\otimes_{\mathcal{D}(\bX)\rtimes G}M$.
\end{proof}
\begin{re} In the situation of the preceding proposition, one has  $\mathcal{M}\simeq E_{\bX/G}(\mathcal{M}),$ for $\mathcal{M}$ the coadmissible equivariant module associated with $M$.
\end{re}
\begin{definition} An equivariant integrable connection $\mathcal{M}\in {\rm Con}(G-\mathcal{D}_{\bX})$ is called {\it strongly equivariant}
if the $\mathcal{D}(\bU)\rtimes H $-module $\mathcal{M}(\bU)$ satisfies the condition (\ref{stronge}) for every $(\bU,H)$ small.
\end{definition} 
\begin{re}The condition (\ref{stronge}) is the analogue in our context 
of the condition \cite[Prop.2.6]{VandenBerghED} appearing in the classical theory of equivariant algebraic $\mathcal{D}$-modules and marking there the difference between "weakly equivariant" and "equivariant". 
\end{re}

\begin{pro} Every strongly equivariant $\mathcal{M}\in {\rm Con}(G-\mathcal{D}_{\bX})$ 
lies in $\mathcal{C}^{wh}_{\bX/G}$. 
\end{pro}
\begin{proof}  Any $G$-equivariant integrable connection lies in 
${\rm Frech}(G-\mathcal{D}_{\bX})$, according to \ref{Frech}.
By \ref{exam1} and the subsequent remark, the module $\mathcal{M}$ coincides locally on small affinoids with its extension to 
$\mathcal{C}_{\bX/G}$. Since the extension takes modules of minimal dimension into $\mathcal{C}^{wh}_{\bX/G}$, cf. \ref{min_ext}, the proposition follows. 
\end{proof} 

\begin{coro}\label{structure_sheaf} The structure sheaf $\mathcal{O}_{\bX}$ is strongly equivariant and thus lies in $\mathcal{C}^{wh}_{\bX/G}.$
\end{coro}
\begin{proof}
Without loss of generality, we may suppose that $(\bX,G)$ is small. Let $\mathcal{A}$ be a $G$-stable affine formal model of $A=\mathcal{O}(\bX)$ and $\mathcal{L}$ is a $G$-stable  $\mathcal{A}$-Lie lattice in $Der_K(A)$. Recall that each $g \in G$ acts on $\mathcal{A}$ via the morphism of groups $\rho: G \longrightarrow Aut(\mathcal{A})$ and on $\mathcal{L}$ via
\begin{center}
$g.x:= \rho(g)\circ x \circ \rho(g^{-1})$ for all $x \in \mathcal{L}$.
\end{center}
Now if $g \in G_\mathcal{L}$, we can write $\rho(g)=\exp(p^\epsilon x)$ with $x \in \mathcal{L}$. Then for $a \in \mathcal{A}$,
\begin{center}
$\beta_{\mathcal{L}}(g).a= \exp(p^\epsilon \iota(x)).a= \sum_n \frac{p^{\epsilon n}}{n!}\iota(x)^n.a= \sum_n \frac{p^{\epsilon n}}{n!}x^n.a=\exp(p^\epsilon x)(a)=\rho(g)(a)=g.a.$
\end{center}
This proves that $\beta_{\mathcal{L}}(g)-g$ acts trivially on $\mathcal{A}$. Since 
$J_n\subseteq G_{\pi^n\mathcal{L}}$ for any good chain $(J_\bullet)$ for $\mathcal{L}$, we see that the condition (\ref{stronge}) holds for any such chain.
\end{proof}

 
 \subsection{Weak holonomicity and push-forward }\label{section_push}

In this section, we prove a dimension formula for the equivariant pushforward functor, generalizing 
\cite[Thm. 6.1]{AWIII} to the equivariant setting. It implies that the equivariant Kashiwara equivalence \cite{AWG} descends to weakly holonomic modules. 

\vskip5pt 

We start with the affinoid situation. Suppose that $i: \,\, \bY=Sp(A/I) \rightarrow \bX=Sp(A)$ is a closed embedding of {\it smooth} affinoid varieties and $G$ be a compact $p$-adic Lie group which acts continuously on $\bX$ and stabilizes $\bY$. We suppose that:
\begin{itemize}
\item[$(a)$] $\mathcal{T}(\bX)$ admits a free $G$-stable $\mathcal{A}$-Lie lattice $\mathcal{L}=\mathcal{A}\partial_1\oplus...\oplus \mathcal{A}\partial_d$ for some $G$-stable affine formal model $\mathcal{A}\subset \mathcal{O}(\bX)$ and such that $[\mathcal{L},\mathcal{L}]\subset \pi \mathcal{L}$, $\mathcal{L}.\mathcal{A}\subset \pi \mathcal{A}$,
\item[$(b)$] $\lbrace \partial_1,...,\partial_d\rbrace$ is an $I$-standard basis with respect to a generating set $\lbrace f_1,...,f_r \rbrace \subset I.$
\end{itemize}
Recall from \cite{AWG} the equivariant pushforward functor $i_+: \mathcal{C}_{\wideparen{\mathcal{D}}(\bY,G)}^r \rightarrow \mathcal{C}^r_{\wideparen{\mathcal{D}}(\bX,G)}$ defined by
\[i_{+} N:= N \wideparen{\otimes}_{\wideparen{\mathcal{D}}(\bY,G)}\wideparen{\mathcal{D}}(\bX,G)/I\wideparen{\mathcal{D}}(\bX,G).\]

 We explain why $i_+ N$ is indeed a  coadmissible (right) $\wideparen{\mathcal{D}}(\bX,G)$-module, thereby fixing some notation for future use. So let $\mathcal{I}:=I\cap \mathcal{A}$ and consider the integral normalizer 
 \[\mathcal{N}_{\mathcal{L}}(\mathcal{I}):= \lbrace x \in \mathcal{L} : x(\mathcal{I})\subset \mathcal{I} \rbrace.\]
 Then $\mathcal{N}:= \mathcal{N}_{\mathcal{L}}(\mathcal{I})/\mathcal{I}\mathcal{L}$ is a $G$-stable
  $\mathcal{A}/\mathcal{I}$-Lie lattice in $\mathcal{T}(\bY)= (A/I)\partial_{r+1}\oplus...\oplus (A/I)\partial_d.$
  Thus, for a good chain $(J_n)_n$ of $G$, we have 
  \begin{center}
  $\wideparen{\mathcal{D}}(\bX,G) \cong \varprojlim_n \widehat{U(\pi^n\mathcal{L})}_K \rtimes_{J_n}G$ and $\wideparen{\mathcal{D}}(\bY,G) \cong \varprojlim_n \widehat{U(\pi^n\mathcal{N})}_K \rtimes_{J_n}G$
  \end{center}
  (note that $G_\mathcal{N}\subset G_\mathcal{L}$ by \cite[Lemma $4.3.2$]{AWG} so we can choose a good chain of $G$ such that each $J_n$ is contained in $G_\mathcal{N}$).
  Write $T_n:=\widehat{U(\pi^n\mathcal{L})}_K \rtimes_{J_n}G$ and $S_n:=\widehat{U(\pi^n\mathcal{N})}_K \rtimes_{J_n}G$, then 
  \begin{center}
  $i_+ N\cong \varprojlim_n N_n\otimes_{S_n}T_n/IT_n$, with $N_n=N\otimes_{\wideparen{\mathcal{D}}(\bY,G)}S_n$.
\end{center}  


\begin{pro}\label{dimest} Given $N\in \mathcal{C}_{\wideparen{\mathcal{D}}(\bX,G)}^r$, one has

\[d_{\wideparen{\mathcal{D}}(\bX,G)}(i_+N)=d_{\wideparen{\mathcal{D}}(\bY,G)}(N) +\dim A -\dim A/I.\]

\end{pro}
\begin{proof}
Since $i_+N$ is a coadmissible $\wideparen{\mathcal{D}}(\bX,G)$-module, there exist $n$ sufficiently large such that
\[j_{\wideparen{\mathcal{D}}(\bX,G)}(i_+N)=j_{T_n}(i_+N\otimes_{\wideparen{\mathcal{D}}(\bX,G)}T_n)=j_{\widehat{U(\pi^n\mathcal{L})}_K }(i_+N\otimes_{\wideparen{\mathcal{D}}(\bX,G)}T_n).\]
Here, the last equality follows from Proposition \ref{keypro} and Lemma \ref{keylemma2}. Note that:
\begin{align*}
i_+N\otimes_{\wideparen{\mathcal{D}}(\bX,G)}T_n = N_n\otimes_{S_n}T_n/IT_n,
\end{align*}
where $N \cong \varprojlim_nN_n$ with  $N_n\otimes_{\wideparen{\mathcal{D}}(Y,G)}S_n$. Furthermore
\begin{align*}
N_n\otimes_{S_n}T_n/IT_n&= N_n\otimes_{\widehat{U(\pi^n\mathcal{N})}_K \rtimes_{J_n}G}\widehat{U(\pi^n\mathcal{L})}_K \rtimes_{J_n}G/I(\widehat{U(\pi^n\mathcal{L})}_K \rtimes_{J_n}G)\\
&\cong N_n\otimes_{\widehat{U(\pi^n\mathcal{N})}_K}\widehat{U(\pi^n\mathcal{L})}_K/I\widehat{U(\pi^n\mathcal{L})}_K.
\end{align*} 
On the other hand, since $\mL$ is a flat $\mR$-module, multiplication by $\pi^n$ yields an isomorphism of the $\mathcal{A}/\mathcal{I}$-Lie lattice $\pi^n\mathcal{N}$ of $\mathcal{T}(\bY)$ with $N_{\pi^n\mathcal{L}}(\mathcal{I})/\mathcal{I}(\pi^n\mathcal{L})$. According to \cite[5.2]{AWII} one has $\widehat{U(\pi^n\mathcal{N})}_K \cong \widehat{U(\mathcal{C}_n)}_K/I\widehat{U(\mathcal{C}_n)}_K$, where 

$$\mathcal{C}_n:=C_{\pi^n\mathcal{L}}(F)= \lbrace x\in \pi^n\mathcal{L}: x.f=0 \,\,\, \forall f \in F\rbrace$$ denotes  the centraliser of $F$ in $\pi^n\mathcal{L}$. So we have
\[N_n\otimes_{\widehat{U(\pi^n\mathcal{N})}_K}\widehat{U(\pi^n\mathcal{L})}_K/I\widehat{U(\pi^n\mathcal{L})}_K \cong N_n\otimes_{\widehat{U(\mathcal{C}_n)}_K}\widehat{U(\pi^n\mathcal{L})}_K.\]
Hence applying \cite[Prop. $6.1$]{AWIII} gives
\begin{align*}
j_{\widehat{U(\pi^n\mathcal{L})}_K }(i_+N\otimes_{\wideparen{\mathcal{D}}(\bX,G)}T_n)&=j_{\widehat{U(\pi^n\mathcal{L})}_K }(N_n\otimes_{\widehat{U(\mathcal{C}_n)}_K}\widehat{U(\pi^n\mathcal{L})}_K)\\
&=j_{\widehat{U(\mathcal{C}_n)}_K/F\widehat{U(\mathcal{C}_n)}_K}(N_n)+ r =j_{\widehat{U(\pi^n\mathcal{N})}_K}(N_n)+r\\
&=j_{S_n}(N_n)+r.
\end{align*} 
Here, the last equality follows from Proposition \ref{keypro} and Lemma \ref{keylemma2}. Finally, for $n$ sufficiently large, one has that:
\begin{align*}
d_{\wideparen{\mathcal{D}}(\bX,G)}(i_+N)&= 2d- j_{\wideparen{\mathcal{D}}(\bX,G)}(i_+N)\\
&=2d-j_{T_n}(i_+N\otimes_{\wideparen{\mathcal{D}}(\bY,G)}T_n)\\
&=2d-(r+j_{S_n}(N_n))\\
&=r +(2d-2r-j_{\wideparen{\mathcal{D}}(\bY,G)}(N))\\
&= d_{\wideparen{\mathcal{D}}(\bY,G)}(N)+\dim A-\dim A/I.
\end{align*}

\end{proof}
Now, let $i: \,\, \bY \rightarrow \bX$ be a closed embedding of smooth rigid varieties, $G$ be a $p$-adic Lie group which acts continuously on $\bX$ and which preserves $\bY$. Let $\mathcal{I}\subset \mathcal{O}_{\bX}$ be the ideal sheaf defining $\bY$. Recall from \cite{AWG} that the above affinoid version $i_+$ extends to a general equivariant pushforward functor 
$i_+: \mathcal{C}_{\bY/G}^r \rightarrow \mathcal{C}^r_{\bX/G}.$ Its construction makes crucial use of the fact \cite[Theorem $6.2$]{AWII} that there is an admissible covering $\mathcal{B}$ of $\bX$ of connected affinoid subdomains $\bU$ such that
\begin{itemize}
\item[$(i)$]there is a free $\mathcal{A}$-Lie lattice $\mathcal{L}=\partial_1\mathcal{A}\oplus...\oplus \partial_d\mathcal{A}$ for some affine formal model $\mathcal{A}\subset \mathcal{O}(\bU)$ satisfying $[\mathcal{L},\mathcal{L}]\subset \pi.\mathcal{L}$ and $\mathcal{L}.\mathcal{A}\subset \pi\mathcal{A}$,
\item[$(ii)$]either $\mathcal{I}(\bU)=\mathcal{I}(\bU)^2$, or $\mathcal{I}(\bU)$ admits a generating set $F=\lbrace f_1,...,f_r\rbrace$ with $\partial_i(f_j)=\delta_{ij}$ for every $i=1...,d$ and $j=1,...,r$.
\end{itemize} 
Let $\bU\in \mathcal{B}$. By definition, there is a free $\mathcal{A}$-Lie lattice $\mathcal{L}=\partial_1\mathcal{A}\oplus...\oplus \partial_d\mathcal{A}$ for some affine formal model $\mathcal{A}\subset \mathcal{O}(\bU)$ satisfying the conditions $(i)$ and $(ii)$ of the above theorem.  Following \cite[Lemma $4.4.2$]{AWG}, there exists a compact open subgroup $H$ of $G$ which stabilies $\bU$, $\mathcal{A}$ and $\mathcal{L}$.  The subgroup $H$ is then called  \textit{$\bU$-good}. Let $\mathcal{N}\in \mathcal{C}_{\bY/G}^r$. Then $i_+\mathcal{N}$ of $\mathcal{N}$ can be defined locally as follows
\[i_+\mathcal{N}(\bU):=\varprojlim_H M[\bU,H]\]
for any $\bU\in \mathcal{B}$, where $M[\bU,H]:=\mathcal{N}(\bU\cap \bY)\wideparen{\otimes}_{\wideparen{\mathcal{D}}(\bU\cap \bY,H)}\wideparen{\mathcal{D}}(\bU,H)/\mathcal{I}(\bU)\wideparen{\mathcal{D}}(\bU,H)$ and $H$ runs over the set of all $\bU$-good subgroups of $G$.

\begin{pro}\label{bernsteindim}
Let $i: \,\, \bY \rightarrow \bX$ be a closed embedding of smooth rigid varieties, $G$ be a $p$-adic Lie group which acts continuously on $\bX$ and which preserves $\bY$. Then for every $\mathcal{N}\in \mathcal{C}^r_{\bY/G}$
\[d_{\bX}(i_+\mathcal{N})=d_{\bY}(\mathcal{N}) +\dim \bX- \dim \bY.\]
\end{pro}
\begin{proof}
Given the above result, this is a direct consequence of Proposition \ref{dimest}. 
\end{proof}
We give two applications.

\begin{theorem} 
Let $\bX$ be a smooth rigid analytic variety with a continuous $G$-action. Suppose that 
$\bY$ is a Zariski closed subspace of $\bX$ which is stable under the $G$-action. If 
Bernstein's inequality holds in $\mathcal{C}_{\bX/G}$, then it holds in $\mathcal{C}_{\bY/G}$.
\end{theorem}
\begin{proof}
This is a direct consequence of Proposition $\ref{bernsteindim}$ and Proposition $\ref{dimcom}$.
\end{proof}

 \begin{theorem}\label{Kash} Let $i: \,\, \bY \rightarrow \bX$ be a closed embedding of smooth rigid varieties, $G$ be a $p$-adic Lie group which acts continuously on $\bX$ and which preserves $\bY$. 
 Then Kashiwara's equivalence restricts to an equivalence between $\mathcal{C}^{wh}_{\bY/G}$
 and the category of weakly holonomic equivariant $\wideparen{\mathcal{D}}_{\bX}$-modules supported on $\bY$.
 \end{theorem}
 \begin{proof} This is a direct consequence of the preceding proposition.
 \end{proof}
 \begin{coro}\label{push}
 Let $i: \,\, \bY \rightarrow \bX$ be a closed embedding of smooth rigid varieties, $G$ be a $p$-adic Lie group which acts continuously on $\bX$ and which preserves $\bY$. 
 Then $i_+\mathcal{O}_{\bY}$ is a weakly holonomic $G$-equivariant $\wideparen{\mathcal{D}}_{\bX}$-module.
  \end{coro} 
 
 \begin{proof} This follows from \ref{Kash} and \ref{structure_sheaf}. \end{proof}
 

\subsection{Weak holonomicity and geometric induction}
Let $\bX$ be a smooth rigid analytic space and $G$ be a $p$-adic Lie group acting continuously on $\bX$. Suppose that $P$ is a closed subgroup of $G$ such that $G/P$ is compact. Note that under this condition, the set of double cosets $\vert H\setminus G/P\vert$ is finite for every open subgroup $H\leq G$.\\
\\
We recall from \cite[$2.2$]{AWG} the geometric induction functor 
$$\text{ind}_P^G: \mathcal{C}_{\bX/P}\longrightarrow \mathcal{C}_{\bX/G}$$
which is locally defined as follows. Let $\mathcal{N} \in \mathcal{C}_{\bX/P}$. Let $\bU\in \bX_{w}(\mathcal{T})$ be an affinoid open subset, $H$ be a $\bU$-small subgroup of $G$ and $s \in G$. If $J \leq G$ is a subgroup, we write ${}^sJ=sJs^{-1}$, $J^s=s^{-1}Js$. Then we set
$$[s]\mathcal{N}(s^{-1}\bU):= \lbrace [s]m : m \in \mathcal{N}(s^{-1}\bU)\rbrace.$$
Note that $H$ is open in $G$, the subgroup $P\cap H^s$ is also open in $P$ and the pair $(s^{-1}\bU, P\cap H^s)$ is small. Hence $\mathcal{N}(s^{-1}\bU)\rbrace$ is a $\wideparen{\mathcal{D}}(s^{-1}\bU, P\cap H^s)$-module. So $[s]\mathcal{N}(s^{-1}\bU)$ can be equipped with a structure of $\wideparen{\mathcal{D}}(\bU, {}^sP\cap H)$-module via the isomorphism of $K$-algebras 
\[s^{-1}: \wideparen{\mathcal{D}}(\bU, {}^sP\cap H)\tilde{\longrightarrow} \wideparen{\mathcal{D}}(s^{-1}\bU, P\cap H^s).\]
This is a coadmissile $\wideparen{\mathcal{D}}(\bU, {}^sP\cap H)$-module and one forms the coadmissible $\wideparen{\mathcal{D}}(\bU,H)$-module:
\begin{center}
$M(\bU,H,s)= \wideparen{\mathcal{D}}(\bU,H) \wideparen{\otimes}_{\wideparen{\mathcal{D}}(\bU, {}^sP\cap H)}[s]\mathcal{N}(s^{-1}\bU)$.
\end{center}
The $\wideparen{\mathcal{D}}(\bU,H)$-module $M(\bU,H,s)$ only depends on the double coset $HsP$ which contains $s$ (\cite[Proposition $3.2.7$]{AWG}), which means that if $t \in HsP$ such that $s=h^{-1}th'$ with $h\in H, h' \in P$, then $M(\bU,H,s) \cong M(\bU,H,t)$ as $\wideparen{\mathcal{D}}(\bU,H)$-modules.This allows to define for each double coset $Z \in H\setminus G/P$:
\[M(\bU,H,Z):= \lim_{s \in Z}M(\bU,H,s).\]
Note that $M(\bU,H,Z) \cong M(\bU,H,s)$ in $\mathcal{C}_{\wideparen{\mathcal{D}}(\bU,H)}$ for all $s\in Z$. Since $\mid H\setminus G/P\mid$ is finite, we obtain that  
$$M(\bU,H):=\bigoplus_{Z \in H \setminus G/P}M(\bU,H,Z)$$ 
is also a coadmissible $\wideparen{\mathcal{D}}(\bU,H)$-module. If $J \leq H$ are $\bU$-small subgroups of $G$ then there is an isomorphism of $\wideparen{\mathcal{D}}(\bU,J)$-modules 
$M(\bU,J)\tilde{\rightarrow} M(\bU,H).$ In this situation as $\wideparen{\mathcal{D}}(\bU,H)$-modules
\begin{align*}
\text{ind}_P^G(\mathcal{N})(\bU)&=\varprojlim_H \bigoplus_{Z \in H\setminus G/P}\lim_{s \in Z} \wideparen{\mathcal{D}}(\bU,H) \wideparen{\otimes}_{\wideparen{\mathcal{D}}(\bU, {}^sP\cap H)}[s]\mathcal{N}(s^{-1}\bU)\\
&=\varprojlim_H M(\bU,H),
\end{align*}
where the inverse limit is taken over the set of $\bU$-small subgroups $H$ of $G$.  
\begin{pro} \label{indpro} Geometric induction $\text{ind}_P^G$ preserves weak holonomicty, i.e restricts to a functor
$$\mathcal{C}^{wh}_{\bX/P}\longrightarrow \mathcal{C}^{wh}_{\bX/G}.$$

\end{pro}
\begin{proof}
Since the sum $M(\bU,H):=\bigoplus_{Z \in H \setminus G/P}M(\bU,H,Z)$ is finite, one has 
\begin{center}
$Ext_{\wideparen{\mathcal{D}}(\bU, H)}^i(M(\bU,H), \wideparen{\mathcal{D}}(\bU,H))\cong \bigoplus_{Z \in H\setminus G/P}Ext_{\wideparen{\mathcal{D}}(\bU, H)}^i(M(\bU,H,Z), \wideparen{\mathcal{D}}(\bU,H)).$
\end{center}
In particular, $Ext_{\wideparen{\mathcal{D}}(\bU, H)}^i(M(\bU,H), \wideparen{\mathcal{D}}(\bU,H))=0$ if and only if 
\begin{center}
$ Ext_{\wideparen{\mathcal{D}}(\bU, H)}^i(M(\bU,H,Z), \wideparen{\mathcal{D}}(\bU,H))=0$ for all $Z \in H \setminus G/P$. 
\end{center}
This shows that
\begin{equation}\label{ind1}
j(M(\bU,H))=\inf \lbrace j(M(\bU,H,Z)): Z \in H \setminus G/P\rbrace.
\end{equation}
Now let $Z \in H \setminus G/P$. Since $M(\bU,H, Z)\cong M(\bU,H,s)$ in $\mathcal{C}_{\wideparen{\mathcal{D}}(\bU,H)}$ for any choice of $s \in Z$ and the map $\wideparen{\mathcal{D}}(\bU,{}^sP\cap H) \longrightarrow \wideparen{\mathcal{D}}(\bU,H)$ is faithfully $c-$flat \cite[Lemma $3.5.3$]{AWG} (note that ${}^sP\cap H$ is closed in $H$), we obtain
\begin{center}
$Ext^i_{\wideparen{\mathcal{D}}(\bU,H)}(\wideparen{\mathcal{D}}(\bU,H) \wideparen{\otimes}_{\wideparen{\mathcal{D}}(\bU, {}^sP\cap H)}[s]\mathcal{N}(s^{-1}\bU), \wideparen{\mathcal{D}}(\bU,H))$\\
$\cong Ext^i_{\wideparen{\mathcal{D}}(\bU,{}^sP\cap H)}([s]\mathcal{N}(s^{-1}\bU),\wideparen{\mathcal{D}}(\bU,{}^sP\cap H))\wideparen{\otimes}_{\wideparen{\mathcal{D}}(\bU,{}^sP\cap H)}\wideparen{\mathcal{D}}(\bU,H).$
\end{center}
This implies:  
\begin{equation}\label{ind2}
j_{\wideparen{\mathcal{D}}(\bU,H)}(M(\bU,H,Z))=j_{\wideparen{\mathcal{D}}(\bU,H)}(M(\bU,H,s))=j_{\wideparen{\mathcal{D}}(\bU, {}^sP\cap H)}([s]\mathcal{N}(s^{-1}\bU)).
\end{equation}
Next, the isomorphism of $K$-algebras $\wideparen{\mathcal{D}}(\bU,{}^sP\cap H) \tilde{\longrightarrow} \wideparen{\mathcal{D}}(s^{-1}\bU, P\cap H^s)$ implies that 
\[ Ext^i_{\wideparen{\mathcal{D}}(\bU,{}^sP\cap H)}([s]\mathcal{N}(s^{-1}\bU),\wideparen{\mathcal{D}}(\bU,{}^sP\cap H))\cong  Ext^i_{\wideparen{\mathcal{D}}(s^{-1}\bU,{}P\cap H^s)}(\mathcal{N}(s^{-1}\bU),\wideparen{\mathcal{D}}(s^{-1}\bU,{}P\cap H^s)).\]
By consequence, 
\begin{equation}\label{ind3}
j_{\wideparen{\mathcal{D}}(\bU,{}^sP\cap H)}([s]\mathcal{N}(s^{-1}\bU))=j_{\wideparen{\mathcal{D}}(s^{-1}\bU,{}P\cap H^s)}(\mathcal{N}(s^{-1}\bU)).
\end{equation}
Now since $\mathcal{N}$ is weakly holonomic, (\ref{ind1}), (\ref{ind2}) and (\ref{ind3}) imply that $d(\text{ind}_P^G(\mathcal{N}))\leq \dim{\bX}$. 
\end{proof}

 \begin{coro}
 Let $i: \,\, \bY \rightarrow \bX$ be a closed embedding of smooth rigid varieties, $G$ be a $p$-adic Lie group which acts continuously on $\bX$ and suppose that the stabilizer $G_{\bY}$ of $\bY$ in $G$ is co-compact. Then
$ \mathrm{ind}_{G_{\bY}}^G i_+\mathcal{O}_{\bY}$ is a weakly holonomic $G$-equivariant $\wideparen{\mathcal{D}}_{\bX}$-module.
  \end{coro} 
 
 \begin{proof} 
This follows from the preceding result and \ref{push}.
\end{proof}

 
\bibliographystyle{plain}
\bibliography{Biblio}
\vskip10pt

\noindent {\small Tobias Schmidt, Bergische Universität Wuppertal, Gaußstraße 20, 42119 Wuppertal, Germany \newline {\it E-mail address: \url{toschmidt@uni-wuppertal.de}} }

\vskip10pt 

\noindent {\small Thi Minh Phuong Vu, FPT University, Hoa Lac Hi-tech Park, Km29, Thang Long Boulevard, Thach Hoa, Thach That, Ha Noi, Vietnam
 \newline {\it E-mail address: \url{phuongvtm11@fe.edu.vn}} }

\end{document}